\definecolor{mycolor1}{rgb}{0.105882,0.619608,0.466667}
\definecolor{mycolor2}{rgb}{0.85098,0.372549,0.00784314}
\definecolor{mycolor3}{rgb}{0.458824,0.439216,0.701961}
\definecolor{mycolor4}{rgb}{0.905882,0.160784,0.541176}
\definecolor{mycolor5}{rgb}{0.4,0.65098,0.117647}
\definecolor{mycolor6}{rgb}{0.65098,0.462745,0.113725}
\definecolor{mycolor7}{rgb}{0.901961,0.670588,0.00784314}
\definecolor{mycolor8}{rgb}{0.4,0.4,0.4}
\definecolor{mycolor9}{rgb}{0.301961,0,0.294118}
\definecolor{mycolor10}{rgb}{0.0313725,0.25098,0.505882}
\newif\ifmygrid@coordinates
\tikzset{/mygrid/step line/.style={line width=0.80pt,draw=gray!80},
         /mygrid/steplet line/.style={line width=0.25pt,draw=gray!80}}
\def\mygrid@def@coordinates(#1,#2)(#3,#4){%
    \def\mygrid@xlo{#1}%
    \def\mygrid@xhi{#3}%
    \def\mygrid@ylo{#2}%
    \def\mygrid@yhi{#4}%
}
\newcommand\DrawGrid[3][]{%
    \pgfkeys{/mygrid/.cd,coordinates=true,step=1,steplet=0.2,#1}%
    \draw[/mygrid/steplet line] #2 grid[step=\mygrid@steplet] #3;
    \draw[/mygrid/step line] #2 grid[step=\mygrid@step] #3;
    \mygrid@def@coordinates#2#3%
    \ifmygrid@coordinates%
        \draw[/mygrid/step line]
        \foreach \xpos in {\mygrid@xlo,...,\mygrid@xhi} {%
          (\xpos,\mygrid@ylo) -- ++(0,-3pt)
                              node[anchor=north] {$\xpos$}
        }
        \foreach \ypos in {\mygrid@ylo,...,\mygrid@yhi} {%
          (\mygrid@xlo,\ypos) -- ++(-3pt,0)
                              node[anchor=east] {$\ypos$}
        };
    \fi%
}
\newcommand{\add}[1]{{#1}}
\newcommand{\remove}[1]{}
\newcommand{\removesafe}[1]{}
\newcommand{\transpose}{^\top\! }
\newcommand{\inner}[2]{\left\langle{#1},{#2}\right\rangle}
\newcommand{\innersmall}[2]{\langle{#1},{#2}\rangle}
\newcommand{\trace}{\mathrm{Tr}}
\newcommand{\Sym}{\operatorname{Sym}}
\newcommand{\Proj}{\mathrm{Proj}}
\newcommand{\Exp}{\mathrm{Exp}}
\newcommand{\Retr}{\mathrm{R}}
\newcommand{\T}{\mathrm{T}}
\newcommand{\calP}{\mathcal{P}}
\newcommand{\Rd}{{\mathbb{R}^{d}}}
\newcommand{\Rdd}{{\mathbb{R}^{d\times d}}}
\newcommand{\reals}{{\mathbb{R}}}
\newcommand{\Rn}{{\mathbb{R}^n}}
\newcommand{\grad}{\mathrm{grad}}
\newcommand{\Hess}{\mathrm{Hess}}
\newcommand{\diag}{\mathrm{diag}}
\newcommand{\D}{\mathrm{D}}
\newcommand{\dphi}{\mathrm{d}\phi}
\newcommand{\Ddt}{\mathrm{D}_t}
\newcommand{\Ddttwo}{\mathrm{D}_t^2}
\newcommand{\Ddq}{\mathrm{D}_q}
\newcommand{\dtau}{\mathrm{d}\tau}
\newcommand{\dtheta}{\mathrm{d}\theta}
\newcommand{\calO}{\mathcal{O}}
\newcommand{\calM}{\mathcal{M}}
\newcommand{\calN}{\mathcal{N}}
\newcommand{\calS}{\mathcal{S}}
\newcommand{\Klow}{{K_{\mathrm{low}}}}
\newcommand{\Kup}{{K_{\mathrm{up}}}}
\newcommand{\Sn}{\mathbb{S}^{n-1}}
\newcommand{\norm}[1]{\left\|{#1}\right\|}
\newcommand{\TODOF}[1]{}
\newcommand{\lambdamin}{\lambda_\mathrm{min}}
\newcommand{\flow}{f_{\mathrm{low}}}
\newtheorem{theorem}{Theorem}[section]
\newtheorem{lemma}[theorem]{Lemma}
\newtheorem{proposition}[theorem]{Proposition}
\newtheorem{corollary}[theorem]{Corollary}
\newtheorem{assumption}{A\ignorespaces} 
\newtheorem{definition}[theorem]{Definition} 
\newtheorem{remark}[theorem]{Remark}
\def\@seccntformat#1{\protect\makebox[0pt][r]{\csname the#1\endcsname\hspace{12pt}}}\makeatother
\newcommand{\sigmamin}{\sigma_{\operatorname{min}}}
\newcommand{\sigmamax}{\sigma_{\operatorname{max}}}
\newcommand{\aref}[1]{\hyperref[#1]{A\ref{#1}}}
\newcommand{\ARGD}{\mathtt{TAGD}}
\newcommand{\PARGD}{\mathtt{PTAGD}}
\newcommand{\TSS}{\mathtt{TSS}} 
\newcommand{\NCE}{\mathtt{NCE}} 
\newcommand{\perturbed}{\mathtt{perturbed}}
\newcommand{\Xstuck}{\mathcal{X}^{(\mathrm{stuck})}}
\newcommand{\Prob}[1]{\operatorname{Prob}\!\left\{#1\right\}}
\newcommand{\Vol}[1]{\operatorname{Vol}\!\left(#1\right)}
\newcommand{\da}{\mathrm{d}a}
\newcommand{\dy}{\mathrm{d}y}
\newcommand{\dz}{\mathrm{d}z}
\newcommand{\one}{\mathbf{1}}
\title{An accelerated first-order method\\for non-convex optimization on manifolds} 
\author{Christopher Criscitiello, Nicolas Boumal}
\affil{EPFL Institute of Mathematics \\ \{christopher.criscitiello, nicolas.boumal\}@epfl.ch}
\date{First posted on arXiv August 5, 2020; updated on November 25, 2021.} 
\begin{document}

\maketitle

\begin{abstract}
	We describe the first gradient methods on Riemannian manifolds to achieve accelerated rates in the non-convex case.
	Under Lipschitz assumptions on the Riemannian gradient and Hessian of the cost function, these methods find approximate first-order critical points 		faster than regular gradient descent.
	A randomized version also finds approximate second-order critical points.
	Both the algorithms and their analyses build extensively on existing work in the Euclidean case.
	The basic operation consists in running the Euclidean accelerated gradient descent method (appropriately safe-guarded against non-convexity) in the current tangent space, then moving back to the manifold and repeating.
	This requires lifting the cost function from the manifold to the tangent space, which can be done for example through the Riemannian exponential map.
	For this approach to succeed, the lifted cost function (called the pullback) must retain certain Lipschitz properties.
	As a contribution of independent interest, we prove precise claims to that effect, with explicit constants.
	Those claims are affected by the Riemannian curvature of the manifold, which in turn affects the worst-case complexity bounds for our optimization algorithms.
\end{abstract}

\section{Introduction}



We consider optimization problems of the form
\begin{align}
	\min_{x \in \calM} f(x)
	\tag{P}
	\label{eq:P}
\end{align}
where $f$ is lower-bounded and twice continuously differentiable on a Riemannian manifold~$\calM$.
For the special case where $\calM$ is a Euclidean space, problem~\eqref{eq:P} amounts to smooth, unconstrained optimization.
The more general case is important for applications notably in scientific computing, statistics, imaging, learning, communications and robotics: see for example~\citep{AMS08,hu2020introoptimmanifolds}.

\add{
For a general non-convex objective $f$, computing a global minimizer of~\eqref{eq:P} is hard.  
Instead, our goal is to compute approximate first- and second-order critical points of~\eqref{eq:P}.  
A number of non-convex problems of interest exhibit the property that second-order critical points are optimal~\citep{boumal2016bmapproach,bandeira2016lowrankmaxcut,ge2016matrix,bhojanapalli2016global,mei2017solving,kawaguchi2016deep,zhang2020symmetry}.
Several of these are optimization problems on nonlinear manifolds.  
Therefore, theoretical guarantees for approximately finding second-order critical points can translate to guarantees for approximately solving these problems.  

It is therefore natural to ask for fast algorithms which find approximate second-order critical points on manifolds, within a tolerance $\epsilon$ (see below).
Existing algorithms include RTR~\citep{boumal2016globalrates}, ARC~\citep{agarwal2018arcfirst} and perturbed RGD~\citep{sun2019prgd,criscitiello2019escapingsaddles}.
Under some regularity conditions, ARC uses Hessian-vector products to achieve a rate of $O(\epsilon^{-7/4})$.
In contrast, under the same regularity conditions, perturbed RGD uses only function value and gradient queries, but achieves a poorer rate of $O(\epsilon^{-2})$.
Does there exist an algorithm which finds approximate second-order critical points with a rate of $O(\epsilon^{-7/4})$ using only function value and gradient queries?
The answer was known to be yes in Euclidean space.
Can it also be done on Riemannian manifolds, hence extending applicability to applications treated in the aforementioned references?
We resolve that question positively with the algorithm $\PARGD$ below.


From a different perspective, the recent success of momentum-based first-order methods in machine learning~\citep{ruder2017overview} has encouraged interest in momentum-based first-order algorithms for non-convex optimization which are provably faster than gradient descent~\citep{carmon2017convexguilty,jin2018agdescapes}.
We show such provable guarantees can be extended to optimization under a manifold constraint.
From this perspective, our paper is part of a body of work theoretically explaining the success of momentum methods in non-convex optimization.

There has been significant difficulty in accelerating \emph{geodesically convex} optimization on Riemannian manifolds.
See ``Related literature'' below for more details on best known bounds \citep{ahn2020nesterovs} as well as results proving that acceleration in certain settings is impossible on manifolds~\citep{hamilton2021nogo}.
Given this difficulty, it is not at all clear a priori that it is possible to accelerate \emph{non-convex} optimization on Riemannian manifolds.
Our paper shows that it is in fact possible.

We design two new algorithms and establish worst-case complexity bounds under Lipschitz assumptions on the gradient and Hessian of $f$.
Beyond a theoretical contribution, we hope that this work will provide an impetus to look for more practical fast first-order algorithms on manifolds.
}

\remove{
Our goal is to compute approximate first- and second-order critical points of~\eqref{eq:P} using first-order methods, that is, algorithms which may query $f$'s value and gradient but nothing else.
To this end, we design two new algorithms.
We establish worst-case complexity bounds under Lipschitz assumptions on the derivatives of $f$. 
}

More precisely, if the gradient of $f$ is $L$-Lipschitz continuous (in the Riemannian sense defined below), it is known that Riemannian gradient descent can find an $\epsilon$-approximate first-order critical point\footnote{That is, a point where the gradient of $f$ has norm smaller than $\epsilon$.} in at most $O(\Delta_f L / \epsilon^2)$ queries, where $\Delta_f$ upper-bounds the gap between initial and optimal cost value~\citep{zhang2016complexitygeodesicallyconvex,bento2017iterationcomplexity,boumal2016globalrates}.
Moreover, this rate is optimal in the special case where $\calM$ is a Euclidean space~\citep{carmon2017lower}, 
\add{but it can be improved under the additional assumption that the Hessian of $f$ is $\rho$-Lipschitz continuous.}

\remove{
It is natural to ask whether this rate can be improved if $f$ has additional properties.
Specifically, we assume the Hessian of $f$ is $\rho$-Lipschitz continuous (still in a Riemannian sense).
(Another interesting structure would be to assume geodesic convexity: we mention recent related efforts momentarily.)

For the Euclidean case, it has been shown under these assumptions that first-order methods require at least $\Omega(\Delta_f L^{3/7} \rho^{2/7} / \epsilon^{12/7})$ queries~\citep[Thm.~2]{carmon2019lowerboundsII}.
Recently, \citet{carmon2017convexguilty} have proposed an algorithm for this setting which requires at most $\tilde O(\Delta_f L^{1/2} \rho^{1/4} / \epsilon^{7/4})$ queries (up to logarithmic factors): this leaves a gap of merely $\tilde O(1/\epsilon^{1/28})$ in the $\epsilon$-dependency.
Their algorithm is deterministic and the complexity is independent of dimension.
Around the same time, \citet{jin2018agdescapes} showed how a related algorithm with randomization can find $(\epsilon, \sqrt{\rho\epsilon})$-approximate second-order critical points
with the same complexity, up to polylogarithmic factors in the dimension of the search space and in the (reciprocal of) the probability of failure.
}

\add{Recently in Euclidean space, \citet{carmon2017convexguilty} have proposed a deterministic algorithm for this setting ($L$-Lipschitz gradient, $\rho$-Lipschitz Hessian) which requires at most $\tilde O(\Delta_f L^{1/2} \rho^{1/4} / \epsilon^{7/4})$ queries (up to logarithmic factors), and is independent of dimension.
This is a speed up of Riemannian gradient descent by a factor of $\tilde \Theta(\sqrt{\frac{L}{\sqrt{\rho \epsilon}}})$.
For the Euclidean case, it has been shown under these assumptions that first-order methods require at least $\Omega(\Delta_f L^{3/7} \rho^{2/7} / \epsilon^{12/7})$ queries~\citep[Thm.~2]{carmon2019lowerboundsII}.  
This leaves a gap of merely $\tilde O(1/\epsilon^{1/28})$ in the $\epsilon$-dependency.

Soon after, \citet{jin2018agdescapes} showed how a related algorithm with randomization can find $(\epsilon, \sqrt{\rho\epsilon})$-approximate second-order critical points\footnote{That is, a point where the gradient of $f$ has norm smaller than $\epsilon$ and the eigenvalues of the Hessian of $f$ are at least $-\sqrt{\rho \epsilon}$.} with the same complexity, up to polylogarithmic factors in the dimension of the search space and in the (reciprocal of) the probability of failure.
}
 
Both the algorithm of \citet{carmon2017convexguilty} and that of \citet{jin2018agdescapes} fundamentally rely on Nesterov's accelerated gradient descent method (AGD)~\citep{nesterovagd1983}, with safe-guards against non-convexity.
To achieve improved rates, AGD builds heavily on a notion of momentum which accumulates across several iterations.  
This makes it delicate to extend AGD to nonlinear manifolds, as it would seem that we need to transfer momentum from tangent space to tangent space, all the while keeping track of fine properties.

\remove{
Presumably this could be done and indeed a number of recent papers show progress in analyzing Riemannian versions of AGD for geodesically convex problems~\citep{zhang2018estimatesequence,alimisis2019continuoustime,ahn2020nesterovs,alimisis2020practical}.
(With convexity, one can hope to improve the rate down to $\tilde O(1/\epsilon)$, as is the case in the Euclidean setting---see for example~\citep[Thm.~1]{carmon2019lowerboundsII} and the associated discussion tailored to the computation of $\epsilon$-approximate first-order critical points.) 
Not assuming convexity, we follow a different approach.
}

In this paper, we build heavily on the Euclidean work of \citet{jin2018agdescapes} to show the following.
Assume $f$ has Lipschitz continuous gradient and Hessian on a complete Riemannian manifold satisfying some curvature conditions.
With at most $\tilde O(\Delta_f L^{1/2} \hat\rho^{1/4} / \epsilon^{7/4})$ queries (where $\hat\rho$ is larger than $\rho$ by an additive term affected by $L$ and the manifold's curvature),
\begin{enumerate}
	\item It is possible to compute an $\epsilon$-approximate first-order critical point of $f$ with a deterministic first-order method,
	\item It is possible to compute an $(\epsilon, \sqrt{\hat\rho \epsilon})$-approximate second-order critical point of $f$ with a randomized first-order method.
\end{enumerate}
In the first case, the complexity is independent of the dimension of $\calM$.
In the second case, the complexity includes polylogarithmic factors in the dimension of $\calM$ and in the probability of failure.
This parallels the Euclidean setting.
In both cases (and in contrast to the Euclidean setting), the Riemannian curvature of $\calM$ affects the complexity in two ways: (a) because $\hat\rho$ is larger than $\rho$, and (b) because the results only apply when the target accuracy $\epsilon$ is small enough in comparison to some curvature-dependent thresholds.
It is an interesting open question to determine whether such a curvature dependency is inescapable.

We call our first algorithm $\ARGD$ for \emph{tangent accelerated gradient descent},\footnote{We refrain from calling our first algorithm ``accelerated Riemannian gradient descent,'' thinking this name should be reserved for algorithms which emulate the momentum approach on the manifold directly.} and the second algorithm $\PARGD$ for \emph{perturbed tangent accelerated gradient descent}.
Both algorithms and (even more so) their analyses closely mirror the perturbed accelerated gradient descent algorithm (PAGD) of \citet{jin2018agdescapes}, with one core design choice that facilitates the extension to manifolds:
instead of transporting momentum from tangent space to tangent space, we run several iterations of AGD (safe-guarded against non-convexity) in individual tangent spaces. 
After an AGD run in the current tangent space, we ``retract'' back to a new point on the manifold and initiate another AGD run in the new tangent space.
In so doing, we only need to understand the fine behavior of AGD in one tangent space at a time.
Since tangent spaces are linear spaces, we can capitalize on existing Euclidean analyses.
This general approach is in line with prior work in~\citep{criscitiello2019escapingsaddles}, \add{and is an instance of the dynamic trivializations framework of~\citet{lezcano2019trivializations}.}

In order to run AGD on the tangent space $\T_x\calM$ at $x$, we must ``pullback'' the cost function $f$ from $\calM$ to $\T_x\calM$.
A geometrically pleasing way to do so is via the exponential map\footnote{The exponential map is a \emph{retraction}: our main optimization results are stated for general retractions.} $\Exp_x \colon \T_x\calM \to \calM$, whose defining feature is that for each $v \in \T_x\calM$ the curve $\gamma(t) = \Exp_x(tv)$ is the geodesic of $\calM$ passing through $\gamma(0) = x$ with velocity $\gamma'(0) = v$.
Then, $\hat f_x = f \circ \Exp_x$ is a real function on $\T_x\calM$ called the \emph{pullback} of $f$ at $x$.
To analyze the behavior of AGD applied to $\hat f_x$, the most pressing question is:
\begin{quote}
	\emph{To what extent does $\hat f_x = f \circ \Exp_x$ inherit the Lipschitz properties of $f$?}
\end{quote}
In this paper, we show that if $f$ has Lipschitz continuous gradient and Hessian \emph{and if the gradient of $f$ at $x$ is sufficiently small}, then $\hat f_x$ \emph{restricted to a ball around the origin of $\T_x\calM$} has Lipschitz continuous gradient and retains \emph{partial Lipschitz-type properties for its Hessian}.
The norm condition on the gradient and the radius of the ball are dictated by the Riemannian curvature of $\calM$.
These geometric results are of independent interest.

Because $\hat f_x$ retains only partial Lipschitzness, our algorithms depart from the Euclidean case in the following ways: (a) at points where the gradient is still large, we perform a simple gradient step; and (b) when running AGD in $\T_x\calM$, we are careful not to leave the prescribed ball around the origin: if we ever do, we take appropriate action.
For those reasons and also because we do not have full Lipschitzness but only radial Lipschitzness for the Hessian of $\hat f_x$, minute changes throughout the analysis of \citet{jin2018agdescapes} are in order.

To be clear, in their current state, $\ARGD$ and $\PARGD$ are theoretical constructs.
As one can see from later sections, running them requires the user to know the value of several parameters that are seldom available (including the Lipschitz constants $L$ and $\rho$); the target accuracy $\epsilon$ must be set ahead of time; and the tuning constants as dictated here by the theory are (in all likelihood) overly cautious.
\add{To mitigate this, we show in Appendix~\ref{backtrackingTAGD} that a simple modification of $\ARGD$, called $\mathtt{backtrackingTAGD}$, finds $\epsilon$-approximate first-order critical points efficiently without knowledge of the Lipschitz constants $L$ or $\rho$.}

Moreover, to compute the gradient of $\hat f_x$ we need to differentiate through the exponential map (or a retraction, as the case may be).
This is sometimes easy to do in closed form (see~\citep{lezcano2019trivializations} for families of examples), but it could be a practical hurdle.
\add{On the other hand, our algorithms do not require parallel transport.}
\remove{Therefore, } It remains an interesting open question to develop \emph{practical} accelerated gradient methods for non-convex problems on manifolds.

In closing this introduction, we give simplified statements of our main results.
These are all phrased under the following assumption (see Section~\ref{sec:pullbacks} for geometric definitions):
\begin{assumption} \label{assu:Mandfintrinsic}
	The Riemannian manifold $\calM$ and the cost function $f \colon \calM \to \reals$ have these properties:
	\begin{itemize}
		\item $\calM$ is complete, its sectional curvatures are in the interval $[-K, K]$ and the covariant derivative of its Riemann curvature endomorphism is bounded by $F$ in operator norm; and
		\item $f$ is lower-bounded by $\flow$, has $L$-Lipschitz continuous gradient $\grad f$ and $\rho$-Lipschitz continuous Hessian $\Hess f$ on $\calM$.
	\end{itemize}
\end{assumption}

\subsection*{Main geometry results}

As a geometric contribution, we show that pullbacks through the exponential map retain certain Lipschitz properties of $f$.
Explicitly, at a point $x \in \calM$ we have the following statement.
\begin{theorem}
	Let $x \in \calM$.
	Under~\aref{assu:Mandfintrinsic}, let $B_x(b)$ be the closed ball of radius $b \leq \min\!\left( \frac{1}{4\sqrt{K}}, \frac{K}{4F} \right)$ around the origin in $\T_x\calM$.
	If $\|\grad f(x)\| \leq Lb$, then
	\begin{enumerate}
		\item The pullback $\hat f_x = f \circ \Exp_x$ has $2L$-Lipschitz continuous gradient $\nabla \hat f_x$ on $B_x(b)$, and
		\item For all $s \in B_x(b)$, we have $\|\nabla^2 \hat f_x(s) - \nabla^2 \hat f_x(0)\| \leq \hat\rho \|s\|$ with $\hat \rho = \rho + L\sqrt{K}$.
	\end{enumerate}
	(Above, $\|\cdot\|$ denotes both the Riemannian norm on $\T_x\calM$ and the associated operator norm.
	 Also, $\nabla \hat f_x$ and $\nabla^2 \hat f_x$ are the gradient and Hessian of $\hat f_x$ on the Euclidean space $\T_x\calM$.)
\end{theorem}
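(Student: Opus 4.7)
The plan is to compute the Euclidean gradient and Hessian of $\hat f_x = f \circ \Exp_x$ via the chain rule and then control the resulting expressions using Jacobi-field comparison estimates. Writing $T_s = \D \Exp_x(s) \colon \T_x\calM \to \T_{\Exp_x(s)}\calM$ and $\tilde g(s) = \grad f(\Exp_x(s))$, the chain rule gives $\nabla \hat f_x(s) = T_s^* \tilde g(s)$ and
\begin{equation*}
\nabla^2 \hat f_x(s) = T_s^* \, \Hess f(\Exp_x(s)) \, T_s + B(s),
\end{equation*}
where $B(s)$ encodes how $T_s$ varies with $s$ and involves the Riemann curvature tensor $\Rcurv$ contracted against $\tilde g(s)$. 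At $s = 0$ we have $T_0 = \Id$ and $B(0) = 0$, so $\nabla^2 \hat f_x(0) = \Hess f(x)$.

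The key geometric inputs (presumably proved in the preceding pullbacks section) come from Jacobi-field comparison under sectional curvature bounded in $[-K,K]$: on $B_x(b)$ with $b \leq 1/(4\sqrt K)$ one has $\|T_s\| \leq \cosh(\sqrt K \|s\|) \leq \cosh(1/4)$ and $\|T_s - P_s\| \leq c_1 K \|s\|^2$, where $P_s$ denotes parallel transport along the radial geodesic $t \mapsto \Exp_x(ts)$. Differentiating Jacobi fields in the base variable yields an estimate of the form $\|B(s)\| \leq c_2 K \|s\|^2 \|\tilde g(s)\|$. Finally, the hypothesis $\|\grad f(x)\| \leq Lb$ combined with $L$-Lipschitzness of $\grad f$ gives $\|\tilde g(s)\| \leq Lb + L\|s\| \leq 2Lb$, a bound crucial for making the correction term small.

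For claim (1), assemble the pieces: $\|T_s^* \Hess f(\Exp_x(s)) T_s\| \leq \cosh^2(1/4)\, L$ and $\|B(s)\| \leq c_2 K b^2 \cdot 2Lb$. With $b \leq 1/(4\sqrt K)$, so $K b^2 \leq 1/16$, both contributions combine (after explicit constant-tracking) to at most $2L$, giving $\|\nabla^2 \hat f_x(s)\| \leq 2L$ on the convex set $B_x(b)$, whence $\nabla \hat f_x$ is $2L$-Lipschitz.

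For claim (2), the radial estimate, differentiate $\nabla^2 \hat f_x(ts)$ in $t \in [0,1]$ and integrate. The $t$-derivative of $T_{ts}^* \Hess f(\Exp_x(ts)) T_{ts}$ contributes at most $\rho \|s\|$, via the Lipschitz-Hessian property of $f$ together with the near-identity behavior of $T_{ts}$. The $t$-derivative of $B(ts)$ is governed by $\barnabla \Rcurv$ (bounded in operator norm by $F$) and by $\tilde g$; under $b \leq K/(4F)$ this contribution is absorbed into an additive term of size $L\sqrt K \|s\|$, producing $\hat\rho = \rho + L\sqrt K$. The main obstacle is the careful bookkeeping for $B(s)$ and its radial derivative: the gradient-smallness hypothesis $\|\grad f(x)\| \leq Lb$ is essential precisely because $B$ scales with $\|\tilde g\|$, and the bound on $\barnabla \Rcurv$ is what controls how the curvature contraction inside $B$ twists as we move radially. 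This is also why only a radial Lipschitz bound is claimed: $B$ does not admit an analogous full Lipschitz estimate in all tangent directions, so the statement compares $\nabla^2 \hat f_x(s)$ only to $\nabla^2 \hat f_x(0)$ rather than to a generic $\nabla^2 \hat f_x(s')$.
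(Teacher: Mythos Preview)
Your approach to claim (1) matches the paper's: bound $\|\nabla^2 \hat f_x(s)\|$ uniformly on $B_x(b)$ via the chain-rule decomposition $T_s^* \Hess f\, T_s + W_s$ (your $B(s)$), using singular-value bounds on $T_s$ and a bound on the correction $W_s$ coming from $c''(0)$, the intrinsic acceleration of $q \mapsto \Exp_x(s+q\dot s)$. One minor correction: the bound that actually falls out is $\|W_s\| \le \tfrac{3}{2}K\|s\|\,\|\tilde g(s)\|$, not $K\|s\|^2\,\|\tilde g(s)\|$; the extra factor of $\|s\|$ you wrote is harmless for the final inequality but is not what the Jacobi-type ODE produces.

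For claim (2) you take a different route from the paper. You propose to differentiate $\nabla^2\hat f_x(ts)$ in $t$ and integrate, controlling $\partial_t B(ts)$ via $\nabla R$. The paper instead works \emph{algebraically at the endpoints}: it inserts parallel transport $P_s$ and splits
\[
T_s^*\Hess f(\Exp_x(s))\,T_s - \Hess f(x) \;=\; \big(P_s^*\Hess f\, P_s - \Hess f(x)\big) \,+\, (T_s-P_s)^*\Hess f\, T_s \,+\, P_s^*\Hess f\,(T_s-P_s),
\]
bounding the first bracket by $\rho\|s\|$ (Lipschitz Hessian of $f$ along geodesics) and the other two by $\|T_s-P_s\|\le\tfrac{1}{3}K\|s\|^2$ together with $\|\Hess f\|\le L$. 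Adding the already-established bound $\|W_s\|\le 3KLb\|s\|\le\tfrac{3}{4}L\sqrt K\|s\|$ yields $\hat\rho=\rho+L\sqrt K$ directly. This avoids any radial differentiation of $T_{ts}$ or of $B(ts)$, and in particular never touches a third derivative of $\Exp_x$. Your approach may be workable, but it is more involved and the paper's $P_s$-insertion trick is what makes the constants come out cleanly.

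Relatedly, you misplace where the bound $F$ on $\nabla R$ enters. It is \emph{not} first used when differentiating $B(ts)$; it is already required to bound $B(s)=W_s$ itself, because $W_s$ is defined through $c''(0)$ and the second-order ODE that controls $c''(0)$ has an inhomogeneous term containing $(\nabla_J R)$ and $(\nabla_{\gamma'} R)$. Thus the hypothesis $b\le K/(4F)$ is active already in claim (1), not only in claim (2).
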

\noindent We expect this result to be useful in several other contexts.
Section~\ref{sec:pullbacks} provides a more complete (and somewhat more general) statement.
At the same time and independently, \citet{lezcano2020curvaturedependent} develops similar geometric bounds and applies them to study gradient descent in tangent spaces\add{---see ``Related literature'' below for additional details}.

\subsection*{Main optimization results}

We aim to compute approximate first- and second-order critical points of $f$, as defined here:
\begin{definition}
	A point $x \in \calM$ is an \emph{$\epsilon$-FOCP} for~\eqref{eq:P} if $\|\grad f(x)\| \leq \epsilon$.
	A point $x \in \calM$ is an \emph{$(\epsilon_1, \epsilon_2)$-SOCP} for~\eqref{eq:P} if $\|\grad f(x)\| \leq \epsilon_1$ and $\lambdamin(\Hess f(x)) \geq -\epsilon_2$, where $\lambdamin(\cdot)$ extracts the smallest eigenvalue of a self-adjoint operator.
\end{definition}

In Section~\ref{sec:focp} we define and analyze the algorithm $\ARGD$.
Resting on the geometric result above, that algorithm with the exponential retraction warrants the following claim about the computation of first-order points.
The $O(\cdot)$ notation is with respect to scaling in $\epsilon$.

\begin{theorem} \label{thm:masterFOCPexp}
	If~\aref{assu:Mandfintrinsic} holds,
	there exists an algorithm \add{($\ARGD$)} which,
	given any $x_0 \in \calM$
	and small enough tolerance $\epsilon > 0$, namely, 
	\begin{align}
		\epsilon & \leq \frac{1}{144} \min\!\left(\frac{1}{K} \hat \rho, \frac{K^2}{F^2} \hat \rho, \frac{36 \ell^2}{\hat\rho}\right) = \frac{1}{144} \min\!\left(\frac{1}{K}, \frac{K^2}{F^2}, \left(\frac{12 L}{\rho + L\sqrt{K}}\right)^2 \right) (\rho + L\sqrt{K}),
		\label{eq:epsilonconditionsthm}
	\end{align}
	produces an $\epsilon$-FOCP for~\eqref{eq:P}
	using at most a constant multiple of $T$ function and pullback gradient queries, and a similar number of evaluations of the exponential map,
	where
	\begin{align*}
		T & = (f(x_0) - \flow) \frac{\hat{\rho}^{1/4} \ell^{1/2}}{\epsilon^{7/4}} \log\!\left(\frac{16\ell}{\sqrt{\hat \rho \epsilon}}\right)^{6} \\
		  & = O\!\left( (f(x_0) - \flow) (\rho + L\sqrt{K})^{1/4} L^{1/2} \cdot \frac{1}{\epsilon^{7/4}} \log\!\left(\frac{1}{\epsilon}\right)^6 \right),
	\end{align*}
	with $\ell = 2L$ and $\hat\rho = \rho + L\sqrt{K}$.
	The algorithm uses no Hessian queries and is deterministic.
\end{theorem}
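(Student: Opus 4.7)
The plan is to mimic the structure of Jin--Netrapalli--Ge--Kakade--Jordan's analysis of perturbed accelerated gradient descent in Euclidean space, but applied iteratively in a single tangent space at a time, using the pullback $\hat f_{x_k} = f \circ \Exp_{x_k}$ as the local surrogate. At each outer iteration $k$ we form the tangent-space ball $B_{x_k}(b)$ with $b \leq \min(1/(4\sqrt{K}), K/(4F))$. If $\|\grad f(x_k)\| > Lb$, the geometry theorem does not apply, but in that case the gradient is already quite large and we can afford a single ``Euclidean'' gradient step on the pullback (equivalently, a retraction step along $-\grad f(x_k)$), which, by an elementary descent lemma for retractions with Lipschitz pullback gradient, decreases $f$ by at least a constant multiple of $\|\grad f(x_k)\|^2 / L \geq L b^2$ (i.e., $\Omega(L b^2)$). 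Since $Lb$ is $\Omega(\epsilon)$ under the smallness condition~\eqref{eq:epsilonconditionsthm}, this case can occur only a bounded number of times before $\|\grad f(x_k)\| \leq Lb$, at which point the geometry theorem applies.

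Once $\|\grad f(x_k)\| \leq Lb$, the geometry theorem gives $\hat f_{x_k}$ a $2L$-Lipschitz gradient and a radially $\hat\rho$-Lipschitz Hessian on $B_{x_k}(b)$, so $\hat f_{x_k}$ looks like a standard Euclidean objective of the type Jin et al.\ analyze, except for the ball constraint and the fact that the Hessian Lipschitz bound is only ``from the origin.'' We then run Jin et al.'s convex-until-proven-guilty AGD (NCE mechanism included) on $\hat f_{x_k}$ starting from $0 \in \T_{x_k}\calM$, but with two modifications: (i) we monitor whether the iterate leaves $B_{x_k}(b)$; if it does, we stop and retract, and (ii) we only need the Hessian-Lipschitz estimate at points compared against the origin, which is precisely the form of radial Lipschitzness the geometry theorem provides. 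The upshot---via exactly the Hamiltonian/potential argument of Jin et al.---is that in $\tilde O(\hat\rho^{1/4}\ell^{1/2}/\epsilon^{3/4})$ inner iterations, either the inner AGD certifies that $\hat f_{x_k}$ decreased by at least $\mathcal{F} = \Omega(\epsilon^2 / \sqrt{\hat\rho\epsilon})$, or it produces a tangent vector $s_k$ with $\|\nabla \hat f_{x_k}(s_k)\| \leq \epsilon$. Setting $x_{k+1} = \Exp_{x_k}(s_k)$, the pullback decrease transfers to $f(x_k) - f(x_{k+1})$ because $f \circ \Exp_{x_k} = \hat f_{x_k}$.

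The outer iteration counting is then immediate: each outer iteration either is a large-gradient step that drops $f$ by $\Omega(Lb^2)$, or is a full AGD run that drops $f$ by $\Omega(\mathcal{F})$, or terminates with an $\epsilon$-FOCP (because the norm of $\nabla \hat f_{x_k}$ at $s_k$ equals $\|\grad f(x_{k+1})\|$ up to the chain rule through $\D\!\Exp_{x_k}$, which on the ball $B_{x_k}(b)$ we show is a well-conditioned isomorphism via standard Jacobi-field estimates, so a gradient that is $\epsilon$ in tangent space translates to an $O(\epsilon)$ Riemannian gradient at $x_{k+1}$). Summing, the number of outer iterations is at most $(f(x_0) - \flow)/\min(Lb^2, \mathcal{F})$, which under~\eqref{eq:epsilonconditionsthm} is $O((f(x_0) - \flow)\hat\rho^{1/2}/\epsilon^{3/2})$, and multiplying by the $\tilde O(\hat\rho^{1/4}\ell^{1/2}/\epsilon^{3/4})$ inner iterations gives the stated total $\tilde O((f(x_0)-\flow)L^{1/2}\hat\rho^{1/4}/\epsilon^{7/4})$ bound.

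The main obstacle will be the inner-loop Hamiltonian argument: Jin et al.\ use full Hessian Lipschitzness to show monotone decrease of a Hamiltonian potential, while here we only have $\|\nabla^2 \hat f_x(s) - \nabla^2 \hat f_x(0)\| \leq \hat\rho\|s\|$, i.e., Hessian comparisons are only to the origin. The fix is to verify that their inequalities can be rearranged so that every invocation of Hessian Lipschitzness happens between the current iterate and the anchor point $0$, together with the improve-or-localize lemma bounding iterate norms by $O(\sqrt{\mathcal{F} / \ell})$, which is well inside $b$ under~\eqref{eq:epsilonconditionsthm}; this ensures no iterate leaves $B_{x_k}(b)$ during a successful AGD run and that the radial Lipschitz estimate suffices. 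Careful bookkeeping of the constants, following the condition~\eqref{eq:epsilonconditionsthm} line by line, yields the displayed bound with the $\log^6$ factor coming from the eta/chi parameters of the NCE/AGD analysis exactly as in Jin et al.
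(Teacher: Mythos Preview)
Your plan is essentially the paper's own approach: reduce to Jin et al.'s AGD analysis on the pullback $\hat f_{x_k}$ in a tangent-space ball, handle the large-gradient regime with a plain retraction step, and account via amortized Hamiltonian decrease. Two small points deserve correction. First, your inner-iteration count is misstated: one run of the tangent-space AGD costs $\tilde O(\sqrt{\kappa}) = \tilde O(\ell^{1/2}/(\hat\rho\epsilon)^{1/4})$, not $\tilde O(\hat\rho^{1/4}\ell^{1/2}/\epsilon^{3/4})$; with your stated figures the product comes out to $\epsilon^{-9/4}$, not $\epsilon^{-7/4}$, though your final claimed rate is the correct one. Second, improve-or-localize alone does not guarantee iterates stay in $B_{x_k}(b)$: during a run that \emph{does} achieve the target decrease, $s_j$ may exit the ball, and the momentum iterate $u_j = s_j + (1-\theta)v_j$ can exit even when $s_j$ does not, at which point the Lipschitz bounds you rely on are unavailable. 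The paper deals with this by actively capping the momentum coefficient so that $\|u_j\|\le 2b$ and by terminating the inner loop the moment $\|s_{j+1}\|>b$, then arguing (again via improve-or-localize, applied \emph{up to} the exit step where all estimates are still valid) that exiting the ball already certifies the required decrease. You should build these two mechanisms into your inner loop rather than assume confinement a posteriori.
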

\noindent This result is dimension free but not curvature free because $K$ and $F$ constrain $\epsilon$ and affect~$\hat\rho$.

\add{
\begin{remark}
In the statements of all theorems and lemmas, the notations $O(\cdot), \Theta(\cdot)$ only hide universal constants, i.e., numbers like $\frac{1}{2}$ or $100$.  They do not hide any parameters.
Moreover, $\tilde{O}(\cdot), \tilde{\Theta}(\cdot)$ only hide universal constants and logarithmic factors in the parameters.
\end{remark}}

\add{
\begin{remark}
If $\epsilon$ is large enough (that is, if $\epsilon > \Theta(\frac{\ell^2}{\hat\rho})$), then $\ARGD$ reduces to vanilla Riemannian gradient descent with constant step-size.
The latter is known to produce an $\epsilon$-FOCP in $O(1/\epsilon^2)$ iterations, yet our result here announces this same outcome in $O(1/\epsilon^{7/4})$ iterations.
This is not a contradiction: when $\epsilon$ is large, $1/\epsilon^{7/4}$ can be worse than $1/\epsilon^2$.
In short: the rates are only meaningful for small $\epsilon$, in which case $\ARGD$ does use accelerated gradient descent steps.
\end{remark}
}

In Section~\ref{sec:socp} we define and analyze the algorithm $\PARGD$.
With the exponential retraction, the latter warrants the following claim about the computation of second-order points.
\begin{theorem} \label{thm:masterSOCPexp}
	If~\aref{assu:Mandfintrinsic} holds,
	there exists an algorithm \add{($\PARGD$)} which,
	given any $x_0 \in \calM$,
	any $\delta \in (0, 1)$
	and small enough tolerance $\epsilon > 0$ (same condition as in Theorem~\ref{thm:masterFOCPexp})
	produces an $\epsilon$-FOCP for~\eqref{eq:P}
	using at most a constant multiple of $T$ function and pullback gradient queries, and a similar number of evaluations of the exponential map,
	where
	\begin{align*}
	T & = (f(x_0) - \flow) \frac{\hat{\rho}^{1/4} \ell^{1/2}}{\epsilon^{7/4}} \log\!\left( \frac{d^{1/2} \ell^{3/2} \Delta_f}{(\hat \rho \epsilon)^{1/4} \epsilon^2 \delta} \right)^{6}
		+
		\frac{\ell^{1/2}}{\hat \rho^{1/4} \epsilon^{1/4}} \log\!\left( \frac{d^{1/2} \ell^{3/2} \Delta_f}{(\hat \rho \epsilon)^{1/4} \epsilon^2 \delta} \right) \\
	& = O\!\left( (f(x_0) - \flow) (\rho + L\sqrt{K})^{1/4} L^{1/2} \cdot \frac{1}{\epsilon^{7/4}} \log\!\left(\frac{d}{\epsilon \delta}\right)^6 \right),
	\end{align*}
	with $\ell = 2L$, $\hat\rho = \rho + L\sqrt{K}$, $d = \dim \calM$ and any $\Delta_f \geq \max(f(x_0) - \flow, \sqrt{\epsilon^3 / \hat{\rho}})$.
	With probability at least $1 - 2\delta$, that point is also $(\epsilon, \sqrt{\hat\rho \epsilon})$-SOCP.
	The algorithm uses no Hessian queries and is randomized.
\end{theorem}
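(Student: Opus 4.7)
The plan is to follow the template of Theorem~\ref{thm:masterFOCPexp} in Section~\ref{sec:focp}, augmenting it with the perturbation-based saddle-escape mechanism of \citet{jin2018agdescapes}. Like $\ARGD$, the algorithm $\PARGD$ operates in outer iterations and uses a hybrid strategy: if the Riemannian gradient at the current iterate $x_k$ has norm exceeding a threshold proportional to $Lb$, it takes a single Riemannian gradient step of size $1/\ell$; otherwise, it runs a bounded inner loop of Euclidean AGD (with non-convexity safeguards) on the pullback $\hat f_{x_k} = f \circ \Exp_{x_k}$ inside the ball $B_{x_k}(b)$, where by the main geometric theorem $\nabla \hat f_{x_k}$ is $\ell$-Lipschitz and $\nabla^2 \hat f_{x_k}$ is $\hat\rho$-radially-Lipschitz from the origin. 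The novel ingredient compared with $\ARGD$ is that whenever an inner AGD loop terminates at an approximate FOCP of the pullback without producing sufficient function-value progress, we inject a random perturbation drawn uniformly from a small ball in $\T_{x_k}\calM$ and restart AGD from the perturbed tangent vector.

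The FOCP guarantee is obtained essentially as in Theorem~\ref{thm:masterFOCPexp}: each inner AGD phase either certifies progress of order $\epsilon^{7/4} / (\ell^{1/2}\hat\rho^{1/4})$ per inner iteration (the accelerated regime) or of order $\epsilon^2/\ell$ (the pure-gradient regime), so a total of at most $T$ pullback gradient queries suffice to drive the Riemannian gradient below $\epsilon$. The perturbation rounds contribute only a lower-order overhead, captured by the second term in $T$.

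For the SOCP conclusion, the key observation is that for the exponential retraction $\nabla^2 \hat f_{x_k}(0) = \Hess f(x_k)$, so any negative eigendirection of $\Hess f(x_k)$ of magnitude below $-\sqrt{\hat\rho\epsilon}$ is also a negative eigendirection of the Hessian of the pullback at the origin. Around such an approximate saddle, we adapt the coupling argument of \citet{jin2018agdescapes}: two AGD trajectories initialized at perturbed points that differ only along the most-negative-eigenvector direction separate exponentially, so at least one of them must exit a thin ``stuck region'' in the perturbation ball and achieve function-value decrease of at least $\mathcal{F} \gtrsim \sqrt{\epsilon^3/\hat\rho}$ within $\tilde O(\ell^{1/2}/(\hat\rho\epsilon)^{1/4})$ AGD steps. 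A union bound over all perturbation rounds (whose number is controlled by the FOCP budget divided by the per-escape progress), each round failing with probability $\lesssim \delta/(\text{number of rounds})$, yields the claimed $1-2\delta$ success probability; the polylogarithmic dependence on $d$ and $1/\delta$ comes from the union bound together with the thickness estimate on the stuck region.

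The main technical obstacle is to ensure that both coupled AGD trajectories remain inside $B_{x_k}(b)$ for the full duration required by the escape argument, given that our Hessian Lipschitzness is only radial from the origin rather than uniform, and given that trajectories may expand rapidly during escape. This is handled by the same safeguard used in Section~\ref{sec:focp}: if an iterate ever leaves the ball, the improve-or-localize inequality of AGD forces a function-value drop at least $\Omega(\ell b^2)$, and we declare progress and retract to the manifold. The curvature-dependent condition on $\epsilon$ inherited from~\eqref{eq:epsilonconditionsthm} is precisely what makes $\ell b^2$ exceed both $\mathcal{F}$ and the progress required per inner iteration, so this safeguard always constitutes genuine progress rather than a stalling event. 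Collecting the deterministic FOCP budget, the per-escape work, and the union bound gives the stated complexity.
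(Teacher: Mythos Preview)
Your sketch is correct and follows the paper's approach: Theorem~\ref{thm:masterSOCPexp} is obtained from the general Theorem~\ref{thm:masterSOCP} (proved in Section~\ref{sec:socp}) by specializing to the exponential retraction via the corollary in Section~\ref{sec:assuparams}, and Theorem~\ref{thm:masterSOCP} in turn is proved exactly along the lines you describe---Cases~1 and~2 reuse Propositions~\ref{prop:Case1} and~\ref{prop:Case2} from the FOCP analysis, while Case~3 (Proposition~\ref{prop:Case3}) runs the Jin et al.\ coupling argument (Lemma~\ref{lem:socp}) inside the tangent ball, with the ``stuck region'' volume bound and the union bound over perturbation rounds producing the $d$- and $\delta$-dependence in $\chi$.

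One quantitative slip: when an iterate leaves $B_x(b)$, the improve-or-localize lemma (Lemma~\ref{lem:TSStraveldist}) yields a Hamiltonian drop of order $b^2/(\sqrt{\kappa}\,\eta\,\mathscr{T})$, not $\Omega(\ell b^2)$---you are missing the factor $1/(\sqrt{\kappa}\,\mathscr{T})$. The paper does not use $\ell b^2$ at all; it simply uses $b > \mathscr{L}$ (a consequence of $\epsilon \leq b^2\hat\rho$ in~\aref{assu:epsilon}) together with the identity $\mathscr{L}^2/(16\sqrt{\kappa}\,\eta\,\mathscr{T}) = \mathscr{E}$ to conclude that exiting the ball already yields the required progress $\mathscr{E}$. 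This does not affect the validity of your outline.
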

\noindent This result is \emph{almost} dimension free, and still not curvature free for the same reasons as above.

\add{
\subsection*{Related literature} \label{relatedwork}
At the same time and independently, \citet{lezcano2020curvaturedependent} develops geometric bounds similar to our own.
Both papers derive the same second-order inhomogenous linear ODE (ordinary differential equation) describing the behavior of the second derivative of the exponential map.
\citet{lezcano2020curvaturedependent} then uses ODE comparison techniques to derive the geometric bounds, while the present work uses a bootstrapping technique.
\citet{lezcano2020curvaturedependent} applies these bounds to study gradient descent in tangent spaces, whereas we study non-convex accelerated algorithms for finding first- and second-order critical points.

The technique of pulling back a function to a tangent space is frequently used in other settings within optimization on manifolds.
See for example the recent papers of~\citet{bergmann2020fenchel} and~\citet{lezcanocasado2020adaptive}.
Additionally, the use of Riemannian Lipschitz conditions in optimization as they appear in Section 2 can be traced back to~\cite[Def.~4.1]{da1998geodesic} and~\citep[Def.~2.2]{ferreira2002kantorovichnewton}.

Accelerating optimization algorithms on Riemannian manifolds has been well-studied in the context of \textit{geodesically convex} optimization problems.  
Such problems can be solved globally, and usually the objective is to bound the suboptimality gap rather than finding approximate critical points.
A number of papers have studied Riemannian versions of AGD; however, none of these papers have been able to achieve a truly accelerated rate for convex optimization.
\citet{zhang2018estimatesequence} show that if the initial iterate is sufficiently close to the minimizer, then acceleration is possible.
Intuitively this makes sense, since manifolds are locally Euclidean.
\citet{ahn2020nesterovs} pushed this further, developing an algorithm converging strictly faster than RGD, and which also achieves acceleration when sufficiently close to the minimizer.

\citet{alimisis2019continuoustime,alimisis2020practical,alimisis2021momentum} analyze the problem of acceleration on the class of nonstrongly convex functions, as well as under weaker notions of convexity.
Interestingly, they also show that in the continuous limit (using an ODE to model optimization algorithms) acceleration is possible.  
However, it is unclear whether the discretization of this ODE preserves a similar acceleration.

Recently,~\citet{hamilton2021nogo} have shown that true acceleration (in the geodesically convex case) is impossible in the hyperbolic plane, in the setting where function values and gradients are corrupted by a very small amount of noise.
In contrast, in the analogous Euclidean setting, acceleration is possible even with noisy oracles~\citep{RePEc:cor:louvco:2013016}.
}

\section{Riemannian tools and regularity of pullbacks} \label{sec:pullbacks}

In this section, we build up to and state our main geometric result.
As we do so, we provide a few reminders of Riemannian geometry.
For more on this topic, we recommend the modern textbooks by~\citet{lee2012smoothmanifolds,lee2018riemannian}.
For book-length, optimization-focused introductions see~\citep{AMS08,boumal2020intromanifolds}.
Some proofs of this section appear in Appendices~\ref{app:PTvDExp} and~\ref{app:controllingcprimeprime}.

We consider a manifold $\calM$ with Riemannian metric $\inner{\cdot}{\cdot}_x$ and associated norm $\|\cdot\|_x$ on the tangent spaces $\T_x\calM$.
(In other sections, we omit the subscript $x$.)
The \emph{tangent bundle}
\begin{align*}
	\T\calM = \{ (x, s) : x \in \calM \textrm{ and } s \in \T_x\calM \}
\end{align*}
is itself a smooth manifold.
The Riemannian metric provides a notion of gradient.
\begin{definition}
	The \emph{Riemannian gradient} of a differentiable function $f \colon \calM \to \reals$ is the unique vector field $\grad f$ on $\calM$ which satisfies:
	\begin{align*}
		\D f(x)[s] & = \inner{\grad f(x)}{s}_x & \textrm{ for all } (x, s) \in \T\calM,
	\end{align*}
	where $\D f(x)[s]$ is the directional derivative of $f$ at $x$ along $s$.
\end{definition}
The Riemannian metric further induces a uniquely defined \emph{Riemannian connection} $\nabla$ (used to differentiate vector fields on $\calM$) and an associated \emph{covariant derivative} $\Ddt$ (used to differentiate vector fields along curves on $\calM$).
(The symbol $\nabla$ here is not to be confused with its use elsewhere to denote differentiation of scalar functions on Euclidean spaces.)
Applying the connection to the gradient vector field, we obtain Hessians.
\begin{definition}
	The \emph{Riemannian Hessian} of a twice differentiable function $f \colon \calM \to \reals$ at $x$ is the linear operator $\Hess f(x)$ to and from $\T_x\calM$ defined by
	\begin{align*}
		\Hess f(x)[s] & = \nabla_s \grad f = \left. \Ddt \grad f(c(t)) \right|_{t = 0},
	\end{align*}
	where in the last equality $c$ can be any smooth curve on $\calM$ satisfying $c(0) = x$ and $c'(0) = s$.
	This operator is self-adjoint with respect to the metric $\inner{\cdot}{\cdot}_x$.
\end{definition}

\add{
We can also define the Riemmannian third derivative $\nabla^3 f$ (a tensor of order three), see~\citep[Ch. 10]{boumal2020intromanifolds} for details.
We write $\norm{\nabla^3 f(x)} \leq \rho$ to mean $\left|\nabla^3 f(x)(u, v, w)\right| \leq \rho$ for all unit vectors $u, v, w \in \T_x\calM$.
}

A \emph{retraction} $\Retr$ is a smooth map from (a subset of) $\T\calM$ to $\calM$ with the following property:
for all $(x, s) \in \T\calM$, the smooth curve $c(t) = \Retr(x, ts) = \Retr_x(ts)$ on $\calM$ passes through $c(0) = x$ with velocity $c'(0) = s$.
Such maps are used frequently in Riemannian optimization in order to move on a manifold.
For example, a key ingredient of Riemannian gradient descent is the curve $c(t) = \Retr_x(-t \grad f(x))$ which initially moves away from $x$ along the negative gradient direction.

To a curve $c$ we naturally associate a velocity vector field $c'$.
Using the covariant derivative $\Ddt$, we differentiate this vector field along $c$ to define the \emph{acceleration} $c'' = \Ddt c'$ of $c$: this is also a vector field along $c$.
In particular, the \emph{geodesics} of $\calM$ are the curves with zero acceleration.

The \emph{exponential map} $\Exp \colon \calO \to \calM$---defined on an open subset $\calO$ of the tangent bundle---is a special retraction whose curves are geodesics.
Specifically, $\gamma(t) = \Exp(x, ts) = \Exp_x(ts)$ is the unique geodesic on $\calM$ which passes through $\gamma(0) = x$ with velocity $\gamma'(0) = s$.
If the domain of $\Exp$ is the whole tangent bundle, we say $\calM$ is \emph{complete}.

To compare tangent vectors in distinct tangent spaces, we use \emph{parallel transports}.
Explicitly, let $c$ be a smooth curve connecting the points $c(0) = x$ and $c(1) = y$.
We say a vector field $Z$ along $c$ is \emph{parallel} if its covariant derivative $\Ddt Z$ is zero.
Conveniently, for any given $v \in \T_x\calM$ there exists a unique parallel vector field along $c$ whose value at $t = 0$ is $v$.
Therefore, the value of that vector field at $t = 1$ is a well-defined vector in $\T_y\calM$: we call it the parallel transport of $v$ from $x$ to $y$ along $c$.
We introduce the notation
\begin{align*}
	P_t^c \colon \T_{c(0)}\calM \to \T_{c(t)}\calM
\end{align*}
to denote parallel transport along a smooth curve $c$ from $c(0)$ to $c(t)$.
This is a linear isometry: $(P_t^c)^{-1} = (P_t^c)^*$, where the star denotes an adjoint with respect to the Riemannian metric.
For the special case of parallel transport along the geodesic $\gamma(t) = \Exp_x(ts)$, we write
\begin{align}
	P_{ts} \colon \T_x\calM \to \T_{\Exp_x(ts)}\calM
	\label{eq:Pts}
\end{align}
with the meaning $P_{ts} = P_t^\gamma$.

Using these tools, we can define Lipschitz continuity of gradients and Hessians.
Note that in the particular case where $\calM$ is a Euclidean space we have $\Exp_x(s) = x + s$ and parallel transports are identities, so that this reduces to the usual definitions.
\begin{definition}
	The gradient of $f \colon \calM \to \reals$ is \emph{$L$-Lipschitz continuous} if
	\begin{align}
		\|P_s^* \grad f(\Exp_x(s)) - \grad f(x)\|_x & \leq L \|s\|_x & \textrm{ for all } (x, s) \in \calO,
		\label{eq:LipschitzGrad}
	\end{align}
	\add{where $P_s^*$ is the adjoint of $P_s$ with respect to the Riemannian metric.}
	
	The Hessian of $f$ is \emph{$\rho$-Lipschitz continuous} if
	\begin{align}
		\|P_s^* \circ \Hess f(\Exp_x(s)) \circ P_s - \Hess f(x)\|_x & \leq \rho \|s\|_x & \textrm{ for all } (x, s) \in \calO,
		\label{eq:LipschitzHess}
	\end{align}
	where $\|\cdot\|_x$ denotes both the Riemannian norm on $\T_x\calM$ and the associated operator norm.
\end{definition}
It is well known that these Lipschitz conditions are equivalent to convenient inequalities, often used to study the complexity of optimization algorithms.
More details appear in~\citep[Ch.~10]{boumal2020intromanifolds}.
\remove{
Variations on this theme occur early, for example in~\cite[Def.~4.1]{da1998geodesic} and~\citep[Def.~2.2]{ferreira2002kantorovichnewton}.
}
\begin{proposition} \label{prop:lipschitzinequalitiesusual}
	If a function $f \colon \calM \to \reals$ has $L$-Lipschitz continuous gradient, then
	\begin{align*}
		\left| f(\Exp_x(s)) - f(x) - \inner{\grad f(x)}{s}_x \right| & \leq \frac{L}{2} \|s\|_x^2 & \textrm{ for all } (x, s) \in \calO.
	\end{align*}
	\add{If in addition $f$ is twice differentiable, then $\norm{\Hess f(x)} \leq L$ for all $x \in \calM$.}
	
	If $f$ has $\rho$-Lipschitz continuous Hessian, then
	\begin{align*}
		\left| f(\Exp_x(s)) - f(x) - \inner{\grad f(x)}{s}_x - \frac{1}{2} \inner{s}{\Hess f(x)[s]}_x \right| & \leq \frac{\rho}{6} \|s\|_x^3 && \textrm{ and} & \\
		\left\| P_{s}^* \grad f(\Exp_x(s)) - \grad f(x) - \Hess f(x)[s] \right\|_x & \leq \frac{\rho}{2} \|s\|_x^2 && \textrm{ for all } (x, s) \in \calO. & 	
	\end{align*}
	\add{If in addition $f$ is three times differentiable, then $\norm{\nabla^3 f(x)} \leq \rho$ for all $x \in \calM$.}
	
	The other way around, if $f$ is three times continuously differentiable and the stated inequalities hold, then its gradient and Hessian are Lipschitz continuous with the stated constants.
\end{proposition}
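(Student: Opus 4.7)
The plan is to reduce everything to a one-dimensional calculation along the geodesic $\gamma(t) = \Exp_x(ts)$, parameterized for $t \in [0,1]$. Since $\gamma$ is a geodesic, its velocity is parallel: $\gamma'(t) = P_{ts}[s]$ where $P_{ts}$ is the transport along $\gamma$ as defined in~\eqref{eq:Pts}. The key computational identity is that for any smooth vector field $V$ along $\gamma$,
\begin{align*}
\frac{\mathrm{d}}{\mathrm{d}t} P_{ts}^{-1} V(t) \;=\; P_{ts}^{-1}\!\left(\Ddt V(t)\right),
\end{align*}
because parallel transport is the flow that trivializes $\Ddt$; combined with $P_{ts}^{-1} = P_{ts}^*$ and $\Ddt \grad f(\gamma(t)) = \Hess f(\gamma(t))[\gamma'(t)]$, this lets me convert Riemannian quantities into ordinary derivatives of curves in the fixed vector space $\T_x\calM$.

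To establish the descent-type inequality for $L$-Lipschitz gradient, I set $g(t) = f(\gamma(t))$, compute $g'(t) = \langle P_{ts}^*\grad f(\gamma(t)), s\rangle_x$, apply~\eqref{eq:LipschitzGrad} to bound $|g'(t) - g'(0)| \leq Lt\|s\|_x^2$, and integrate from $0$ to $1$. For the bound $\|\Hess f(x)\| \leq L$, I divide the Lipschitz gradient inequality by $\|s\|$ and take $s\to 0$, using that $\frac{1}{t}(P_{ts}^*\grad f(\gamma(t)) - \grad f(x)) \to \Hess f(x)[s']$ in the limit. For the quadratic gradient inequality under $\rho$-Lipschitz Hessian, I apply the same trick to the $\T_x\calM$-valued curve $u(t) = P_{ts}^*\grad f(\gamma(t)) - \grad f(x) - t\Hess f(x)[s]$: its derivative is $\bigl(P_{ts}^*\circ\Hess f(\gamma(t))\circ P_{ts} - \Hess f(x)\bigr)[s]$, which~\eqref{eq:LipschitzHess} bounds by $\rho t\|s\|_x^2$, and integrating gives $\|u(1)\|\leq \frac{\rho}{2}\|s\|_x^2$. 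The cubic inequality then follows by defining $\phi(t) = f(\gamma(t)) - f(x) - t\langle\grad f(x),s\rangle - \frac{t^2}{2}\langle s,\Hess f(x)[s]\rangle$, noting that $\phi'(t) = \langle u(t)/\text{(rescaled)}, s\rangle$ — more precisely, applying the already-proved quadratic inequality to $ts$ in place of $s$ shows $|\phi'(t)| \leq \frac{\rho t^2}{2}\|s\|_x^3$, and integrating yields $|\phi(1)| \leq \frac{\rho}{6}\|s\|_x^3$. The bound $\|\nabla^3 f(x)\| \leq \rho$ is the infinitesimal limit of~\eqref{eq:LipschitzHess}, using the definition of $\nabla^3 f$ as the covariant derivative of $\Hess f$.

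For the converse, I start from the two inequalities and a $C^3$ assumption. Taking $s = tv$, expanding $f(\Exp_x(tv))$ and $P_s^*\grad f(\Exp_x(s))$ in $t$ via Taylor's theorem (which uses $C^2$ and $C^3$ regularity and the Riemannian definitions of $\Hess f$ and $\nabla^3 f$), and letting $t\to 0$, I extract the pointwise bounds $\|\Hess f(x)\| \leq L$ and $\|\nabla^3 f(x)\| \leq \rho$ at every $x$. Then, for any $(x,s)\in\calO$, the fundamental theorem of calculus along $\gamma$ gives
\begin{align*}
P_s^*\grad f(\Exp_x(s)) - \grad f(x) \;=\; \int_0^1 P_{ts}^*\bigl(\Hess f(\gamma(t))[\gamma'(t)]\bigr)\,\mathrm{d}t,
\end{align*}
whose norm is at most $\int_0^1 \|\Hess f(\gamma(t))\|\,\|s\|_x\,\mathrm{d}t \leq L\|s\|_x$ since parallel transport is an isometry. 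An analogous identity for the Hessian, differentiating $P_{ts}^*\circ\Hess f(\gamma(t))\circ P_{ts}$ and using $\|\nabla^3 f\|\leq\rho$, recovers~\eqref{eq:LipschitzHess}.

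The main obstacles I anticipate are bookkeeping: tracking adjoints versus inverses of parallel transports (these coincide because $P_{ts}$ is an isometry, but the notation matters), and justifying the identity $\frac{\mathrm{d}}{\mathrm{d}t}(P_{ts}^* V(t)) = P_{ts}^*(\Ddt V(t))$ used repeatedly. Once these are laid out cleanly at the start, the rest is FTC plus the Cauchy–Schwarz inequality, with no curvature-dependent terms appearing anywhere — the Lipschitz definitions are designed precisely so that all curvature effects are hidden inside the parallel transports.
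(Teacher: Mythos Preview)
The paper does not actually prove this proposition: it is stated as a known result with the remark ``More details appear in~[Ch.~10]{boumal2020intromanifolds}.'' Your approach---reducing to a one-dimensional calculation along the geodesic via the identity $\frac{\mathrm{d}}{\mathrm{d}t}P_{ts}^*V(t) = P_{ts}^*(\Ddt V(t))$, then applying the fundamental theorem of calculus and Cauchy--Schwarz---is correct and is precisely the standard argument one finds in the cited reference; there is nothing to compare against in the paper itself.
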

For sufficiently simple algorithms, these inequalities may be all we need to track progress in a sharp way.
As an example, the iterates of Riemannian gradient descent with constant step-size $1/L$ satisfy $x_{k+1} = \Exp_{x_k}(s_k)$ with $s_k = -\frac{1}{L}\grad f(x_k)$.
It follows directly from the first inequality above that $f(x_k) - f(x_{k+1}) \geq \frac{1}{2L} \|\grad f(x_k)\|^2$.
From there, it takes a brief argument to conclude that this method finds a point with gradient smaller than $\epsilon$ in at most $2L(f(x_0) - \flow)\frac{1}{\epsilon^2}$ steps.
A similar (but longer) story applies to the analysis of Riemannian trust regions and adaptive cubic regularization~\citep{boumal2016globalrates,agarwal2018arcfirst}.

However, the inequalities in Proposition~\ref{prop:lipschitzinequalitiesusual} fall short when finer properties of the algorithms are only visible at the scale of multiple combined iterations.
This is notably the case for accelerated gradient methods.
For such algorithms, individual iterations may not achieve spectacular cost decrease, but a long sequence of them may accumulate an advantage over time (using momentum).
To capture this advantage in an analysis, it is not enough to apply inequalities above to individual iterations.
As we turn to assessing a string of iterations jointly by relating the various gradients and step directions we encounter, the nonlinearity of $\calM$ generates significant hurdles.

For these reasons, we study the pullbacks of the cost function, namely, the functions
\begin{align}
	\hat f_x  & = f \circ \Exp_x \colon \T_x\calM \to \reals.
\end{align}
\add{Each pullback is defined on a linear space, hence we can in principle run any Euclidean optimization algorithm on $\hat f_x$ directly: our strategy is therefore to apply a momentum-based method on $\hat{f}_x$.
To this end, we now work towards showing that if $f$ has Lipschitz continuous gradient and Hessian then $\hat f_x$ also has certain Lipschitz-type properties.}
\remove{Each pullback is defined on a linear space, hence we can in principle run any Euclidean optimization algorithm on $\hat f_x$ directly.
In what follows, we establish that if $f$ has Lipschitz continuous gradient and Hessian, then $\hat f_x$ also has certain Lipschitz-type properties.}

The following formulas appear in~\citep[Lem.~5]{agarwal2018arcfirst}: we are interested in the case $\Retr = \Exp$.
(We use $\nabla$ and $\nabla^2$ to designate gradients and Hessians of functions on Euclidean spaces: not to be confused with the connection $\nabla$.)
\begin{lemma} \label{lem:derivativespullback}
	Given $f \colon \calM \to \reals$ twice continuously differentiable and $(x, s)$ in the domain of a retraction $\Retr$, the gradient and Hessian of the pullback $\hat f_x = f \circ \Retr_x$ at $s \in \T_x\calM$ are given by
	\begin{align}
		\nabla \hat f_x(s) & = T_s^* \grad f(\Retr_x(s)) & \textrm{ and } &&  
		\nabla^2 \hat f_x(s) & = T_s^* \circ \Hess f(\Retr_x(s)) \circ T_s + W_s, 
		\label{eq:gradhesspullback}
	\end{align}
	where $T_s$ is the differential of $\Retr_x$ at $s$ (a linear operator):
	\begin{align}
		T_s & = \D\Retr_x(s) \colon \T_{x}\calM \to \T_{\Retr_x(s)}\calM,
		\label{eq:Ts}
	\end{align}
	and $W_s$ is a self-adjoint linear operator on $\T_x\calM$ defined through polarization by
	\begin{align}
		\inner{W_s[\dot s]}{\dot s}_x & = \inner{\grad f(\Retr_x(s))}{c''(0)}_{\Retr_x(s)},
		\label{eq:Whessianpullback}
	\end{align}
	with $c''(0) \in \T_{\Retr_x(s)}\calM$ the (intrinsic) acceleration on $\calM$ of $c(t) = \Retr_x(s+t\dot s)$ at $t = 0$.
\end{lemma}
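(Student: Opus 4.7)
The plan is to derive both formulas by direct computation, using the chain rule for the gradient and then differentiating twice along a straight line in $\T_x\calM$ for the Hessian. Since the pullback lives on the Euclidean space $\T_x\calM$, its gradient and Hessian can be read off from first and second directional derivatives, and the key step is to express those in intrinsic Riemannian quantities on $\calM$ through the curve $c(t) = \Retr_x(s + t\dot s)$.

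First I would handle the gradient. For any $\dot s \in \T_x\calM$, the chain rule gives
\begin{align*}
\D \hat f_x(s)[\dot s] = \D f(\Retr_x(s))\big[\D\Retr_x(s)[\dot s]\big] = \inner{\grad f(\Retr_x(s))}{T_s \dot s}_{\Retr_x(s)} = \inner{T_s^* \grad f(\Retr_x(s))}{\dot s}_x.
\end{align*}
Since this equals $\inner{\nabla \hat f_x(s)}{\dot s}_x$ by definition of the Euclidean gradient on the inner product space $\T_x\calM$, the first formula follows.

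Next I would compute the Hessian via $\inner{\nabla^2 \hat f_x(s)[\dot s]}{\dot s}_x = \frac{d^2}{dt^2}\hat f_x(s + t\dot s)\big|_{t=0} = \frac{d^2}{dt^2} f(c(t))\big|_{t=0}$, and then recover the full bilinear form through polarization (which is why it suffices to treat the quadratic form). Using the compatibility of $\Ddt$ with the metric and the defining property of the Riemannian Hessian,
\begin{align*}
\frac{d}{dt} f(c(t)) = \inner{\grad f(c(t))}{c'(t)}_{c(t)},
\qquad
\frac{d^2}{dt^2} f(c(t)) = \inner{\tfrac{D}{dt}\grad f(c(t))}{c'(t)}_{c(t)} + \inner{\grad f(c(t))}{c''(t)}_{c(t)}.
\end{align*}
Evaluating at $t=0$ with $c(0) = \Retr_x(s)$, $c'(0) = T_s \dot s$, and $\tfrac{D}{dt}\grad f(c(t))|_{t=0} = \Hess f(\Retr_x(s))[T_s\dot s]$, the first term becomes $\inner{T_s^* \circ \Hess f(\Retr_x(s)) \circ T_s \, \dot s}{\dot s}_x$ and the second is $\inner{W_s \dot s}{\dot s}_x$ by the definition~\eqref{eq:Whessianpullback}. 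Matching this with $\inner{\nabla^2 \hat f_x(s)[\dot s]}{\dot s}_x$ and polarizing yields the stated Hessian formula.

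The only delicate point is conceptual rather than computational: one must recognize that although $s \mapsto \hat f_x(s)$ is a function on a Euclidean space (so its Hessian needs no connection), the expression $\frac{d^2}{dt^2} f(c(t))$ \emph{does} require the Riemannian connection on $\calM$ because $c$ is a curve on the manifold, and this is exactly where the acceleration $c''(0)$ enters. No bound on the curvature, on $\norm{s}$, or on the regularity of $\Retr$ beyond $C^2$ is needed; the formulas are purely algebraic consequences of the chain rule and the Leibniz rule for $\Ddt$.
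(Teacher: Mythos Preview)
Your proof is correct. The paper itself does not prove this lemma but simply attributes it to \citep[Lem.~5]{agarwal2018arcfirst}, so your direct computation via the chain rule and the Leibniz rule for $\Ddt$ along $c(t) = \Retr_x(s+t\dot s)$ is a self-contained argument that fills in what the paper leaves as a citation.
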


\add{
\begin{remark}
Throughout, $s, \dot s, \ddot s$ will simply denote tangent vectors.
\end{remark}
}

We turn to curvature.
The \emph{Lie bracket} of two smooth vector fields $X, Y$ on $\calM$ is itself a smooth vector field, conveniently expressed in terms of the Riemannian connection as $[X, Y] = \nabla_X Y - \nabla_Y X$.
Using this notion, the \emph{Riemann curvature endomorphism} $R$ of $\calM$ is an operator which maps three smooth vector fields $X, Y, Z$ of $\calM$ to a fourth smooth vector field as:
\begin{align}
	R(X, Y) Z & = \nabla_X \nabla _Y Z - \nabla_Y \nabla _X Z - \nabla_{[X, Y]} Z.
\end{align}
Whenever $R$ is identically zero, we say $\calM$ is \emph{flat}: 
this is the case notably when $\calM$ is a Euclidean space and when $\calM$ has dimension one (e.g., a circle is flat, while a sphere is not).

Though it is not obvious from the definition, the value of the vector field $R(X, Y) Z$ at $x \in \calM$ depends on $X, Y, Z$ only through their value at $x$.
Therefore, given $u, v, w \in \T_x\calM$ we can make sense of the notation $R(u, v)w$ as denoting the vector in $\T_x\calM$ corresponding to $R(X, Y)Z$ at $x$, where $X, Y, Z$ are arbitrary smooth vector fields whose values at $x$ are $u, v, w$, respectively.
The map $(u, v, w) \mapsto R(u, v)w$ is linear in each input.

Two linearly independent tangent vectors $u, v$ at $x$ span a two-dimensional plane of $\T_x\calM$.
The \emph{sectional curvature} of $\calM$ along that plane is a real number $K(u, v)$ defined as
\begin{align}
	K(u, v) & = \frac{\inner{R(u, v)v}{u}_x}{\|u\|_x^2\|v\|_x^2 - \inner{u}{v}_x^2}.
	\label{eq:sectionalcurvature}
\end{align}
Of course, all sectional curvatures of flat manifolds are zero.
Also, all sectional curvatures of a sphere of radius $r$ are $1/r^2$ and all sectional curvatures of the hyperbolic space with parameter $r$ are $-1/r^2$---see~\citep[Thm.~8.34]{lee2018riemannian}.

Using the connection $\nabla$, we differentiate the curvature endomorphism $R$ as follows.
Given any smooth vector field $U$, we let $\nabla_U R$ be an operator of the same type as $R$ itself, in the sense that it maps three smooth vector fields $X, Y, Z$ to a fourth one denoted $(\nabla_U R)(X, Y)Z$ through
\begin{align}
	(\nabla_U R)(X, Y)Z & = \nabla_U(R(X, Y) Z) - R(\nabla_U X, Y) Z - R(X, \nabla_U Y) Z - R(X, Y) \nabla_U Z.
	\label{eq:nablaUR}
\end{align}
Observe that this formula captures a convenient chain rule on $\nabla_U(R(X, Y) Z)$.
As for $R$, the value of $\nabla R(X, Y, Z, U) \triangleq (\nabla_U R)(X, Y)Z$ at $x$ depends on $X, Y, Z, U$ only through their values at $x$.
Therefore, $\nabla R$ unambiguously maps $u, v, w, z \in \T_x\calM$ to $\nabla R(u, v, w, z) \in \T_x\calM$, linearly in all inputs.
We say the operator norm of $\nabla R$ at $x$ is bounded by $F$ if
\begin{align*}
	\|\nabla R(u, v, w, z)\|_x & \leq F \|u\|_x \|v\|_x \|w\|_x \|z\|_x
\end{align*}
for all $u, v, w, z \in \T_x\calM$.
We say $\nabla R$ has operator norm bounded by $F$ if this holds for all $x$. 
If $F = 0$ (that is, $\nabla R \equiv 0$), we say $R$ is \emph{parallel} and $\calM$ is called \emph{locally symmetric}.
This is notably the case for manifolds with constant sectional curvature---Euclidean spaces, spheres and hyperbolic spaces---and (Riemannian) products thereof~\citep[pp219--221]{oneill}.

We are ready to state the main result of this section.
Note that $\calM$ need not be complete.
\begin{theorem} \label{thm:pullbacklipschitz}
	Let $\calM$ be a Riemannian manifold whose sectional curvatures are in the interval $[\Klow, \Kup]$, and let $K = \max(|\Klow|, |\Kup|)$.
	Also assume $\nabla R$---the covariant derivative of the Riemann curvature endomorphism $R$---is bounded by $F$ in operator norm.
	Let $f \colon \calM \to \reals$ be twice continuously differentiable and select $b > 0$ such that
	\begin{align*}
		b \leq \min\!\left( \frac{1}{4\sqrt{K}}, \frac{K}{4F} \right).
	\end{align*}
	Pick any point $x \in \calM$ such that $\Exp_x$ is defined on the closed ball $B_x(b)$ of radius $b$ around the origin in $\T_x\calM$.
	We have the following three conclusions:
	\begin{enumerate}
		\item If $f$ has $L$-Lipschitz continuous gradient and $\|\grad f(x)\|_x \leq Lb$, then $\hat f_x = f \circ \Exp_x$ has $2L$-Lipschitz continuous gradient in $B_x(b)$, that is, for all $u, v \in B_x(b)$ it holds that $\|\nabla \hat f_x(u) - \nabla \hat f_x(v)\|_x \leq 2L \|u - v\|_x$.
		\item If moreover $f$ has $\rho$-Lipschitz continuous Hessian, then $\|\nabla^2 \hat f_x(s) - \nabla^2 \hat f_x(0)\|_x \leq \hat\rho \|s\|_x$ for all $s \in B_x(b)$, with $\hat \rho = \rho + L \sqrt{K}$.
		\item For all $s \in B_x(b)$, the singular values of $T_s = \D\Exp_x(s)$ lie in the interval $[2/3, 4/3]$.
	\end{enumerate}
\end{theorem}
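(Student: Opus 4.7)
The plan is to leverage the explicit formulas from Lemma~\ref{lem:derivativespullback}, namely $\nabla \hat f_x(s) = T_s^* \grad f(\Exp_x(s))$ and $\nabla^2 \hat f_x(s) = T_s^* \Hess f(\Exp_x(s)) T_s + W_s$, together with three geometric ingredients: (i) a Jacobi-field sandwich on $T_s$; (ii) a refined Jacobi-field bound on $T_s - P_s$; and (iii) control on the acceleration $c''(0)$ of the curve $c(t) = \Exp_x(s + t\dot s)$ that defines $W_s$. I would begin with Conclusion~3, which is purely geometric and does not involve $f$. For fixed $\dot s \in \T_x\calM$, the map $J(t) := \D\Exp_x(ts)[t\dot s]$ is a Jacobi field along the geodesic $\gamma(t) = \Exp_x(ts)$ with $J(0) = 0$, $\Ddt J(0) = \dot s$, and $J(1) = T_s[\dot s]$. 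Splitting $\dot s = \dot s_\parallel + \dot s_\perp$ relative to $s$, the parallel piece satisfies $T_s[\dot s_\parallel] = P_s[\dot s_\parallel]$ (reparametrization of $\gamma$), while Rauch's comparison theorem sandwiches $\|T_s[\dot s_\perp]\|/\|\dot s_\perp\|$ between $\sin(\sqrt{K}\|s\|)/(\sqrt{K}\|s\|)$ and $\sinh(\sqrt{K}\|s\|)/(\sqrt{K}\|s\|)$. Since $\sqrt{K}\|s\| \leq 1/4$, both quantities lie comfortably inside $[2/3, 4/3]$, proving Conclusion~3.

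For Conclusion~1, I would establish $\|\nabla^2 \hat f_x(s)\| \leq 2L$ throughout $B_x(b)$ and then invoke the mean value inequality on the linear space $\T_x\calM$. Using Conclusion~3, $\|\Hess f(\Exp_x s)\| \leq L$ (from Proposition~\ref{prop:lipschitzinequalitiesusual}), and the formula above, $\|\nabla^2 \hat f_x(s)\| \leq \tfrac{16}{9} L + \|W_s\|$. The polarization identity for $W_s$ gives $|\inner{W_s \dot s}{\dot s}| \leq \|\grad f(\Exp_x s)\| \cdot \|c''(0)\|$, where the Lipschitz gradient bound combined with $\|\grad f(x)\| \leq Lb$ yields $\|\grad f(\Exp_x s)\| \leq 2Lb$. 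An auxiliary lemma (postponed to Appendix~\ref{app:controllingcprimeprime}) bounds $\|c''(0)\|$ by a multiple of $\|s\| \|\dot s\|^2$ with a constant controlled by $K$ and $F$; the radius conditions $b \leq 1/(4\sqrt{K})$ and $b \leq K/(4F)$ are calibrated precisely so that $\|W_s\| \leq \tfrac{2}{9} L$, closing the $2L$ bound. For Conclusion~2, I would use $\nabla^2 \hat f_x(0) = \Hess f(x)$ (since $T_0 = \Id$ and $W_0 = 0$) and expand
\begin{align*}
\nabla^2 \hat f_x(s) - \Hess f(x) &= (T_s - P_s)^* \Hess f(\Exp_x s)\, T_s + P_s^* \Hess f(\Exp_x s)\, (T_s - P_s) \\
&\quad + \bigl( P_s^* \Hess f(\Exp_x s) P_s - \Hess f(x) \bigr) + W_s.
\end{align*}
The middle bracket is at most $\rho\|s\|$ by the Lipschitz Hessian assumption. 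The first two terms are controlled by a quantitative Jacobi-field estimate $\|T_s - P_s\| \leq C K\|s\|^2$ (proved in Appendix~\ref{app:PTvDExp}), which after using $\|s\| \leq 1/(4\sqrt{K})$ becomes a constant multiple of $L\sqrt{K}\|s\|$; $W_s$ is bounded as in Conclusion~1. With the constants tracked, the total is at most $\hat\rho\|s\| = (\rho + L\sqrt{K})\|s\|$.

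The principal obstacle is the precise quantitative control of $T_s - P_s$ and $c''(0)$: both arise from a Jacobi-type second-order linear ODE along $\gamma$ whose coefficients involve $R$ and $\nabla R$. The hypothesis $b \leq K/(4F)$ is imposed precisely so that the $\nabla R$-driven correction to this ODE remains subordinate to the $R$-driven leading term, keeping the effective constants in the bounds clean. A bootstrapping argument (evolving the Jacobi equation forward in $t$ and using the bound just obtained to close itself) is the most delicate piece. Once the two quantitative geometric estimates are isolated in Appendices~\ref{app:PTvDExp} and~\ref{app:controllingcprimeprime}, the remainder of the argument is algebraic bookkeeping of operator norms.
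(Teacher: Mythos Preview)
Your proposal is correct and follows essentially the same approach as the paper: the same decomposition $\nabla^2 \hat f_x(s) - \Hess f(x) = (T_s-P_s)^*\Hess f\,T_s + P_s^*\Hess f\,(T_s-P_s) + (P_s^*\Hess f\,P_s - \Hess f(x)) + W_s$, the same reduction of Conclusion~1 to a Hessian bound via the mean value inequality, and the same deferral of the two geometric estimates (on $T_s-P_s$ and on $c''(0)$) to the appendices. The only cosmetic difference is that for Conclusion~3 you invoke Rauch comparison directly on the perpendicular Jacobi field, whereas the paper routes through the bound $\|T_s-P_s\|\le \tfrac13 K\|s\|^2$ and a triangle inequality with the isometry $P_s$; both arguments are equivalent and yield the interval $[2/3,4/3]$.
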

\noindent A few comments are in order:
\begin{enumerate}
	\item For locally symmetric spaces ($F = 0$), we interpret $K/F$ as infinite (regardless of $K$).
	\item If $\calM$ is compact, then it is complete and there necessarily exist finite $K$ and $F$.
	      See work by~\citet{greene1978boundedcurvature} for a discussion on non-compact manifolds.
	\add{
	\item If $\calM$ is a homogeneous Riemannian manifold (not necessarily compact), then there exist finite $K$ and $F$, and these can be assessed by studying a single point on the manifold.
	This follows directly from the definition of homogeneous Riemannian manifold~\citep[p55]{lee2018riemannian}.
	
	\item All symmetric spaces are homogeneous and locally symmetric~\citep[Exercise~6-19, Exercise~7-3 and p78]{lee2018riemannian} so there exists finite $K$ and $F = 0$.  
	Let $\Sym(d)$ be the set of real $d \times d$ symmetric matrices.
	The set of $d\times d$ positive definite matrices
	$$\calP_d = \{P \in \Sym(d) : P \succ 0\}$$ 
	endowed with the so-called affine invariant metric 
$$\inner{X}{Y}_P = \trace(P^{-1} X P^{-1} Y) \quad \text{for } P \in \calP_d \text{ and } X, Y \in \T_P \calP_d \cong \Sym(d)$$
is a noncompact symmetric space of nonconstant curvature.
It is commonly used in practice~\citep{Bhatia,sra2015conicgeometricoptimspd,moakher2005diffgeomspdmeans,moakher2006symmetric}.	
	In Appendix~\ref{appPDmatrices}, we show that $K = \frac{1}{2}$ and $F=0$ are the right constants for this manifold.
	}
	\item The following statements are equivalent: (a) $\calM$ is complete; (b) $\Exp$ is defined on the whole tangent bundle: $\calO = \T\calM$; and (c) for some $b > 0$, $\Exp_x$ is defined on $B_x(b)$ for all $x \in \calM$. In later sections, we need to apply Theorem~\ref{thm:pullbacklipschitz} at various points of $\calM$ with constant $b$, which is why we then assume $\calM$ is complete.
	\item The properties of $T_s$ are useful in combination with Lemma~\ref{lem:derivativespullback} to relate gradients and Hessians of the pullbacks to gradients and Hessians on the manifold. For example, if $\nabla \hat f_x(s)$ has norm $\epsilon$, then $\grad f(\Exp_x(s))$ has norm somewhere between $\frac{3}{4}\epsilon$ and $\frac{3}{2}\epsilon$. Under the conditions of the theorem, $W_s$~\eqref{eq:Whessianpullback} is bounded as $\|W_s\|_x \leq \frac{9}{4} K \|\nabla \hat f_x(s)\|_x \|s\|_x$.
	\item We only get satisfactory Lipschitzness at points where the gradient is bounded by $Lb$.
	Fortunately, for the algorithms we study, whenever we encounter a point with gradient larger than that threshold it is sufficient to take a simple gradient descent step.
\end{enumerate}
See Section~\ref{sec:discussion} for additional comments regarding the restriction to balls of radius $b$ and regarding the only-partial Lipschitzness of the Hessian: those are the two main sources of technicalities in adapting Euclidean analyses to the Riemannian case in subsequent sections.

To prove Theorem~\ref{thm:pullbacklipschitz}, we must control $\nabla^2 \hat f_x(s)$.
According to Lemma~\ref{lem:derivativespullback}, this requires controlling both $T_s$ (a differential of the exponential map) and $c''(0)$ (the intrinsic initial acceleration of a curve defined via the exponential map, but which is not itself a geodesic in general). 
On both counts, we must study differentials of exponentials.
\emph{Jacobi fields} are the tool of choice for such tasks.
As a first step, we use Jacobi fields to investigate the difference between $T_s$ and $P_s$: two linear operators from $\T_x\calM$ to $\T_{\Exp_x(s)}\calM$.
We prove a general result in Appendix~\ref{app:PTvDExp} (exact for constant sectional curvature) and state a sufficient particular case here.
Control of $T_s$ follows as a corollary because $P_s$ (parallel transport) is an isometry.
\begin{proposition} \label{prop:TsminPsparticular}
	Let $\calM$ be a Riemannian manifold whose sectional curvatures are in the interval $[\Klow, \Kup]$, and let $K = \max(|\Klow|, |\Kup|)$. For any $(x, s) \in \calO$ with $\|s\|_x \leq \frac{\pi}{\sqrt{K}}$,
	\begin{align}
		\|(T_s - P_s)[\dot s]\|_{\Exp_x(s)} & \leq \frac{1}{3} K \|s\|_x^2 \|\dot s_\perp\|_x,
		\label{eq:PminTboundgeneral}
	\end{align}
	where $\dot s_\perp = \dot s - \frac{\inner{s}{\dot s}_x}{\inner{s}{s}_x}s$ is the component of $\dot s$ orthogonal to $s$.
\end{proposition}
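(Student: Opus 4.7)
The plan is to realize $(T_s - P_s)[\dot s]$ as a Jacobi-field quantity along the geodesic $\gamma(t) = \Exp_x(ts)$, $t \in [0,1]$, and then bound it via a Gronwall-type integral inequality. Consider the variation $\alpha(t,u) = \Exp_x(t(s + u\dot s))$ and set $J(t) = \left.\partial_u \alpha(t,u)\right|_{u=0}$. Standard theory identifies $J$ as the Jacobi field along $\gamma$ with $J(0) = 0$, $J'(0) = \dot s$, satisfying $J'' + R(J, \gamma')\gamma' = 0$, and $J(1) = T_s[\dot s]$; meanwhile $P_s[\dot s] = P_1[\dot s]$. Writing $\dot s = \dot s_\parallel + \dot s_\perp$ with $\dot s_\parallel = \beta s$, linearity of the Jacobi equation together with $R(\gamma',\gamma')\gamma' = 0$ and parallelism of $\gamma'$ show that the Jacobi field generated by $\dot s_\parallel$ equals $\beta t\gamma'(t) = tP_t[\dot s_\parallel]$, which cancels the parallel part of $P_1[\dot s]$. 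Thus $(T_s - P_s)[\dot s] = J_\perp(1) - P_1[\dot s_\perp]$, where $J_\perp$ is the Jacobi field with $J_\perp(0) = 0$, $J_\perp'(0) = \dot s_\perp$; the tangential component of the Jacobi equation, combined with these initial conditions, forces $J_\perp(t) \perp \gamma'(t)$ for all $t$.

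Now define $E(t) = J_\perp(t) - tP_t[\dot s_\perp]$. Since $P_t[\dot s_\perp]$ is parallel, $(tP_t[\dot s_\perp])'' = 0$, so $E(0) = E'(0) = 0$ and $E''(t) = -R(J_\perp(t),\gamma'(t))\gamma'(t)$. As $J_\perp \perp \gamma'$, the sectional curvature bound yields $\|E''(t)\| \leq K\|\gamma'\|^2 \|J_\perp(t)\| = K\|s\|^2 \|J_\perp(t)\| \leq K\|s\|^2(\|E(t)\| + t\|\dot s_\perp\|)$. Integrating twice produces the Volterra inequality $\|E(t)\| \leq a\int_0^t (t-\tau)\|E(\tau)\|\,d\tau + ab\,t^3/6$ with $a = K\|s\|^2$ and $b = \|\dot s_\perp\|$.

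To solve this inequality I would iterate the Volterra operator $\calK u(t) = a\int_0^t (t-\tau)u(\tau)\,d\tau$, which acts on monomials as $\calK(t^m) = at^{m+2}/((m+1)(m+2))$. Starting from $\phi(t) = ab\,t^3/6$, an easy induction gives $\calK^n \phi(t) = a^{n+1} b\, t^{2n+3}/(2n+3)!$, and summing produces $\|E(t)\| \leq (b/\sqrt{a})(\sinh(\sqrt{a}\,t) - \sqrt{a}\,t)$. Evaluating at $t = 1$ and writing $u := \sqrt{K}\|s\| \leq \pi$ gives $\|E(1)\| \leq \|\dot s_\perp\|(\sinh(u) - u)/u$.

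To close out the stated constant $1/3$, I would prove $\sinh(u) - u \leq u^3/3$ on $[0,\pi]$. Setting $g(u) := u^3/3 + u - \sinh(u)$, one has $g(0) = g'(0) = g''(0) = 0$, and $g'''(u) = 2 - \cosh(u)$ changes sign once on $[0,\pi]$; a short derivative analysis shows $g$ is first positive and increasing, attains a maximum in the interior of $[0,\pi]$, and decreases to $g(\pi) = \pi^3/3 + \pi - \sinh(\pi) > 0$, so $g \geq 0$ on $[0,\pi]$. Substituting gives $\|E(1)\| \leq (u^2/3)\|\dot s_\perp\| = (K\|s\|^2/3)\|\dot s_\perp\|$, as claimed. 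I expect the main obstacle to be precisely this last step: the constant $1/3$ is nearly tight at $u = \pi$, so the crude small-$u$ Taylor bound $\sinh(u) - u \leq u^3/6$ will not suffice, and one must argue on the full range $[0,\pi]$. The Jacobi-field reduction and the Volterra iteration are otherwise routine.
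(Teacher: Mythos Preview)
Your proof is correct. It differs from the paper's in one main respect: after reaching the integral inequality
\[
\|E(t)\| \le K\|s\|^2 \int_0^t (t-\tau)\bigl(\|E(\tau)\| + \tau\|\dot s_\perp\|\bigr)\,d\tau,
\]
you close it by Volterra iteration, obtaining directly $\|E(1)\| \le \|\dot s_\perp\|(\sinh u - u)/u$ with $u = \sqrt{K}\,\|s\|$. The paper instead invokes the Jacobi (Rauch) comparison theorem to bound $\|J_\perp(\theta)\| \le h_{\Klow}(\theta)\|\dot s_\perp\|$ and then integrates twice, yielding $K f_{\Klow}(\|s\|)\,\|\dot s_\perp\|$. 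In the worst case $\Klow = -K$ this is exactly your expression $(\sinh u - u)/u\cdot\|\dot s_\perp\|$, and both arguments then bound this by $\tfrac13 K\|s\|^2\|\dot s_\perp\|$ on $[0,\pi]$. Your route is more self-contained---it does not appeal to Rauch and does not need the no-interior-conjugate-point hypothesis (which the condition $\|s\|\le\pi/\sqrt{K}$ enforces anyway). The paper's route buys a bit more: it tracks the dependence on $\Klow$ separately, giving sharper bounds when $\Klow > -K$ and equality when $\Klow = \Kup$.
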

\begin{corollary} \label{cor:Tssingularvalues}
	Let $\calM$ be a Riemannian manifold whose sectional curvatures are in the interval $[\Klow, \Kup]$, and let $K = \max(|\Klow|, |\Kup|)$. For any $(x, s) \in \calO$ with $\|s\|_x \leq \frac{1}{\sqrt{K}}$,
	\begin{align}
		\sigmamin(T_s) & \geq \frac{2}{3} & & \textrm{ and } & \sigmamax(T_s) & \leq \frac{4}{3}.
	\end{align}
\end{corollary}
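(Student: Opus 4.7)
The plan is to derive the corollary directly from Proposition~\ref{prop:TsminPsparticular} by comparing $T_s$ to the parallel transport $P_s$, which is a linear isometry. First I would observe that the hypothesis $\|s\|_x \leq \frac{1}{\sqrt{K}}$ implies $\|s\|_x \leq \frac{\pi}{\sqrt{K}}$ (since $1 < \pi$), so the bound~\eqref{eq:PminTboundgeneral} applies. Fix an arbitrary $\dot s \in \T_x\calM$. By the reverse and forward triangle inequalities in $\T_{\Exp_x(s)}\calM$,
\begin{align*}
\bigl| \|T_s[\dot s]\|_{\Exp_x(s)} - \|P_s[\dot s]\|_{\Exp_x(s)} \bigr| \leq \|(T_s - P_s)[\dot s]\|_{\Exp_x(s)} \leq \tfrac{1}{3} K \|s\|_x^2 \|\dot s_\perp\|_x.
\end{align*}

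Next I would use that $P_s$ is a linear isometry, so $\|P_s[\dot s]\|_{\Exp_x(s)} = \|\dot s\|_x$, together with the obvious estimate $\|\dot s_\perp\|_x \leq \|\dot s\|_x$ (orthogonal projection is non-expansive). Combining these,
\begin{align*}
\bigl( 1 - \tfrac{1}{3} K \|s\|_x^2 \bigr) \|\dot s\|_x \;\leq\; \|T_s[\dot s]\|_{\Exp_x(s)} \;\leq\; \bigl( 1 + \tfrac{1}{3} K \|s\|_x^2 \bigr) \|\dot s\|_x.
\end{align*}
Finally, the assumption $\|s\|_x \leq \frac{1}{\sqrt{K}}$ gives $K\|s\|_x^2 \leq 1$, so the multiplicative factors are bounded by $\frac{2}{3}$ below and $\frac{4}{3}$ above. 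Since this holds for every $\dot s$, the extremal singular values of $T_s$ satisfy the claimed bounds.

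There is no real obstacle here: the proposition does all the geometric work, and the corollary is just the observation that an isometry perturbed by a small operator has singular values close to $1$. The only minor subtlety worth noting is the projection onto the orthogonal complement of $s$, which is what makes the factor in Proposition~\ref{prop:TsminPsparticular} non-vacuous; but since we only need an upper bound on $\|\dot s_\perp\|_x$, this detail drops out immediately.
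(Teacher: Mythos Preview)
Your proof is correct and follows essentially the same approach as the paper: bound $T_s - P_s$ using Proposition~\ref{prop:TsminPsparticular}, use that $P_s$ is an isometry, and apply triangle inequalities to control the singular values. The only cosmetic difference is that the paper phrases things directly in terms of operator norms and singular values rather than working pointwise with an arbitrary $\dot s$, but the content is identical.
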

\begin{proof}
	By Proposition~\ref{prop:TsminPsparticular}, the operator norm of $T_s - P_s$ is bounded above by $\frac{1}{3} K \|s\|_x^2 \leq \frac{1}{3}$. Furthermore, parallel transport $P_s$ is an isometry: its singular values are equal to 1. Thus,
	\begin{align*}
		\sigmamax(T_s) & = \sigmamax(P_s + T_s - P_s) \leq \sigmamax(P_s) + \sigmamax(T_s - P_s) \leq 1 + \frac{1}{3} = \frac{4}{3}.
	\end{align*}
	Likewise, with min/max taken over unit-norm vectors $u \in \T_x\calM$ and writing $y = \Exp_x(s)$,
	\begin{align*}
		\sigmamin(T_s) & = \min_{u} \|T_s u\|_y \geq \min_{u} \|P_s u\|_y - \|(T_s - P_s)u\|_y = 1 - \max_{u} \|(T_s - P_s)u\|_y \geq \frac{2}{3}. \qedhere
	\end{align*}
\end{proof}
We turn to controlling the term $c''(0)$ which appears in the definition of operator $W_s$ in the expression for $\nabla^2 \hat f_x(s)$ provided by Lemma~\ref{lem:derivativespullback}.
We present a detailed proof in Appendix~\ref{app:controllingcprimeprime} for a general statement, and state a sufficient particular case here.
The proof is fairly technical: it involves designing an appropriate non-linear second-order ODE on the manifold and bounding its solutions.
The ODE is related to the Jacobi equation, except we had to differentiate to the next order, and the equation is not homogeneous.
We argue in the appendix that the result would be exact for manifolds with constant sectional curvature and with small $s$ if we optimized constants for that case.
\begin{proposition} \label{prop:cprimeprimecontrol}
	Let $\calM$ be a Riemannian manifold whose sectional curvatures are in the interval $[\Klow, \Kup]$, and let $K = \max(|\Klow|, |\Kup|)$.
	Further assume $\nabla R$ is bounded by $F$ in operator norm.
	Pick any $(x, s) \in \calO$ such that
	\begin{align*}
		\|s\|_x & \leq \min\!\left( \frac{1}{4\sqrt{K}}, \frac{K}{4F} \right).
	\end{align*}
	For any $\dot s \in \T_x\calM$, the curve $c(t) = \Exp_x(s + t \dot s)$ has initial acceleration bounded as
	\begin{align*}
		\|c''(0)\|_{\Exp_x(s)} & \leq \frac{3}{2} K \|s\|_x \|\dot s\|_x \|\dot s_\perp\|_x,
	\end{align*}
	where $\dot s_\perp = \dot s - \frac{\inner{s}{\dot s}_x}{\inner{s}{s}_x} s$ is the component of $\dot s$ orthogonal to $s$.
\end{proposition}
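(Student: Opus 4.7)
The plan is to encode $c''(0)$ as the value at $u = 1$ of a vector field $A$ along the geodesic $\gamma(u) = \Exp_x(us)$ that satisfies a non-homogeneous Jacobi-type ODE, and then to estimate that ODE. Consider the two-parameter variation $\Gamma(u, t) = \Exp_x(u(s + t\dot s))$: for each fixed $t$, the curve $u \mapsto \Gamma(u, t)$ is a geodesic with initial velocity $s + t\dot s$, and $c(t) = \Gamma(1, t)$. Writing $V = \partial_u \Gamma$ and $J = \partial_t \Gamma$, the restriction $J(\cdot, 0)$ is the Jacobi field along $\gamma$ with $J(0) = 0$ and $J'(0) = \dot s$ (primes denote covariant differentiation in $u$), and $V(u, 0) = \gamma'(u)$ has constant norm $\|s\|_x$. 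Set $A(u) = \Ddt J(u, t)|_{t = 0}$; since $c'(t) = \partial_t \Gamma(1, t)$, we have $c''(0) = \Ddt c'(t)|_{t = 0} = A(1)$, so the goal is to bound $\|A(1)\|$.

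To derive an ODE for $A$, apply $\Ddt|_{t = 0}$ to the Jacobi identity $J'' + R(J, V) V = 0$, which holds on the whole surface $\Gamma$. Swapping $\Ddt$ past the two $u$-covariant derivatives via the standard mixed-partial curvature identity, expanding $\Ddt [R(J, V) V]$ via the chain rule~\eqref{eq:nablaUR} defining $\nabla R$, and using $V' = 0$ and $\Ddt V = J'$ yields a linear second-order ODE along $\gamma$ of the form
\begin{align*}
A'' + R(A, \gamma') \gamma' = \Phi(u),
\end{align*}
in which $\Phi$ is a forcing vector field built purely from $R$ and $\nabla R$ contracted with $J, \gamma'$, and $J'$. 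Evaluating at $u = 0$ (where $\Gamma(0, t) \equiv x$, so $J(0, t) = 0$ and $J'(0, t) = \dot s$ is constant in $t$) gives the zero initial data $A(0) = 0$ and $A'(0) = 0$.

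Next, estimate $\Phi$. Corollary~\ref{cor:Tssingularvalues} gives $\|J(u)\| = u\,\|T_{us}[\dot s]\| \leq \tfrac{4u}{3}\|\dot s\|_x$ on $[0, 1]$, and an analogous bootstrap on the Jacobi equation controls $\|J'(u)\|$ by a constant multiple of $\|\dot s\|_x$. The key to extracting the factor $\|\dot s_\perp\|_x$ is that every tensorial building block of $\Phi$---of the form $R(\cdot, \cdot)\,\cdot$ or $(\nabla_\cdot R)(\cdot, \cdot)\,\cdot$ with two slots filled by $V$---vanishes when a third slot is radial, via $R(V, V) = 0$ and its companion identity for $\nabla R$. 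Decomposing $J$ and $J'$ into components parallel and perpendicular to $V$ therefore replaces one Jacobi-field factor per term by its perpendicular component, whose norm is bounded by a constant times $u\,\|\dot s_\perp\|_x$. Together with $\|R\| \leq K$, $\|\nabla R\| \leq F$, and the hypothesis $\|s\|_x \leq K/(4F)$ (which folds the $F$-term into the $K$-term), this produces an estimate of the form $\|\Phi(u)\| \leq C\, K\, u\, \|s\|_x \|\dot s\|_x \|\dot s_\perp\|_x$ for an absolute constant $C$.

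Finally, close the argument by a bootstrap on the ODE. Using $\|R(A, \gamma') \gamma'\| \leq K \|s\|_x^2 \|A\|$ together with $K \|s\|_x^2 \leq 1/16$ (from $\|s\|_x \leq 1/(4\sqrt{K})$), tentatively assume $\|A(u)\| \leq M$ on $[0, 1]$ and integrate the ODE twice from the zero initial data to obtain an inequality of the form $M \leq \tfrac{1}{2} \sup \|\Phi\| + \tfrac{1}{32} M$; resolving this yields $\|A(u)\| \leq \tfrac{3}{2} K \|s\|_x \|\dot s\|_x \|\dot s_\perp\|_x$ on $[0, 1]$, and evaluation at $u = 1$ gives the announced bound. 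The main obstacle is the accurate derivation and term-by-term decomposition of $\Phi$: each contribution generated by differentiating the Jacobi identity in $t$ must be reorganized so as to expose the $\|\dot s_\perp\|$ scaling, which is the quantitative expression of the fact that $c''(0)$ is invariant under $\dot s \mapsto \dot s + \lambda s$ (such a shift merely reparametrizes the geodesic $t \mapsto \Exp_x(s + t\dot s)$).
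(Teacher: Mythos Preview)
Your plan matches the paper's approach: both set up the two-parameter variation $\Gamma(u,t)=\Exp_x(u(s+t\dot s))$, identify $c''(0)$ with the field $W(u)=\Ddt\partial_t\Gamma|_{t=0}$ along $\gamma(u)=\Exp_x(us)$, derive the inhomogeneous Jacobi-type ODE $W''+R(W,\gamma')\gamma'=H$ (the paper computes explicitly $H=4R(\gamma',J_\perp)\Ddt J+(\nabla_J R)(\gamma',J_\perp)\gamma'+(\nabla_{\gamma'}R)(\gamma',J_\perp)J$), extract the $\|\dot s_\perp\|$ factor via antisymmetry of $R$ in its first two slots, and bound the solution by bootstrapping the ODE. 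The only notable differences are technical: the paper uses $\|R\|\leq 2K$ (the Karcher estimate) rather than $\|R\|\leq K$, and its bootstrap is more elaborate---a crude Gr\"onwall bound followed by infinitely many iterations of the integral inequality---which is what actually brings the constant down to~$\tfrac{3}{2}$; your single-pass closure with the stated bounds would yield a somewhat larger constant.
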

Equipped with all of the above, it is now easy to prove the main theorem of this section.
\begin{proof}[Proof of Theorem~\ref{thm:pullbacklipschitz}.]
	Consider the pullback $\hat f_x = f \circ \Exp_x$ defined on $\T_x\calM$.
	Since $\T_x\calM$ is linear, it is a classical exercise to verify that $\nabla \hat f_x$ is $2L$-Lipschitz continuous in $B_x(b)$ if and only if $\|\nabla^2 \hat f_x(s)\|_x \leq 2L$ for all $s$ in $B_x(b)$.
	Using Lemma~\ref{lem:derivativespullback}, we start bounding the Hessian as follows:
	\begin{align*}
		\|\nabla^2 \hat f_x(s)\|_x & \leq \sigmamax(T_s^*) \sigmamax(T_s) \|\Hess f(\Exp_x(s))\|_{\Exp_x(s)} + \|W_s\|_x,
	\end{align*}
	with operator $W_s$ defined by~\eqref{eq:Whessianpullback}.
	Since $\grad f$ is $L$-Lipschitz continuous, $\|\Hess f(y)\|_y \leq L$ for all $y \in \calM$ (this follows fairly directly from Proposition~\ref{prop:lipschitzinequalitiesusual}).
	To bound $W_s$, we start with a Cauchy--Schwarz inequality then we consider the worst case for the magnitude of $c''(0)$:
	\begin{align*}
		\|W_s\|_x & \leq \|\grad f(\Exp_x(s))\|_{\Exp_x(s)} \cdot \max_{\dot s \in \T_x\calM, \|\dot s\|_x = 1} \|c''(0)\|_{\Exp_x(s)}.
	\end{align*}
	Combining these steps yields a first bound of the form
	\begin{align}
		\|\nabla^2 \hat f_x(s)\|_x & \leq \sigmamax(T_s)^2 L + \|\grad f(\Exp_x(s))\|_{\Exp_x(s)} \cdot \max_{\dot s \in \T_x\calM, \|\dot s\|_x = 1} \|c''(0)\|_{\Exp_x(s)}.
		\label{eq:initialboundnablatwo}
	\end{align}
	To proceed, we keep working on the $W_s$-terms: use Proposition~\ref{prop:cprimeprimecontrol}, $L$-Lipschitz-continuity of the gradient, and our bounds on the norms of $s$ and $\grad f(x)$ to see that:
	\begin{align}
		\|W_s\|_x & \leq \max_{\dot s \in \T_x\calM, \|\dot s\|_x = 1} \|c''(0)\|_{\Exp_x(s)} \cdot \|\grad f(\Exp_x(s))\|_{\Exp_x(s)} \nonumber\\
					 & \leq \frac{3}{2} K \|s\|_x \cdot \|P_s^*\grad f(\Exp_x(s)) - \grad f(x) + \grad f(x)\|_{x} \nonumber\\
					 & \leq \frac{3}{2} K \|s\|_x \cdot \left( L \|s\|_x + \|\grad f(x)\|_x \right) \nonumber\\
					 & \leq 3KLb\|s\|_x 
					   \leq \frac{3}{4} L \sqrt{K} \|s\|_x \leq \frac{3}{16}L. \label{eq:Wsbound}
	\end{align}
	Returning to~\eqref{eq:initialboundnablatwo} and using Corollary~\ref{cor:Tssingularvalues} to bound $T_s$ confirms that
	\begin{align*}
		\|\nabla^2 \hat f_x(s)\|_x & \leq \frac{16}{9} L + \frac{3}{16} L  < 2L.
	\end{align*}
	Thus, $\nabla \hat f_x$ is $2L$-Lipschitz continuous in the ball of radius $b$ around the origin in $\T_x\calM$.
	
	To establish the second part of the claim, we use the same intermediate results and $\rho$-Lipschitz continuity of the Hessian. First, using Lemma~\ref{lem:derivativespullback} twice and noting that $W_0 = 0$ so that $\nabla^2 \hat f_x(0) = \Hess f(x)$, we have:
	\begin{align*}
		\nabla^2 \hat f_x(s) - \nabla^2 \hat f_x(0) & = P_s^* \circ \Hess f(\Exp_x(s)) \circ P_s - \Hess f(x) \\
													& \qquad + (T_s - P_s)^* \circ \Hess f(\Exp_x(s)) \circ T_s \\
													& \qquad + P_s^* \circ \Hess f(\Exp_x(s)) \circ (T_s - P_s) \\
													& \qquad + W_s.
	\end{align*}
	We bound this line by line calling upon Proposition~\ref{prop:TsminPsparticular}, Corollary~\ref{cor:Tssingularvalues} and~\eqref{eq:Wsbound} to get:
	\begin{align*}
		\|\nabla^2 \hat f_x(s) - \nabla^2 \hat f_x(0)\|_x & \leq \rho \|s\|_x + \frac{4}{9} L K \|s\|_x^2 + \frac{1}{3} L K \|s\|_x^2 + 3LKb\|s\|_x \\
			& \leq \left( \rho + \frac{1}{9} L \sqrt{K} + \frac{1}{12} L \sqrt{K} + \frac{3}{4} L \sqrt{K} \right) \|s\|_x \\
			& \leq \left( \rho + L \sqrt{K} \right) \|s\|_x.
	\end{align*}
	This shows a type of Lipschitz continuity of the Hessian of the pullback with respect to the origin, in the ball of radius $b$.
\end{proof}

\section{Assumptions and parameters for $\ARGD$ and $\PARGD$} \label{sec:assuparams}

Our algorithms apply to the minimization of $f \colon \calM \to \reals$ on a Riemannian manifold $\calM$ equipped with a retraction $\Retr$ defined on the whole tangent bundle $\T\calM$.
The pullback of $f$ at $x \in \calM$ is $\hat f_x = f \circ \Retr_x \colon \T_x\calM \to \reals$.
In light of Section~\ref{sec:pullbacks}, we make the following assumptions.
\begin{assumption} \label{assu:Mandf}
	There exists a constant $\flow$ such that $f(x) \geq \flow$ for all $x \in \calM$.
	Moreover, $f$ is twice continuously differentiable and there exist constants $\ell$, $\hat \rho$ and $b$ such that, for all $x \in \calM$ with $\|\grad f(x)\| \leq \frac{1}{2} \ell b$,
	\begin{enumerate}
		\item $\nabla \hat f_x$ is $\ell$-Lipschitz continuous in $B_x(3b)$ (in particular, $\|\nabla^2 \hat f_x(0)\| \leq \ell$),
		\item $\|\nabla^2 \hat f_x(s) - \nabla^2 \hat f_x(0)\| \leq \hat\rho \|s\|$ for all $s \in B_x(3b)$, and
		\item $\sigmamin(T_s) \geq \frac{1}{2}$ with $T_s = \D\Retr_x(s)$ for all $s \in B_x(3b)$, 
	\end{enumerate}
	where $B_x(3b) = \{ u \in \T_x\calM : \|u\| \leq 3b \}$.
	Finally, for all $(x, s) \in \T\calM$ it holds that
	\begin{enumerate}[resume]
		\item  $\hat f_x(s) \leq \hat f_x(0) + \innersmall{\nabla \hat f_x(0)}{s} + \frac{\ell}{2}\|s\|^2$.
	\end{enumerate}
\end{assumption}
The first three items in~\aref{assu:Mandf} confer Lipschitz properties to the derivatives of the pullbacks $\hat f_x$ restricted to balls around the origins of tangent spaces: these are the balls where we shall run accelerated gradient steps.
We only need these guarantees at points where the gradient is below a threshold.
For all other points, a regular gradient step provides ample progress: the last item in~\aref{assu:Mandf} serves that purpose only, see Proposition~\ref{prop:Case1}.

Section~\ref{sec:pullbacks} tells us that~\aref{assu:Mandf} holds in particular when we use the exponential map as a retraction and $f$ itself has appropriate (Riemannian) Lipschitz properties.
This is the link between Theorems~\ref{thm:masterFOCPexp} and~\ref{thm:masterSOCPexp} in the introduction and Theorems~\ref{thm:masterFOCP} and~\ref{thm:masterSOCP} in later sections.
\begin{corollary}
	If we use the exponential retraction $\Retr = \Exp$ and~\aref{assu:Mandfintrinsic} holds, then~\aref{assu:Mandf} holds with $\flow$, 
	and 
	\begin{align} \label{ellrhohatb}
	\ell &= 2L, && \hat \rho = \rho + L\sqrt{K}, && b = \frac{1}{12}\min\!\left(\frac{1}{\sqrt{K}}, \frac{K}{F}\right).
	\end{align}
\end{corollary}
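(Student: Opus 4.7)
The proof is a straightforward application of Theorem~\ref{thm:pullbacklipschitz} together with Proposition~\ref{prop:lipschitzinequalitiesusual}; each of the four items in \aref{assu:Mandf} maps onto exactly one of those results. The plan is to verify the radius and gradient-norm conditions once, then read off items 1--3 from Theorem~\ref{thm:pullbacklipschitz} applied on the larger ball $B_x(3b)$, and finally deduce item 4 from the standard Lipschitz-gradient descent lemma.

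First, I would set $b = \frac{1}{12}\min(1/\sqrt{K},K/F)$ so that $3b = \frac{1}{4}\min(1/\sqrt{K},K/F) \leq \min(1/(4\sqrt{K}),K/(4F))$. This is precisely the radius admissible in Theorem~\ref{thm:pullbacklipschitz}, so that theorem may be invoked with the ball $B_x(3b)$. Next, fix any $x \in \calM$ with $\|\grad f(x)\| \leq \frac{1}{2}\ell b = Lb \leq L(3b)$, which satisfies the gradient-norm hypothesis of Theorem~\ref{thm:pullbacklipschitz} (with $b$ there replaced by $3b$). The three conclusions of that theorem then yield, on $B_x(3b)$: $\nabla \hat f_x$ is $2L$-Lipschitz continuous (item~1 with $\ell=2L$); $\|\nabla^2 \hat f_x(s) - \nabla^2 \hat f_x(0)\| \leq (\rho + L\sqrt{K})\|s\|$ (item~2 with $\hat\rho = \rho + L\sqrt{K}$); and $\sigmamin(T_s) \geq 2/3 \geq 1/2$ (item~3). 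The parenthetical bound $\|\nabla^2 \hat f_x(0)\| \leq \ell$ in item~1 follows since $\nabla^2\hat f_x(0)=\Hess f(x)$ (because $T_0=\Id$ and $W_0=0$ in Lemma~\ref{lem:derivativespullback}) and since Proposition~\ref{prop:lipschitzinequalitiesusual} gives $\|\Hess f(x)\|\leq L \leq \ell$.

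Finally, for item~4 I would observe that it must hold at \emph{every} $(x,s) \in \T\calM$, with no gradient-norm restriction. This is exactly the standard descent lemma: since $f$ has $L$-Lipschitz continuous gradient, Proposition~\ref{prop:lipschitzinequalitiesusual} applied at $(x,s)$ gives
\begin{align*}
\hat f_x(s) = f(\Exp_x(s)) \leq f(x) + \inner{\grad f(x)}{s}_x + \frac{L}{2}\|s\|_x^2.
\end{align*}
Using $\nabla \hat f_x(0) = T_0^*\grad f(x) = \grad f(x)$ and $\frac{L}{2} \leq L = \frac{\ell}{2}$ then yields the required inequality on all of $\T_x\calM$.

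There is no real obstacle: the only mildly delicate point is the bookkeeping on the radius (needing to apply Theorem~\ref{thm:pullbacklipschitz} on $B_x(3b)$, not $B_x(b)$, and to check that $Lb \leq L(3b)$ still fits the gradient-norm hypothesis), and the observation that item~4 holds unconditionally in $s$ thanks to the global Lipschitz-gradient assumption in~\aref{assu:Mandfintrinsic}.
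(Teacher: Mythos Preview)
Your proposal is correct and follows exactly the approach the paper intends: the corollary is stated without proof precisely because it is a direct consequence of Theorem~\ref{thm:pullbacklipschitz} (applied on $B_x(3b)$, which is why the factor $1/12$ appears) together with Proposition~\ref{prop:lipschitzinequalitiesusual} for item~4. Your bookkeeping on the radius and the gradient-norm hypothesis is exactly the delicate point, and you handled it correctly.
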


With constants as in~\aref{assu:Mandf}, we further define a number of parameters.
First, the user specifies a tolerance $\epsilon$ which must not be too loose: see Remark~\ref{remark:conditionsonepsilon} below.
\remove{
The first condition on $\epsilon$ is analogous to what one finds in the Euclidean case~\citep{jin2018agdescapes,carmon2017convexguilty}.
the second condition is specific to our treatment:}
\begin{assumption} \label{assu:epsilon}
	The tolerance $\epsilon > 0$ satisfies $\sqrt{\hat \rho \epsilon} \leq \frac{1}{2}\ell$ and $\epsilon \leq b^2 \hat \rho$.
\end{assumption}

\noindent Then, we fix a first set of parameters (see~\citep{jin2018agdescapes} for more context; in particular, $\kappa$ plays the role of a condition number; under \aref{assu:epsilon}, we have $\kappa \geq 2$):
\begin{align} \label{definingparams1}
	\eta & = \frac{1}{4\ell}, &
	\kappa & = \frac{\ell}{\sqrt{\hat \rho \epsilon}}, &
	\theta & = \frac{1}{4\sqrt{\kappa}}, &
	\gamma & = \frac{\sqrt{\hat\rho \epsilon}}{4}, &
	s & = \frac{1}{32} \sqrt{\frac{\epsilon}{\hat\rho}}.
\end{align}
We define a second set of parameters based on some $\chi \geq 1$ (as set in some of the lemmas and theorems below) and a universal constant $c > 0$ (implicitly defined as the smallest real satisfying a finite number of lower-bounds required throughout the paper):
\begin{align} \label{definingparams2}
	          r & = \eta \epsilon \chi^{-5}c^{-8}, &
	\mathscr{T} & = \sqrt{\kappa}\chi c, &
	\mathscr{E} & = \sqrt{\frac{\epsilon^3}{\hat{\rho}}}\chi^{-5}c^{-7}, &
	\mathscr{L} & = \sqrt{\frac{4\epsilon}{\hat{\rho}}} \chi^{-2}c^{-3}, &
	\mathscr{M} & = \frac{\epsilon \sqrt{\kappa}}{\ell}c^{-1}.
\end{align}
When we say ``with $\chi \geq A \geq 1$'' (for example, in Theorems~\ref{thm:masterFOCP} and~\ref{thm:masterSOCP}), we mean: ``with $\chi$ the smallest value larger than $A$ such that $\mathscr{T}$ is a positive integer multiple of $4$.''

Lemma~\ref{lem:params} in Appendix~\ref{app:params} lists useful relations between the parameters.

\add{
\begin{remark} \label{remark:conditionsonepsilon}
Both conditions in~\aref{assu:epsilon} can be well understood, and neither is particularly stringent.

The first condition is analogous to what one finds in the Euclidean case~\citep{jin2018agdescapes,carmon2017convexguilty}.
This condition can be traced back to two facts.
First, our main theorem announces an improvement over Riemannian gradient descent by a factor $\sqrt{\frac{L}{\sqrt{\hat \rho \epsilon}}}$.
That factor is at least 1 exactly when $\sqrt{\hat{\rho} \epsilon} \leq L$.
Second, consider $\PARGD$ whose goal is to find an $(\epsilon, \sqrt{\hat{\rho} \epsilon})$-approximate second-order critical point.
If $L \leq \sqrt{\hat{\rho} \epsilon}$, then every point $x \in \calM$ satisfies $\norm{\Hess f(x)} \leq \sqrt{\hat{\rho} \epsilon}$ (because Lipschitz gradients imply bounded Hessians).
Therefore, the task of finding approximate second-order critical points is only interesting when $\sqrt{\hat{\rho} \epsilon} \leq L$.

The second condition on $\epsilon$ is specific to our treatment.
This assumption is also mild.
First, note that this condition becomes \textit{less restrictive} as $L$ and $\rho$ \emph{increase}.
Second, consider the case where $\calM$ is a sphere or hyperbolic space of curvature $K \neq 0$.
We argue in the next paragraph that if $\epsilon$ is greater than a constant times $b^2 \hat \rho$ then every point $x \in \calM$ is $\epsilon$-approximate first-order critical.  
Therefore, the optimization scenario is only interesting when $\epsilon \ll b^2 \hat{\rho}$.  

When $\calM$ is a sphere or hyperbolic space of curvature $K \neq 0$, it can be shown that any three times differentiable function $f$ with $\rho$-Lipschitz Hessian satisfies 
$$\norm{\grad f(x)} \leq \frac{2 \rho}{K} \leq 288 b^2 \hat \rho \text{ for all } x \in \calM.$$
This can be easily deduced from Proposition~\ref{prop:lipschitzinequalitiesusual} and the following identity, which follows from applying the Ricci identity~\citep[Thm.~7.14]{lee2018riemannian} to $\grad f$:
$$\nabla^3 f(U, W, V) - \nabla^3 f(U, V, W) = \inner{R(W, V)U}{\grad f}.$$
Therefore, if $\epsilon \geq 288 b^2 \hat \rho$, then all points $x \in \calM$ are $\epsilon$-approximate first-order critical points.
\end{remark}
}

\section{Accelerated gradient descent in a ball of a tangent space} \label{sec:agdTxM}

\begin{algorithm}[t]
	\caption{$\TSS(x, s_0)$ with $(x, s_0) \in \T\calM$ and parameters $\epsilon, \eta, b, \theta, \gamma, s, \mathscr{T}$} 
	\label{algo:TSS}
	\begin{algorithmic}[1]
		\State If $s_0$ is not provided, set $s_0 = 0$ and $\perturbed = \textbf{false}$; otherwise, set $\perturbed = \textbf{true}$.
		\State $v_0 = 0$
		\For {$j$ in $0, 1, \ldots, \mathscr{T} - 1$}
			\State\label{step:uj} $u_j = s_j + (1-\theta_j) v_j$ with \Comment{AGD: capped momentum step}
			\begin{align}
				\theta_j = \begin{cases}
				\theta \textrm{ if } \|s_j + (1-\theta) v_j\| \leq 2b, \\
				\hat \theta \in [\theta, 1] \textrm{ such that } \|s_j + (1-\hat\theta) v_j\| = 2b \textrm{ otherwise}.
				\end{cases}
				\label{eq:thetaj}
			\end{align}
			\If {\eqref{eq:NCC} triggers with $(x, s_j, u_j)$} \Comment{Negative curvature detection} 
				\State \textbf{return} $\Retr_x(\NCE(x, s_j, v_j))$ \Comment{(Cases 2a, 3a)} 
			\EndIf
			\State $s_{j+1} = u_j - \eta \nabla \hat{f}_x(u_j)$ \Comment{AGD: gradient step}
			\State $v_{j+1} = s_{j+1} - s_j$ \Comment{AGD: momentum update}
			\If{\Big($\|s_{j+1}\| > b$\Big) \textbf{or} \Big((\textbf{not} $\perturbed$) \textbf{and} $\|\nabla \hat{f}_x(s_{j + 1})\| \leq \epsilon / 2$\Big)}
				\label{step:leaveball}
				\State \textbf{return} $\Retr_x(s_{j+1})$ \Comment{(Cases 2b, 2c, 3b)} 
			\EndIf
		\EndFor
		\State \textbf{return} $\Retr_x{(s_{\mathscr{T}})}$ 
		\Comment{(Cases 2d, 3d)}
	\end{algorithmic}
\end{algorithm}

The main ingredient of algorithms $\ARGD$ and $\PARGD$ is $\TSS$: the \emph{tangent space steps} algorithm.
Essentially, the latter runs the classical accelerated gradient descent algorithm (AGD) from convex optimization on $\hat f_x$ in a tangent space $\T_x\calM$, with a few tweaks:
\begin{enumerate}
	\item Because $\hat f_x$ need not be convex, $\TSS$ monitors the generated sequences for signs of non-convexity.
	      If $\hat f_x$ happens to behave like a convex function along the sequence $\TSS$ generates, then we reap the benefits of convexity.
	      Otherwise, the direction along which $\hat f_x$ behaves in a non-convex way can be used as a good descent direction.
	      This is the idea behind the ``convex until proven guilty'' paradigm developed by~\citet{carmon2017convexguilty} and also exploited by~\citet{jin2018agdescapes}.
	      Explicitly, given $x \in \calM$ and $s, u \in \T_x\calM$, for a specified parameter $\gamma > 0$, we check the \emph{negative curvature condition} (one might also call it the \emph{non-convexity condition)}~\eqref{eq:NCC}:
	      \begin{align}
		      \hat{f}_x(s) < \hat{f}_x(u) + \innersmall{\nabla \hat{f}_x(u)}{s - u} - \frac{\gamma}{2}\norm{s - u}^2.
		      \tag{NCC}
	    	  \label{eq:NCC}
	      \end{align}
	      If~\eqref{eq:NCC} triggers with a triplet $(x, s, u)$ and $s$ is not too large, we can exploit that fact to generate substantial cost decrease using the \emph{negative curvature exploitation} algorithm, $\NCE$: see Lemma~\ref{lem:TSSNCE}.
	      (This is about curvature of the cost function, not the manifold.)
	\item In contrast to the Euclidean case in~\citep{jin2018agdescapes}, our assumption~\aref{assu:Mandf} provides Lipschitz-type guarantees only in a ball of radius $3b$ around the origin in $\T_x\calM$.
	      Therefore, we must act if iterates generated by $\TSS$ leave that ball.
	      This is done in two places.
	      First, the momentum step in step~\ref{step:uj} of $\TSS$ is capped so that $\|u_j\|$ remains in the ball of radius $2b$ around the origin.
	      Second, if $s_{j+1}$ leaves the ball of radius $b$ (as checked in step~\ref{step:leaveball}) then we terminate this run of $\TSS$ by returning to the manifold.
	      Lemma~\ref{lem:TSSballs} guarantees that the iterates indeed remain in appropriate balls, that $\theta_j$~\eqref{eq:thetaj} in the capped momentum step is uniquely defined, and that if a momentum step is capped, then immediately after that $\TSS$ terminates.
\end{enumerate}
The initial momentum $v_0$ is always set to zero.
By default, the AGD sequence is initialized at the origin: $s_0 = 0$.
However, for $\PARGD$ we sometimes want to initialize at a different point (a perturbation away from the origin): this is only relevant for Section~\ref{sec:socp}.

\begin{algorithm}[t]
	\caption{$\NCE(x, s_j, v_j)$ with $x \in \calM$, $s_j, v_j \in \T_x\calM$ and parameter $s$}
	\label{algo:NCE}
	\begin{algorithmic}[1]
		\If {$\norm{v_j} \geq s$}
			\State \textbf{return} $s_j$
		\Else
			\State $\dot{v} = s \frac{v_j}{\norm{v_j}}$
			\State \textbf{return} $\text{argmin}_{ \dot{s} \in \{s_j, s_j + \dot{v},s_j - \dot{v}\}}{\hat{f}_x(\dot{s})}$
		\EndIf
	\end{algorithmic}
\end{algorithm}

In the remainder of this section, we provide four general purpose lemmas about $\TSS$.
Proofs are in Appendix~\ref{app:agdTxM}.
We note that $\ARGD$ and $\PARGD$ call $\TSS$ only at points $x$ where $\|\grad f(x)\| \leq \frac{1}{2} \ell b$.
The first lemma below notably guarantees that, for such runs, all iterates $u_j, s_j$ generated by $\TSS$ remain (a fortiori) in balls of radius $3b$, so that the strongest provisions of~\aref{assu:Mandf} always apply: we use this fact often without mention.
\begin{lemma}[$\TSS$ stays in balls] \label{lem:TSSballs}
	Fix parameters and assumptions as laid out in Section~\ref{sec:assuparams}.
	Let $x \in \calM$ satisfy $\|\grad f(x)\| \leq \frac{1}{2}\ell b$.
	If $\TSS(x)$ or $\TSS(x, s_0)$ (with $\|s_0\| \leq b$) defines vectors $u_0, \ldots, u_q$ (and possibly more), then it also defines vectors $s_0, \ldots, s_q$, and we have:
	\begin{align*}
		\|s_0\|, \ldots, \|s_q\| & \leq b,  &  \|u_0\|, \ldots, \|u_q\| & \leq 2b, & \textrm{ and } && 2\eta\gamma \leq \theta \leq \theta_j \leq 1.
	\end{align*}
	If $s_{q+1}$ is defined, then
	$\|s_{q+1}\| \leq 3b$ and,
	if $\|u_q\| = 2b$, then $\|s_{q+1}\| > b$
	and $u_{q+1}$ is undefined.
\end{lemma}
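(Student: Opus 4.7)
The plan is induction on the iteration index $q$ (the largest index at which $u_q$ is defined), establishing simultaneously the existence of $s_0, \ldots, s_q$, the norm bounds $\|s_j\| \leq b$ and $\|u_j\| \leq 2b$ for $j \leq q$, and the well-definedness of $\theta_j$. The base case $q = 0$ is immediate: either $s_0 = 0$ (unperturbed) or $\|s_0\| \leq b$ by hypothesis, and $v_0 = 0$ forces $u_0 = s_0$ and $\theta_0 = \theta$ via the first branch of~\eqref{eq:thetaj}. The bound $\theta \geq 2\eta\gamma$ reduces, upon substituting~\eqref{definingparams1}, to the inequality $\sqrt{\hat\rho\epsilon} \leq 4\ell$, which is weaker than~\aref{assu:epsilon}; I would simply cite Lemma~\ref{lem:params} from Appendix~\ref{app:params} for this.

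For the inductive step, the key estimate is a bound on $\|\nabla \hat f_x(u_q)\|$. Since $\|u_q\| \leq 2b \leq 3b$ and $\|\grad f(x)\| \leq \tfrac{1}{2}\ell b$, condition 1 of~\aref{assu:Mandf} applies: $\nabla \hat f_x$ is $\ell$-Lipschitz on $B_x(3b)$. Using $\nabla \hat f_x(0) = \grad f(x)$ (from Lemma~\ref{lem:derivativespullback} together with the retraction identity $\D\Retr_x(0) = \Id$), I obtain
\begin{align*}
\|\nabla \hat f_x(u_q)\| \leq \|\grad f(x)\| + \ell\|u_q\| \leq \tfrac{1}{2}\ell b + 2\ell b = \tfrac{5}{2}\ell b.
\end{align*}
With $\eta = 1/(4\ell)$, the step $s_{q+1} = u_q - \eta\nabla \hat f_x(u_q)$ satisfies $\|s_{q+1}\| \leq 2b + \tfrac{5}{8}b < 3b$, proving the first part of the final claim. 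If moreover $\|u_q\| = 2b$, the reverse triangle inequality gives $\|s_{q+1}\| \geq 2b - \tfrac{5}{8}b > b$, so step~\ref{step:leaveball} triggers and $u_{q+1}$ is indeed undefined.

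If the algorithm does not terminate at iteration $q$, then step~\ref{step:leaveball} did not fire, so $\|s_{q+1}\| \leq b$, restoring the inductive invariant for $s$. It then remains to verify that $\theta_{q+1}$ from~\eqref{eq:thetaj} is well-defined and that $\|u_{q+1}\| \leq 2b$. If $\|s_{q+1} + (1-\theta)v_{q+1}\| \leq 2b$, then $\theta_{q+1} = \theta$ and we are done. Otherwise the continuous map $\alpha \mapsto \|s_{q+1} + (1-\alpha) v_{q+1}\|$ exceeds $2b$ at $\alpha = \theta$ and equals $\|s_{q+1}\| \leq b < 2b$ at $\alpha = 1$, so the intermediate value theorem supplies $\hat\theta \in (\theta, 1)$ with $\|s_{q+1} + (1 - \hat\theta)v_{q+1}\| = 2b$, giving $\|u_{q+1}\| = 2b$.

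The argument is essentially bookkeeping, and the only moderately delicate point is quantitative: the radius $3b$ in~\aref{assu:Mandf} must be large enough both to absorb a worst-case gradient step starting from $\|u_q\| = 2b$ (which $\tfrac{5}{8}b + 2b < 3b$ confirms) and, at the same time, small enough that such a step necessarily pushes $\|s_{q+1}\|$ above $b$ so that termination is forced after a capped momentum step. These two requirements are satisfied with room to spare by the choice $\eta = 1/(4\ell)$ together with the assumption $\|\grad f(x)\| \leq \tfrac{1}{2}\ell b$; no additional Riemannian input is needed beyond~\aref{assu:Mandf} and the defining property $\D\Retr_x(0) = \Id$ of a retraction.
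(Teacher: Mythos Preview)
Your proof is correct and follows essentially the same approach as the paper: the same gradient estimate $\|\nabla \hat f_x(u_q)\| \leq \tfrac{5}{2}\ell b$ (yielding $\eta\|\nabla \hat f_x(u_q)\| \leq \tfrac{5}{8}b$), the same two-sided triangle inequality to place $\|s_{q+1}\|$ in $(b,3b]$ when $\|u_q\| = 2b$, and the same intermediate-value argument for the well-definedness of $\theta_j$. The paper organizes the bookkeeping slightly differently---it observes directly from the control flow of $\TSS$ that defining $u_j$ forces $\|s_j\| \leq b$ (since step~\ref{step:leaveball} would otherwise have terminated), rather than casting this as an induction---but the mathematical content is identical.
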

Along the iterates of AGD, the value of the cost function $\hat f_x$ may not monotonically decrease.
Fortunately, there is a useful quantity which monotonically decreases along iterates: \citet{jin2018agdescapes} call it the \emph{Hamiltonian}.
In several ways, it serves the purpose of a Lyapunov function.
Importantly, the Hamiltonian decreases regardless of any special events that occur while running $\TSS$.
It is built as a combination of the cost function value and the momentum.
The next lemma makes this precise: we use monotonic decrease of the Hamiltonian often without mention.
This corresponds to~\citep[Lem.~9 and~20]{jin2018agdescapes}.
\begin{lemma}[Hamiltonian decrease] \label{lem:TSSEj}
	Fix parameters and assumptions as laid out in Section~\ref{sec:assuparams}.
	Let $x \in \calM$ satisfy $\|\grad f(x)\| \leq \frac{1}{2}\ell b$.
	For each pair $(s_j, v_j)$ defined by $\TSS(x)$ or $\TSS(x, s_0)$ (with $\|s_0\| \leq b$), define the Hamiltonian
	\begin{align}
		E_j & = \hat f_x(s_j) + \frac{1}{2\eta} \|v_j\|^2.
		\label{eq:Ej}
	\end{align}
	If $E_{j+1}$ is defined, then $E_j$, $\theta_j$ and $u_j$ are also defined and: 
	\begin{align*}
		E_{j+1} & \leq E_j - \frac{\theta_j}{2\eta} \|v_j\|^2 -  \frac{\eta}{4} \|\nabla \hat f_x(u_j)\|^2 \leq E_j.
	\end{align*}
	If moreover $\|v_j\| \geq \mathscr{M}$,
	then $E_j - E_{j+1} \geq \frac{4\mathscr{E}}{\mathscr{T}}$.
\end{lemma}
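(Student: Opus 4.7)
The plan is to mirror the standard AGD-with-Lyapunov analysis (as in Lemma 9 and Lemma 20 of \citet{jin2018agdescapes}), but adapted to work within the tangent space with the capped momentum $\theta_j$ in place of $\theta$. First I would argue that, for $E_{j+1}$ to be defined, the algorithm must have executed step~\ref{step:uj} (so $\theta_j$ and $u_j$ are defined; Lemma~\ref{lem:TSSballs} guarantees $\theta_j \in [\theta,1]$ exists uniquely and that $s_j, u_j$ lie in the ball of radius $3b$ where the Lipschitz bounds of~\aref{assu:Mandf} apply), and that the negative curvature check~\eqref{eq:NCC} did \emph{not} trigger at $(x, s_j, u_j)$ (otherwise $\TSS$ would have returned).

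Next, I would write $E_{j+1} - E_j = [\hat f_x(s_{j+1}) - \hat f_x(s_j)] + \frac{1}{2\eta}[\|v_{j+1}\|^2 - \|v_j\|^2]$ and bound the two pieces. For the function difference, the $\ell$-Lipschitz gradient bound (combined with $s_{j+1} = u_j - \eta \nabla \hat f_x(u_j)$ and $\eta = 1/(4\ell)$) gives
\[
\hat f_x(s_{j+1}) \leq \hat f_x(u_j) - \tfrac{7\eta}{8}\|\nabla \hat f_x(u_j)\|^2,
\]
while the fact that~\eqref{eq:NCC} failed at $(x, s_j, u_j)$ rearranges to
\[
\hat f_x(s_j) \geq \hat f_x(u_j) - (1-\theta_j)\innersmall{\nabla \hat f_x(u_j)}{v_j} - \tfrac{\gamma(1-\theta_j)^2}{2}\|v_j\|^2,
\]
using $s_j - u_j = -(1-\theta_j)v_j$. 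For the momentum term, the identity $v_{j+1} = (1-\theta_j)v_j - \eta \nabla \hat f_x(u_j)$ yields
\[
\tfrac{1}{2\eta}(\|v_{j+1}\|^2-\|v_j\|^2) = -\tfrac{\theta_j(2-\theta_j)}{2\eta}\|v_j\|^2 - (1-\theta_j)\innersmall{\nabla \hat f_x(u_j)}{v_j} + \tfrac{\eta}{2}\|\nabla \hat f_x(u_j)\|^2.
\]
The key observation is that the cross terms $(1-\theta_j)\innersmall{\nabla \hat f_x(u_j)}{v_j}$ cancel exactly upon summation, leaving
\[
E_{j+1} - E_j \leq -\tfrac{3\eta}{8}\|\nabla \hat f_x(u_j)\|^2 + \Bigl(\tfrac{\gamma(1-\theta_j)^2}{2} - \tfrac{\theta_j(2-\theta_j)}{2\eta}\Bigr)\|v_j\|^2.
\]

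To conclude the first inequality, I would invoke the parameter bound $2\eta\gamma \leq \theta \leq \theta_j$ from Lemma~\ref{lem:TSSballs}: this reduces the coefficient of $\|v_j\|^2$ to at most $-\tfrac{\theta_j}{2\eta}$ (after noting $\gamma(1-\theta_j) \leq \theta_j/\eta$ follows from $\eta\gamma \leq \theta_j$ and $1-\theta_j \leq 1$), and of course $\tfrac{3\eta}{8} \geq \tfrac{\eta}{4}$. For the second, sharper claim, I would discard the gradient term and use $\|v_j\| \geq \mathscr{M}$ together with $\theta_j \geq \theta$ to get
\[
E_j - E_{j+1} \geq \tfrac{\theta}{2\eta}\mathscr{M}^2 = \tfrac{\ell}{2\sqrt{\kappa}}\mathscr{M}^2,
\]
then plug in $\mathscr{M}=\epsilon\sqrt{\kappa}/\ell\cdot c^{-1}$ and $\kappa = \ell/\sqrt{\hat\rho\epsilon}$ to get $\tfrac{\epsilon^{7/4}}{2\sqrt{\ell}\hat\rho^{1/4}}c^{-2}$, and compare to $\tfrac{4\mathscr{E}}{\mathscr{T}} = \tfrac{4\epsilon^{7/4}}{\sqrt{\ell}\hat\rho^{1/4}}\chi^{-6}c^{-8}$; the desired inequality reduces to $(c\chi)^6 \geq 8$, which is absorbed into the definition of the universal constant $c$. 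The main obstacle is the careful bookkeeping of the parameter inequalities (especially the $2\eta\gamma\leq\theta_j$ relation and the $c$-accounting for the second claim); the analytical content—the telescoping of the cross terms and the descent lemma—is standard and adds no Riemannian difficulty because everything takes place inside a single tangent space where~\aref{assu:Mandf} already supplies the Euclidean-style Lipschitz bounds.
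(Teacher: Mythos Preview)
Your proposal is correct and follows essentially the same route as the paper's proof: the descent lemma on $\hat f_x$, the failure of~\eqref{eq:NCC} at $(x,s_j,u_j)$, and the expansion of the momentum term combine so that the cross terms cancel, leaving exactly the paper's coefficient $\frac{1}{2\eta}\bigl(\eta\gamma(1-\theta_j)^2+(1-\theta_j)^2-1\bigr)$ on $\|v_j\|^2$ (your $-\tfrac{\theta_j(2-\theta_j)}{2\eta}$ is the same thing since $\theta_j(2-\theta_j)=1-(1-\theta_j)^2$). The only cosmetic differences are that you use the sharper descent constant $7\eta/8$ (the paper uses $3\eta/4$), you verify the coefficient bound via $\eta\gamma(1-\theta_j)\leq\theta_j$ directly whereas the paper checks the scalar inequality $\tfrac12\theta_j(1-\theta_j)^2+(1-\theta_j)^2-1\leq-\theta_j$ on $[0,1]$, and for the final claim you unpack the parameters explicitly rather than citing Lemma~\ref{lem:params}; all of these are equivalent.
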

\citet{jin2018agdescapes} formalize an important property of $\TSS$ sequences in the Euclidean case, namely, the fact that ``either the algorithm makes significant progress or the iterates do not move much.''
They call this the \emph{improve or localize} phenomenon.
The next lemma states this precisely in our context.
This corresponds to~\citep[Cor.~11]{jin2018agdescapes}.
\begin{lemma}[Improve or localize] \label{lem:TSStraveldist}
	Fix parameters and assumptions as laid out in Section~\ref{sec:assuparams}.
	Let $x \in \calM$ satisfy $\|\grad f(x)\| \leq \frac{1}{2}\ell b$.
	If $\TSS(x)$ or $\TSS(x, s_0)$ (with $\|s_0\| \leq b$) defines vectors $s_0, \ldots, s_q$ (and possibly more), then $E_0, \ldots, E_q$ are defined by~\eqref{eq:Ej} and, for all $0 \leq q' \leq q$,
	\begin{align*}
		\|s_q - s_{q'}\|^2 \leq (q-q') \sum_{j = q'}^{q-1} \|s_{j+1} - s_j\|^2 \leq 16 \sqrt{\kappa} \eta (q-q') (E_{q'} - E_q).
	\end{align*}
	For $q' = 0$ in particular, using $E_0 = \hat f_x(s_0)$ we can write $E_q \leq \hat f_x(s_0) - \frac{\|s_q - s_0\|^2}{16 \sqrt{\kappa} \eta q}$.
\end{lemma}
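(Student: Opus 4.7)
\textbf{Proof proposal for Lemma~\ref{lem:TSStraveldist}.} The plan is to reduce everything to a per-step bound of the form $\|v_{j+1}\|^2 \leq 16\sqrt{\kappa}\,\eta(E_j - E_{j+1})$, where $v_{j+1} = s_{j+1} - s_j$, and then telescope. The first inequality in the statement is essentially free: write $s_q - s_{q'} = \sum_{j=q'}^{q-1}(s_{j+1}-s_j)$, take norms via the triangle inequality, and apply Cauchy--Schwarz to pick up the factor $(q-q')$. So the real content is the second inequality.

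For the per-step bound, I would unwind the $\TSS$ update rules from Algorithm~\ref{algo:TSS}: since $u_j = s_j + (1-\theta_j)v_j$ and $s_{j+1} = u_j - \eta\,\nabla\hat f_x(u_j)$, we have
\begin{align*}
v_{j+1} \;=\; s_{j+1} - s_j \;=\; (1-\theta_j)\,v_j - \eta\,\nabla\hat f_x(u_j).
\end{align*}
The inequality $(a+b)^2 \leq 2a^2 + 2b^2$ then gives $\|v_{j+1}\|^2 \leq 2\|v_j\|^2 + 2\eta^2\|\nabla\hat f_x(u_j)\|^2$, using $(1-\theta_j)^2 \leq 1$. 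I now want to dominate the right-hand side by a multiple of the Hamiltonian drop. Lemma~\ref{lem:TSSEj} provides exactly this drop:
\begin{align*}
E_j - E_{j+1} \;\geq\; \frac{\theta_j}{2\eta}\|v_j\|^2 + \frac{\eta}{4}\|\nabla\hat f_x(u_j)\|^2 \;\geq\; \frac{1}{8\sqrt{\kappa}\,\eta}\|v_j\|^2 + \frac{\eta}{4}\|\nabla\hat f_x(u_j)\|^2,
\end{align*}
where the second step uses Lemma~\ref{lem:TSSballs} to conclude $\theta_j \geq \theta = 1/(4\sqrt{\kappa})$.

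Multiplying through by $16\sqrt{\kappa}\,\eta$, the coefficient of $\|v_j\|^2$ becomes $2$ (matching the first summand we need) and the coefficient of $\|\nabla\hat f_x(u_j)\|^2$ becomes $4\sqrt{\kappa}\,\eta^2$, which is at least $2\eta^2$ since $\sqrt{\kappa}\geq 1/2$ (in fact $\kappa \geq 2$ under \aref{assu:epsilon}). Thus $\|v_{j+1}\|^2 \leq 16\sqrt{\kappa}\,\eta\,(E_j - E_{j+1})$. Summing this over $j = q',\ldots,q-1$, the right-hand side telescopes to $16\sqrt{\kappa}\,\eta\,(E_{q'} - E_q)$, yielding the second inequality. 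The special case $q' = 0$ uses $v_0 = 0$, which gives $E_0 = \hat f_x(s_0)$.

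I do not expect any serious obstacle here: the per-step contraction of $v_{j+1}$ into the Hamiltonian drop is a direct adaptation of the Euclidean argument of~\citet{jin2018agdescapes}, and the Riemannian-specific complications (staying in balls, capped momentum $\theta_j$) are already handled upstream by Lemmas~\ref{lem:TSSballs} and~\ref{lem:TSSEj}. The only thing to watch is that the capped momentum satisfies $\theta_j \geq \theta$ (not $\theta_j = \theta$), but this inequality is exactly what Lemma~\ref{lem:TSSballs} guarantees, and the derivation above is monotone in $\theta_j$, so the larger-than-$\theta$ case is harmless.
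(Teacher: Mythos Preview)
Your proof is correct and follows the same overall strategy as the paper: establish a per-step bound $\|s_{j+1}-s_j\|^2 \leq 16\sqrt{\kappa}\,\eta\,(E_j - E_{j+1})$ by splitting the step into its momentum and gradient components via a Young-type inequality, then invoke Lemma~\ref{lem:TSSEj} and telescope. The one difference is that the paper uses the weighted version $\|a+b\|^2 \leq (1+C)\|a\|^2 + \frac{1+C}{C}\|b\|^2$ with the carefully tuned choice $C = 2\sqrt{\kappa}-1$, and then must verify the algebraic fact $(1-\theta_j)^2 \leq 4(2\sqrt{\kappa}-1)\theta_j$ for all $\theta_j \in [\theta,1]$; your simpler choice $(a+b)^2 \leq 2a^2 + 2b^2$ sidesteps that verification entirely, at the cost of using $\theta_j \geq \theta$ directly and relying on $\sqrt{\kappa} \geq 1$ to absorb the gradient term. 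Since the target constant $16\sqrt{\kappa}\,\eta$ is loose enough, your shortcut works cleanly and is arguably more transparent.
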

As outlined earlier, in case the $\TSS$ sequence witnesses non-convexity in $\hat f_x$ through the~\eqref{eq:NCC} check, we call upon the $\NCE$ algorithm to exploit this event.
The final lemma of this section formalizes the fact that this yields appropriate cost improvement.
(Indeed, if $\|s_j\| > \mathscr{L}$ one can argue that sufficient progress was already achieved; otherwise, the lemma applies and we get a result from $E_j \leq E_0 = \hat f_x(s_0)$.)
This corresponds to~\citep[Lem.~10 and 17]{jin2018agdescapes}.
\begin{lemma}[Negative curvature exploitation] \label{lem:TSSNCE}
	Fix parameters and assumptions as laid out in Section~\ref{sec:assuparams}.
	Let $x \in \calM$ satisfy $\|\grad f(x)\| \leq \frac{1}{2}\ell b$.
	Assume $\TSS(x)$ or $\TSS(x, s_0)$ (with $\|s_0\| \leq b$) defines $u_j$, so that $s_j, v_j$ are also defined, and $E_j$ is defined by~\eqref{eq:Ej}.
	If \eqref{eq:NCC} triggers with $(x, s_j, u_j)$
	and
	$\|s_j\| \leq \mathscr{L}$, then $\hat f_x(\NCE(x, s_j, v_j)) \leq E_j - 2\mathscr{E}$.
\end{lemma}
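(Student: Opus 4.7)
The plan is to split on the two branches of the $\NCE$ algorithm and exhibit a decrease of at least $2\mathscr{E}$ from $E_j$ in each case.

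\textbf{Case 1: $\|v_j\| \geq s$.} Here $\NCE$ returns $s_j$, so I only need $\hat{f}_x(s_j) \leq E_j - 2\mathscr{E}$. By the definition~\eqref{eq:Ej} of the Hamiltonian, $E_j - \hat{f}_x(s_j) = \frac{1}{2\eta}\|v_j\|^2 \geq \frac{s^2}{2\eta}$. Substituting the parameter values from~\eqref{definingparams1}--\eqref{definingparams2} and using $\kappa \geq 2$ (from~\aref{assu:epsilon}), a direct computation shows this exceeds $2\mathscr{E}$ for all sufficiently large values of the universal constant $c$.

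\textbf{Case 2: $\|v_j\| < s$.} Set $w = v_j / \|v_j\|$ and $\dot{v} = s\, w$. Since $s_j - u_j = -(1-\theta_j) v_j$ is parallel to $w$, \eqref{eq:NCC} expresses negative curvature of $\hat{f}_x$ along $w$ on the segment from $u_j$ to $s_j$. A second-order Taylor expansion with integral remainder applied to~\eqref{eq:NCC} gives
\begin{align*}
\int_0^1 (1-t)\,\innersmall{\nabla^2 \hat{f}_x\!\left(u_j + t(s_j - u_j)\right)[w]}{w}\,dt < -\frac{\gamma}{2}.
\end{align*}
Every point $y$ on this segment satisfies $\|y\| \leq \mathscr{L} + s$, since $\|s_j\| \leq \mathscr{L}$ and $\|v_j\| \leq s$, and $\mathscr{L} + s \leq 3b$ holds under~\aref{assu:epsilon}. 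Therefore~\aref{assu:Mandf}(2) gives $\|\nabla^2 \hat{f}_x(y) - \nabla^2 \hat{f}_x(0)\| \leq \hat\rho(\mathscr{L} + s)$, which I can plug into the integral above to conclude $\innersmall{\nabla^2 \hat{f}_x(0)[w]}{w} \leq -\gamma + 2\hat{\rho}(\mathscr{L} + s)$. With the parameter choices~\eqref{definingparams1}--\eqref{definingparams2} and $c$ large enough, this right-hand side is below $-\gamma/2$.

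Next I evaluate the symmetric sum
\begin{align*}
\hat{f}_x(s_j + \dot{v}) + \hat{f}_x(s_j - \dot{v}) - 2\hat{f}_x(s_j).
\end{align*}
A standard Taylor argument, using~\aref{assu:Mandf}(1)--(2) on the ball of radius $3b$ (applicable because $\|s_j \pm \dot{v}\| \leq \mathscr{L} + s \leq 3b$), bounds this sum above by $s^2 \innersmall{\nabla^2 \hat{f}_x(0)[w]}{w} + C\hat\rho s^2 (\mathscr{L} + s)$ for a universal constant $C$. The first term is at most $-s^2 \gamma/2$ by the previous paragraph, while the second is $O(\hat\rho s^3)$ and, again by parameter choice, at most $s^2\gamma/4$. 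Hence at least one of $\hat{f}_x(s_j \pm \dot{v})$ is at most $\hat{f}_x(s_j) - s^2 \gamma / 8$. Plugging in $\gamma = \sqrt{\hat\rho\epsilon}/4$ and $s = \frac{1}{32}\sqrt{\epsilon/\hat\rho}$ yields a decrease proportional to $\sqrt{\epsilon^3/\hat\rho}$, and for $c$ large enough this exceeds $2\mathscr{E} + \frac{1}{2\eta}\|v_j\|^2$, which together with $E_j \geq \hat{f}_x(s_j) + \frac{1}{2\eta}\|v_j\|^2$ gives the desired bound on $\hat{f}_x(\NCE(x, s_j, v_j))$.

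The main technical obstacle will be propagating the negative curvature information from the segment $[u_j, s_j]$ (where~\eqref{eq:NCC} supplies information) to the point $0$ (where~\aref{assu:Mandf} anchors the Lipschitz Hessian bound), and back to a Taylor expansion centered at $s_j$, while only using the one-sided Hessian bound $\|\nabla^2 \hat{f}_x(s) - \nabla^2 \hat{f}_x(0)\| \leq \hat\rho\|s\|$ rather than a full pairwise Lipschitz property. All numerical slack is absorbed into the universal constant $c$, which we assume large enough to dominate the finitely many lower bounds produced along the way.
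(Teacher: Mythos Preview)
Your plan is correct and essentially the same as the paper's: both extract the negative-curvature direction from~\eqref{eq:NCC} via a second-order Taylor expansion, route the Hessian comparison through $\nabla^2\hat f_x(0)$ using the one-sided bound in~\aref{assu:Mandf}, and then control $\hat f_x(s_j \pm \dot v)$ by another Taylor step. The only cosmetic differences are that the paper uses Lagrange remainders and picks the sign of $\tilde v$ so that $\innersmall{\nabla\hat f_x(s_j)}{\tilde v} \leq 0$, whereas you use integral remainders and the symmetric sum to cancel the linear term.

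One slip in your last paragraph: since $E_j = \hat f_x(s_j) + \tfrac{1}{2\eta}\|v_j\|^2$, the kinetic term works \emph{for} you, not against you. You need only $s^2\gamma/8 \geq 2\mathscr{E}$ (which holds for modest $c$); your stated requirement that $s^2\gamma/8 \geq 2\mathscr{E} + \tfrac{1}{2\eta}\|v_j\|^2$ is actually false when $\|v_j\|$ is near $s$, because $\tfrac{s^2/(2\eta)}{s^2\gamma/8} = 4/(\eta\gamma) = 64\kappa$, independently of $c$. Simply drop the $\tfrac{1}{2\eta}\|v_j\|^2$ term (or flip its sign) and the argument closes.
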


\section{First-order critical points} \label{sec:focp}

Our algorithm to compute $\epsilon$-approximate first-order critical points on Riemannian manifolds is $\ARGD$: this is a deterministic algorithm which does not require access to the Hessian of the cost function.
Our main result regarding $\ARGD$, namely, Theorem~\ref{thm:masterFOCP}, states that it does so in a bounded number of iterations.
As worked out in Theorem~\ref{thm:masterFOCPexp}, this bound scales as $\epsilon^{-7/4}$, up to polylogarithmic terms.
The complexity is independent of the dimension of the manifold.
\add{In Appendix~\ref{backtrackingTAGD}, we present a simple modification of $\ARGD$, called $\mathtt{backtrackingTAGD}$, which also finds $\epsilon$-approximate first-order critical points in $\tilde{O}(\epsilon^{-7/4})$ without knowledge of $L$ or $\rho$.}

The proof of Theorem~\ref{thm:masterFOCP} rests on two propositions introduced hereafter in this section.
They themselves rest on two lemmas introduced later still in this section.
Interestingly, it is only in the proof of Theorem~\ref{thm:masterFOCP} that we track the behavior of iterates of $\ARGD$ across multiple points on the manifold.
This is done by tracking decrease of the value of the cost function $f$.
All supporting results (lemmas and propositions) handle a single tangent space at a time.
As a result, lemmas and propositions fully benefit from the linear structure of tangent spaces.
This is why we can salvage most of the Euclidean proofs of \citet{jin2018agdescapes}, up to mostly minor (but numerous and necessary) changes.

\begin{algorithm}[t]
	\caption{$\ARGD(x_0)$ with $x_0 \in \calM$ and parameters $\epsilon, \ell, \eta, b, \theta, \gamma, s, \mathscr{T}, \mathscr{M}$} 
	\label{algo:ARGD}
	\begin{algorithmic}[1]
		\State $t \gets 0$
		\While {\textbf{true}} 
			\If {$\norm{\grad f(x_t)} > 2 \ell \mathscr{M}$}
				\State $x_{t+1} = \Retr_{x_t}(-\eta \grad f(x_t))$ \Comment{Case 1: one Riemannian gradient step}
				\State {$t \gets t+1$}
			\ElsIf {$\norm{\grad f(x_t)} > \epsilon$}
				\State $x_{t+\mathscr{T}} = \TSS(x_t)$ \Comment{Case 2: accelerated gradient in $\T_{x_t}\calM$}
				\State $t \gets t + \mathscr{T}$
			\Else
				\State \textbf{return} $x_t$ \Comment{Approximate FOCP}
			\EndIf
		\EndWhile
	\end{algorithmic}
\end{algorithm}

\begin{theorem} \label{thm:masterFOCP}
	Fix parameters and assumptions as laid out in Section~\ref{sec:assuparams}, with
	\begin{align} \label{chidefinition}
		\chi \geq \log_2(\theta^{-1}) \geq 1.
	\end{align}
	Given $x_0 \in \calM$, $\ARGD(x_0)$ returns $x_t \in \calM$ satisfying $f(x_t) \leq f(x_0)$ and $\|\grad f(x_t)\| \leq \epsilon$ with
	\begin{align} \label{T1definition}
		t \leq T_1 \triangleq \frac{f(x_0) - \flow}{\mathscr{E}} \mathscr{T}.
	\end{align}
	Running the algorithm requires at most $2T_1$ pullback gradient queries and $3T_1$ function queries (but no Hessian queries), and a similar number of calls to the retraction.
\end{theorem}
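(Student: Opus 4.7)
The plan is to telescope $f(x_t)$ across the run of $\ARGD$, showing that each increment of the counter $t$ corresponds, on average, to a cost drop of at least $\mathscr{E}/\mathscr{T}$. To that end I would isolate two supporting propositions, one per case of the while-loop. For Case~1 (plain Riemannian gradient step, used when $\|\grad f(x_t)\| > 2\ell\mathscr{M}$), item~4 of~\aref{assu:Mandf} applied to the pullback gives the standard descent inequality $f(x_{t+1}) \leq f(x_t) - \tfrac{\eta}{2}\|\grad f(x_t)\|^2$ with $\eta = 1/(4\ell)$; plugging in $\|\grad f(x_t)\| > 2\ell\mathscr{M}$ and using the parameter identities from Lemma~\ref{lem:params} should yield a drop of at least $\mathscr{E}/\mathscr{T}$. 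For Case~2 ($\TSS(x_t)$ called when $\epsilon < \|\grad f(x_t)\| \leq 2\ell\mathscr{M}$), the proposition I would state is: $\TSS$ either returns $x_{t+\mathscr{T}}$ with $f(x_{t+\mathscr{T}}) \leq f(x_t) - \mathscr{E}$, or returns a point $x_{t+\mathscr{T}}$ satisfying $\|\grad f(x_{t+\mathscr{T}})\| \leq \epsilon$ (in which case the algorithm terminates).

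Given these two propositions, the theorem reduces to bookkeeping. Both propositions guarantee non-increase of $f$, so $f(x_t) \leq f(x_0)$ throughout. A Case-1 pass contributes counter increment $1$ and cost drop $\geq \mathscr{E}/\mathscr{T}$; a non-terminating Case-2 pass contributes counter increment $\mathscr{T}$ and cost drop $\geq \mathscr{E}$; at most one Case-2 pass terminates the algorithm by returning an FOCP. Summing the drops against $f(x_0) - \flow$ gives $t_{\mathrm{final}} \leq T_1$. For the oracle counts, each Case-1 pass costs one gradient and one function evaluation (for the gating tests), while each of the $\mathscr{T}$ inner iterations of $\TSS$ costs at most one pullback gradient and two function evaluations (one for the \eqref{eq:NCC} check and one for the exit tests), aggregating to at most $2T_1$ pullback gradient queries and $3T_1$ function queries.

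The substance lies in the Case-2 proposition, which enumerates the four return paths of $\TSS$. If NCE triggers (Case~2a), Lemma~\ref{lem:TSSballs} gives $\|s_j\| \leq b$, and provided $b \leq \mathscr{L}$ (verified through Lemma~\ref{lem:params}), Lemma~\ref{lem:TSSNCE} delivers a drop $\geq 2\mathscr{E}$. If the iterate escapes the ball ($\|s_{j+1}\| > b$, Case~2b), the improve-or-localize inequality (Lemma~\ref{lem:TSStraveldist}) applied with $q' = 0$ and $\|s_{j+1} - s_0\| > b$ forces $E_0 - E_{j+1} \geq b^2/(16\sqrt{\kappa}\eta(j+1))$, which the parameter identities show is at least $\mathscr{E}$. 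If the small-gradient exit fires (Case~2c), item~3 of~\aref{assu:Mandf} gives $\sigmamin(T_{s_{j+1}}) \geq 1/2$, so using the gradient formula from Lemma~\ref{lem:derivativespullback} we get $\|\grad f(\Retr_{x_t}(s_{j+1}))\| \leq 2\|\nabla\hat f_{x_t}(s_{j+1})\| \leq \epsilon$, making the output an FOCP.

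The genuinely delicate sub-case is Case~2d, in which $\TSS$ runs to completion without any early exit. Here one must show that the absence of \eqref{eq:NCC} triggers along the entire AGD trajectory produces a Hamiltonian drop of at least $\mathscr{E}$ over $\mathscr{T}$ steps, which by monotonicity of $E_j$ (Lemma~\ref{lem:TSSEj}) translates to the required $f$ drop. This is the ``convex-until-proven-guilty'' accelerated-rate analysis of~\citet{jin2018agdescapes}, and the main obstacle is adapting it to our setting, where $\hat f_{x_t}$ only enjoys the radial Hessian bound $\|\nabla^2 \hat f_x(s) - \nabla^2\hat f_x(0)\| \leq \hat\rho\|s\|$ (not a full Lipschitz Hessian) and where iterates must be kept in $B_{x_t}(3b)$. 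The two auxiliary lemmas introduced later in the section are designed precisely to encapsulate this adapted analysis; combining them with the first three sub-cases above closes Case~2 and therefore the theorem.
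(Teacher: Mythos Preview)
Your overall structure matches the paper's proof exactly: the same two propositions (Case~1 via the descent inequality, Case~2 via a sub-case analysis of how $\TSS$ terminates), the same telescoping argument bounding $t$ by $T_1$, and the correct identification of Case~2d as the delicate part handled by Lemmas~\ref{lem:focpA} and~\ref{lem:focpB}.

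There is one concrete error in your Case~2a argument. You write ``provided $b \leq \mathscr{L}$ (verified through Lemma~\ref{lem:params})'', but the inequality goes the other way: Lemma~\ref{lem:params} gives $\mathscr{L} \leq s \leq b/32$, so $\mathscr{L}$ is much smaller than $b$. Knowing only $\|s_j\| \leq b$ from Lemma~\ref{lem:TSSballs} therefore does not let you invoke Lemma~\ref{lem:TSSNCE}, whose hypothesis is $\|s_j\| \leq \mathscr{L}$. The paper repairs this with a case split inside Case~2a: if $\|s_j\| \leq \mathscr{L}$, apply Lemma~\ref{lem:TSSNCE} as you describe to get a drop of $2\mathscr{E}$; if instead $\|s_j\| > \mathscr{L}$, apply improve-or-localize (Lemma~\ref{lem:TSStraveldist}) with $q = j$, $q' = 0$, $s_0 = 0$ to obtain $E_j \leq f(x) - \mathscr{L}^2/(16\sqrt{\kappa}\eta\mathscr{T}) = f(x) - \mathscr{E}$, and then use $\hat f_x(\NCE(x,s_j,v_j)) \leq \hat f_x(s_j) \leq E_j$. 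This is the same mechanism you already invoke for Case~2b, so the fix is local and the rest of your outline stands.
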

\begin{proof}[Proof of Theorem~\ref{thm:masterFOCP}]
	The call to $\ARGD(x_0)$ generates a sequence of points $x_{t_0}, x_{t_1}, x_{t_2}, \ldots$ on $\calM$, with $t_0 = 0$.
	A priori, this sequence may be finite or infinite.
	Considering two consecutive indices $t_i$ and $t_{i+1}$, we either have $t_{i+1} = t_i + 1$ (if the step from $x_{t_i}$ to $x_{t_{i+1}}$ is a single gradient step (Case 1)) or $t_{i+1} = t_i + \mathscr{T}$ (if that same step is obtained through a call to $\TSS$ (Case 2)). Moreover:
	\begin{itemize}
		\item In Case 1, Proposition~\ref{prop:Case1} applies and guarantees
		\begin{align*}
		 	f(x_{t_i}) - f(x_{t_{i+1}}) \geq \frac{\mathscr{E}}{\mathscr{T}} = \frac{\mathscr{E}}{\mathscr{T}}(t_{i+1} - t_i).
		\end{align*}
		\item In Case 2, Proposition~\ref{prop:Case2} applies and guarantees that if $\|\grad f(x_{t_{i+1}})\| > \epsilon$ then
		\begin{align*}
			f(x_{t_i}) - f(x_{t_{i+1}}) \geq \mathscr{E} = \frac{\mathscr{E}}{\mathscr{T}}(t_{i+1} - t_i).
		\end{align*}
	\end{itemize}
	It is now clear that $\ARGD(x_0)$ terminates after a finite number of steps.
	Indeed, if it does not, then the above reasoning shows that the algorithm produces an amortized decrease in the cost function $f$ of $\frac{\mathscr{E}}{\mathscr{T}}$ per unit increment of the counter $t$, yet the value of $f$ cannot decrease by more than $f(x_0) - \flow$ because $f$ is globally lower-bounded by $\flow$.

	Accordingly, assume $\ARGD(x_0)$ generates $x_{t_0}, \ldots, x_{t_k}$ and terminates there, returning $x_{t_k}$.
	We know that $f(x_{t_k}) \leq f(x_0)$ and $\|\grad f(x_{t_k})\| \leq \epsilon$.
	Moreover, from the discussion above and $t_0 = 0$, we know that
	\begin{align*}
		f(x_0) - \flow \geq f(x_0) - f(x_{t_k}) = \sum_{i = 0}^{k-1} f(x_{t_i}) - f(x_{t_{i+1}}) \geq \frac{\mathscr{E}}{\mathscr{T}} \sum_{i = 0}^{k-1} t_{i+1} - t_i = \frac{\mathscr{E}}{\mathscr{T}} t_k.
	\end{align*}
	Thus, $t_k \leq \frac{f(x_0) - \flow}{\mathscr{E}} \mathscr{T} \triangleq T_1$.

	How much work does it take to run the algorithm?
	Each (regular) gradient step requires one gradient query and increases the counter by one.
	Each run of $\TSS$ requires at most $2 \mathscr{T}$ gradient queries and $2\mathscr{T} + 3 \leq 3\mathscr{T}$ function queries ($3 \leq \mathscr{T}$ because $\mathscr{T}$ is a positive integer multiple of $4$) and increases the counter by $\mathscr{T}$.
	Therefore, by the time $\ARGD$ produces $x_t$ it has used at most $2t$ gradient queries and $3t$ function queries.
\end{proof}

The two following propositions form the backbone of the proof of Theorem~\ref{thm:masterFOCP}.
Each handles one of the two possible cases in one (outer) iteration of $\ARGD$, namely:
Case~1 is a ``vanilla'' Riemannian gradient descent step, while Case~2 is a call to $\TSS$ to run (modified) AGD in the current tangent space.
The former has a trivial and standard proof.
The latter relies on all lemmas from Section~\ref{sec:agdTxM} and on two additional lemmas introduced later in this section, all following~\citet{jin2018agdescapes}.
\begin{proposition}[Case 1] \label{prop:Case1}
	Fix parameters and assumptions as laid out in Section~\ref{sec:assuparams}.
	Assume $x \in \calM$ satisfies $\|\grad f(x)\| > 2\ell \mathscr{M}$.
	Then, $x_+ = \Retr_{x}(-\eta \grad f(x))$ satisfies $f(x) - f(x_+) \geq \frac{\mathscr{E}}{\mathscr{T}}$.
\end{proposition}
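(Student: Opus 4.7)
The proof is essentially a one-iteration descent lemma followed by parameter bookkeeping, and I would carry it out as follows.

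The plan is to apply the last item of \aref{assu:Mandf} (the ``descent inequality'' for $\hat f_x$ at the origin) with the particular tangent vector $s = -\eta \grad f(x)$. Since $\hat f_x = f \circ \Retr_x$ and $\Retr_x(0) = x$, Lemma~\ref{lem:derivativespullback} evaluated at $s = 0$ gives $\nabla \hat f_x(0) = T_0^* \grad f(x) = \grad f(x)$ (as $T_0 = \Id$). Plugging in yields
\begin{align*}
f(x_+) \;=\; \hat f_x(-\eta \grad f(x)) \;\leq\; f(x) \,-\, \eta \bigl(1 - \tfrac{\ell\eta}{2}\bigr)\|\grad f(x)\|^2.
\end{align*}
Since $\eta = \frac{1}{4\ell}$, the parenthesized factor equals $\frac{7}{8} \geq \frac{1}{2}$, so
\begin{align*}
f(x) - f(x_+) \;\geq\; \tfrac{\eta}{2}\,\|\grad f(x)\|^2.
\end{align*}

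Next, I would use the assumption $\|\grad f(x)\| > 2\ell \mathscr{M}$ to obtain the lower bound $\tfrac{\eta}{2}\|\grad f(x)\|^2 > 2\eta \ell^2 \mathscr{M}^2$, and then verify that this exceeds $\mathscr{E}/\mathscr{T}$ using the definitions in~\eqref{definingparams1} and~\eqref{definingparams2}. Substituting $\eta = 1/(4\ell)$ and $\mathscr{M} = \epsilon \sqrt{\kappa}\,c^{-1}/\ell$ gives $2\eta \ell^2 \mathscr{M}^2 = \frac{\epsilon^2 \kappa}{2\ell} c^{-2}$. On the other hand, using $\kappa = \ell/\sqrt{\hat\rho\epsilon}$, one computes $\mathscr{E}/\mathscr{T} = \epsilon^{7/4}/(\hat\rho^{1/4}\sqrt{\ell})\cdot \chi^{-6} c^{-8}$. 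The ratio of the two is $\tfrac{1}{2}\sqrt{\kappa}\,\chi^{6} c^{6}$, which is $\geq 1$ since $\kappa \geq 2$ (by~\aref{assu:epsilon}), $\chi \geq 1$, and $c \geq 1$ is a sufficiently large universal constant absorbing this inequality (one of the finite list of lower bounds defining $c$).

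There is no real obstacle here; the main thing is to bookkeep the parameter dependencies correctly so that the threshold $2\ell \mathscr{M}$ chosen in $\ARGD$ is exactly what makes a single gradient step produce at least $\mathscr{E}/\mathscr{T}$ cost decrease (which is the amortized decrease needed in Theorem~\ref{thm:masterFOCP} to match the per-step decrease in Case~2). I would also note in passing that this argument uses only the global quadratic-upper-bound clause in~\aref{assu:Mandf}, not any of the ball-localized Lipschitz properties, which is why the assumption in Case~1 is stated globally on $\calM$ and does not require $\|\grad f(x)\| \leq \tfrac{1}{2}\ell b$.
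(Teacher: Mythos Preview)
Your proof is correct and follows essentially the same approach as the paper: apply item~4 of \aref{assu:Mandf} with $s = -\eta\,\grad f(x)$, use $\eta = 1/(4\ell)$ to get a descent of order $\ell\mathscr{M}^2$, then compare parameters. The only cosmetic difference is that the paper keeps the sharper constant $7/8$ (rather than your $1/2$) and defers the parameter comparison to Lemma~\ref{lem:params}, which records $\ell\mathscr{M}^2 = (\chi c)^6\sqrt{\kappa}\,\mathscr{E}/\mathscr{T}$; your explicit computation of the ratio $\tfrac{1}{2}\sqrt{\kappa}\,\chi^6 c^6$ is equivalent.
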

\begin{proof}[Proof of Proposition~\ref{prop:Case1}]
	This follows directly by property 4 in~\aref{assu:Mandf} with $\hat f_x = f \circ \Retr_x$ since $\hat f_x(0) = f(x)$ and $\nabla \hat f_x(0) = \grad f(x)$ by properties of retractions, and also using $\ell\eta = 1/4$:
	\begin{align*}
		f(x_+) & = \hat f_x(-\eta \grad f(x))
		         \leq \hat f_x(0) - \eta \|\grad f(x)\|^2 + \frac{\ell}{2} \|\eta \grad f(x)\|^2
		         \leq f(x) - (7/8) \ell \mathscr{M}^2.
	\end{align*}
	To conclude, it remains to use that $(7/8) \ell \mathscr{M}^2 \geq \frac{\mathscr{E}}{\mathscr{T}}$, as shown in Lemma~\ref{lem:params}.
\end{proof}
The next proposition corresponds mostly to~\citep[Lem.~12]{jin2018agdescapes}.
\begin{proposition}[Case 2] \label{prop:Case2}
	Fix parameters and assumptions as laid out in Section~\ref{sec:assuparams}, with
	\begin{align} \label{chidefintion}
		\chi \geq \log_2(\theta^{-1}) \geq 1.
	\end{align}
	If $x \in \calM$ satisfies $\|\grad f(x)\| \leq 2\ell \mathscr{M}$,
	then $x_{\mathscr{T}} = \TSS(x)$ falls in one of two cases:
	\begin{enumerate}
		\item Either $\|\grad f(x_{\mathscr{T}})\| \leq \epsilon$ and $f(x) - f(x_{\mathscr{T}}) \geq 0$,
		\item Or $\|\grad f(x_{\mathscr{T}})\| > \epsilon$ and $f(x) - f(x_{\mathscr{T}}) \geq \mathscr{E}$.
	\end{enumerate}
\end{proposition}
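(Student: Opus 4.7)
The plan is to split by how $\TSS(x)$ terminates. Since $\ARGD$ calls $\TSS$ without a starting point, $\perturbed = \mathbf{false}$ and $s_0 = 0$, so $E_0 = \hat f_x(0) = f(x)$. The parameter relations collected in Lemma~\ref{lem:params} give $2\ell \mathscr{M} \leq \tfrac12 \ell b$, so the hypothesis $\|\grad f(x)\| \leq 2\ell \mathscr{M}$ places us inside the regime of Lemmas~\ref{lem:TSSballs}--\ref{lem:TSSNCE} throughout the run. The overall strategy is: in each exit branch, either exhibit a cost decrease of at least $\mathscr{E}$, or argue that $\|\grad f(x_\mathscr{T})\| \leq \epsilon$ together with $f(x_\mathscr{T}) \leq f(x)$.

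The easiest exits are those of step~\ref{step:leaveball}. If the exit is triggered by $\|\nabla \hat f_x(s_{j+1})\| \leq \epsilon/2$, then $\nabla \hat f_x(s_{j+1}) = T_{s_{j+1}}^*\, \grad f(x_\mathscr{T})$ together with $\sigmamin(T_{s_{j+1}}) \geq \tfrac12$ (item~3 of~\aref{assu:Mandf}) gives $\|\grad f(x_\mathscr{T})\| \leq \epsilon$, while Hamiltonian monotonicity (Lemma~\ref{lem:TSSEj}) gives $f(x_\mathscr{T}) \leq E_{j+1} \leq E_0 = f(x)$. If instead $\|s_{j+1}\| > b$, applying improve-or-localize (Lemma~\ref{lem:TSStraveldist}) with $q' = 0$ yields $E_{j+1} \leq f(x) - \frac{\|s_{j+1}\|^2}{16\sqrt{\kappa}\eta(j+1)} \leq f(x) - \frac{b^2}{16\sqrt{\kappa}\eta \mathscr{T}}$, and the parameter algebra of Lemma~\ref{lem:params} shows that the subtracted term is at least $\mathscr{E}$.

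The NCE exit at index $j$ splits further. If $\|s_j\| \leq \mathscr{L}$, Lemma~\ref{lem:TSSNCE} directly delivers $\hat f_x(\NCE(x, s_j, v_j)) \leq E_j - 2\mathscr{E} \leq f(x) - 2\mathscr{E}$. When $\|s_j\| > \mathscr{L}$, I fall back on improve-or-localize to bound $E_j \leq f(x) - \frac{\mathscr{L}^2}{16\sqrt{\kappa}\eta\mathscr{T}}$, and since $\NCE$ returns a point with pullback value at most $\hat f_x(s_j) \leq E_j$, the parameters are tuned (again by Lemma~\ref{lem:params}) so that this residual exceeds $\mathscr{E}$.

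The main obstacle is the loop-completion exit, in which $\TSS$ performs all $\mathscr{T}$ iterations without ever triggering~\eqref{eq:NCC} or leaving the ball. Here I would split on whether there exists an iterate with $\|v_j\| \geq \mathscr{M}$: in that case the second clause of Lemma~\ref{lem:TSSEj} yields a per-step decrease of $4\mathscr{E}/\mathscr{T}$, and accumulated over the steps where the event occurs this gives total decrease $\geq \mathscr{E}$. Otherwise all $\|v_j\|$ remain below $\mathscr{M}$ and, because~\eqref{eq:NCC} never triggered, the entire trajectory behaves as AGD on a function that is ``convex along the iterates,'' to which the two AGD-convergence lemmas introduced further in this section apply; they produce $\|\nabla \hat f_x(s_\mathscr{T})\| \leq \epsilon/2$, which transfers to $\|\grad f(x_\mathscr{T})\| \leq \epsilon$ as before, with $f(x_\mathscr{T}) \leq f(x)$ from Hamiltonian monotonicity. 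The subtle part is re-running that accelerated-convergence argument under the radial-only Hessian-Lipschitz property of~\aref{assu:Mandf} and in presence of the capped momentum~\eqref{eq:thetaj}; both are tractable because Lemma~\ref{lem:TSSballs} keeps every $u_j, s_j$ inside $B_x(3b)$, which is precisely the ball on which~\aref{assu:Mandf} applies. The role of the hypothesis $\chi \geq \log_2(\theta^{-1})$ is to guarantee that $\mathscr{T}$ is large enough for this AGD bound to reach the $\epsilon/2$ threshold.
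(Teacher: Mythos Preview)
Your handling of the early-exit branches (Cases 2a, 2b, 2c) matches the paper's proof and is correct. The gap is entirely in the loop-completion branch (Case 2d).

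First, your dichotomy ``there exists $j$ with $\|v_j\| \geq \mathscr{M}$'' versus ``all $\|v_j\| < \mathscr{M}$'' does not close. A single large-momentum step contributes only $4\mathscr{E}/\mathscr{T}$ to the Hamiltonian drop, so ``accumulated over the steps where the event occurs'' does not reach $\mathscr{E}$ unless you can guarantee at least $\mathscr{T}/4$ such steps---which you have not. The paper avoids this by first splitting on whether $E_0 - E_{\mathscr{T}/2} > \mathscr{E}$ (if so, done), and only then looking for a small-momentum index $\tau$ inside the window $\{\mathscr{T}/4,\ldots,\mathscr{T}/2\}$; if no such $\tau$ exists, then $\|v_j\| > \mathscr{M}$ for all $\mathscr{T}/4$ indices in that window, and summing gives $\mathscr{E}$.

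Second, and more seriously, your endgame in the ``all $\|v_j\| < \mathscr{M}$'' branch is logically impossible and misreads the supporting lemmas. In Case~2d the gradient check at step~\ref{step:leaveball} failed at every iteration, so $\|\nabla \hat f_x(s_j)\| > \epsilon/2$ for all $j = 1,\ldots,\mathscr{T}$; you cannot prove $\|\nabla \hat f_x(s_\mathscr{T})\| \leq \epsilon/2$. And neither Lemma~\ref{lem:focpA} nor Lemma~\ref{lem:focpB} asserts a small-gradient conclusion: Lemma~\ref{lem:focpB} bounds only the projection $\|P_\calS \nabla \hat f_x(s_j)\| \leq \epsilon/6$ onto the strongly-convex eigenspace $\calS$, and Lemma~\ref{lem:focpA} proves a Hamiltonian drop $E_{\tau-1} - E_{\tau+\mathscr{T}/4} \geq \mathscr{E}$ assuming the gradient component in $\calS^c$ is \emph{large}. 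The paper's logic runs in the opposite direction from yours: at the index $\tau$ it combines $\|\nabla \hat f_x(s_\tau)\| > \epsilon/2$ (from non-termination) with $\|P_\calS \nabla \hat f_x(s_\tau)\| \leq \epsilon/6$ (from Lemma~\ref{lem:focpB}) to obtain $\|P_{\calS^c}\nabla \hat f_x(s_\tau)\| > \epsilon/6$, which is precisely the hypothesis Lemma~\ref{lem:focpA} needs to output the $\mathscr{E}$-drop (after a further split on $\|s_\tau\| \leq \mathscr{L}$). Thus in Case~2d the proof always produces $f(x) - f(x_\mathscr{T}) \geq \mathscr{E}$; conclusion~1 of the proposition arises only through Case~2c.
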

\begin{proof}[Proof of Proposition~\ref{prop:Case2}]
	By Lemma~\ref{lem:params}, $\|\grad f(x)\| \leq 2\ell\mathscr{M} < \frac{1}{2} \ell b$. Thus, the strongest provisions of~\aref{assu:Mandf} apply at $x$, as do Lemmas~\ref{lem:TSSballs}, \ref{lem:TSSEj}, \ref{lem:TSStraveldist} and~\ref{lem:TSSNCE}.
	Let $u_j, s_j, v_j$ for $j = 0, 1, \ldots$ be the vectors generated by the computation of $x_{\mathscr{T}} = \TSS(x)$.
	Note that $s_0 = v_0 = 0$.
	There are several cases to consider, based on how $\TSS$ terminates:
	\begin{itemize}
		\item (Case 2a) The negative curvature condition \eqref{eq:NCC} triggers with $(x, s_j, u_j)$. 
			  There are two cases to check.
			  Either $\|s_j\| \leq \mathscr{L}$, in which case Lemma~\ref{lem:TSSEj} tells us $E_j \leq E_0 = f(x)$ and Lemma~\ref{lem:TSSNCE} further tells us that
			  \begin{align*}
			  	f(x_{\mathscr{T}}) = \hat f_x(\NCE(x, s_j, v_j)) \leq E_j - 2\mathscr{E} \leq f(x) - 2\mathscr{E}.
			  \end{align*}
			  Or $\|s_j\| > \mathscr{L}$, in which case Lemma~\ref{lem:TSStraveldist} used with $q = j < \mathscr{T}$ and $s_0 = 0$ implies
			  \begin{align*}
			  	E_j \leq f(x) - \frac{\mathscr{L}^2}{16 \sqrt{\kappa} \eta \mathscr{T}} = f(x) - \mathscr{E}.
			  \end{align*}
			  (See Lemma~\ref{lem:params} for that last equality.)
			  Owing to how $\NCE$ works, we always have $f(x_{\mathscr{T}}) = \hat f_x(\NCE(x, s_j, v_j)) \leq \hat f_x(s_j) \leq E_j$ (the last inequality is by definition of $E_j$~\eqref{eq:Ej}).
			  Thus, we conclude that $f(x_{\mathscr{T}}) \leq f(x) - \mathscr{E}$.
			  		
		\item (Case 2b) The iterate $s_{j+1}$ leaves the ball of radius $b$, that is, $\|s_{j+1}\| > b$. 
			  In this case, apply Lemma~\ref{lem:TSStraveldist} with $q = j + 1 \leq \mathscr{T}$ and $s_0 = 0$ to claim
			  \begin{align*}
				f(x_{\mathscr{T}}) = \hat f_x(s_{j+1}) \leq E_{j+1} \leq f(x) - \frac{\|s_{j+1}\|^2}{16 \sqrt{\kappa} \eta \mathscr{T}} \leq f(x) - \frac{\mathscr{L}^2}{16 \sqrt{\kappa} \eta \mathscr{T}} = f(x) - \mathscr{E}.
			  \end{align*}
			  (The first inequality is by definition of $E_{j+1}$~\eqref{eq:Ej}; subsequently, we use $\|s_{j+1}\| > b > \mathscr{L}$ as in Lemma~\ref{lem:params}.)
		
		\item (Case 2c) The iterate $s_{j+1}$ satisfies $\|\nabla \hat f_x(s_{j+1})\| \leq \epsilon/2$. 
		      Recall the chain rule identity relating gradients of $f$ and gradients of the pullback $\hat f_x = f \circ \Retr_x$ with $T_s = \D\Retr_x(s)$:
		      \begin{align*}
		      	\nabla \hat f_x(s) = T_s^* \grad f(\Retr_x(s)).
		      \end{align*}
		      In our situation, $x_{\mathscr{T}} = \Retr_x(s_{j+1})$ and $\|s_{j+1}\| \leq b$ (otherwise, Case 2b applies).
		      Thus, \aref{assu:Mandf} ensures $\sigmamin(T_{s_{j+1}}) \geq \frac{1}{2}$ and we deduce that
		      \begin{align*}
		      	\|\grad f(x_{\mathscr{T}})\| = \|(T_{s_{j+1}}^*)^{-1} \nabla \hat f_x(s_{j+1})\| \leq \|(T_{s_{j+1}}^*)^{-1}\| \|\nabla \hat f_x(s_{j+1})\|\| \leq 2 \cdot \frac{\epsilon}{2} = \epsilon.
		      \end{align*}
		
		\item (Case 2d) None of the other events occur: $\TSS(x)$ runs its $\mathscr{T}$ iterations in full. 
			  In this case, we apply the logic in the proof of~\citep[Lem.~12]{jin2018agdescapes}, as follows.
			  We consider two cases.
			  In the first case, $E_{0} - E_{\mathscr{T}/2} > \mathscr{E}$.
			  Then, we apply Lemma~\ref{lem:TSSEj} to claim that $E_{0} - E_{\mathscr{T}} \geq E_{0} - E_{\mathscr{T}/2} \geq \mathscr{E}$.
			  Moreover, $E_0 = f(x)$ and $E_{\mathscr{T}} \geq \hat f_x(s_\mathscr{T}) = f(x_{\mathscr{T}})$. Thus, in this case, $f(x) - f(x_{\mathscr{T}}) \geq \mathscr{E}$.
			  In the second case, $E_{0} - E_{\mathscr{T}/2} \leq \mathscr{E}$.
			  Then, Lemma~\ref{lem:focpB} applies and we learn the following:
			  Let $\calS$ denote the linear subspace of $\T_x\calM$ spanned by the eigenvectors of $\nabla^2 \hat f_x(0)$ associated to eigenvalues strictly larger than $\frac{\theta^2}{\eta(2-\theta)^2}$.
			  Let $P_\calS$ denote orthogonal projection to $\calS$.
			  For each $j$ in $\{ \mathscr{T}/4, \ldots, \mathscr{T}/2 \}$ we have
			  \begin{align*}
				  \|P_\calS \nabla \hat f_x(s_j)\| & \leq \epsilon / 6 & \textrm{ and } && \inner{P_\calS v_j}{\nabla^2 \hat f_x(0)[P_\calS v_j]} & \leq \sqrt{\hat\rho \epsilon} \mathscr{M}^2.
			  \end{align*}
			  Let $\tau$ be the first index in the range $\{\mathscr{T}/4, \ldots, \mathscr{T}\}$ for which $\|v_\tau\| \leq \mathscr{M}$.
			  Again, there are two possibilities.
			  In the first case, $\tau > \mathscr{T}/2$.
			  Then, $\|v_j\| > \mathscr{M}$ for all $j$ in $\{\mathscr{T}/4, \ldots, \mathscr{T}/2\}$.
			  The last part of Lemma~\ref{lem:TSSEj} implies that, for each such $j$, $E_{j} - E_{j+1} \geq \frac{4\mathscr{E}}{\mathscr{T}}$.
			  It follows that $E_{\mathscr{T}/4} - E_{\mathscr{T}/2} \geq \mathscr{E}$.
			  Conclude this case with Lemma~\ref{lem:TSSEj} which justifies these statements: $f(x) = E_0$, $f(x_{\mathscr{T}}) = \hat f_x(s_{\mathscr{T}}) \leq E_{\mathscr{T}}$, and:
			  \begin{align*}
			  	f(x) - f(x_{\mathscr{T}}) \geq E_0 - E_{\mathscr{T}} \geq E_{\mathscr{T}/4} - E_{\mathscr{T}/2} \geq \mathscr{E}.
			  \end{align*}
			  In the second case, $\tau \in \{\mathscr{T}/4, \ldots, \mathscr{T}/2\}$.
			  We aim to apply Lemma~\ref{lem:focpA}: there are a few preconditions to check.
			  Here is what we already know:
			  \begin{align*}
			  	\|v_\tau\| & \leq \mathscr{M}, & \inner{P_\calS v_\tau}{\nabla^2 \hat f_x(0)[P_\calS v_\tau]} & \leq \sqrt{\hat\rho \epsilon} \mathscr{M}^2, & \textrm{ and } && \|P_\calS \nabla \hat f_x(s_\tau)\| & \leq \epsilon / 6.
			  \end{align*}
			  Regarding the third one above: we know that $\|\nabla \hat{f}_x(s_{\tau})\| > \epsilon / 2$ because $\TSS(x)$ did not terminate with $s_\tau$.
			  We deduce that
			  \begin{align*}
			  	\|\nabla \hat f_x(s_\tau) - P_{\calS} \nabla \hat f_x(s_\tau)\| & \geq \|\nabla \hat f_x(s_\tau)\| - \|P_{\calS} \nabla \hat f_x(s_\tau)\| \geq \frac{\epsilon}{2} - \frac{\epsilon}{6} > \frac{\epsilon}{6}.
			  \end{align*}
			  We now have a final pair of cases to check.
			  Either $\|s_\tau\| \leq \mathscr{L}$, in which case Lemma~\ref{lem:focpA} applies: it follows that $E_{\tau-1} - E_{\tau + \mathscr{T}/4} \geq \mathscr{E}$, and by arguments similar as above we conclude that $f(x) - f(x_{\mathscr{T}}) \geq \mathscr{E}$.
			  Or $\|s_\tau\| > \mathscr{L}$, in which case Lemma~\ref{lem:TSStraveldist} implies (using $s_0 = 0$):
			  \begin{align*}
			  	f(x_{\mathscr{T}}) \leq E_{\mathscr{T}} \leq E_\tau \leq f(x) - \frac{\mathscr{L}^2}{16\sqrt{\kappa}\eta\tau} \leq f(x) - \mathscr{E}.
			  \end{align*}
			  (For the second and last inequalities, we use $\tau < \mathscr{T}$ and Lemmas~\ref{lem:TSSEj} and~\ref{lem:params}.)
	\end{itemize}
	This covers all possibilities.
\end{proof}



The next two lemmas support Proposition~\ref{prop:Case2}.
Proofs are in Appendix~\ref{app:FOCP}.
They correspond to~\citep[Lem.~21 and ~22]{jin2018agdescapes}.
Notice that it is in Lemma~\ref{lem:focpB} that the condition on $\chi$ originates, then finds its way into the conditions of Theorem~\ref{thm:masterFOCP} through Proposition~\ref{prop:Case2}.
Ultimately, this causes the polylogarithmic factor in the complexity of Theorem~\ref{thm:masterFOCPexp}.
\begin{lemma} \label{lem:focpA}
	Fix parameters and assumptions as laid out in Section~\ref{sec:assuparams}.
	Let $x \in \calM$ satisfy $\|\grad f(x)\| \leq \frac{1}{2}\ell b$.
	Let $\calS$ denote the linear subspace of $\T_x\calM$ spanned by the eigenvectors of $\nabla^2 \hat f_x(0)$ associated to eigenvalues strictly larger than $\frac{\theta^2}{\eta(2-\theta)^2}$. 
	Let $P_\calS$
	denote orthogonal projection to $\calS$. 
	Assume $\TSS(x)$
	runs its course in full.
	
	If there exists $\tau \in \{\mathscr{T}/4, \ldots, \mathscr{T}/2\}$ such that
	\begin{align*}
		\|s_\tau\| & \leq \mathscr{L}, & \|\nabla \hat f_x(s_\tau) - P_{\calS} \nabla \hat f_x(s_\tau)\| & \geq \epsilon / 6, \\
		\|v_\tau\| & \leq \mathscr{M}, \textrm{ and } & \inner{P_\calS v_\tau}{\nabla^2 \hat f_x(0)[P_\calS v_\tau]} & \leq \sqrt{\hat \rho \epsilon} \mathscr{M}^2,
	\end{align*}
	then $E_{\tau-1} - E_{\tau + \mathscr{T}/4} \geq \mathscr{E}$.
\end{lemma}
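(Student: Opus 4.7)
The plan is to track the Hamiltonian over the $\mathscr{T}/4 + 1$ steps from index $\tau - 1$ to $\tau + \mathscr{T}/4$, closely following the Euclidean argument of~\citet{jin2018agdescapes} while accounting for the only-partial Lipschitzness of $\nabla^2 \hat f_x$ provided by~\aref{assu:Mandf}. The intuition is that a large gradient component in the ``non-convex'' subspace $\calS^\perp$, where $\nabla^2 \hat f_x(0)$ has eigenvalues at most $\frac{\theta^2}{\eta(2-\theta)^2}$, drives the AGD iterates to escape: their $\calS^\perp$-momentum accumulates geometrically, which ultimately forces either a large displacement (and a Hamiltonian drop through Lemma~\ref{lem:TSStraveldist}) or a large velocity (and a Hamiltonian drop through Lemma~\ref{lem:TSSEj}).

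First I would dichotomize on whether some iterate $s_j$ with $\tau \leq j \leq \tau + \mathscr{T}/4$ leaves the ball of radius $\mathscr{L}$ in $\T_x\calM$. If so, Lemma~\ref{lem:TSStraveldist} applied from $s_{\tau-1}$ immediately yields $E_{\tau-1} - E_j \geq \frac{\|s_j - s_{\tau-1}\|^2}{16\sqrt{\kappa}\eta(j-\tau+1)}$, which by Lemma~\ref{lem:params} exceeds $\mathscr{E}$. Otherwise, every iterate stays in $B_x(\mathscr{L})$, and I can linearize: write $\nabla \hat f_x(s) = \nabla \hat f_x(0) + \nabla^2 \hat f_x(0)[s] + \Delta(s)$ with $\|\Delta(s)\| \leq \tfrac{\hat\rho}{2}\|s\|^2 \leq \tfrac{\hat\rho}{2}\mathscr{L}^2$. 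The AGD recursion then decouples block-diagonally along $\calS$ and $\calS^\perp$ up to this residual. In the $\calS^\perp$ block, the standard two-step transition matrix of AGD has spectral radius at least $1/\theta$ on non-trivial modes; combined with $\|(I - P_\calS) \nabla \hat f_x(s_\tau)\| \geq \epsilon/6$ and initial momentum $\|v_\tau\| \leq \mathscr{M}$, an eigenvector decomposition shows $\|P_{\calS^\perp} v_{\tau+k}\|$ grows at least like $(1/\theta)^k$ times $\epsilon$ up to the residual. The condition $\chi \geq \log_2(\theta^{-1})$ ensures that over the $\mathscr{T}/4 = \Theta(\sqrt{\kappa}\chi)$ steps the growth comfortably exceeds $\mathscr{M}$.

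Once $\|v_j\|$ has crossed $\mathscr{M}$, Lemma~\ref{lem:TSSEj} produces per-step Hamiltonian decrease of $4\mathscr{E}/\mathscr{T}$; summing over the remaining $\Theta(\mathscr{T})$ steps in which the speed stays above threshold yields the required total decrease of $\mathscr{E}$. The fourth hypothesis $\innersmall{P_\calS v_\tau}{\nabla^2 \hat f_x(0)[P_\calS v_\tau]} \leq \sqrt{\hat\rho \epsilon}\mathscr{M}^2$ is used to ensure that the $\calS$-block cannot secretly contribute a large positive ``momentum-energy'' term which, by virtue of decrease of the Hamiltonian, would otherwise have to be traded off against the $\calS^\perp$ growth; with this bound, the $\calS$-block contribution stays negligible against $\mathscr{E}$.

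The main obstacle will be controlling the accumulated linearization error $\Delta(s_{\tau+k})$ over $\Theta(\sqrt{\kappa})$ iterations. Since item~2 of~\aref{assu:Mandf} only bounds $\|\nabla^2 \hat f_x(s) - \nabla^2 \hat f_x(0)\|$ (not differences between arbitrary pairs $s,s'$), the analysis must anchor all comparisons at the origin, and the residual propagated through the two-step recursion behaves like $\hat\rho \mathscr{L}^2$ multiplied by the matrix-geometric-series norm of the AGD transition. A direct computation using the parameter relations of Lemma~\ref{lem:params} (notably $\mathscr{L}^2 \hat\rho \ll \epsilon$ and $\eta \ell = 1/4$) shows this error is dominated by the leading $\calS^\perp$-growth. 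Apart from this perturbative bookkeeping, the linear-algebra step is identical to the Euclidean one, which is why the overall structure of the proof transfers without surprises.
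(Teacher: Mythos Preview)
There is a genuine gap: the dynamical picture you describe for the $\calS^\perp$ block is wrong, and the mechanism you propose (exponential growth of the velocity, then per-step Hamiltonian decrease via Lemma~\ref{lem:TSSEj}) is not what drives the result.

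For $m\in S^c$, the eigenvalues of $\mathcal H=\nabla^2\hat f_x(0)$ satisfy $\eta\lambda_m\le\frac{\theta^2}{(2-\theta)^2}$, so the $2\times 2$ block $A_m$ in~\eqref{eq:Am} has spectral radius close to~$1$ (for $\eta\lambda_m=0$ its eigenvalues are exactly $1$ and $1-\theta$), not ``at least $1/\theta$.'' There is no exponential blow-up of $\|P_{\calS^\perp}v_{\tau+k}\|$; the $\calS^\perp$ directions are the \emph{slowly contracting} ones, and a persistent gradient there produces large \emph{displacement}, not large velocity. Concretely, the paper argues by contradiction: if $E_{\tau-1}-E_{\tau+\mathscr T/4}<\mathscr E$, then Lemma~\ref{lem:TSStraveldist} forces $\|s_{\tau+j}-s_\tau\|<\mathscr L/2$ throughout; but the recursion~\eqref{eq:second} together with the estimate $\sum_{k=0}^{j-1}a_{m,k}\ge \frac{1}{c_4\theta^2}$ for $m\in S^c$ and $j=\mathscr T/4$ (Lemma~30 of~\citet{jin2018agdescapes}) shows $\|s_{\tau+\mathscr T/4}-s_\tau\|\gtrsim \frac{\eta}{\theta^2}\cdot\frac{\epsilon}{6}\sim\sqrt{\epsilon/\hat\rho}>\mathscr L/2$, a contradiction. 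The factor $1/\theta^2$ comes from \emph{summing} the $a_{m,k}$, not from a spectral radius.

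Two further points. First, the fourth hypothesis $\inner{P_\calS v_\tau}{\mathcal H P_\calS v_\tau}\le\sqrt{\hat\rho\epsilon}\,\mathscr M^2$ is not used as you describe; it is combined with the fact that \eqref{eq:NCC} did \emph{not} trigger at $(x,s_\tau,u_\tau)$ to lower-bound $\inner{v_\tau}{\mathcal H v_\tau}$ and hence to control the momentum's contribution to the displacement formula (the term $\|P_{\calS^c}(\tilde v_j)\|$ in the paper's notation). Second, the condition $\chi\ge\log_2(\theta^{-1})$ plays no role here---it enters only in Lemma~\ref{lem:focpB}; what is needed for Lemma~\ref{lem:focpA} is merely $\mathscr T/4\ge 1+2/\theta$, which follows from $c$ large enough. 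Your linearization-error bookkeeping (anchoring at the origin, residual of size $\hat\rho\mathscr L^2$) is on the right track and matches the paper's treatment via Lemma~\ref{Lemma14}, but it feeds into the displacement estimate, not a velocity estimate.
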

\begin{lemma} \label{lem:focpB}
	Fix parameters and assumptions as laid out in Section~\ref{sec:assuparams}, with
	\begin{align*}
		\chi \geq \log_2(\theta^{-1}) \geq 1.
	\end{align*}
	Let $x \in \calM$ satisfy $\|\grad f(x)\| \leq 2\ell \mathscr{M}$. 
	Let $\calS$ denote the linear subspace of $\T_x\calM$ spanned by the eigenvectors of $\nabla^2 \hat f_x(0)$ associated to eigenvalues strictly larger than $\frac{\theta^2}{\eta(2-\theta)^2}$.
	Let $P_\calS$ denote orthogonal projection to $\calS$.
	Assume $\TSS(x)$ runs its course in full.
	
	If $E_0 - E_{\mathscr{T}/2} \leq \mathscr{E}$, then for each $j$ in $\{ \mathscr{T}/4, \ldots, \mathscr{T}/2 \}$ we have
	\begin{align*}
		\|P_\calS \nabla \hat f_x(s_j)\| & \leq \epsilon / 6 & \textrm{ and } && \inner{P_\calS v_j}{\nabla^2 \hat f_x(0)[P_\calS v_j]} & \leq \sqrt{\hat\rho \epsilon} \mathscr{M}^2.
	\end{align*}
\end{lemma}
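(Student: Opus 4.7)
The plan is to follow the proof of~\citep[Lem.~22]{jin2018agdescapes} as closely as possible, adapting only where the pullback's partial (radial) Hessian Lipschitzness forces modifications. The high-level idea is that on the subspace $\calS$ where $H \triangleq \nabla^2 \hat f_x(0)$ has eigenvalues above $\mu \triangleq \frac{\theta^2}{\eta(2-\theta)^2} = \Theta(\sqrt{\hat\rho\epsilon})$, the AGD parameters fixed in~\eqref{definingparams1} are tuned for strongly convex quadratics. Linearizing $\nabla \hat f_x$ around $0$ therefore gives a linear AGD recursion in $\calS$ which contracts; this contraction forces both $\|P_\calS \nabla \hat f_x(s_j)\|$ and the ``momentum energy'' $\innersmall{P_\calS v_j}{H[P_\calS v_j]}$ to stay small, provided the nonlinear perturbation is under control, and here is where the hypothesis $E_0 - E_{\mathscr{T}/2} \leq \mathscr{E}$ enters.

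Concretely, I would first use the improve-or-localize bound (Lemma~\ref{lem:TSStraveldist}) together with $E_0 - E_{\mathscr{T}/2} \leq \mathscr{E}$ to conclude that $\|s_j\|, \|u_j\| = O(\mathscr{L})$ and $\sum_{j=0}^{\mathscr{T}/2 - 1}\|s_{j+1} - s_j\|^2 = O(\sqrt{\kappa}\eta \mathscr{T} \mathscr{E})$ for $j \leq \mathscr{T}/2$; this keeps iterates well inside the ball where item~2 of~\aref{assu:Mandf} is available. Writing the AGD step as $s_{j+1} = u_j - \eta(\nabla \hat f_x(0) + H u_j + \delta_j)$ with $\|\delta_j\| \leq \frac{\hat\rho}{2}\|u_j\|^2$ via a Taylor-with-remainder argument using only the radial bound $\|\nabla^2 \hat f_x(s) - H\| \leq \hat\rho \|s\|$, and then projecting with $P_\calS$ (which commutes with $H$), I obtain a linear recursion on $(P_\calS s_j, P_\calS v_j)$ driven by $H|_\calS$ and forced by $P_\calS \nabla \hat f_x(0) + P_\calS \delta_j$. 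Diagonalizing $H|_\calS$ reduces the analysis to a collection of scalar two-dimensional AGD recursions, one per eigenvalue $\lambda \in [\mu, \ell]$, whose transition matrices can be written down explicitly and their spectra bounded as in~\citep{jin2018agdescapes}.

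Bounding the contributions then combines three ingredients: (i) on each eigenspace of $H|_\calS$, the AGD transition matrix has spectral radius $\leq 1 - \Omega(\theta)$, yielding geometric contraction over $\Omega(\mathscr{T})$ iterations; (ii) the baseline forcing $P_\calS \nabla \hat f_x(0)$ is small because $\|\nabla \hat f_x(0)\| = \|\grad f(x)\| \leq 2\ell \mathscr{M}$; and (iii) the cumulative nonlinear perturbation $\sum_j \|\delta_j\|^2$ is controlled via the localization bound and the identities of Lemma~\ref{lem:params}. The main obstacle will be ingredient~(i): the contraction must dominate the accumulated perturbation starting from $s_0 = 0$, which is exactly why we require $\chi \geq \log_2(\theta^{-1})$, making $\mathscr{T} = \sqrt{\kappa}\chi c$ large enough that $(1-\Omega(\theta))^{\mathscr{T}/4}$ kills the initial transient in $\calS$. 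Once the contraction-versus-perturbation trade-off is established, the two claimed bounds $\epsilon/6$ and $\sqrt{\hat\rho\epsilon}\,\mathscr{M}^2$ follow from bookkeeping with the parameter identities in Lemma~\ref{lem:params}, essentially verbatim from the Euclidean proof, since every use of full Hessian Lipschitzness in~\citep{jin2018agdescapes} is replaced by the radial bound at $s$ with $\|s\| \lesssim \mathscr{L}$, which suffices.
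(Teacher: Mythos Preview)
Your proposal is correct and follows essentially the same approach as the paper: localize via Lemma~\ref{lem:TSStraveldist}, write the iterates through the linear recursion with matrix $A$ (Lemma~\ref{Lemma13}) plus the perturbation $\delta_k$, diagonalize in the eigenbasis of $\mathcal H$, exploit the contraction on $\calS$ via the entrywise bounds $\max(|a_{m,j}|,|b_{m,j}|)\leq (j+1)(1-\theta)^{j/2}$ and the identity $1-\eta\lambda_m\sum_k a_{m,k}=a_{m,j}-b_{m,j}$, and control the perturbation with Lemma~\ref{Lemma14} and the radial Hessian bound. One minor refinement: the paper does not bound the cumulative perturbation through $\sum_j\|\delta_j\|^2$ alone but rather through $\|\delta_{j-1}\|^2 + j\sum_k\|\delta_k-\delta_{k-1}\|^2$ (the Abel-summation trick encoded in \citep[Lem.~34]{jin2018agdescapes}), and the second claim on $\innersmall{P_\calS v_j}{\mathcal H P_\calS v_j}$ requires the companion bound on $\sum_k(a_{m,k}-a_{m,k-1})\epsilon_k$; but these are exactly the ``bookkeeping'' steps you anticipate.
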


\add{
In Lemmas~\ref{lem:focpA} and~\ref{lem:focpB}, if $\calS$ is empty then $P_{\calS}$ maps all vectors to the zero vector, and the statements still hold.
}

\section{Second-order critical points} \label{sec:socp}

As discussed in the previous section, $\ARGD$ produces $\epsilon$-approximate first-order critical points at an accelerated rate, deterministically.
Such a point might happen to be an approximate second-order critical point, or it might not.
In order to produce approximate second-order critical points, $\PARGD$ builds on top of $\ARGD$ as follows.

Whenever $\ARGD$ produces a point with gradient smaller than $\epsilon$, $\PARGD$ generates a random vector $\xi$ close to the origin in the current tangent space and runs $\TSS$ starting from that perturbation.
The run of $\TSS$ itself is deterministic.
However, the randomized initialization has the following effect: if the current point is not an approximate second-order critical point, then with high probability the sequence generated by $\TSS$ produces significant cost decrease.
Intuitively, this is because the current point is a saddle point, and gradient descent-type methods slowly but likely escape saddles.
If this happens, we simply proceed with the algorithm.
Otherwise, we can be reasonably confident that the point from which we ran the perturbed $\TSS$ is an approximate second-order critical point, and we terminate there.

\begin{algorithm}[t]
	\caption{$\PARGD(x_0)$ with $x_0 \in \calM$ and parameters $\epsilon, \ell, \eta, b, \theta, \gamma, s, r, \mathscr{T}, \mathscr{E}, \mathscr{M}$} 
	\label{algo:PARGD}
	\begin{algorithmic}[1]
		\State $t \gets 0$
		\While {\textbf{true}} 
			\If {$\norm{\grad f(x_t)} > 2 \ell \mathscr{M}$}
				\State $x_{t+1} = \Retr_{x_t}(-\eta \grad f(x_t))$ \Comment{Case 1: one Riemannian gradient step}
				\State {$t \gets t+1$}
			\ElsIf {$\norm{\grad f(x_t)} > \epsilon$}
				\State  $x_{t+\mathscr{T}} = \TSS(x_t)$ \Comment{Case 2: accelerated gradient in $\T_{x_t}\calM$}
				\State $t \gets t + \mathscr{T}$
			\Else
				\State $\xi \sim \text{Uniform}(B_{x_t}(r))$ \Comment{Random perturbation}
				\State $x_{t+\mathscr{T}} = \TSS(x_t, \xi)$ \Comment{Case 3: Perturbed accelerated gradient in $\T_{x_t}\calM$}
				\If {$f(x_t) - f(x_{t+\mathscr{T}}) < \frac{1}{2}\mathscr{E}$}
					\State \textbf{return} $x_t$ \Comment{Approximate FOCP, likely an approximate SOCP}
				\EndIf
				\State $t \gets t + \mathscr{T}$
			\EndIf
		\EndWhile
	\end{algorithmic}
\end{algorithm}

Our main result regarding $\PARGD$, namely, Theorem~\ref{thm:masterSOCP}, states that it computes approximate second-order critical points with high probability in a bounded number of iterations.
As worked out in Theorem~\ref{thm:masterSOCPexp}, this bound scales as $\epsilon^{-7/4}$, up to polylogarithmic terms which include a dependency in the dimension of the manifold and the probability of success.

Mirroring Section~\ref{sec:focp}, the proof of Theorem~\ref{thm:masterSOCP} rests on the two propositions of that section and on an additional proposition introduced hereafter in this section.
The latter proposition rests on a lemma introduced later still.
\begin{theorem} \label{thm:masterSOCP}
	Pick any $x_0 \in \calM$.
	Fix parameters and assumptions as laid out in Section~\ref{sec:assuparams},
	with $d = \dim \calM$, $\delta \in (0, 1)$, any $\Delta_f \geq \max\!\left(f(x_0) - \flow, \sqrt{\frac{\epsilon^3}{\hat{\rho}}}\right)$ and
	\begin{align*}
		\chi \geq \log_2\!\left( \frac{d^{1/2} \ell^{3/2} \Delta_f}{(\hat \rho \epsilon)^{1/4} \epsilon^2 \delta} \right) \geq \log_2(\theta^{-1}) \geq 1.
	\end{align*}
	The call to $\PARGD(x_0)$ returns $x_t \in \calM$ satisfying $f(x_t) \leq f(x_0)$, $\|\grad f(x_t)\| \leq \epsilon$ and (with probability at least $1-2\delta$) also $\lambdamin(\nabla^2 \hat f_{x_t}(0)) \geq -\sqrt{\hat\rho \epsilon}$ with
	\begin{align}
		t + \mathscr{T} \leq T_2 \triangleq \left( 2 + 4\frac{f(x_0) - \flow}{\mathscr{E}} \right) \mathscr{T}.
	\end{align}
	To reach termination, the algorithm requires at most $2T_2$ pullback gradient queries and $4T_2$ function queries (but no Hessian queries), and a similar number of calls to the retraction.
\end{theorem}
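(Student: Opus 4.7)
The plan is to mirror the proof of Theorem~\ref{thm:masterFOCP}, adding the treatment of Case 3 (the perturbed $\TSS$ call) and tracking randomness through a union bound. First, I would state and prove an auxiliary ``Case 3 proposition'' complementing Propositions~\ref{prop:Case1} and~\ref{prop:Case2}. Its content would be: whenever $\|\grad f(x_t)\| \leq \epsilon$ and $\lambdamin(\nabla^2 \hat f_{x_t}(0)) < -\sqrt{\hat\rho\epsilon}$, the perturbed call $x_{t+\mathscr{T}} = \TSS(x_t, \xi)$ with $\xi \sim \mathrm{Uniform}(B_{x_t}(r))$ yields $f(x_t) - f(x_{t+\mathscr{T}}) \geq \tfrac{1}{2}\mathscr{E}$ with probability at least $1-\delta_0$, where $\delta_0$ is tied to $\chi$ via the hypothesis on $\log_2$.

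With that in hand, the deterministic bookkeeping proceeds as in Theorem~\ref{thm:masterFOCP}: Case 1 gives amortized decrease $\mathscr{E}/\mathscr{T}$ per unit of $t$ (Proposition~\ref{prop:Case1}); Case 2 either returns an $\epsilon$-FOCP or gives decrease $\mathscr{E}$ per $\mathscr{T}$ units of $t$ (Proposition~\ref{prop:Case2}); and a non-terminating Case 3 gives decrease $\tfrac{1}{2}\mathscr{E}$ per $\mathscr{T}$ units of $t$. Telescoping against $f\geq\flow$ yields $t + \mathscr{T} \leq T_2$, with the extra $\mathscr{T}$ accounting for the final (terminating) Case 3 call, which may produce less decrease. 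The query count is then read off exactly as in the proof of Theorem~\ref{thm:masterFOCP}, since each $\TSS$ call costs at most $2\mathscr{T}$ gradient and $2\mathscr{T}+3$ function queries.

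For the SOCP guarantee, observe that $\PARGD$ can only terminate from Case 3, so the returned $x_t$ automatically satisfies $\|\grad f(x_t)\| \leq \epsilon$. It fails the SOCP condition only if the Case 3 proposition's bad event occurred at the terminating iteration. Applying a union bound over the at most $T_2/\mathscr{T} = 2 + 4(f(x_0)-\flow)/\mathscr{E}$ Case 3 calls, and using the hypothesis $\chi \geq \log_2\!\bigl( d^{1/2}\ell^{3/2}\Delta_f/((\hat\rho\epsilon)^{1/4}\epsilon^2\delta) \bigr)$ to select $\delta_0$ small enough, the total failure probability is bounded by $2\delta$: one $\delta$ for the terminating iteration's SOCP verdict, one for all intermediate Case 3 calls collectively needing to behave well in order for the telescoping decrease argument to apply.

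The hard part will be the Case 3 proposition itself, i.e., the saddle-escape argument carried out in a tangent space. I would adapt the coupling scheme of~\citet{jin2018agdescapes}: pick a unit eigenvector $e$ of $\nabla^2 \hat f_{x_t}(0)$ for its smallest (negative) eigenvalue, and couple two hypothetical perturbations $\xi_1, \xi_2 \in B_{x_t}(r)$ that differ only along $e$. Using the partial Hessian Lipschitz bound from Theorem~\ref{thm:pullbacklipschitz}, one shows that along the paired $\TSS$ trajectories the component along $e$ of their gap grows geometrically at rate tied to $\sqrt{\hat\rho\epsilon/\ell}$, so after $\mathscr{T}$ steps at least one trajectory must leave $B_{x_t}(\mathscr{L})$ and hence (via Lemmas~\ref{lem:TSStraveldist} or~\ref{lem:TSSNCE}) produce Hamiltonian drop at least $\mathscr{E}$. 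A direction-wise volume comparison in $\T_{x_t}\calM$ then bounds the measure of the ``stuck region'' of perturbations along $e$. The Riemannian complications are two: (i) iterates must stay in $B_{x_t}(3b)$, handled by the capped momentum step~\eqref{eq:thetaj} and the ball-exit check in $\TSS$ (Lemma~\ref{lem:TSSballs}); and (ii) the Hessian Lipschitz bound is only \emph{radial}, i.e., with respect to the origin rather than between arbitrary iterates. Both issues are managed by keeping every comparison anchored at the origin of $\T_{x_t}\calM$, so that $\|\nabla^2\hat f_{x_t}(s)-\nabla^2\hat f_{x_t}(0)\|\leq\hat\rho\|s\|$ can be invoked uniformly along the trajectory.
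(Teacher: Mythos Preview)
Your overall plan matches the paper's approach closely, including the Case~3 proposition (this is Proposition~\ref{prop:Case3}) and the coupling-based saddle-escape argument (Lemma~\ref{lem:socp}). Two points, however, need correction.

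First, the termination bound $t + \mathscr{T} \leq T_2$ is \emph{deterministic}, not probabilistic. The algorithm explicitly checks whether $f(x_t) - f(x_{t+\mathscr{T}}) \geq \tfrac{1}{2}\mathscr{E}$ after each Case~3 call and terminates if not; hence every non-terminating Case~3 step \emph{does} produce the required decrease, by construction. Your split of the failure probability into ``one $\delta$ for the terminating step, one $\delta$ for intermediate steps collectively needing to behave well for the telescoping'' is therefore misconceived: the telescoping argument requires no probabilistic input. The only stochastic question is whether the terminating point is an SOCP. The union bound over all (at most $T_2/\mathscr{T}$) Case~3 calls bounds the chance that \emph{any} of them terminates at a non-SOCP, with per-step failure probability $\tfrac{\delta\mathscr{E}}{3\Delta_f}$; the $2\delta$ then drops out of the arithmetic $\bigl(2 + 4\frac{f(x_0)-\flow}{\mathscr{E}}\bigr)\frac{\delta\mathscr{E}}{3\Delta_f} \leq 2\delta$, not from a two-part split.

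Second, you gloss over the Case~2 $\to$ Case~3 transition. When Case~2 outputs a point with gradient norm $\leq \epsilon$, it may give \emph{zero} decrease (Proposition~\ref{prop:Case2}), and the next step is necessarily Case~3. The paper handles this by pairing: over the two steps combined one gets $\geq \tfrac{1}{2}\mathscr{E}$ decrease for $2\mathscr{T}$ units of $t$, i.e., amortized rate $\mathscr{E}/(4\mathscr{T})$---this is where the factor $4$ in $T_2$ originates. Moreover, at termination one must budget $2\mathscr{T}$ (not just $\mathscr{T}$) of slack: the final Case~3 step \emph{and} a possibly preceding zero-decrease Case~2 step. Your accounting with a single extra $\mathscr{T}$ would not recover $T_2$ as stated.
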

Notice how this result gives a (probabilistic) guarantee about the smallest eigenvalue of the Hessian of the pullback $\hat f_x$ at $0$ rather than about the Hessian of $f$ itself at $x$.
Owing to Lemma~\ref{lem:derivativespullback}, the two are equal in particular when we use the exponential retraction (more generally, when we use a \emph{second-order retraction}): see also~\citep[\S3.5]{boumal2016globalrates}.
\begin{proof}[Proof of Theorem~\ref{thm:masterSOCP}]
	The proof starts the same way as that of Theorem~\ref{thm:masterFOCP}.
	The call to $\PARGD(x_0)$ generates a sequence of points $x_{t_0}, x_{t_1}, x_{t_2}, \ldots$ on $\calM$, with $t_0 = 0$.
	A priori, this sequence may be finite or infinite.
	Considering two consecutive indices $t_i$ and $t_{i+1}$, we either have $t_{i+1} = t_i + 1$ (if the step from $x_{t_i}$ to $x_{t_{i+1}}$ is a single gradient step (Case 1)) or $t_{i+1} = t_i + \mathscr{T}$ (if that same step is obtained through a call to $\TSS$, with or without perturbation (Cases 3 and 2 respectively)). Moreover:
	\begin{itemize}
		\item In Case 1, Proposition~\ref{prop:Case1} applies and guarantees
		\begin{align*}
			f(x_{t_i}) - f(x_{t_{i+1}}) \geq \frac{\mathscr{E}}{\mathscr{T}} = \frac{\mathscr{E}}{\mathscr{T}}(t_{i+1} - t_i).
		\end{align*}
		The algorithm does not terminate here.
		
		\item In Case 2, Proposition~\ref{prop:Case2} applies and guarantees that if $\|\grad f(x_{t_{i+1}})\| > \epsilon$ then
		\begin{align*}
			f(x_{t_i}) - f(x_{t_{i+1}}) \geq \mathscr{E} = \frac{\mathscr{E}}{\mathscr{T}}(t_{i+1} - t_i),
		\end{align*}
		and the algorithm does not terminate here.
		
		If however $\|\grad f(x_{t_{i+1}})\| \leq \epsilon$, then $f(x_{t_i}) - f(x_{t_{i+1}}) \geq 0$ and the step from $x_{t_{i+1}}$ to $x_{t_{i+2}}$ does not fall in Case 2: it must fall in Case 3.
		(Indeed, it cannot fall in Case~1 because the fact that a Case~2 step occurred tells us $\epsilon < 2\ell\mathscr{M}$.)
		The algorithm terminates with $x_{t_{i+1}}$ unless $f(x_{t_{i+1}}) - f(x_{t_{i+2}}) \geq \frac{1}{2}\mathscr{E}$.
		In other words, if the algorithm does not terminate with $x_{t_{i+1}}$, then
		\begin{align*}
			f(x_{t_{i}}) - f(x_{t_{i+2}}) = f(x_{t_{i}}) - f(x_{t_{i+1}}) + f(x_{t_{i+1}}) - f(x_{t_{i+2}}) \geq \frac{1}{2}\mathscr{E} = \frac{\mathscr{E}}{4\mathscr{T}}(t_{i+2} - t_{i}).
		\end{align*}
		
		\item In Case 3, the algorithm terminates with $x_{t_{i}}$ unless
		\begin{align*}
			f(x_{t_{i}}) - f(x_{t_{i+1}}) \geq \frac{1}{2}\mathscr{E} = \frac{\mathscr{E}}{2\mathscr{T}}(t_{i+1} - t_{i}).
		\end{align*}
	\end{itemize}

	
	Clearly, $\PARGD(x_0)$ must terminate after a finite number of steps.
	Indeed, if it does not, then the above reasoning shows that the algorithm produces an amortized decrease in the cost function $f$ of $\frac{\mathscr{E}}{4\mathscr{T}}$ per unit increment of the counter $t$, yet the value of $f$ cannot decrease by more than $f(x_0) - \flow$.
	
	Accordingly, assume $\PARGD(x_0)$ generates $x_{t_0}, \ldots, x_{t_{k+1}}$ and terminates there (returning $x_{t_k}$).
	The step from $x_{t_k}$ to $x_{t_{k+1}}$ necessarily falls in Case 3: $t_{k+1} - t_k = \mathscr{T}$. 
	The step from $x_{t_{k-1}}$ to $x_{t_k}$ could be of any type.
	If it falls in Case 2, it could be that $f(x_{t_{k-1}}) - f(x_{t_k})$ is as small as zero, and that $t_{k} - t_{k-1} = \mathscr{T}$.
	(All other scenarios are better, in that the cost function decreases more, and the counter increases as much or less.)
	Moreover, for all steps prior to that, each unit increment of $t$ brings about an amortized decrease in $f$ of $\frac{\mathscr{E}}{4\mathscr{T}}$.
	Thus, $t_{k+1} \leq t_{k-1} + 2\mathscr{T}$ and
	\begin{align*}
		f(x_0) - \flow \geq f(x_{0}) - f(x_{t_{k-1}})
		\geq
		\frac{\mathscr{E}}{4\mathscr{T}} t_{k-1}.
	\end{align*}
	Combining, we find
	\begin{align*}
		t_k + \mathscr{T} = t_{k+1} \leq \left( 2 + 4\frac{f(x_0) - \flow}{\mathscr{E}} \right) \mathscr{T} \triangleq T_2.
	\end{align*}
	
	What can we say about the point that is returned, $x_{t_k}$?
	Deterministically, $f(x_{t_k}) \leq f(x_0)$ and $\|\grad f(x_{t_k})\| \leq \epsilon$ (notice that we cannot guarantee the same about $x_{t_{k+1}}$). 
	Let us now discuss the role of randomness.
	
	In any run of $\PARGD(x_0)$, there are at most $T_2/\mathscr{T}$ perturbations, that is, ``Case 3'' steps.
	By Proposition~\ref{prop:Case3}, the probability of any single one of those steps failing to prevent termination at a point where the smallest eigenvalue of the Hessian of the pullback at the origin is strictly less than $-\sqrt{\hat \rho \epsilon}$ is at most $\frac{\delta \mathscr{E}}{3\Delta_f}$.
	Thus, by a union bound, the probability of failure in any given run of $\PARGD(x_0)$ is at most
	(we use $\Delta_f \geq \max\!\left(f(x_0) - \flow, \sqrt{\frac{\epsilon^3}{\hat{\rho}}}\right) \geq \max\!\left( f(x_0) - \flow, 2^7\mathscr{E} \right)$ because $\chi \geq 1$ and $c \geq 2$):
	\begin{align*}
		\frac{T_2}{\mathscr{T}} \cdot \frac{\delta \mathscr{E}}{3\Delta_f} = \left( 2 + 4\frac{f(x_0) - \flow}{\mathscr{E}} \right) \frac{\delta \mathscr{E}}{3\Delta_f} \leq \left( \frac{2\mathscr{E}}{3\Delta_f} + \frac{4}{3}\right) \delta \leq 2\delta.
	\end{align*}
	In all other events, we have $\lambdamin(\nabla^2 \hat f_{x_{t_k}}(0)) \geq -\sqrt{\hat\rho \epsilon}$.
	
	For accounting of the maximal amount of work needed to run $\PARGD(x_0)$, use reasoning similar to that at the end of the proof of Theorem~\ref{thm:masterFOCP}, adding the cost of checking the condition ``$f(x_t) - f(x_{t+\mathscr{T}}) < \frac{1}{2}\mathscr{E}$'' after each perturbed call to $\TSS$.
	
	Note: the inequality $\frac{d^{1/2} \ell^{3/2} \sqrt{\epsilon^3 / \hat{\rho}}}{(\hat \rho \epsilon)^{1/4} \epsilon^2 \delta} \geq \theta^{-1}$ holds for all $d \geq 1$ and $\delta \in (0, 1)$ with $c \geq 4$.
\end{proof}
The next proposition corresponds mostly to~\citep[Lem.~13]{jin2018agdescapes}.
\begin{proposition}[Case 3] \label{prop:Case3}
	Fix parameters and assumptions as laid out in Section~\ref{sec:assuparams}, with $d = \dim \calM$, $\delta \in (0, 1)$, any $\Delta_f > 0$ and
	\begin{align*}
		\chi \geq \max\!\left(\log_2(\theta^{-1}), \log_2\!\left( \frac{d^{1/2} \ell^{3/2} \Delta_f}{(\hat \rho \epsilon)^{1/4} \epsilon^2 \delta} \right) \right) \geq 1.
	\end{align*}
	If $x \in \calM$ satisfies $\|\grad f(x)\| \leq \min(\epsilon, 2\ell\mathscr{M})$ and $\lambdamin(\nabla^2 \hat f_x(0)) \leq -\sqrt{\hat\rho \epsilon}$, and $\xi$ is sampled uniformly at random from the ball of radius $r$ around the origin in $\T_x\calM$,
	then $x_{\mathscr{T}} = \TSS(x, \xi)$ satisfies $f(x) - f(x_{\mathscr{T}}) \geq \mathscr{E}/2$ with probability at least $1 - \frac{\delta \mathscr{E}}{3\Delta_f}$ over the choice of $\xi$.
\end{proposition}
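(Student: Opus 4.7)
The plan is to mirror the proof of Lemma~13 of \citet{jin2018agdescapes} via a coupling/stuck-region argument, exploiting the fact that the entire analysis happens inside a single tangent space $\T_x\calM$ where the pullback $\hat f_x$ enjoys the Lipschitz-type guarantees of~\aref{assu:Mandf}. Note first that $\|\grad f(x)\| \leq 2\ell\mathscr{M} < \tfrac12 \ell b$ (by Lemma~\ref{lem:params}), so the strongest conclusions of~\aref{assu:Mandf} apply at $x$, and Lemmas~\ref{lem:TSSballs}, \ref{lem:TSSEj}, \ref{lem:TSStraveldist} and~\ref{lem:TSSNCE} are all at our disposal for both coupled runs of $\TSS(x, \cdot)$.

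Define the ``stuck region''
\begin{align*}
  \Xstuck = \left\{ \xi \in B_x(r) : f(x) - \hat f_x(\tilde s(x,\xi)) < \tfrac12 \mathscr{E} \right\},
\end{align*}
where $\tilde s(x, \xi) \in \T_x\calM$ denotes the tangent vector returned by $\TSS(x, \xi)$ before the final retraction. The target is the volume estimate $\Vol{\Xstuck}/\Vol{B_x(r)} \leq \frac{\delta \mathscr{E}}{3\Delta_f}$, which then yields the proposition by the definition of the uniform distribution on $B_x(r)$.

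The crux is a ``thin slab'' property: letting $e_1 \in \T_x\calM$ be a unit eigenvector of $\nabla^2 \hat f_x(0)$ associated with an eigenvalue $\lambda_1 \leq -\sqrt{\hat\rho \epsilon}$, I claim that the intersection of $\Xstuck$ with any line parallel to $e_1$ has length at most $r_0 = \frac{\delta r}{\sqrt{d} \Delta_f} \cdot \mathscr{E} \cdot (\text{universal constant})$. Combined with the standard slab-versus-ball volume inequality, this yields the required probability bound. To prove the thin slab property, suppose for contradiction that $\xi, \xi' \in \Xstuck$ satisfy $\xi - \xi' = r_0 e_1$; run $\TSS(x,\xi)$ and $\TSS(x,\xi')$ producing $(s_j, v_j)$ and $(s_j', v_j')$, and set $w_j = s_j - s_j'$, $p_j = v_j - v_j'$. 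The AGD recursion gives an affine update of $(w_j, p_j)$ driven by $\nabla^2 \hat f_x(0)$, plus an inhomogeneous error controlled using the radial Hessian Lipschitz bound in~\aref{assu:Mandf} by $\hat\rho \max(\|s_j\|, \|s_j'\|) \|w_j\|$. Since both trajectories are stuck, the Hamiltonian decrease of Lemma~\ref{lem:TSSEj} combined with the improve-or-localize bound of Lemma~\ref{lem:TSStraveldist} forces $\|s_j\|, \|s_j'\| \leq \mathscr{L}$ for all $j \leq \mathscr{T}$. A standard scalar-recursion analysis along $e_1$ then shows $|\inner{w_j}{e_1}|$ grows geometrically with ratio roughly $1 + \Theta(\sqrt{\eta|\lambda_1|}) = 1 + \Theta(\theta)$, so for $\mathscr{T} = \sqrt{\kappa} \chi c$ with $\chi \geq \log_2(\theta^{-1})$ and the stated logarithmic-in-$d/\delta$ lower bound on $\chi$, the component $|\inner{w_{\mathscr{T}}}{e_1}|$ exceeds $2\mathscr{L}$, contradicting $\|w_{\mathscr{T}}\| \leq \|s_{\mathscr{T}}\| + \|s_{\mathscr{T}}'\| \leq 2\mathscr{L}$.

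The expected main obstacle is ensuring that the coupled iterates never exit the ball of radius $b$ where~\aref{assu:Mandf} applies and that the error terms from linearizing the AGD recursion about $\nabla^2 \hat f_x(0)$ remain dominated by the geometric growth along $e_1$. Both worries are defused by the same observation: if both runs are stuck (which is the only case we analyze), Lemma~\ref{lem:TSStraveldist} confines the iterates to radius $\mathscr{L}$, and Lemma~\ref{lem:params} guarantees $\mathscr{L} < b$; moreover, the radial form of our Hessian Lipschitz bound evaluates the Hessian relative to the origin, which is precisely what the coupling argument uses. Thus the Euclidean analysis transfers essentially verbatim, with constants to be tracked to verify the probability bound $\frac{\delta \mathscr{E}}{3\Delta_f}$ and the choice of $\chi$ absorbing the $\log(d^{1/2} \ell^{3/2}\Delta_f/((\hat\rho\epsilon)^{1/4}\epsilon^2 \delta))$ factor.
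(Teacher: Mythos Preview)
Your approach is essentially the paper's, and the thin-slab coupling argument you outline is exactly the content of Lemma~\ref{lem:socp}. However, there is a genuine gap: you never establish that $\xi \in \Xstuck$ implies $\TSS(x,\xi)$ runs its full $\mathscr{T}$ iterations with $\theta_j = \theta$ throughout. Your coupling argument, as well as your invocation of Lemma~\ref{lem:TSStraveldist} to confine the $s_j$ to radius $\mathscr{L}$, both implicitly assume this. But $\TSS(x,\xi)$ can terminate early in two ways---\eqref{eq:NCC} triggers, or $\|s_{j+1}\| > b$---and in either case the iterates $s_{j+2},\ldots,s_{\mathscr{T}}$ on which your argument relies are simply undefined.

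The paper handles this up front as deterministic ``Cases~3a and~3b'': it shows that each early-termination mode already yields $f(x) - f(x_{\mathscr{T}}) \geq \tfrac12\mathscr{E}$, so such $\xi$ are automatically outside the stuck region. The ball-exit case is close to what you sketch (if $\|s_{j+1}\| > b$, Lemma~\ref{lem:TSStraveldist} forces a large Hamiltonian drop), but the \eqref{eq:NCC} case is not: there the output is $\NCE(x,s_j,v_j)$, not $s_{j+1}$, and one must split on whether $\|s_j\| \lessgtr \mathscr{L}$ and invoke Lemma~\ref{lem:TSSNCE} in the small case, together with the perturbation bound $\hat f_x(s_0) - f(x) \leq \tfrac14\mathscr{E}$. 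Only after disposing of these cases can one legitimately restrict attention to full-course runs and apply the coupling lemma. Your ``expected main obstacle'' paragraph conflates staying in $B_x(b)$ with $\TSS$ not terminating; the two are related but the argument you give is circular without first ruling out early termination for stuck $\xi$.
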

\begin{proof}[Proof of Proposition~\ref{prop:Case3}]
	By Lemma~\ref{lem:params}, $\|\grad f(x)\| \leq 2\ell\mathscr{M} < \frac{1}{2} \ell b$ and $\|\xi\| \leq r < b$.
	Thus, the strongest provisions of~\aref{assu:Mandf} apply at $x$, as do Lemmas~\ref{lem:TSSballs}, \ref{lem:TSSEj}, \ref{lem:TSStraveldist} and~\ref{lem:TSSNCE}.
	Let $u_j, s_j, v_j$ for $j = 0, 1, \ldots$ be the vectors generated by the computation of $x_{\mathscr{T}} = \TSS(x, \xi)$.
	Note that $s_0 = \xi$ and $v_0 = 0$.
	Owing to how $\TSS$ works, there are several cases to consider, based on how it terminates.
	We remark that cases 3a and 3b are deterministic (they only use the fact that $\|s_0\| \leq r$), 
	that there is no case 3c, and that case 3d is the only place where probabilities are involved.
	Throughout, it is useful to observe that, since $f(x) = \hat f_x(0)$,  $\|\grad f(x)\| \leq \epsilon$ and $\grad f(x) = \nabla \hat f_x(0)$, the first property of~\aref{assu:Mandf} ensures:
	\begin{align}
		\hat f_x(s_0) - f(x) \leq \inner{\grad f(x)}{s_0} + \frac{\ell}{2} \|s_0\|^2 \leq \epsilon r + \frac{\ell}{2} r^2 \leq \frac{1}{4} \mathscr{E}.
		\label{eq:hatfxszerominfxbound}
	\end{align}
	(Use Lemma~\ref{lem:params} to relate parameters.)
	Compare details below with Proposition~\ref{prop:Case2}. 
	\begin{itemize}
		\item (Case 3a) The negative curvature condition \eqref{eq:NCC} triggers with $(x, s_j, u_j)$. 
		Either $\|s_j\| \leq \mathscr{L}$, in which case Lemma~\ref{lem:TSSEj} tells us $E_j \leq E_0 = \hat f_x(s_0)$ and, by Lemma~\ref{lem:TSSNCE},
		\begin{align*}
			f(x_{\mathscr{T}}) = \hat f_x(\NCE(x, s_j, v_j)) \leq E_j - 2\mathscr{E} \leq f(x) - 2\mathscr{E} + \hat f_x(s_0) - f(x).
		\end{align*}
		Or $\|s_j\| > \mathscr{L}$, in which case Lemma~\ref{lem:TSStraveldist} used with $q = j < \mathscr{T}$ and $\|s_q - s_0\| \geq \|s_q\| - \|s_0\| \geq \mathscr{L} - r \geq \frac{63}{64}\mathscr{L}$ implies
		\begin{align*}
			f(x_{\mathscr{T}}) \leq E_j \leq \hat f_x(s_0) - \frac{63^2}{64^2} \frac{\mathscr{L}^2}{16 \sqrt{\kappa} \eta \mathscr{T}} = f(x) - \frac{63^2}{64^2} \mathscr{E} + \hat f_x(s_0) - f(x).
		\end{align*}
		(We used Lemma~\ref{lem:params} to relate parameters.)
		Either way, bound $\hat f_x(s_0) - f(x)$ with~\eqref{eq:hatfxszerominfxbound}.
		Overall, we conclude that $f(x) - f(x_{\mathscr{T}}) \geq \frac{1}{2} \mathscr{E}$ (deterministically).
		
		\item (Case 3b) The iterate $s_{j+1}$ leaves the ball of radius $b$, that is, $\|s_{j+1}\| > b$. 
		In this case, apply Lemma~\ref{lem:TSStraveldist} with $q = j + 1 \leq \mathscr{T}$ and
		\begin{align*}
			 \|s_{j+1} - s_0\| \geq \|s_{j+1}\| - \|s_0\| \geq b - r \geq 4\mathscr{L} - \frac{1}{64}\mathscr{L} \geq \mathscr{L}
		\end{align*}
		to claim (as always, we use Lemma~\ref{lem:params} repeatedly to relate parameters)
		\begin{align*}
			f(x_{\mathscr{T}}) = \hat f_x(s_{j+1}) \leq E_{j+1} \leq \hat f_x(s_0) - \frac{\|s_{j+1} - s_0\|^2}{16 \sqrt{\kappa} \eta \mathscr{T}} \leq \hat f_x(s_0) - \frac{\mathscr{L}^2}{16 \sqrt{\kappa} \eta \mathscr{T}} = \hat f_x(s_0) - \mathscr{E}.
		\end{align*}
		By~\eqref{eq:hatfxszerominfxbound}, it follows that $f(x) - f(x_{\mathscr{T}}) \geq \frac{3}{4} \mathscr{E}$ (deterministically).
		
		\item (Case 3d) None of the other events occur: $\TSS(x, s_0)$ runs its $\mathscr{T}$ iterations in full. 
		In this case, we apply the logic in the proof of~\citep[Lem.~13]{jin2018agdescapes}, as follows.
		Define the set $\Xstuck_x$ as containing exactly all tangent vectors $s^* \in B_x(r)$ such that
		\begin{enumerate}
			\item $\TSS(x, s^*)$ runs its $\mathscr{T}$ iterations in full, and
			\item $E_0^* - E_{\mathscr{T}}^* \leq 2\mathscr{E}$, where $E_j^*$ denotes the Hamiltonians associated to $\TSS(x, s^*)$.
		\end{enumerate}
		There are two cases.
		Either $s_0$ is not in $\Xstuck_x$, in which case $E_0 - E_{\mathscr{T}} > 2\mathscr{E}$:
		it is then easy to conclude (using~\eqref{eq:hatfxszerominfxbound}) that $f(x) - f(x_{\mathscr{T}}) > \frac{7}{4} \mathscr{E}$.
		Or $s_0$ is in $\Xstuck_x$, in which case we do not lower-bound $f(x) - f(x_{\mathscr{T}})$.
		The probability of this happening is
		\begin{align*}
			\Prob{\xi \in \Xstuck_x} & = \frac{\Vol{\Xstuck_x}}{\Vol{\mathbb{B}^d_r}},
		\end{align*}
		where $\Vol{\cdot}$ denotes the volume of a set, and $\Vol{\mathbb{B}^d_r}$ is the volume of a Euclidean ball of radius $r$ in a $d$-dimensional vector space.
		In order to upper-bound the volume of $\Xstuck_x$, we resort to Lemma~\ref{lem:socp}: this is where we use the assumption $\lambdamin(\nabla^2 \hat f_x(0)) \leq -\sqrt{\hat\rho \epsilon}$.
		
		Let $e_1$ denote an eigenvector of $\nabla^2 \hat f_x(0)$ with minimal eigenvalue, and let $s_0, s_0'$ be two arbitrary vectors in $\Xstuck_x$ such that $s_0 - s_0'$ is parallel to $e_1$.
		Lemma~\ref{lem:socp} implies that $\|s_0 - s_0'\| \leq \frac{\delta \mathscr{E}}{2\Delta_f} \frac{r}{\sqrt{d}}$.
		Now consider a point $a \in B_x(r)$ orthogonal to $e_1$, and let $\ell_a$ denote the line parallel to $e_1$ passing through $a$.
		The previous reasoning tells us that the intersection of $\ell_a$ with $\Xstuck_x$ is contained in a segment of $\ell_a$ of length at most $\frac{\delta \mathscr{E}}{2\Delta_f} \frac{r}{\sqrt{d}}$.
		Thus, with $\one$ denoting the indicator function,
		\begin{align*}
			\Vol{\Xstuck_x} & = \int_{B_x(r)} \one_{\Xstuck_x}(y) \dy \\
							& = \int_{a \in B_x(r) : a \perp e_1} \left[ \int_{\ell_a} \one_{\Xstuck_x}(z) \dz \right] \da \\
							& \leq \frac{\delta \mathscr{E}}{2\Delta_f} \frac{r}{\sqrt{d}} \Vol{\mathbb{B}^{d-1}_r}.
		\end{align*}
		With $\Gamma$ denoting the Gamma function, it follows that
		\begin{align*}
			\Prob{\xi \in \Xstuck_x} & \leq \frac{\delta \mathscr{E}}{2\Delta_f} \frac{r}{\sqrt{d}} \cdot \frac{\Vol{\mathbb{B}^{d-1}_r}}{\Vol{\mathbb{B}^d_r}} = \frac{\delta \mathscr{E}}{2\Delta_f} \frac{r}{\sqrt{d}} \cdot \frac{1}{r\sqrt{\pi}} \frac{\Gamma(1 + d/2)}{\Gamma(1 + (d-1)/2)}.
		\end{align*}
		One can check (for example, using Gautschi's inequality) that the last fraction is upper-bounded by $\sqrt{d}$ for all $d \geq 1$.
		Thus,
		\begin{align*}
			\Prob{\xi \in \Xstuck_x} & \leq \frac{\delta \mathscr{E}}{2\sqrt{\pi}\Delta_f} \leq \frac{\delta \mathscr{E}}{3\Delta_f}.
		\end{align*}
		This limits the probability of the only bad event.
	\end{itemize}
This covers all possibilities.
\end{proof}

Mirroring Section~\ref{sec:focp}, the following final lemma supports Proposition~\ref{prop:Case3}.
The proof is in Appendix~\ref{app:SOCP}.
It corresponds to~\citep[Lem.~23]{jin2018agdescapes}.
The condition on $\chi$ originates in this lemma, and from here appears in Theorem~\ref{thm:masterSOCP} through Proposition~\ref{prop:Case3}.
It causes the occurrence of dimension in the polylogarithmic factor in the complexity of Theorem~\ref{thm:masterSOCPexp}, but note that the real reason why $d$ appears in the condition on $\chi$ here is so that dimension can be canceled out in the probabilistic argument in the proof of Proposition~\ref{prop:Case3}.
\begin{lemma} \label{lem:socp}
	Fix parameters and assumptions as laid out in Section~\ref{sec:assuparams}, with $d = \dim \calM$, $\delta \in (0, 1)$, any $\Delta_f > 0$ and
	\begin{align*}
		\chi \geq \max\!\left(\log_2(\theta^{-1}), \log_2\!\left( \frac{d^{1/2} \ell^{3/2} \Delta_f}{(\hat \rho \epsilon)^{1/4} \epsilon^2 \delta} \right) \right) \geq 1.
	\end{align*}
	Let $s_0, s_0' \in B_x(r)$ be such that
	\begin{enumerate}
		\item $s_0 - s_0' = r_0 e_1$ where $e_1$ is an eigenvector of $\nabla^2 \hat f_x(0)$ associated to the smallest eigenvalue and $r_0 \geq \frac{\delta \mathscr{E}}{2\Delta_f} \frac{r}{\sqrt{d}}$, and
		\item $\TSS(x, s_0)$ and $\TSS(x, s_0')$ both run their $\mathscr{T}$ iterations in full, respectively generating vectors $u_j, s_j, v_j$ and $u_j', s_j', v_j'$, with corresponding Hamiltonians $E_j, E_j'$.
	\end{enumerate}
	If $\|\grad f(x)\| \leq \frac{1}{2} \ell b$ and $\lambdamin(\nabla^2 \hat f_x(0)) \leq -\sqrt{\hat\rho \epsilon}$, then $\max\!\left( E_0 - E_{\mathscr{T}}, E_0' - E_{\mathscr{T}}' \right) \geq 2\mathscr{E}$.
\end{lemma}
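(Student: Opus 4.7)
The plan is to argue by contradiction, following the coupling strategy of~\citet{jin2018agdescapes}. Assume both runs are ``stuck'': $E_0 - E_{\mathscr{T}} < 2\mathscr{E}$ and $E_0' - E_{\mathscr{T}}' < 2\mathscr{E}$. Because $\TSS(x,s_0)$ and $\TSS(x,s_0')$ both run their full $\mathscr{T}$ iterations, Lemma~\ref{lem:TSStraveldist} applied with $q'=0$ yields $\|s_j - s_0\|^2 \leq 16\sqrt{\kappa}\eta\, j \cdot 2\mathscr{E}$ and an analogous bound for the primed trajectory. Using Lemma~\ref{lem:params} to compare $\mathscr{L}$, $\mathscr{E}$, $\kappa$, $\eta$, $\mathscr{T}$, this forces $\|s_j\|,\|u_j\|,\|s_j'\|,\|u_j'\|$ to remain in a ball of radius $O(\mathscr{L})$, hence safely inside $B_x(3b)$, so all provisions of~\aref{assu:Mandf} (in particular the Hessian Lipschitz bound and the gradient Lipschitz bound) apply along both trajectories.

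Next I introduce the coupled/difference sequence $w_j = s_j - s_j'$, $\tilde v_j = v_j - v_j'$ and $\tilde u_j = u_j - u_j'$. Writing $H = \nabla^2 \hat f_x(0)$ and subtracting the two AGD recursions, one obtains a linear recursion of the form
\begin{align*}
w_{j+1} = (2-\theta_j)(I-\eta H) w_j - (1-\theta_{j-1})(I-\eta H) w_{j-1} + \eta (\Delta_j - \Delta_j'),
\end{align*}
where the error terms $\Delta_j,\Delta_j'$ measure the failure of the true gradient to be linear, and satisfy $\|\Delta_j\| \leq \hat\rho \|u_j\|^2 = O(\hat\rho \mathscr{L}^2)$ by the Hessian radial-Lipschitz property in~\aref{assu:Mandf}. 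A delicate point: in $\TSS$ the momentum parameter $\theta_j$ can be capped (see~\eqref{eq:thetaj}), but the runs go to completion by hypothesis so by Lemma~\ref{lem:TSSballs} the capping never triggers (otherwise the next step would exit the ball), and hence $\theta_j = \theta$ throughout both runs. This is exactly what lets us treat the recursion as having constant coefficients.

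I then project onto the eigenvector $e_1$ of $H$ associated to the smallest eigenvalue $-\lambda$ with $\lambda \geq \sqrt{\hat\rho\epsilon}$. Writing $\alpha_j = \inner{w_j}{e_1}$, we obtain a scalar second-order recursion with characteristic roots exceeding $1 + \eta\lambda \geq 1 + \tfrac{1}{4}\sqrt{\hat\rho\epsilon}/\ell$ in magnitude. With the initial condition $\alpha_0 = r_0$ and $\alpha_{-1} = r_0$ (both trajectories start with $v = 0$), the homogeneous part grows like $\Omega(\mu^j)$ with $\mu \gtrsim 1 + \Theta(1/\sqrt{\kappa})$, so after $\mathscr{T} = \Theta(\sqrt{\kappa}\chi)$ steps the homogeneous contribution to $|\alpha_{\mathscr{T}}|$ is at least $r_0 \cdot 2^{\Omega(\chi)}$. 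By choosing $\chi \geq \log_2\!\big(d^{1/2}\ell^{3/2}\Delta_f/((\hat\rho\epsilon)^{1/4}\epsilon^2\delta)\big)$ together with the lower bound $r_0 \geq \tfrac{\delta\mathscr{E}}{2\Delta_f}\tfrac{r}{\sqrt{d}}$, this homogeneous growth exceeds the localization budget $O(\mathscr{L})$ by a large factor.

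The main obstacle is to show the inhomogeneous perturbation cannot cancel this growth. The plan is a standard bootstrap argument: assume inductively that $\|w_j\| \leq 2 |\alpha_j^{\text{hom}}|$, where $\alpha_j^{\text{hom}}$ denotes the pure homogeneous solution. Then $\|\Delta_j - \Delta_j'\| \leq \hat\rho(\|u_j\|+\|u_j'\|)\|\tilde u_j\| = O(\hat\rho \mathscr{L}\|w_j\|)$, and summing the convolution of this perturbation with the Green's function of the recursion (a geometric sum dominated by $\mu^{\mathscr{T}}$) produces a contribution of order $\eta \hat\rho \mathscr{L} \cdot \mathscr{T} \cdot |\alpha_j^{\text{hom}}|$. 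One checks via Lemma~\ref{lem:params} that $\eta \hat\rho \mathscr{L} \mathscr{T} \ll 1$, closing the induction. Therefore $\|w_{\mathscr{T}}\| \geq |\alpha_{\mathscr{T}}| = \Omega(r_0 \cdot 2^{\chi})$, which for the prescribed $\chi$ strictly exceeds $\|s_{\mathscr{T}} - s_0\| + \|s_{\mathscr{T}}' - s_0'\| + r_0 = O(\mathscr{L})$, contradicting the localization. This contradiction proves the lemma.
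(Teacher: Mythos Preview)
Your strategy matches the paper's proof essentially step for step: argue by contradiction, use improve-or-localize to confine both trajectories to an $O(\mathscr{L})$ ball, form the difference $w_j = s_j - s_j'$, observe that $\theta_j \equiv \theta$ since both runs complete, split $w_j$ into a homogeneous part aligned with $e_1$ and an inhomogeneous remainder bounded via a bootstrap induction, and conclude that $\|w_{\mathscr{T}}\|$ outgrows the localization budget.

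Two places where your sketch is loose compared to the paper. First, the growth rate: you write that the characteristic roots exceed $1+\eta\lambda = 1 + \Theta(1/\kappa)$, but after $\mathscr{T} = \Theta(\sqrt{\kappa}\chi)$ steps that would only give $e^{\Theta(\chi/\sqrt{\kappa})}$, not $2^{\Omega(\chi)}$. The correct rate is $1 + \Theta(1/\sqrt{\kappa})$ (your next sentence), which comes from the momentum structure of the second-order recursion; the paper imports this as \citep[Lem.~38]{jin2018agdescapes}. Second, your induction hypothesis is on the full norm $\|w_j\|$, but the perturbation $\delta_k''$ has components along \emph{all} eigenvectors of $\mathcal{H}$, and the Green's function in direction $e_m$ is governed by the block $A_m$, not $A_1$. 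To close the induction you need $\max_m |(A_m^k)_{11}| = |(A_1^k)_{11}|$ whenever $\lambda_1 \leq 0$; the paper invokes \citep[Lem.~37]{jin2018agdescapes} for exactly this, together with \citep[Lem.~36]{jin2018agdescapes} to control the convolution $\sum_k |a_{1,j-k}|\,|a_{1,k}-b_{1,k}|$, which is where the factor $\eta\hat\rho\mathscr{L}\mathscr{T}^2 = O(c^{-1})$ (rather than your $\eta\hat\rho\mathscr{L}\mathscr{T}$) actually appears. Neither point changes the architecture of the argument, but both are needed to make it rigorous.
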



\section{Discussion of the main results} \label{sec:discussion}

In this section, we discuss finer points of our main theorems and their construction. 

\subsection*{About geometric results}

Our main geometric result, Theorem~\ref{thm:pullbacklipschitz}, departs from what one might ideally hope for in three ways: (a) it applies only at points where $\grad f$ is sufficiently small; (b) it does not provide full Lipschitzness for the Hessian: only a Lipschitz-like condition with respect to the origin of the tangent space; and (c) its conclusions are restricted to balls of some radius $b$. Here, we discuss these limitations.

On compact manifolds, the restrictions can be partly but not fully relaxed.
For example, consider the unit sphere $\Sn = \{ x \in \Rn : x\transpose x = 1\}$ as a Riemannian submanifold of $\Rn$ with the usual Euclidean metric ($\Klow = \Kup = K = 1$, $F = 0$).
Let $f \colon \Sn \to \reals$ have $L$-Lipschitz continuous gradient.
The pullback of $f$ through the exponential map at $x$ is denoted $\hat f_x = f \circ \Exp_x$.
We show in Proposition~\ref{prop:lipschitzpullbacksphere} that $\nabla \hat f_x$ is $\frac{5}{2}L$-Lipschitz continuous on the whole tangent space, for all $x$.
If moreover the Hessian of $f$ is $\rho$-Lipschitz continuous, then $\|\nabla^2 \hat f_x(s) - \nabla^2 \hat f_x(0)\|_x \leq \hat\rho \|s\|_x$ with $\hat \rho = \rho + 3.1 \cdot L$ for all $x$ and $s$.
This secures the benefits of items 1.\ and 2.\ of Theorem~\ref{thm:pullbacklipschitz} with fewer restrictions.
However, for item~3.\  and still on the sphere, we do need a restriction to balls of some finite radius.
Indeed, the smallest singular value of $T_s = \D\Exp_x(s)$ drops from one to zero as $\|s\|_x$ increases from zero to $\pi$.
As we run an optimization algorithm in a tangent space, one aim is to find an approximate critical point $s$ of $\hat f_x$ which maps to an approximate critical point $y = \Exp_x(s)$ of $f$.
Since the norm of $\grad f(y)$ could be as large as $\|\nabla \hat f_x(s)\|_x / \sigmamin(T_s)$ by~\eqref{eq:gradhesspullback}, we must restrict $\|s\|_x$ to retain control.
In general, if $\Kup$ is positive, this last consideration forces us to consider only balls of some radius bounded in proportion to $1/\sqrt{\Kup}$.

In contrast, consider the hyperbolic manifold $\calM = \{ x \in \Rn : x_2^2 + \cdots + x_n^2 = x_1^2 - 1 \}$ with the Riemannian metric defined by restriction of the Minkowski semi-inner product $\inner{u}{v} = u_2v_2 + \cdots + u_nv_n - u_1v_1$ to the tangent spaces.
For this non-compact manifold, we have $\Klow = \Kup = -1$, hence, $K = 1$ and $F = 0$.
Owing to $\Klow \leq 0$, the singular values of $T_s$ are all at least one, for all $x$ and $s$.
Thus, securing item~3.\ in Theorem~\ref{thm:pullbacklipschitz} requires no particular restrictions.
However, as we show in Proposition~\ref{prop:lipschitzpullbackhyperbolic}, as soon as $f$ is non-constant,
we cannot hope to find a finite $\ell \geq 0$ such that all pullbacks have $\ell$-Lipschitz gradient globally.

These considerations on the sphere and on the hyperbolic space suggest that some of the restrictions in Theorem~\ref{thm:pullbacklipschitz} are indeed necessary.
Since for both examples we have $F = 0$, we cannot conclude as to the necessity of the assumption regarding the covariant derivative of the Riemannan curvature endomorphism.
We suspect it is necessary.
Moreover, we suspect that by assuming a bound on the second covariant derivative of curvature it may be possible to improve item 2.\ in Theorem~\ref{thm:pullbacklipschitz} to offer a bound on $\|\nabla^2 \hat f_x(s_1) - \nabla^2 \hat f_x(s_2)\|_x$ (that is, full Lipschitz-continuity of pullback Hessians, in appropriate balls at appropriate points $x$).

\subsection*{About optimization results}

In different papers, \citet{jin2017howtoescape,jin2019escape} also explore non-accelerated perturbed gradient descent for the purpose of finding approximate second-order critical points in the Euclidean case (in line with a number of other papers, e.g., \citep{agarwal2017finding}).
That work was extended to the Riemannian case by~\citet{sun2019prgd} and also by ourselves~\citep{criscitiello2019escapingsaddles} using different techniques.
The assumptions made in the latter left the role of curvature unclear: this role is now elucidated by Theorem~\ref{thm:pullbacklipschitz}.

Though $\PARGD$ is a generalization of Jin et al.'s PAGD in spirit, it does not reduce to PAGD when $\calM$ is a Euclidean space.
One of the reasons is that $\PARGD$ resets the momentum after each random perturbation but PAGD does not.
Our $\NCE$ procedure also works slightly differently.
At a more philosophical level, Jin et al.\ emphasize the single-loop aspect of PAGD, which we lose by working on a sequence of tangent spaces.

We only need \aref{assu:Mandf} to hold at the points generated by the algorithm.
Since $f$ decreases monotonically along (outer) iterations, these points remain in the sublevel set of $x_0$.
Stated differently: the value of $f$ can go up and down inside of $\TSS$, but not in the outer loops of $\ARGD, \PARGD$.
Thus, it is possible to relax \aref{assu:Mandf} somewhat, for example assuming the sublevel sets of $f$ are compact.
In addition, property 4 of \aref{assu:Mandf} is used only once, namely, in Proposition~\ref{prop:Case1}: it could also be relaxed in several ways.

\remove{
Notice that if $\epsilon$ is larger than $2\ell\mathscr{M}$ (that is, if $\epsilon > \frac{16 \ell^2}{c^4  \hat\rho}$), then $\ARGD$ reduces to vanilla Riemannian gradient descent with constant step-size.
The latter is known to produce an $\epsilon$-FOCP in $O(1/\epsilon^2)$ iterations, yet our result here announces this same outcome in $O(1/\epsilon^{7/4})$ iterations.
This is not a contradiction: when $\epsilon$ is large, $1/\epsilon^{7/4}$ can be worse than $1/\epsilon^2$.
In short: the rates are only meaningful for small $\epsilon$, in which case $\ARGD$ does use accelerated gradient descent steps.
}

\add{
Consider optimization in a Euclidean space $\reals^d$ under an equality constraint $h(x) = 0, h \colon \reals^d \rightarrow \reals^m$.
If $\calM = \{x \in \reals^d : h(x) = 0\}$ defines a smooth embedded submanifold of $\reals^d$,
then we can consider applying our results to this optimization problem.
The requirement that the sectional curvatures at a point $x \in \calM$ are bounded by $K$ and $\nabla R$ is bounded by $F$ is a local condition on the regularity of $h$ and its higher-order derivatives---see for example~\citep{tamasrap} for an expression of sectional curvatures in terms of the gradient and Hessian of $h$.
By phrasing our results in terms of bounds on the curvature, there is the added benefit that these regularity conditions on $h$ are intrinsic rather than extrinsic.
}

\add{
In passing, we note the similarity of $\ARGD$ and $\PARGD$ with the Riemmanian trust-region method (RTR)~\citep{boumal2016globalrates,boumal2020intromanifolds}.
For example, we can view $\ARGD$ as a combination of gradient steps and subproblem steps.
Like RTR, each subproblem of $\ARGD$ consists of approximately minimizing a model function in a ball of finite radius in a fixed tangent space.
In RTR, each subproblem is usually minimized via the truncated conjugate gradient method, which can be viewed as a type of momentum method.
In $\ARGD$, each subproblem is minimized with a modification of AGD, another type of momentum method.
}

%

\section{Conclusions and perspectives} \label{sec:conclusions}

Our main complexity results for $\ARGD$ and $\PARGD$ (Theorems~\ref{thm:masterFOCPexp} and~\ref{thm:masterSOCPexp}) recover known Euclidean results when $\calM$ is a Euclidean space.
In particular, they retain the important properties of scaling essentially with $\epsilon^{-7/4}$ and of being either dimension free (for $\ARGD$) or almost dimension free (for $\PARGD$).
Those properties extend as is to the Riemannian case.

However, our Riemannian results are negatively impacted by the Riemannian curvature of $\calM$, and also by the covariant derivative of the Riemann curvature endomorphism.
We do not know whether such a dependency on curvature is necessary to achieve acceleration.
In particular, the non-accelerated rates for Riemannian gradient descent, Riemannian trust-regions and Riemannian adaptive regularization with cubics under Lipschitz assumptions do not suffer from curvature~\citep{boumal2016globalrates,agarwal2018arcfirst}.

Curvature enters our complexity bounds through our geometric results (Theorem~\ref{thm:pullbacklipschitz}).
For the latter, we do believe that curvature must play a role.
Thus, it is natural to ask:
\begin{quote}
	\emph{Can we achieve acceleration for first-order methods on Riemannian manifolds with weaker (or without) dependency on the curvature of the manifold?}
\end{quote}
For the geodesically convex case, all algorithms we know of are affected by curvature~\citep{zhang2018estimatesequence,alimisis2019continuoustime,ahn2020nesterovs,alimisis2020practical}. 
\add{Additionally, \citet{hamilton2021nogo} show that curvature can significantly slow down convergence rates in the geodesically convex case with noisy gradients.}

Adaptive regularization with cubics (ARC) may offer insights in that regard.
ARC is a cubically-regularized approximate Newton method with optimal iteration complexity on the class of cost functions with Lipschitz continuous Hessian, assuming access to gradients and Hessians~\citep{nesterov2006cubic,cartis2011adaptivecubic}.
Specifically, assuming $f$ has $\rho$-Lipschitz continuous Hessian, ARC finds an $(\epsilon, \sqrt{\rho \epsilon})$-approximate second-order critical point in at most $\tilde O(\Delta_f \rho^{1/2} / \epsilon^{3/2})$ iterations, omitting logarithmic factors.
This also holds on complete Riemannian manifolds~\citep[Cor.~3, eqs~(16),~(26)]{agarwal2018arcfirst}.
Note that this is dimension free \emph{and} curvature free.
Each iteration, however, requires solving a separate subproblem more costly than a gradient evaluation.
\citet[\S3]{carmon2018analysis} argue that it is possible to solve the subproblems accurately enough so as to find $\epsilon$-approximate first-order critical points with $\sim 1/\epsilon^{7/4}$ Hessian-vector products overall, with randomization and a logarithmic dependency in dimension.
Compared to $\ARGD$, this has the benefit of being curvature free, at the cost of randomization, a logarithmic dimension dependency, and of requiring Hessian-vector products.
The latter could conceivably be approximated with finite differences of the gradients.
Perhaps that operation leads to losses tied to curvature?
If not, as it is unclear why there ought to be a trade-off between curvature dependency and randomization, this may be the indication that the curvature dependency is not necessary for acceleration.

On a distinct note and as pointed out in the introduction, $\ARGD$ and $\PARGD$ are theoretical constructs.
Despite having the theoretical upper-hand in worst-case scenarios, we do not expect them to be competitive against time-tested algorithms such as Riemannian versions of nonlinear conjugate gradients or the trust-region methods.
It remains an interesting open problem to devise a truly practical accelerated first-order method on manifolds.


In the Euclidean case, \citet{carmon2017convexguilty} showed that if one assumes not only the gradient and the Hessian of $f$ but also the third derivative of $f$ are Lipschitz continuous, then it is possible to find $\epsilon$-approximate first-order critical points in just $\tilde O(\epsilon^{-5/3})$ iterations. We suspect that our proof technique could be used to prove a similar result on manifolds, possibly at the cost of also assuming a bound on the second covariant derivative of the Riemann curvature endomorphism.

\clearpage
\appendix

\section{Parallel transport vs differential of exponential map} \label{app:PTvDExp}

In this section, we give a proof for Proposition~\ref{prop:TsminPsparticular} regarding the difference between parallel transport along a geodesic and the differential of the exponential map.
We use these families of functions parameterized by $\Klow \in \reals$:
\begin{align}
	h_\Klow(t) & = \begin{cases}
			t & \textrm{ if } \Klow = 0, \\
			r \sin(t/r) & \textrm{ if } \Klow = 1/r^2 > 0, \\
			r \sinh(t/r) & \textrm{ if } \Klow = -1/r^2 < 0.
			\end{cases}
		\label{eq:hklow}
\end{align}
\begin{align}
	g_{\Klow}(t) & = \int_{0}^{t} h_{\Klow}(\tau) \dtau = \begin{cases}
			\frac{t^2}{2} & \textrm{ if } \Klow = 0, \\
			r^2\left( 1 - \cos(t/r) \right) & \textrm{ if } \Klow = 1/r^2 > 0, \\
			r^2\left( \cosh(t/r) - 1 \right) & \textrm{ if } \Klow = -1/r^2 < 0.
		\end{cases}
		\label{eq:gklow}
\end{align}
\begin{align}
	f_{\Klow}(t) & = \frac{1}{t} \int_{0}^{t} g_{\Klow}(\tau) \dtau = \begin{cases}
			\frac{t^2}{6} & \textrm{ if } \Klow = 0, \\
			r^2\left( 1 - \frac{\sin(t/r)}{t/r} \right) & \textrm{ if } \Klow = 1/r^2 > 0, \\
			r^2\left( \frac{\sinh(t/r)}{t/r} - 1 \right) & \textrm{ if } \Klow = -1/r^2 < 0.
		\end{cases}
		\label{eq:fklow}
\end{align}
Under the assumptions we make below, these functions are only ever evaluated at points where they are nonnegative.
In all cases, functions are dominated by the case $\Klow < 0$; formally, for all $\Klow \in \reals$, all $K \geq |\Klow|$ and all $t \geq 0$:
\begin{align}
	h_\Klow(t) & \leq h_{-K}(t), & g_\Klow(t) & \leq g_{-K}(t), & f_\Klow(t) & \leq f_{-K}(t).
\end{align}
If $\Klow \geq 0$ and $t \geq 0$, then
\begin{align}
	h_\Klow(t) & \leq t, & g_\Klow(t) & \leq \frac{1}{2} t^2, & f_\Klow(t) & \leq \frac{1}{6} t^2.
\end{align}
Independently of the sign of $\Klow$, if $0 \leq t \leq \pi / \sqrt{|\Klow|}$, then
\begin{align*}
	h_\Klow(t) & \leq t + 0.2712 \cdot \Klow t^3 \leq 3.6761 \cdot t, & g_\Klow(t) & \leq 1.0732 \cdot t^2, &  f_\Klow(t) & \leq 0.2712 \cdot t^2.
	%
\end{align*}
For $t$ bounded as indicated, this last line shows that up to constants the sign of $\Klow$ does not substantially affect bounds.

To state our result, we need the notion of conjugate points along geodesics on Riemannian manifolds.
The following definition is equivalent to the standard one~\citep[Prop.~10.20 and p303]{lee2018riemannian}.
We are particularly interested in situations where there are no conjugate points on some interval: we discuss that event in a remark.
\begin{definition}
	Let $\calM$ be a Riemannian manifold.
	Consider $(x, s) \in \T\calM$ and the geodesic $\gamma(t) = \Exp_x(ts)$ defined on an open interval $I$ around zero.
	For $t \in I$, we say $\gamma(t)$ is \emph{conjugate to $x$ along $\gamma$} if $\D\Exp_x(ts)$ is rank deficient.
	We say $\gamma$ \emph{has an interior conjugate point on $[0, \bar t] \subset I$} if $\gamma(t)$ is conjugate to $x$ along $\gamma$ for some $t \in (0, \bar t)$.
\end{definition}
\begin{remark} \label{rem:conjugatepoints}
	Let $\gamma$ be a geodesic on a Riemannian manifold $\calM$.
	If $\gamma$ is minimizing on the interval $[0, \bar t]$, then it has no interior conjugate point on that interval~\citep[Thm.~10.26]{lee2018riemannian}.
	Assume the sectional curvatures of $\calM$ are in the interval $[\Klow, \Kup]$. Then:
	\begin{enumerate}
		\item If $\Kup \leq 0$, $\gamma$ has no conjugate points at all~\citep[Pb.~10-7]{lee2018riemannian};
		\item If $\Kup > 0$, $\gamma$ has no interior conjugate points on $[0, \pi / \sqrt{\Kup}]$~\citep[Thm.~11.9a]{lee2018riemannian}; and
		\item If $\Klow > 0$ and $\gamma$ has no interior conjugate point on $[0, \bar t]$, then $\bar t \leq \pi / \sqrt{\Klow}$~\citep[p298 and Thm.~11.9b]{lee2018riemannian}. This will be why, under our assumptions, $h_\Klow$~\eqref{eq:hklow} is only ever evaluated at points where it is nonnegative.
	\end{enumerate}
\end{remark}

We now state and prove the main result of this section.
A similar result appears in~\citep[Lem.~6]{tripuraneni2018averagingriemannian} for general retractions.
Constants there are not explicit (they are absorbed in $O(\cdot)$ notation).
Their proof is based on Taylor expansions of the differential of the exponential map as they appear in~\citep[Thm.~A.2.9]{waldmann2012geometric}, namely, for $s \mapsto \D\Exp_x(s)$ around $s = 0$.
In the next section, we investigate a situation around $s \neq 0$.
In appendices, we typically omit subscripts for inner products and norms.
\begin{proposition} \label{prop:DExpminPT}
	Let $\calM$ be a Riemannian manifold whose sectional curvatures are in the interval $[\Klow, \Kup]$, and let $K = \max(|\Klow|, |\Kup|)$. Consider $(x, s) \in \T\calM$ and the geodesic $\gamma(t) = \Exp_x(ts)$. If $\gamma$ is defined and has no interior conjugate point on the interval $[0, 1]$, then
	\begin{align}
		\forall \dot s \in \T_x\calM, && \|(T_s - P_s)[\dot s]\| \leq K \cdot f_{\Klow}(\|s\|) \cdot \|\dot s_\perp\|,
		\label{eq:DExpPgammaone}
	\end{align}
	where $\dot s_\perp = \dot s - \frac{\inner{s}{\dot s}}{\inner{s}{s}}s$ is the component of $\dot s$ orthogonal to $s$, $T_s = \D\Exp_x(s)$ and $P_{ts}$ denotes parallel transport along $\gamma$ from $\gamma(0)$ to $\gamma(t)$.
	(The inequality holds with equality if $\Klow = \Kup$.)
		If it also holds that $\|s\| \leq \pi / \sqrt{|\Klow|}$, then
		\begin{align}
			\forall \dot s \in \T_x\calM, && \|(T_s - P_s)[\dot s]\| \leq \frac{1}{3} K \|s\|^2 \|\dot s_\perp\|.
		\end{align}
%
\end{proposition}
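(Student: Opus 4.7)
The plan is to lift the problem to a linear second-order ODE in the fixed vector space $\T_x\calM$ via parallel transport, and then invoke Rauch's comparison theorem together with a direct integration. First, I would recall the standard fact that $T_s[\dot s] = J(1)$, where $J$ is the Jacobi field along $\gamma$ with $J(0) = 0$ and $\Ddt J(0) = \dot s$. Decomposing $\dot s = \dot s_\parallel + \dot s_\perp$ with $\dot s_\parallel$ parallel to $s$, the Jacobi field splits: its tangential component is exactly $t \mapsto t\, P_{ts}[\dot s_\parallel]$, which contributes nothing to $(T_s - P_s)[\dot s]$. Hence it suffices to bound $\|(T_s - P_s)[\dot s_\perp]\|$, where $J_\perp$ is the normal Jacobi field with initial data $J_\perp(0) = 0$ and $\Ddt J_\perp(0) = \dot s_\perp$.

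Second, I would set $W(t) = (P_{ts})^{-1}[J_\perp(t)] \in \T_x\calM$. Because parallel transport intertwines with covariant differentiation and is an isometry, the Jacobi equation lifts to the ODE $W''(t) + \tilde R(t)[W(t)] = 0$ with $W(0) = 0$ and $W'(0) = \dot s_\perp$, where $\tilde R(t)[v] = (P_{ts})^{-1}[R(P_{ts}v, \gamma'(t))\gamma'(t)]$. Integrating twice yields the integral identity
\begin{equation*}
W(1) - \dot s_\perp = -\int_0^1 (1-\tau)\, \tilde R(\tau)[W(\tau)]\, d\tau.
\end{equation*}
The operator $v \mapsto R(v, \gamma')\gamma'$ restricted to $\gamma'^\perp$ is self-adjoint with eigenvalues equal to sectional curvatures of planes containing $\gamma'$ times $\|s\|^2$, so $\|\tilde R(\tau)[v]\| \leq K\|s\|^2 \|v\|$ for all $v$. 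By Rauch's comparison theorem---applicable because $\gamma$ has no interior conjugate points on $[0,1]$ by hypothesis, and the constant-curvature comparison geodesic has none either by item~3 of Remark~\ref{rem:conjugatepoints}---we have $\|W(\tau)\| = \|J_\perp(\tau)\| \leq \tfrac{h_\Klow(\tau\|s\|)}{\|s\|}\, \|\dot s_\perp\|$.

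Third, substituting these bounds, taking the norm of the integral identity, and changing variables $u = \tau\|s\|$ would produce
\begin{equation*}
\|W(1) - \dot s_\perp\| \leq K\|\dot s_\perp\| \int_0^{\|s\|} \Big(1 - \tfrac{u}{\|s\|}\Big) h_\Klow(u)\, du = K\, f_\Klow(\|s\|)\, \|\dot s_\perp\|,
\end{equation*}
where the equality applies Fubini to the definition $f_\Klow(t) = \tfrac{1}{t}\int_0^t g_\Klow(\tau)\, d\tau$ to rewrite the iterated integral of $h_\Klow$. Since parallel transport is an isometry, $\|(T_s - P_s)[\dot s]\| = \|W(1) - \dot s_\perp\|$, which yields~\eqref{eq:DExpPgammaone}. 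Equality in the case $\Klow = \Kup$ is immediate: in constant curvature both the Rauch bound on $J_\perp$ and the operator bound on $\tilde R$ are simultaneously sharp.

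The second conclusion then drops out by plugging in the pre-recorded inequality $f_\Klow(\|s\|) \leq 0.2712\,\|s\|^2 \leq \tfrac{1}{3}\|s\|^2$, valid whenever $\|s\| \leq \pi/\sqrt{|\Klow|}$. The hardest part will be the Rauch step itself---verifying that the conjugate-point hypothesis transfers to the constant-curvature comparison space (automatic for $\Klow \leq 0$; for $\Klow > 0$, the hypothesis on $\gamma$ forces $\|s\| \leq \pi/\sqrt{\Klow}$ via item~3 of Remark~\ref{rem:conjugatepoints}, on which interval $h_\Klow \geq 0$)---so that the comparison inequality and the subsequent integral manipulations are all meaningful.
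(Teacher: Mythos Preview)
Your proposal is correct and takes essentially the same approach as the paper: both reduce to the normal Jacobi field, rewrite the Jacobi equation as a linear ODE in a fixed vector space (the paper via coordinates in a parallel-transported frame, you via the coordinate-free pullback $W = P_{ts}^{-1}J_\perp$), bound the curvature operator by $K\|s\|^2$ and the Jacobi field by the comparison theorem, and integrate. The only cosmetic differences are that the paper normalizes $\|s\|=1$ and uses the iterated integral $\int_0^t\!\int_0^\tau h_{\Klow}$ through $g_{\Klow}$, whereas you keep $\|s\|$ general and use the equivalent single-integral Taylor kernel $\int_0^1(1-\tau)\cdots$ together with Fubini.
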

\begin{proof}
For convenience, we consider $\|s\| = 1$: the result follows by a simple rescaling of $t$.
Given any tangent vector $\dot s \in \T_x\calM$, consider the following smooth vector field along $\gamma$:
\begin{align}
	J(t) & = \D\Exp_x(ts)[t \dot s].
	\label{eq:DExpPTJ}
\end{align}
By~\citep[Prop.~10.10]{lee2018riemannian}, this is the unique Jacobi field satisfying the initial conditions
\begin{align}
	J(0) & = 0 & \textrm{ and } & & \Ddt J(0) & = \dot s,
\end{align}
where $\Ddt$ is the covariant derivative along curves induced by the Riemannian connection.
Thus, $J$ is smooth and obeys the ordinary differential equation (ODE) known as the Jacobi equation:
\begin{align}
	\Ddttwo J(t) + R(J(t), \gamma'(t))\gamma'(t) = 0,
\end{align}
where $R$ denotes Riemannian curvature.
Fix $e_d = s$ and pick $e_1, \ldots, e_{d-1}$ so that $e_1, \ldots, e_d$ form an orthonormal basis for $\T_x\calM$.
Parallel transport this basis along $\gamma$ as
\begin{align}
	E_i(t) & = P_{ts}(e_i), & i & = 1, \ldots, d,
\end{align}
so that $E_1(t), \ldots, E_d(t)$ form an orthonormal basis for $\T_{\gamma(t)}\calM$. Expand $J$ as
\begin{align}
	J(t) & = \sum_{i = 1}^{d} a_i(t) E_i(t)
\end{align}
with uniquely defined smooth, real functions $a_1, \ldots, a_d$. Plugging this expansion into the Jacobi equation yields the ODE
\begin{align}
	\sum_{i = 1}^{d} a_i''(t) E_i(t) + \sum_{i = 1}^{d} a_i(t) R(E_i(t), E_d(t)) E_d(t) = 0,
\end{align}
where we used the Leibniz rule on $\Ddt$, the fact that $\Ddt E_i = 0$, linearity of the Riemann curvature endomorphism in its inputs, and the fact that
\begin{align*}
	\gamma'(t) = P_{ts}(\gamma'(0)) = E_d(t).
\end{align*}
Taking an inner product of this ODE against each one of the fields $E_j(t)$ yields $d$ ODEs:
\begin{align}
	a_j''(t) & = - \sum_{i = 1}^{d} a_i(t) \inner{R(E_i(t), E_d(t)) E_d(t)}{E_j(t)}, & j & = 1, \ldots, d.
	\label{eq:ODEcoordinatesJacobi}
\end{align}
Furthermore, the initial conditions fix $a_i(0) = 0$ and $a_i'(0) = \inner{\dot s}{e_i}$ for $i = 1, \ldots, d$.

Owing to symmetries of Riemannian curvature, the summation above can be restricted to the range $1, \ldots, d-1$. For the same reason, $a_d''(t) = 0$, so that
\begin{align}
	a_d(t) = a_d(0) + t a_d'(0) = t \inner{\dot s}{s}.
\end{align}
It remains to solve for the first $d-1$ coefficients (they are decoupled from $a_d$). This effectively splits the solution $J$ into two fields: one tangent  (aligned with $\gamma'$), and one normal (orthogonal to $\gamma'$):
\begin{align}
	J(t) & = t \inner{\dot s}{s} \gamma'(t) + J_\perp(t), & J_\perp(t) & = \sum_{i = 1}^{d-1} a_i(t) E_i(t).
	\label{eq:Jsplit}
\end{align}
The normal part is the Jacobi field with initial conditions $J_\perp(0) = 0$ and $\Ddt J_\perp(0) = \dot s_\perp$, where $\dot s_\perp = \dot s - \inner{\dot s}{s}s$ is the component of $\dot s$ orthogonal to $s$.

Introducing vector notation for the first $d-1$ ODEs, let $a(t) \in \reals^{d-1}$ have components $a_1(t), \ldots, a_{d-1}(t)$, and let $M(t) \in \reals^{(d-1)\times(d-1)}$ have entries
\begin{align}
	M_{ji}(t) = \inner{R(E_i(t), E_d(t)) E_d(t)}{E_j(t)}.
	\label{eq:Mjit}
\end{align}
Then, equations in~\eqref{eq:ODEcoordinatesJacobi} for $j = 1, \ldots, d-1$ can be written succinctly as
\begin{align}
	a''(t) & = - M(t)a(t).
\end{align}
Since $a(t)$ is smooth, it holds that
\begin{align}
	a(t) & = a(0) + \int_{0}^{t} a'(\tau) \dtau = a(0) + t a'(0) + \int_{0}^{t} \int_{0}^{\tau} a''(\theta) \dtheta \dtau.
\end{align}
Initial conditions specify $a(0) = 0$, so that (with $\|\cdot\|$ also denoting the standard Euclidean norm and associated operator norm in real space):
\begin{align}
	\|a(t) - t a'(0)\| & \leq \int_{0}^{t} \int_{0}^{\tau} \|M(\theta)\| \|a(\theta)\| \dtheta \dtau.
	\label{eq:DExpPTboundintegral}
\end{align}
The left-hand side is exactly what we seek to control.
Indeed, initial conditions ensure $\dot s = a_1'(0) e_1 + \cdots + a_d'(0) e_d$, and:
\begin{align*}
	\|(\D\Exp_x(ts) - P_{ts})[t\dot s]\| & = \|J(t) - P_{ts}(t\dot s)\| \\
											 & = \left\| \sum_{i = 1}^{d} \left[ a_i(t) E_i(t) - t a_i'(0) E_i(t) \right] \right\| \\
											 & = \sqrt{\|a(t) - t a'(0)\|^2 + |a_d(t) - t a_d'(0)|^2} \\
											 & = \|a(t) - t a'(0)\|.
\end{align*}
For the right-hand side of~\eqref{eq:DExpPTboundintegral}, first note that $M(t)$ is a symmetric matrix owing to the symmetries of $R$.

Additionally, for any unit-norm $z \in \reals^{d-1}$,
\begin{align}
	z\transpose M(t) z & = \sum_{i,j = 1}^{d-1} z_i z_j \inner{R(E_i(t), E_d(t)) E_d(t)}{E_j(t)} = \inner{R(v, \gamma'(t)) \gamma'(t)}{v},
\end{align}
where $v = z_1 E_1(t) + \cdots + z_{d-1} E_{d-1}(t)$ is a tangent vector at $\gamma(t)$: it is orthogonal to $\gamma'(t)$ and also has unit norm.
By definition of sectional curvature $K(\cdot, \cdot)$~\eqref{eq:sectionalcurvature}, it follows that
\begin{align}
	z \transpose M(t) z & = K(v, \gamma'(t)).
\end{align}
By symmetry of $M(t)$, we conclude that
\begin{align}
	\|M(t)\| & = \max_{z \in \reals^{d-1}, \|z\| = 1} |z\transpose M(t) z| \leq K,
\end{align}
where $K \geq 0$ is such that all sectional curvatures of $\calM$ along $\gamma$ are in the interval $[-K, K]$. Going back to~\eqref{eq:DExpPTboundintegral}, we have so far shown that
\begin{align}
	\|(\D\Exp_x(ts) - P_{ts})[t\dot s]\| & \leq K \int_{0}^{t} \int_{0}^{\tau} \|a(\theta)\| \dtheta \dtau.
	\label{eq:DExpPTboundintegralfurther}
\end{align}
It remains to bound $\|a(\theta)\|$.
By~\eqref{eq:Jsplit}, we see that $\|a(t)\| = \|J_\perp(t)\|$. By the Jacobi field comparison theorem~\citep[Thm.~11.9b]{lee2018riemannian} and our assumed lower-bound on sectional curvature, we can now claim that, for $t \geq 0$, with $h_\Klow(t)$ as defined by~\eqref{eq:hklow},
\begin{align}
	\|a(t)\| = \|J_\perp(t)\| & \leq  h_\Klow(t)\|\dot s_\perp\|,
	\label{eq:UBJperp}
\end{align}
provided $\gamma$ has no interior conjugate point on $[0, t]$.
Combining with~\eqref{eq:DExpPTboundintegralfurther} and with the definitions of $h_\Klow$~\eqref{eq:hklow}, $g_\Klow$~\eqref{eq:gklow} and $f_\Klow$~\eqref{eq:fklow}, we find
\begin{align}
	\|(\D\Exp_x(ts) - P_{ts})[t\dot s]\| & \leq K \|\dot s_\perp\| \int_{0}^{t} \int_{0}^{\tau} h_\Klow(\theta) \dtheta \dtau \nonumber \\
		& = K \|\dot s_\perp\| \int_{0}^{t} g_\Klow(\tau) \dtau \nonumber \\
		& = K \|\dot s_\perp\| \cdot t f_\Klow(t).
\end{align}
It only remains to divide through by $t$, and to rescale $s$ so that $t$ plays the role of $\|s\|$.


For the special case where $\Kup = \Klow = \pm K$ (constant sectional curvature), one can show (for example by polarization) that $M(t) = \pm K I_{d-1}$, that is, $M(t)$ is a multiple of the identity matrix.
As a result, the ODEs separate and are easily solved (see also~\citep[Prop.~10.12]{lee2018riemannian}).
Explicitly, with $\|s\| = 1$,
\begin{align}
	\D\Exp_x(ts)[t \dot s] & = J(t) = h_{\pm K}(t) P_{ts}(\dot s_\perp) + t P_{ts}(\dot s_\parallel),
	\label{eq:DExpxtsconstantseccurv}
\end{align}
where $\dot s_\parallel = \inner{\dot s}{s} s$ is the component of $\dot s$ parallel to $s$. Hence,
\begin{align}
	\D\Exp_x(ts)[t \dot s] - P_{ts}(t \dot s) & = (h_{\pm K}(t) - t) P_{ts}(\dot s_\perp),
\end{align}
and the claim follows easily after dividing through by $t$ and rescaling.
\end{proof}

As a continuation of the previous proof and in anticipation of our needs in Appendix~\ref{app:controllingcprimeprime}, we provide a lemma controlling the Jacobi field $J$ and its covariant derivative, assessing both the full field and its normal component.
\begin{lemma} \label{lem:Jbounds}
	Let $\calM$ be a Riemannian manifold whose sectional curvatures are in the interval $[\Klow, \Kup]$, and let $K = \max(|\Klow|, |\Kup|)$.
	Consider $(x, s) \in \T\calM$ with $\|s\| = 1$ and the geodesic $\gamma(t) = \Exp_x(ts)$.
	Given a tangent vector $\dot s \in \T_x\calM$, consider the Jacobi field $J$ defined by~\eqref{eq:Jsplit}:
	\begin{align*}
		J(t) & = t \inner{\dot s}{s} \gamma'(t) + J_\perp(t),
	\end{align*}
	where $J_\perp$ is the Jacobi field along $\gamma$ with initial conditions $J_\perp(0) = 0$ and $\Ddt J_\perp(0) = \dot s_\perp$, and $\dot s_\perp = \dot s - \inner{\dot s}{s}s$ is the component of $\dot s$ orthogonal to $s$.
	For $t \geq 0$ such that $\gamma$ is defined and has no interior conjugate point on the interval $[0, t]$, the following inequalities hold:
	\begin{align}
		\left\| J(t) \right\| & \leq \max(t, h_\Klow(t)) \|\dot s\|,  &  \left\| \Ddt J(t) \right\| & \leq \left( 1 + K g_{\Klow}(t) \right) \|\dot s\|, \nonumber \\
		\left\| J_\perp(t) \right\| & \leq  h_\Klow(t)\|\dot s_\perp\|, &  \left\| \Ddt J_\perp(t) \right\| & \leq \left( 1 + K g_{\Klow}(t) \right) \|\dot s_\perp\|, \nonumber
	\end{align}
	where $h_\Klow(t)$ and $g_\Klow(t)$ are defined by~\eqref{eq:hklow} and~\eqref{eq:gklow}.
\end{lemma}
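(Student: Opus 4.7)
The plan is to reuse the ODE machinery already developed in the proof of Proposition~\ref{prop:DExpminPT} and to combine it with a Gr\"onwall-free direct integration of the Jacobi equation. Throughout, $\gamma(t) = \Exp_x(ts)$ has unit speed and the Jacobi field splits as $J = t\inner{\dot s}{s}\gamma' + J_\perp$, where the tangential and normal pieces are pointwise orthogonal because $J_\perp$ is always perpendicular to $\gamma'$.

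First I will dispose of the two statements that do not involve $\Ddt$. The bound $\|J_\perp(t)\| \leq h_\Klow(t)\|\dot s_\perp\|$ is exactly equation~\eqref{eq:UBJperp} in the proof of Proposition~\ref{prop:DExpminPT}, a direct application of the Jacobi field comparison theorem~\citep[Thm.~11.9b]{lee2018riemannian}; the hypothesis on interior conjugate points is precisely what that theorem needs. For the full field $J$, I use the orthogonal decomposition and $\|\gamma'(t)\| = 1$ to write
\[ \|J(t)\|^2 = t^2 \inner{\dot s}{s}^2 + \|J_\perp(t)\|^2 \leq \max(t^2, h_\Klow(t)^2)\left(\inner{\dot s}{s}^2 + \|\dot s_\perp\|^2\right) = \max(t, h_\Klow(t))^2 \|\dot s\|^2. \]

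For the two bounds on the covariant derivative, I will integrate the Jacobi equation itself. Since $\Ddttwo J(\tau) = -R(J(\tau),\gamma'(\tau))\gamma'(\tau)$ and $\Ddt J(0) = \dot s$, the fundamental theorem of calculus for parallel frames gives
\[ \Ddt J(t) = \dot s - \int_0^t R(J(\tau), \gamma'(\tau))\gamma'(\tau) \dtau, \]
so the task reduces to bounding $\|R(J(\tau), \gamma'(\tau))\gamma'(\tau)\|$. By antisymmetry of $R$ in its first two arguments, the linear map $u \mapsto R(u, \gamma'(\tau))\gamma'(\tau)$ annihilates $\gamma'(\tau)$, so only the normal component of $J$ contributes: $R(J(\tau), \gamma'(\tau))\gamma'(\tau) = R(J_\perp(\tau), \gamma'(\tau))\gamma'(\tau)$. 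On $\gamma'(\tau)^\perp$, this operator is represented in the parallel orthonormal frame of~\eqref{eq:Mjit} by the symmetric matrix $M(\tau)$, which was shown in the proof of Proposition~\ref{prop:DExpminPT} to have operator norm at most $K$ (via the sectional curvature definition~\eqref{eq:sectionalcurvature}). Hence $\|R(J(\tau), \gamma'(\tau))\gamma'(\tau)\| \leq K\|J_\perp(\tau)\| \leq K h_\Klow(\tau)\|\dot s_\perp\|$ by the first step. Integrating and recalling $g_\Klow(t) = \int_0^t h_\Klow(\tau)\dtau$ yields
\[ \|\Ddt J(t)\| \leq \|\dot s\| + K g_\Klow(t) \|\dot s_\perp\| \leq (1 + K g_\Klow(t))\|\dot s\|, \]
since $\|\dot s_\perp\| \leq \|\dot s\|$. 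The bound for $\Ddt J_\perp$ is obtained by the exact same argument applied to the Jacobi field $J_\perp$ (which has initial data $J_\perp(0)=0$, $\Ddt J_\perp(0) = \dot s_\perp$), giving $\|\Ddt J_\perp(t)\| \leq (1 + K g_\Klow(t))\|\dot s_\perp\|$.

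The only step that requires any care is the operator-norm bound on $u \mapsto R(u,\gamma')\gamma'$, but this is already essentially packaged in Proposition~\ref{prop:DExpminPT}'s proof, so no new geometric work is needed. The remaining inequalities are purely algebraic, and the no-conjugate-point assumption is used solely to invoke the Jacobi field comparison inequality~\eqref{eq:UBJperp}.
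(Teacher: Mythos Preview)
Your proposal is correct and follows essentially the same approach as the paper: both use the Jacobi field comparison theorem (eq.~\eqref{eq:UBJperp}) for $\|J_\perp\|$, the orthogonal tangential/normal split for $\|J\|$, and a single integration of the Jacobi equation combined with the bound $\|M(\tau)\| \leq K$ for the covariant derivatives. The only cosmetic difference is that the paper works in coordinates with respect to the parallel frame $E_1,\ldots,E_d$ throughout and derives $\|\Ddt J\|$ from $\|\Ddt J_\perp\|$ via the angle decomposition, whereas you present the integration step coordinate-free and bound $\|\Ddt J\|$ directly before specializing to $J_\perp$; the underlying argument is the same.
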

\begin{proof}
	The proof is a continuation of that of Proposition~\ref{prop:DExpminPT}. Using notation as in there,
	\begin{align*}
		J_\perp(t) & = \sum_{i = 1}^{d-1} a_i(t) E_i(t).
	\end{align*}
	Since $J_\perp$ and $\Ddt J_\perp$ are orthogonal to $\gamma' = E_d$, we know that
	\begin{align*}
		\|J\|^2 & = t^2 \inner{\dot s}{s}^2 + \|J_\perp\|^2 & \textrm{ and } &&
		\left\|\Ddt J\right\|^2 & = \inner{\dot s}{s}^2 + \left\|\Ddt J_\perp\right\|^2.
	\end{align*}
	The bound $\|J_\perp(t)\| \leq  h_\Klow(t)\|\dot s_\perp\|$ appears explicitly as~\eqref{eq:UBJperp}. With $\alpha$ denoting the angle between $s$ and $\dot s$, we may write $\inner{\dot s}{s}^2 = \cos(\alpha)^2\|\dot s\|^2$ and $\|\dot s_\perp\|^2 = \sin(\alpha)^2\|\dot s\|^2$, so that
	\begin{align*}
		\|J\|^2 & \leq t^2  \left( \cos(\alpha)^2 + \left(\frac{h_\Klow(t)}{t}\right)^2 \sin(\alpha)^2 \right)\|\dot s\|^2.
	\end{align*}
	Since the maximum of $\alpha \mapsto \cos(\alpha)^2 + q \sin(\alpha)^2$ with $q \in \reals$ is $\max(1, q)$, we find for $t \geq 0$ that
	\begin{align*}
		\|J\| & \leq \max(t, h_\Klow(t)) \|\dot s\|.
	\end{align*}
		
%
%
%
%
%

	With the same tools, we may also bound $\Ddt J = \inner{\dot s}{s}\gamma' + \Ddt J_\perp$.
	Indeed, its coordinates in the frame $E_1, \ldots, E_d$ are given by $a_1', \ldots, a_d'$ with $a_d'(t) = \inner{\dot s}{s}$, so that
	\begin{align*}
		\left\| \Ddt J(t) \right\|^2 & = \inner{\dot s}{s}^2 + \left\| \Ddt J_\perp(t) \right\|^2 = \inner{\dot s}{s}^2 + \|a'(t)\|^2,
	\end{align*}	
	where $a(t) \in \reals^{d-1}$ collects the $d-1$ first coordinates.
	Moreover,
	\begin{align*}
		a'(t) = a'(0) + \int_{0}^{t} a''(\tau) \dtau = a'(0) - \int_{0}^{t} M(\tau) a(\tau) \dtau.
	\end{align*}
	Note that
	\begin{align*}
		\left\| \int_{0}^{t} M(\tau) a(\tau) \dtau \right\| \leq K \|\dot s_\perp\| \int_{0}^{t} h_{\Klow}(\tau) \dtau = K \|\dot s_\perp\| g_{\Klow}(t).
	\end{align*}
	Combining with the fact that $\|a'(0)\| = \|\dot s_\perp\|$, we get
	\begin{align*}
		\left\| \Ddt J_\perp(t) \right\| & \leq \left( 1 + K g_{\Klow}(t) \right) \|\dot s_\perp\|,
	\end{align*}
	as announced. We now conclude along the same lines as above with
	\begin{align*}
		\left\| \Ddt J(t) \right\|^2 & \leq \left( \cos(\alpha)^2 + \left( 1 + K g_{\Klow}(t) \right)^2 \sin(\alpha)^2 \right)\|\dot s\|^2.
	\end{align*}
	Since $\max(1, 1 + K g_{\Klow}(t)) = 1 + K g_{\Klow}(t)$, we reach the desired conclusion.
\end{proof}


\section{Controlling the initial acceleration $c''(0)$} \label{app:controllingcprimeprime}

In this section, we build a proof for Proposition~\ref{prop:cprimeprimecontrol}, whose aim is to control the initial intrinsic acceleration $c''(0)$ of the curve $c(t) = \Exp_x(s + t \dot s)$.
Since $c'(t) = \D\Exp_x(s + t \dot s)[\dot s]$, we can think of this result as giving us access to a second derivative of the exponential map $\Exp_x$ away from the origin.
As a first step, we build an ODE whose solution encodes $c''(0)$.
\begin{proposition} \label{prop:WODE}
	Let $\calM$ be a Riemannian manifold with Riemannian connection $\nabla$ and Riemann curvature endomorphism $R$.
	Consider $(x, s) \in \T\calM$ with $\|s\| = 1$ and the geodesic $\gamma(t) = \Exp_x(ts)$.
	Furthermore, consider a tangent vector $\dot s \in \T_x\calM$ and the curve
	\begin{align*}
		c_{ts, \dot s}(q) & = \Exp_x(ts + q\dot s)
	\end{align*}
	defined for some fixed $t$.
	Let $J$ be the Jacobi field along $\gamma$ with initial conditions $J(0) = 0$ and $\Ddt J(0) = \dot s$.
	We use it to define a new vector field $H$ along $\gamma$:
	\begin{align*}
		H & = 4 R(\gamma', J) \Ddt J + (\nabla_J R)(\gamma', J) \gamma' + \left(\nabla_{\gamma'} R\right)(\gamma', J) J.
	\end{align*}
	The smooth vector field $W$ along $\gamma$ defined by the linear ODE
	\begin{align*}
		\Ddttwo W + R(W, \gamma') \gamma'& = H
	\end{align*}
	with initial conditions $W(0) = 0$ and $\Ddt W(0) = 0$ is also defined on the same domain as $\gamma$. This vector field is related to the initial intrinsic acceleration of the curve $c_{ts, \dot s}$ as follows:
	\begin{align*}
		W(t) & = t^2 c_{ts, \dot s}''(0).
	\end{align*}
	Furthermore, the vector field $H$ is equivalently defined as
	\begin{align*}
		H & = 4 R(\gamma', J_\perp) \Ddt J + (\nabla_J R)(\gamma', J_\perp) \gamma' + \left(\nabla_{\gamma'} R\right)(\gamma', J_\perp) J,
	\end{align*}
	where $J_\perp$ the Jacobi field along $\gamma$ with initial conditions $J_\perp(0) = 0$ and $\Ddt J_\perp(0) = \dot s_\perp = \dot s - \inner{\dot s}{s}s$.
\end{proposition}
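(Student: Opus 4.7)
The plan is to view $\Gamma(t,q) := \Exp_x(ts + q\dot s)$ as a smooth parametrized two-surface in $\calM$, with coordinate vector fields $T := \partial_t \Gamma$ and $U := \partial_q \Gamma$ satisfying $[T,U] = 0$ (hence the symmetry $\nabla_T U = \nabla_U T$). At $q = 0$ the curve $t \mapsto \Gamma(t,0) = \gamma(t)$ is a geodesic, so $T(t,0) = \gamma'(t)$, $\nabla_T T|_{q=0} = 0$, and $U(t,0) = \D\Exp_x(ts)[\dot s] = J(t)/t$. Since $c_{ts,\dot s}(q) = \Gamma(t,q)$, one has $c_{ts,\dot s}''(0) = \nabla_U U|_{(t,0)}$, so setting $Y(t) := \nabla_U U|_{(t,0)}$ makes $W(t) = t^2 Y(t)$ the quantity to analyze, and the target ODE $\Ddt^2 W + R(W,\gamma')\gamma' = H$ becomes a statement about $Y$, its first two covariant derivatives, and the Jacobi field $J$.

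For the initial conditions I would observe that $\alpha_0(q) := \Gamma(0,q) = \Exp_x(q\dot s)$ is itself a geodesic in $q$, whence $Y(0) = \nabla_U U|_{(0,0)} = 0$; the identity $\Ddt W = 2 t Y + t^2 \Ddt Y$ then gives $W(0) = 0$ and $\Ddt W(0) = 0$ after confirming that $\Ddt Y$ extends smoothly through $t = 0$. For the alternative form of $H$ (with $J$ replaced by $J_\perp$ only in the second argument of $R$ and of $\nabla_\cdot R$), I would decompose $J = J_\perp + J_\parallel$ with $J_\parallel = t\inner{\dot s}{s}\gamma'$: every term in which $J_\parallel$ occupies the second slot of $R$ or $\nabla_\cdot R$ collapses to a multiple of $R(\gamma',\gamma')(\cdot)$ or $(\nabla_\cdot R)(\gamma',\gamma')(\cdot)$, which vanishes because $\nabla R$ inherits the antisymmetry of $R$ in its first two arguments.

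The heart of the proof is the ODE. I would first compute $\Ddt Y = \nabla_T \nabla_U U$ via the curvature commutator $\nabla_T \nabla_U X - \nabla_U \nabla_T X = R(T,U) X$ (valid because $[T,U] = 0$) together with $\nabla_T U = \nabla_U T$, obtaining $\Ddt Y = \nabla_U \nabla_U T + R(T,U) U$. Differentiating again in $T$ and applying the commutator a second time brings in $\nabla_U \nabla_U \nabla_T T$: although $\tau := \nabla_T T$ vanishes on $\gamma$, its $U$-derivatives do not, and they can be evaluated at $q = 0$ using the identity $\nabla_U \tau = \Ddt^2 U + R(U,T)T$ combined with the Jacobi equation $\Ddt^2 J + R(J,\gamma')\gamma' = 0$. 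The Leibniz expansion of $\nabla_T(R(T,U)U)$ and of the second $\nabla_U$ applied to $\nabla_T T$ produces the three source terms of $H$: $(\nabla_J R)(\gamma',J)\gamma'$ and $(\nabla_{\gamma'} R)(\gamma',J)J$ emerge as the $\nabla R$-pieces, while the coefficient $4$ in $4R(\gamma',J)\Ddt J$ arises from combining two symmetric commutator contributions with the curvature terms produced when the Jacobi equation is used to rewrite $\Ddt^2 U|_{q=0}$. The Jacobi-operator term $R(W,\gamma')\gamma'$ on the left-hand side is the residue after all other $R$-terms cancel.

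The main obstacle will be the bookkeeping in this commutator calculation: a double commutator together with two Leibniz expansions produces on the order of a dozen terms, several of which must cancel against each other using the Jacobi equation, the antisymmetries of $R$, and the identity $\nabla_T U|_{q=0} = \Ddt(J/t) = -J/t^2 + \Ddt J / t$. To guard against algebraic error I would first verify the identity in the constant sectional curvature case, where $\nabla R \equiv 0$ (so only the source $4R(\gamma',J)\Ddt J$ survives) and $J$ has the closed form recorded in Lemma~\ref{lem:Jbounds}, so that the resulting linear ODE can be solved explicitly and cross-checked against the exact formula $\D\Exp_x(ts)[t\dot s] = h_{\pm K}(t) P_{ts}(\dot s_\perp) + t P_{ts}(\dot s_\parallel)$ from~\eqref{eq:DExpxtsconstantseccurv} after one further covariant derivative.
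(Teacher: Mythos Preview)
Your plan for the initial conditions and for the reduction from $J$ to $J_\perp$ is fine, but the derivation of the ODE itself has a real gap, and it stems from the choice of variation. With $\Gamma(t,q)=\Exp_x(ts+q\dot s)$, neither family of coordinate curves is geodesic away from the axes: for $q\neq 0$ the curves $t\mapsto\Gamma(t,q)$ are not geodesics, so $\tau:=\nabla_T T$ vanishes only on $\{q=0\}$, not identically. When you carry out the double commutation you describe, $\Ddt^2 Y$ indeed produces a term $\nabla_U\nabla_U\tau$; your proposed remedy is to compute it from the identity $\nabla_U\tau=\Ddt^2 U+R(U,T)T$. But differentiating that identity once more in $U$ and commuting gives $\nabla_U\nabla_U\tau|_{q=0}=\Ddt^2(\nabla_U U)|_{q=0}+(\text{curvature terms})=\Ddt^2 Y+(\text{curvature terms})$, so substituting back yields the tautology $0=0$: the unknown $\nabla_U\nabla_U\tau|_{q=0}$ and the unknown $\Ddt^2 Y$ are equivalent modulo curvature terms, and neither commutator manipulation nor the Jacobi equation at $q=0$ supplies an independent relation.

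The paper avoids this by using the reparametrized variation $\tilde\Gamma(q,t)=\Exp_x\!\big(t(s+q\dot s)\big)$, which is a variation \emph{through geodesics}: for every $q$ the curve $t\mapsto\tilde\Gamma(q,t)$ is geodesic, so $\Ddt\partial_t\tilde\Gamma\equiv 0$ identically and all of its $q$-derivatives vanish as well. That single fact is what allows the commutator calculation to close into a genuine second-order linear ODE for $W:=\Ddq\partial_q\tilde\Gamma|_{q=0}$. Your $\Gamma$ is related to $\tilde\Gamma$ by the change of variables $q\mapsto tq$ (which is exactly where the factor $t^2$ in $W(t)=t^2 c_{ts,\dot s}''(0)$ comes from), so the fix is simply to switch to $\tilde\Gamma$ before differentiating; the rest of your outline---commutation rule, symmetry lemma, Leibniz expansion of $\nabla R$, and the Bianchi identity to collect the coefficient $4$---is then exactly what is needed.
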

\begin{proof}
	Define
	\begin{align*}
		\Gamma(q, t) & = \Exp_x(t(s + q \dot s)),
	\end{align*}
	a variation through geodesics of the geodesic
	\begin{align*}
		\gamma(t) & = \Gamma(0, t) = \Exp_x(ts).
	\end{align*}
	Then,
	\begin{align*}
		J(t) & = \partial_q \Gamma(0, t) = \left. \D\Exp_x(t(s + q\dot s))[t \dot s] \right|_{q = 0} = \D\Exp_x(ts)[t\dot s]
	\end{align*}
	is the Jacobi field along $\gamma$ with initial conditions $J(0) = 0$ and $\Ddt J(0) = \dot s$: the same field we considered in the proof of Proposition~\ref{prop:DExpminPT}.
	Further consider
	\begin{align}
		W(t) & = \left(\Ddq \partial_q \Gamma\right)(0, t), 
	\end{align}
	another smooth vector field along $\gamma$.
	This field is related to acceleration of curves of the form
	\begin{align*}
		c_{s, \dot s}(q) & = \Exp_x(s + q\dot s),
	\end{align*}
	because $c_{ts, t\dot s}(q) =  \Gamma(q, t)$.
	Specifically,
	\begin{align}
		W(t) & = \left(\Ddq \partial_q \Gamma\right)(0, t) = c''_{ts, t\dot s}(0) = t^2 c''_{ts, \dot s}(0).
		\label{eq:Wcprimeprimelink}
	\end{align}
	To verify the last equality, differentiate the identity $c_{ts, t\dot s}(q) = c_{ts, \dot s}(tq)$ twice with respect to $q$, with the chain rule.
	This shows in particular that
	\begin{align}
		W(0) & = 0 & \textrm{ and } & & \Ddt W(0) & = 0.
	\end{align}
	Our goal is to derive a second-order ODE for $W$.
	In so doing, we repeatedly use the two following results from Riemannian geometry which allow us to commute certain derivatives:
	\begin{itemize}
		\item \citep[Prop.~7.5]{lee2018riemannian} For every smooth vector field $V$ along $\Gamma$ (meaning $V(q, t)$ is tangent to $\calM$ at $\Gamma(q, t)$),
		\begin{align}
			\Ddt \Ddq V - \Ddq \Ddt V = R(\partial_t \Gamma, \partial_q \Gamma) V,
		\end{align}
		where $R$ is the Riemann curvature endomorphism.
		\item \citep[Lem.~6.2]{lee2018riemannian} The \emph{symmetry lemma} states
		\begin{align}
			\Ddq \partial_t \Gamma = \Ddt \partial_q \Gamma.
		\end{align}
	\end{itemize}
	With the link between $W$ and $\Ddq \partial_q \Gamma$ in mind, we compute a first derivative with respect to $t$:
	\begin{align*}
		\Ddt \Ddq \partial_q \Gamma = \Ddq \Ddt \partial_q \Gamma + R(\partial_t \Gamma, \partial_q \Gamma) \partial_q \Gamma,
	\end{align*}
	then a second derivative:
	\begin{align*}
		\Ddt \Ddt \Ddq \partial_q \Gamma & = \Ddt \Ddq \Ddt \partial_q \Gamma + \Ddt \left\{R(\partial_t \Gamma, \partial_q \Gamma) \partial_q \Gamma\right\}. 
	\end{align*}
	Our goal is to evaluate this expression for $q = 0$, in which case the left-hand side yields $\Ddttwo W$. However, it is unclear how to evaluate the first term on the right-hand side at $q = 0$.
	Focusing on that term for now, apply the commutation rule on the first two derivatives:
	\begin{align*}
		\Ddt \Ddq \Ddt \partial_q \Gamma & = \Ddq \Ddt \Ddt \partial_q \Gamma + R(\partial_t \Gamma, \partial_q \Gamma) \Ddt \partial_q \Gamma.
	\end{align*}
	Focusing on the first term once more, apply the symmetry lemma then the commutation rule:
	\begin{align*}
		\Ddq \Ddt \Ddt \partial_q \Gamma & = \Ddq \left\{ \Ddt \Ddq \partial_t \Gamma \right\} = \Ddq \left\{ \Ddq \Ddt \partial_t \Gamma + R(\partial_t \Gamma, \partial_q \Gamma) \partial_t \Gamma\right\} = \Ddq \left\{R(\partial_t \Gamma, \partial_q \Gamma) \partial_t \Gamma\right\}.
	\end{align*}
	To reach the last equality, we used that $\Ddt \partial_t \Gamma$ vanishes identically since $t \mapsto \Gamma(q, t)$ is a geodesic for every fixed $q$.
	Combining, we find
	\begin{align}
		\Ddt \Ddt \Ddq \partial_q \Gamma & = R(\partial_t \Gamma, \partial_q \Gamma) \Ddt \partial_q \Gamma + \Ddt \left\{R(\partial_t \Gamma, \partial_q \Gamma) \partial_q \Gamma\right\} + \Ddq \left\{R(\partial_t \Gamma, \partial_q \Gamma) \partial_t \Gamma\right\}.
	\end{align}
	Using the chain rule for tensors as in~\eqref{eq:nablaUR} (see also \citep[pp95--103]{lee2018riemannian} or \citep[Def.~3.17]{oneill}), we can further expand the right-most term:
	\begin{align*}
		\Ddq \left\{R(\partial_t \Gamma, \partial_q \Gamma) \partial_t \Gamma\right\} & = \left(\nabla_{\partial_q \Gamma} R\right)(\partial_t \Gamma, \partial_q \Gamma) \partial_t \Gamma + R\left(\Ddq \partial_t \Gamma, \partial_q \Gamma\right) \partial_t \Gamma \\ & \qquad + R\left(\partial_t \Gamma, \Ddq \partial_q \Gamma\right) \partial_t \Gamma + R\left(\partial_t \Gamma, \partial_q \Gamma\right) \Ddq \partial_t \Gamma.
	\end{align*}
	It is now easier to evaluate the whole expression at $q = 0$: using
	\begin{align*}
		\partial_q \Gamma(0, t) & = J(t),  & \partial_t \Gamma(0, t) & = \gamma'(t) & & \textrm{ and } & \left(\Ddq \partial_q \Gamma\right)(0, t) & = W(t)
	\end{align*}
	repeatedly, and also $\Ddq \partial_t \Gamma = \Ddt \partial_q \Gamma$ twice so that it evaluates to $\Ddt J$ at $q = 0$, we find
	\begin{align*}
		\Ddttwo W & = 2 R(\gamma', J) \Ddt J + \Ddt \left\{ R(\gamma', J) J \right\} + (\nabla_J R)(\gamma', J) \gamma' + R\left(\Ddt J, J\right) \gamma' + R(\gamma', W) \gamma'. 
	\end{align*}
	This is now an ODE in the single variable $t$, involving smooth vector fields $J$, $W$ and $\gamma'$ along the geodesic $\gamma$.
	We may apply the chain rule for tensors again (we could just as well have done this earlier too):
	\begin{align*}
		\Ddt \left\{ R(\gamma', J) J \right\} & = \left(\nabla_{\gamma'} R\right)(\gamma', J) J + R\left(\gamma', \Ddt J\right) J + R(\gamma', J) \Ddt J,
	\end{align*}
	here too simplifying one term since $\gamma''$ vanishes. The algebraic Bianchi identity~\citep[p203]{lee2018riemannian} states $R(X, Y)Z + R(Y, Z)X + R(Z, X) Y = 0$, so that in particular
	\begin{align*}
		R\left(\Ddt J, J\right) \gamma' + R\left(\gamma', \Ddt J\right) J = - R(J, \gamma') \Ddt J = R(\gamma', J) \Ddt J.
	\end{align*}
	(We also used anti-symmetry of $R$). Overall, $\Ddttwo W + R(W, \gamma') \gamma' = H$ with
	\begin{align*}
		H & = 4 R(\gamma', J) \Ddt J + (\nabla_J R)(\gamma', J) \gamma' + \left(\nabla_{\gamma'} R\right)(\gamma', J) J.
	\end{align*}
	The Jacobi field $J$ splits into its tangent and normal parts~\eqref{eq:Jsplit}:
	\begin{align*}
		J(t) & = t\inner{\dot s}{s}\gamma'(t) + J_\perp(t).
	\end{align*}
	Since $R(\gamma', \gamma') = 0$ by anti-symmetry of $R$, and since for the same reason $(\nabla_\cdot R)(\gamma', \gamma') = 0$ as well, by linearity, we may simplify $H$ to:
	\begin{align*}
		H & = 4 R(\gamma', J_\perp) \Ddt J + (\nabla_J R)(\gamma', J_\perp) \gamma' + \left(\nabla_{\gamma'} R\right)(\gamma', J_\perp) J.
	\end{align*}
	This concludes the proof.
\end{proof}

To reach our main result, it remains to bound the solutions of the ODE in $W$.
In order to do so, we notably need to bound the inhomogeneous term $H$.
For that reason, we require a bound on the covariant derivative of Riemannian curvature.
\begin{theorem} \label{thm:cprimeprime}
	Let $\calM$ be a Riemannian manifold whose sectional curvatures are in the interval $[\Klow, \Kup]$, and let $K = \max(|\Klow|, |\Kup|)$.
	Also assume $\nabla R$---the covariant derivative of the Riemann curvature endomorphism---is bounded by $F$ in operator norm.
	Pick any $(x, s) \in \T\calM$ such that the geodesic $\gamma(t) = \Exp_x(ts)$ is defined for all $t \in [0, 1]$, and such that
	\begin{align*}
		\|s\| & \leq \min\!\left( C \frac{1}{\sqrt{K}}, C' \frac{K}{F} \right)
	\end{align*}
	with some constants $C \leq \pi$ and $C'$.
	For any $\dot s \in \T_x\calM$,
	the curve
	\begin{align*}
		 c(t) = \Exp_x(s + t \dot s)
	\end{align*}
	has initial acceleration bounded as
	\begin{align*}
		\|c''(0)\| & \leq \bar{\bar W} K \|s\| \|\dot s\| \|\dot s_\perp\|,
	\end{align*}
	where $\dot s_\perp = \dot s - \frac{\inner{s}{\dot s}}{\inner{s}{s}} s$ is the component of $\dot s$ orthogonal to $s$ and $\bar{\bar{W}} \in \reals$ is only a function of $C$ and $C'$.
	In particular, for $C, C' \leq \frac{1}{4}$, we have $\bar{\bar W} \leq \frac{3}{2}$.
\end{theorem}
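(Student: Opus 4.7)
The plan is to reduce the theorem to a controlled linear ODE, bound its inhomogeneous term, and then bootstrap the solution. First I rescale: writing $\tilde s = s / \|s\|$ and applying Proposition~\ref{prop:WODE} along $\gamma(t) = \Exp_x(t\tilde s)$, the vector field $W$ satisfying $\Ddttwo W + R(W,\gamma')\gamma' = H$ with $W(0) = 0$ and $\Ddt W(0) = 0$ satisfies $W(\|s\|) = \|s\|^2\, c''(0)$, where $H$ is built from $\gamma'$ and from the Jacobi field $J$ (equivalently, $J_\perp$) along $\gamma$ with $J(0) = 0$ and $\Ddt J(0) = \dot s$. Thus controlling $\|c''(0)\|$ reduces to controlling $\|W(\|s\|)\|$ from this second-order inhomogeneous ODE.

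Next I bound the inhomogeneous term $H$. Using the operator bounds $\|R\| \lesssim K$ and $\|\nabla R\| \leq F$ together with the estimates of Lemma~\ref{lem:Jbounds} applied to the unit-speed geodesic $\gamma$, each of the three summands in $H = 4R(\gamma',J_\perp)\Ddt J + (\nabla_J R)(\gamma',J_\perp)\gamma' + (\nabla_{\gamma'} R)(\gamma',J_\perp)J$ is controlled. The first term is bounded by a constant multiple of $K \cdot h_\Klow(t) \cdot (1 + K g_\Klow(t)) \cdot \|\dot s\|\|\dot s_\perp\|$, while the two $\nabla R$-terms contribute a constant multiple of $F \cdot t \cdot h_\Klow(t)\|\dot s\|\|\dot s_\perp\|$. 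Using the elementary estimates on $h_\Klow$ and $g_\Klow$ listed at the top of Appendix~\ref{app:PTvDExp} (valid because $t \leq \|s\| \leq C/\sqrt{K} \leq \pi/\sqrt{K}$), each factor simplifies: $h_\Klow(t) \lesssim t$ and $g_\Klow(t) \lesssim t^2$ under our smallness assumption. This gives an explicit bound of the form $\|H(t)\| \leq (\alpha K t + \beta F t^2) \|\dot s\|\|\dot s_\perp\|$ with explicit, small universal constants $\alpha,\beta$.

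With this in hand, I bound $\|W\|$ by converting the ODE into an integral identity. Writing $W(t) = \int_0^t\int_0^\tau H(\theta)\,d\theta\,d\tau - \int_0^t\int_0^\tau R(W(\theta),\gamma'(\theta))\gamma'(\theta)\,d\theta\,d\tau$ and taking norms yields
\begin{align*}
\|W(t)\| \leq A(t) + K \int_0^t\int_0^\tau \|W(\theta)\|\,d\theta\,d\tau,
\end{align*}
where $A(t)$ is the explicit polynomial bound coming from $\|H\|$. Because the smallness hypothesis $\|s\| \leq C/\sqrt{K}$ makes $K t^2 \leq C^2$ small, a Grönwall-type argument (or a two-step bootstrap using $\|W(t)\| \leq 2 A(t)$ on $[0,\|s\|]$) absorbs the second term into a benign multiplicative factor close to $1$. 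The resulting estimate reads $\|W(\|s\|)\| \leq (\alpha' K \|s\|^3 + \beta' F \|s\|^4)\|\dot s\|\|\dot s_\perp\|$ for explicit $\alpha',\beta'$.

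Finally I invoke the second smallness hypothesis: $\|s\| \leq C' K/F$ implies $F\|s\| \leq C' K$, so the $F$-term is absorbed into a constant multiple of the $K$-term. Dividing by $\|s\|^2$ and unwinding the rescaling gives $\|c''(0)\| \leq \bar{\bar W} K \|s\|\|\dot s\|\|\dot s_\perp\|$ with $\bar{\bar W}$ depending only on $C, C'$. The main obstacle is book-keeping sharp enough constants: to reach the announced $\bar{\bar W} \leq 3/2$ for $C,C' \leq 1/4$, the estimates on $h_\Klow, g_\Klow$ and on the operator norm of $R$ must be tracked carefully, and the Grönwall correction factor (of the form $1/(1 - KC^2/2)$ or similar) must be estimated explicitly. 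As a sanity check, the constant curvature case would make $M(t)$ a multiple of the identity and $\nabla R \equiv 0$, giving an exact closed form that matches the leading-order bound and confirms the correct form $K\|s\|\|\dot s\|\|\dot s_\perp\|$.
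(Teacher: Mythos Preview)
Your proposal is correct and follows essentially the same route as the paper: reduce via Proposition~\ref{prop:WODE} to the inhomogeneous Jacobi-type ODE for $W$, bound $H$ using $\|R\|\lesssim K$, $\|\nabla R\|\leq F$ and Lemma~\ref{lem:Jbounds}, pass to the double-integral inequality, and then bootstrap. The only place where the paper is more explicit than your sketch is the bootstrap itself: a literal ``two-step bootstrap using $\|W(t)\|\leq 2A(t)$'' is circular unless you first have \emph{some} a priori bound on $W$. The paper handles this by first rewriting the ODE as a first-order system and applying ordinary Gr\"onwall to get a (possibly large, exponential-in-$C$) crude bound $\|W(t)\|\leq \bar W/\sqrt{K}$, and only then iterates the integral inequality infinitely many times so that the crude constant $\bar W$ washes out and the limiting constant $\bar{\bar W}=\tfrac{1}{6}\bar H/(1-C^2/42)$ depends only on $C,C'$. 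Your ``Gr\"onwall-type argument'' phrasing covers this (the Volterra/Neumann-series version achieves the same thing directly), but the two-step shortcut alone would leave the bad $\bar W$ factor and miss the announced $\bar{\bar W}\leq 3/2$.
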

\begin{proof}
	By Remark~\ref{rem:conjugatepoints}, since $C \leq \pi$ we know that $\gamma$ has no interior conjugate point on $[0, 1]$.
	Since the claim is clear for either $s = 0$ or $\dot s = 0$, assume $\|s\| = 1$ for now---we rescale at the end---and $\dot s \neq 0$.
	We also assume $K > 0$: the case $K = 0$ follows easily by inspection of the proof below.

	Following Proposition~\ref{prop:WODE}, the goal is to bound $W$: the solution of an ODE with right-hand side given by the vector field $H$.
	As we did in earlier proofs, pick an orthonormal basis $e_1, \ldots, e_d$ for $\T_x\calM$ with $e_d = s$ and transport it along $\gamma$ as $E_i(t) = P_{ts}(e_i)$.
	We expand $W$ and $H$ as
	\begin{align}
		W(t) & = \sum_{i = 1}^{d} w_i(t) E_i(t), & H(t) & = \sum_{i = 1}^{d} h_i(t) E_i(t).
	\end{align}
	This allows us to write the ODE in coordinates:
	\begin{align}
		w''(t) + M(t)w(t) & = h(t),
	\end{align}
	where $M(t)$ is as in~\eqref{eq:Mjit} but defined in $\Rdd$ (thus, it has an extra row and column of zeros), and $w(t), h(t) \in \Rd$ are vectors containing the coordinates of $W(t)$ and $H(t)$.
	Since $W(0) = \Ddt W(0) = 0$, we have $w(0) = w'(0) = 0$ and we deduce
	\begin{align*}
		w(t) & = w(0) + t w'(0) + \int_{0}^{t} \int_{0}^{\tau} w''(\theta) \dtheta \dtau = \int_{0}^{t} \int_{0}^{\tau} -M(\theta)w(\theta) + h(\theta) \, \dtheta \dtau.
	\end{align*}
	Thus,
	\begin{align}
		\|W(t)\| = \|w(t)\| \leq \int_{0}^{t} \int_{0}^{\tau} K \|w(\theta)\| + \|h(\theta)\| \, \dtheta \dtau.
		\label{eq:WboundFirstForm}
	\end{align}
	To proceed, we need a bound on $\|H(t)\| = \|h(t)\|$ and a first bound on $\|W(t)\|$.
	We will then improve the latter by bootstrapping.
	
	Let us first bound $H$.
	Following~\citep[eq.~(9)]{karcher1970curvatureestimates}, we know that $R$ is bounded (as an operator) as follows:
	\begin{align}
		\|R(X, Y)Z\| & \leq K_0 \|X\| \|Y\| \|Z\| & \textrm{ with } & & K_0 & = \sqrt{K^2 + (25/36)(\Kup - \Klow)^2} \leq 2K,
		\label{eq:boundR}
	\end{align}
	where $X, Y, Z$ are arbitrary vector fields along $\gamma$.
	We further assume that
	\begin{align}
		\|(\nabla_U R) (X, Y) Z\| & \leq F \|U\| \|X\| \|Y\| \|Z\|
	\end{align}
	for some finite $F \geq 0$. Then,
	\begin{align}
		\|H\| & \leq 4 K_0 \|\gamma'\| \left\| \Ddt J \right\| \|J_\perp\| + 2 F \|\gamma'\|^2 \|J\| \|J_\perp\|.
	\end{align}
	Since $\|\gamma'(t)\| = \|s\| = 1$ for all $t$, this expression simplifies somewhat.
	Using Lemma~\ref{lem:Jbounds}, we can also bound all terms involving $J$ and $J_\perp$, so that, also using $K_0 \leq 2K$,
	\begin{align}
		\|H(t)\| &\leq  h_\Klow(t) \Big( 8K \left( 1 + K g_{\Klow}(t) \right)  +  2F \max(t, h_\Klow(t))  \Big) \|\dot s\| \|\dot s_\perp\|.
	\end{align}
	Since $h_\Klow(t) \leq h_{-K}(t) = t \frac{\sinh(\sqrt{K}t)}{\sqrt{K}t}$ and likewise $K g_\Klow(t) \leq K g_{-K}(t) = \cosh(\sqrt{K}t) - 1$, and since $h_{-K}(t) \geq t$, we find
	\begin{align}
		\|H(t)\| & \leq t \frac{\sinh(\sqrt{K}t)}{\sqrt{K}t} \Big( 8K \cosh(\sqrt{K}t)  +  2F t \frac{\sinh(\sqrt{K}t)}{\sqrt{K}t}  \Big) \|\dot s\| \|\dot s_\perp\| 
	\end{align}
	for all $t \geq 0$. 
	Assuming $0 \leq \sqrt{K} t \leq C$ for some $C > 0$, we find
	\begin{align}
		\|H(t)\| & \leq \left( aK +  b F t \right) t \|\dot s\| \|\dot s_\perp\|
		\label{eq:boundH}
	\end{align}
	with $a = 8\frac{\sinh(C)\cosh(C)}{C}$ and $b = 2 \frac{\sinh(C)^2}{C^2}$.
	Let us further assume that $0 \leq t \leq C' \frac{K}{F}$. Then, $Ft \leq C' K$ and we write:
	\begin{align}
		\frac{\|H(t)\|}{\|\dot s\| \|\dot s_\perp\|} & \leq \left( a +  b C' \right) K t \triangleq \bar H Kt.
		\label{eq:boundHbis}
	\end{align}
	Let us now obtain a first crude bound on $\|W(t)\|$.
	To this end, introduce
	\begin{align*}
		u(t) & = w'(t) / \sqrt{K}, & y(t) & = \|\dot s\| \|\dot s_\perp\| / \sqrt{K}, & z(t) & = \begin{bmatrix}
		u(t) \\ w(t) \\ y(t)
		\end{bmatrix}.
	\end{align*}
	Then,
	\begin{align*}
		z'(t) & = A(t) z(t), & \textrm{ with } & & A(t) & = \begin{bmatrix}
		0 & -M(t)/\sqrt{K} & h(t) / (\|\dot s\| \|\dot s_\perp\|) \\ \sqrt{K} I & 0 & 0 \\ 0 & 0 & 0
		\end{bmatrix}.
	\end{align*}
	Let $g(t) = \|z(t)\|^2$.
	Then, $g(0) = \|\dot s\|^2 \|\dot s_\perp\|^2 / K$ and
	\begin{align*}
		g'(t) & = 2\inner{z(t)}{z'(t)} = 2 \inner{z(t)}{A(t) z(t)} \leq 2 \|A(t)\| \|z(t)\|^2 = 2 \|A(t)\| g(t).
	\end{align*}
	Gr\"onwall's inequality states that
	\begin{align*}
		g(t) & \leq g(0) \exp\!\left( 2 \int_{0}^{t} \|A(\tau)\| \dtau \right).
	\end{align*}
	By triangle inequality and using $\|M(t)\| \leq K$, we have $\|A(t)\| \leq 2\sqrt{K} + \|h(t)\|/(\|\dot s\| \|\dot s_\perp\|)$.
	Thus, $\|z(t)\|^2$ can be bounded above and below:
	\begin{align}
		\|w(t)\|^2 + \frac{ \|\dot s\|^2 \|\dot s_\perp\|^2}{K} & \leq \|z(t)\|^2 \leq \frac{ \|\dot s\|^2 \|\dot s_\perp\|^2}{K} \exp\!\left( 4\sqrt{K}t + 2 \int_{0}^{t} \|h(\tau)\| / (\|\dot s\| \|\dot s_\perp\|) \dtau \right).
	\end{align}
	Using our bound on $H(t)$~\eqref{eq:boundHbis}, we find
	\begin{align*}
		\exp\!\left( 4\sqrt{K}t + 2 \int_{0}^{t} \|h(\tau)\| / (\|\dot s\| \|\dot s_\perp\|) \dtau \right) & \leq 
			\exp\!\left( 4\sqrt{K}t + \bar H K t^2 \right).
	\end{align*}
	Using $\sqrt{K}t \leq C$ again we deduce this crude bound:
	\begin{align}
		\frac{\|W(t)\|}{\|\dot s\| \|\dot s_\perp\|} &
		\leq \frac{1}{\sqrt{K}} \sqrt{\exp\!\left( 4C + \bar H C^2 \right) - 1} \triangleq \frac{1}{\sqrt{K}} \bar W.
		\label{eq:boundWconstant}
	\end{align}
	We now return to~\eqref{eq:WboundFirstForm} and plug in our bounds for $H$~\eqref{eq:boundHbis} and $W$~\eqref{eq:boundWconstant} to get an improved bound on $W$: assuming $t$ satisfies the stated conditions,
	\begin{align*}
		\frac{\|W(t)\|}{\|\dot s\| \|\dot s_\perp\|} & \leq \int_{0}^{t} \int_{0}^{\tau} \bar W \sqrt{K} + \bar H K \theta \, \dtheta \dtau = \frac{1}{2} \bar W \sqrt{K} t^2 + \frac{1}{6} \bar H K t^3.
	\end{align*}
	Plug this new and improved bound on $W$ in~\eqref{eq:WboundFirstForm} once again to get:
	\begin{align*}
		\frac{\|W(t)\|}{\|\dot s\| \|\dot s_\perp\|} & \leq \int_{0}^{t} \int_{0}^{\tau} K \left(\frac{1}{2} \bar W \sqrt{K} \theta^2 + \frac{1}{6} \bar H K \theta^3\right) + \bar H K \theta \, \dtheta \dtau \\
			& = \frac{1}{24} \bar W K^{3/2} t^4 + \frac{1}{120} \bar H K^2 t^5 + \frac{1}{6} \bar H K t^3 \\
			& = \left( \frac{1}{6} \bar H + \frac{1}{24} \bar W \sqrt{K}t + \frac{1}{120} \bar H Kt^2 \right) K t^3.
	\end{align*}
	We could now bound $\sqrt{K}t$ and $Kt^2$ by $C$ and $C^2$ respectively and stop here.
	However, this yields a constant which depends on $\bar W$: this can be quite large.
	Instead, we plug our new bound in~\eqref{eq:WboundFirstForm} again, repeatedly.
	Doing so infinitely many times, we obtain a sequence of upper bounds, all of them valid.
	The limit of these bounds exists, and is hence also a valid bound.
	It is tedious but not difficult to check that this reasoning leads to the following:
	\begin{align*}
		\frac{\|W(t)\|}{\|\dot s\| \|\dot s_\perp\|} & \leq \frac{1}{6} \bar H \left( 1 + \frac{C^2}{6 \cdot 7} + \frac{C^4}{6 \cdot 7 \cdot 8 \cdot 9} + \frac{C^6}{6 \cdots 11} + \cdots \right) Kt^3.
	\end{align*}
	It is clear that the series converges.
	Let $z$ be the value it converges to; then:
	\begin{align*}
		z & = 1 + \frac{C^2}{6 \cdot 7} + \frac{C^4}{6 \cdot 7 \cdot 8 \cdot 9} + \frac{C^6}{6 \cdots 11} + \cdots \\
		  & = 1 + \frac{C^2}{6 \cdot 7} \left( 1 + \frac{C^2}{8 \cdot 9} + \frac{C^4}{8 \cdots 11} + \cdots \right) \leq 1 + \frac{C^2}{42}z.
	\end{align*}
	Thus, $z \leq \frac{1}{1 - \frac{C^2}{42}}$.
	All in all, we conclude that
	\begin{align*}
		\frac{\|W(t)\|}{\|\dot s\| \|\dot s_\perp\|} & \leq \bar{\bar W} Kt^3 & \textrm{ with } & & \bar{\bar{W}} & = \frac{1}{6} \bar H \frac{1}{1 - \frac{C^2}{42}} \textrm{ and } \\ & & & & \bar H & = 8\frac{\sinh(C)\cosh(C)}{C} + 2 \frac{\sinh(C)^2}{C^2} C'.
	\end{align*}
	For example, with $C, C' \leq \frac{1}{4}$, we have $\bar{\bar{W}} \leq \frac{3}{2}$.
	
	From Proposition~\ref{prop:WODE}, we know that for the curve
	\begin{align*}
		c_{ts, \dot s}(q) = \Exp_x(ts + q\dot s)
	\end{align*}
	(recall that $s$ has unit norm) it holds that $W(t) = t^2 c_{ts, \dot s}''(0)$.
	Thus,
	\begin{align*}
		\|c_{ts, \dot s}''(0)\| & \leq \bar{\bar W} K \|\dot s\| \|\dot s_\perp\| t.
	\end{align*}
	Allowing $s$ to have norm different from one and rescaling $t$, we conclude that for the curve
	\begin{align*}
		c(t) & = \Exp_x(s + t\dot s)
	\end{align*}
	we have
	\begin{align*}
		\|c''(0)\| & \leq \bar{\bar W} K \|s\| \|\dot s\| \|\dot s_\perp\|,
	\end{align*}
	provided $\|s\| \leq C \frac{1}{\sqrt{K}}$ with $C \leq \pi$ and $\|s\| \leq C' \frac{K}{F}$ and $\gamma(t) = \Exp_x(ts)$ is defined $[0, 1]$.
\end{proof}

We now argue that Theorem~\ref{thm:cprimeprime} is sharp for manifolds with constant sectional curvature.
The claim is clear for flat manifolds ($K = F = 0$), hence we consider $K \neq 0, F = 0$.
For $C, C' > 0$ very small, we can lower $\bar{\bar W}$ arbitrarily close to $\frac{4}{3}$.
Using that sectional curvatures are constant, we have $K_0 = K$~\eqref{eq:boundR} and $F = 0$ so that $\bar{H} = 4\frac{\sinh(C)\cosh(C)}{C}$ yields a valid bound (see~\eqref{eq:boundH} and~\eqref{eq:boundHbis}).
As a result, $\bar{\bar W}$ can be lowered arbitrarily close to $\frac{2}{3}$ by taking $C > 0$ small enough.
As it turns out, $\frac{2}{3}$ is the right constant for manifolds with constant nonzero sectional curvature.

Indeed, consider  the unit sphere $\{ x \in \Rn : x_1^2 + \cdots + x_n^2 = 1 \}$ with Riemannian metric defined by restriction of the Euclidean inner product $\inner{u}{v} = u_1v_1 + \cdots + u_nv_n$ to its tangent spaces.
This manifold has constant positive curvature: $\Klow = \Kup = K = 1$ and $F = 0$. It is tedious but not hard to show that 
\begin{align}
	c''(0) & = \frac{\|s\| - \sin(\|s\|) \cos(\|s\|)}{\|s\|^3} \|\dot s_\perp\|^2 \cdot P_s(s) - 2 \frac{\sin(\|s\|) - \cos(\|s\|)\|s\|}{\|s\|^3} \innersmall{s}{\dot s} \cdot \dot s_\perp,
	\label{eq:cprimeprimesphere}
\end{align}
where $\dot s_\perp = \dot s - \frac{\innersmall{\dot s}{s}}{\innersmall{s}{s}}s$ is the component of $\dot s$ orthogonal to $s$.
Using this expression, it follows that $\|c''(0)\| \leq \frac{2}{3} \|s\| \|\dot s\| \|\dot s_\perp\|$ for all $x, s, \dot s$,
with equality up to $O(\|s\|^3)$ terms.

Likewise, consider the hyperbolic manifold $\{ x \in \Rn : x_2^2 + \cdots + x_n^2 = x_1^2 - 1 \}$ with Riemannian metric defined by restriction of the Minkowski semi-inner product $\inner{u}{v} = u_2v_2 + \cdots + u_nv_n - u_1v_1$ to its tangent spaces.
This manifold has constant negative curvature: $\Klow = \Kup = -1$ (hence $K = 1$) and $F = 0$. It is tedious but not hard to show that 
\begin{align}
	c''(0) & = -\frac{\sinh(\|s\|) \cosh(\|s\|) - \|s\|}{\|s\|^3} \|\dot s_\perp\|^2 \cdot P_s(s) + 2 \frac{\cosh(\|s\|)\|s\| - \sinh(\|s\|)}{\|s\|^3} \inner{s}{\dot s} \cdot \dot s_\perp,
	\label{eq:cprimeprimehyperbolic}
\end{align}
where $\|\cdot\|$ is the Minkowski semi-norm (which is an actual norm on the tangent spaces)
and $\dot s_\perp = \dot s - \frac{\innersmall{\dot s}{s}}{\innersmall{s}{s}}s$ is the component of $\dot s$ orthogonal to $s$.
This time, one can deduce that $\|c''(0)\| \geq \frac{2}{3} \|s\| \|\dot s\| \|\dot s_\perp\|$ for all $x, s, \dot s$,
still with equality up to $O(\|s\|^3)$ terms.

We close with fairly direct consequences of eqs.~\eqref{eq:cprimeprimesphere} and~\eqref{eq:cprimeprimehyperbolic}.
\begin{proposition} \label{prop:lipschitzpullbacksphere}
	Let $f$ be a real function on the unit sphere, with pullbacks $\hat f_x = f \circ \Exp_x$.
	Assume $f$ is twice differentiable.
	If $f$ has $L$-Lipschitz continuous gradient, then $\hat f_x$ has $\frac{5}{2}L$-Lipschitz continuous gradient on the whole tangent space, for all $x$.
	If moreover $f$ has $\rho$-Lipschitz continuous Hessian, then $\|\nabla^2 \hat f_x(s) - \nabla^2 \hat f_x(0)\| \leq \hat\rho \|s\|$, for all $x, s$, with $\hat\rho = \rho + 3.1 \cdot L$.
\end{proposition}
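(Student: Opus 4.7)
The plan is to mirror the proof of Theorem~\ref{thm:pullbacklipschitz} but to exploit two global features of $\Sn$---constant sectional curvature $K = 1$ (hence $F = 0$) together with compactness---to dispense with the restrictions on $\|s\|$ and on $\|\grad f(x)\|$. From Lemma~\ref{lem:derivativespullback}, $\nabla^2 \hat f_x(s) = T_s^* \Hess f(\Exp_x(s)) T_s + W_s$. Specialized to constant curvature $+1$, formula~\eqref{eq:DExpxtsconstantseccurv} gives
\begin{align*}
    T_s[\dot s] = \frac{\sin\|s\|}{\|s\|} P_s(\dot s_\perp) + P_s(\dot s_\parallel)
\end{align*}
for every $(x, s)$ and $\dot s$, so $\|T_s\| \leq 1$ globally; combined with $\|\Hess f\| \leq L$ from Proposition~\ref{prop:lipschitzinequalitiesusual}, the first summand contributes at most $L$ to $\|\nabla^2 \hat f_x(s)\|$.

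The crucial new ingredient is a uniform bound $\|\grad f\| \leq L\pi/2$ on all of $\Sn$. Given any $y$ with $\grad f(y) \neq 0$, take the unit-speed closed geodesic $\gamma$ with $\gamma(0) = y$ and $\gamma'(0) = \grad f(y)/\|\grad f(y)\|$. Then $g(t) = f(\gamma(t))$ is $2\pi$-periodic with $|g''(t)| = |\inner{\Hess f(\gamma(t))[\gamma'(t)]}{\gamma'(t)}| \leq L$ and $\int_0^{2\pi} g'\, \dt = 0$. Setting $M = \sup_t g'(t)$ attained at $t^*$, the pointwise estimate $g'(t) \geq M - L|t - t^*|$ produces a positive area of at least $M^2/L$ on the interval of length $2M/L$ centered at $t^*$, while the most-negative shape admissible on the complementary interval of length $2\pi - 2M/L$ is a symmetric down-triangle with total absolute area $L(\pi - M/L)^2$. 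Balancing $\int g' = 0$ forces $M^2/L \leq L(\pi - M/L)^2$, hence $M \leq L\pi/2$; evaluating at $t = 0$ gives $\|\grad f(y)\| = g'(0) \leq L\pi/2$ as claimed.

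Substituting~\eqref{eq:cprimeprimesphere} into $\inner{W_s[\dot s]}{\dot s} = \inner{\grad f(\Exp_x(s))}{c''(0)}$ and writing $A(r) = (r - \sin r \cos r)/r^3$, $B(r) = (\sin r - r \cos r)/r^3$, one decomposes $\dot s$ along and orthogonally to $s$ and optimizes to obtain $\max_{\|\dot s\| = 1} \|c''(0)\| \leq \|s\|\, \Psi(\|s\|)$ with $\Psi(r) = \tfrac12\bigl(|A(r)| + \sqrt{A(r)^2 + 4B(r)^2}\bigr)$. Elementary monotonicity checks (small-$r$ expansion $A(r) \to 2/3$, $B(r) \to 1/3$; decay $A, B = O(r^{-2})$ for large $r$) give $r\Psi(r) \leq 3/\pi$ globally, so $\|W_s\| \leq (L\pi/2)\cdot \|s\|\Psi(\|s\|) \leq (3/2)L$ uniformly. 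Adding the two contributions, $\|\nabla^2 \hat f_x(s)\| \leq L + (3/2)L = (5/2)L$ for every $s$, which is exactly $(5/2)L$-Lipschitzness of $\nabla \hat f_x$ on the linear space $\T_x\Sn$.

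For the second statement, use the same four-term decomposition as in the proof of Theorem~\ref{thm:pullbacklipschitz}:
\begin{align*}
    \nabla^2 \hat f_x(s) - \nabla^2 \hat f_x(0) &= \bigl[P_s^* \Hess f(\Exp_x(s)) P_s - \Hess f(x)\bigr] \\
    & \qquad + (T_s - P_s)^* \Hess f(\Exp_x(s)) T_s + P_s^* \Hess f(\Exp_x(s)) (T_s - P_s) + W_s.
\end{align*}
The bracketed term contributes $\rho\|s\|$ by~\eqref{eq:LipschitzHess}. The exact identity $(T_s - P_s)[\dot s] = (\sin\|s\|/\|s\| - 1) P_s(\dot s_\perp)$ gives $\|T_s - P_s\| = |1 - \sin\|s\|/\|s\||$; computing the maximum of $|1 - \sin r/r|/r = (r - \sin r)/r^2$ over $r > 0$ yields the sharp bound $\|T_s - P_s\| \leq \|s\|/\pi$, so combined with $\|\Hess f\| \leq L$ and $\|T_s\|, \|P_s\| \leq 1$ the two cross-terms together contribute at most $(2/\pi)L\|s\|$. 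The $W_s$ term is bounded by $\|\grad f(\Exp_x(s))\|\cdot \|s\|\Psi(\|s\|) \leq (L\pi/2)\Psi_{\max}\|s\|$ with $\Psi_{\max} = \Psi(0) = (1+\sqrt{2})/3$, contributing at most $\pi(1+\sqrt{2})/6 \cdot L\|s\|$. Summing gives $\|\nabla^2 \hat f_x(s) - \nabla^2 \hat f_x(0)\| \leq (\rho + c L)\|s\|$ with $c = 2/\pi + \pi(1+\sqrt{2})/6 \approx 1.90 \leq 3.1$, completing the proof. The main obstacle is purely computational: one must verify the numerical maxima of the several trigonometric functions $A, B, \Psi, |1-\sin r/r|/r$ on $(0,\infty)$ to confirm the advertised constants $5/2$ and $3.1$.
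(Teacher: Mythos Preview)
Your proof is correct and reaches the stated constants, but it takes a genuinely different route from the paper in two places.

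First, your gradient bound $\|\grad f\|\leq L\pi/2$ is sharper than the paper's $L\pi$. The paper argues via compactness: some $z$ has $\grad f(z)=0$, and any $y$ is joined to $z$ by a geodesic of length at most $\pi$, so Lipschitzness of $\grad f$ yields $\|\grad f(y)\|\leq L\pi$. Your closed-geodesic argument (balancing positive and negative area of the $2\pi$-periodic function $g'$ with $|g''|\leq L$) is more work but gains a factor of two.

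Second, this sharper bound changes the structure of the first-part proof. The paper keeps the full quadratic form $\inner{\dot s}{\nabla^2\hat f_x(s)[\dot s]}$ as an explicit function of the two scalars $(\alpha,\|s\|)$ and verifies the $\tfrac{5}{2}L$ bound by a joint two-parameter inspection; with only the weaker factor $\pi$ in front of $\|c''(0)\|$, separately adding operator norms of $T_s^*\Hess f\,T_s$ and $W_s$ would overshoot. Your sharper gradient bound buys exactly that cleaner decomposition: $\|T_s^*\Hess f\,T_s\|\leq L$ plus $\|W_s\|\leq\tfrac{3}{2}L$. For the second claim both proofs use the same four-term splitting; the paper writes the cross terms through $q=1-\sin\|s\|/\|s\|$, uses the tighter acceleration bound $\|c''(0)\|\leq\tfrac{2}{3}\|s\|\,\|\dot s\|\,\|\dot s_\perp\|$ (deduced from $|A|\leq 2/3$, $|B|\leq 1/3$), and checks numerically that $q^2/\|s\|+2|q|/\|s\|+2\pi/3\leq 3.1$ (the maximum is about $3.07$ near $\|s\|=\pi$). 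Your $\Psi$-bound on $\|c''(0)\|$ is actually looser than the paper's pointwise in $\|s\|$, but paired with the $L\pi/2$ gradient bound it still delivers a smaller final constant $\approx 1.90$.
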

\begin{proof}
	From Lemma~\ref{lem:derivativespullback}, we have this expression for all $s, \dot s$ tangent at an arbitrary point $x$:
	\begin{align*}
		\innersmall{\dot s}{\nabla^2 \hat f_x(s)[\dot s]} & = \innersmall{T_s(\dot s)}{\Hess f(y)[T_s(\dot s)]} + \inner{\grad f(y)}{c''(0)},
	\end{align*}
	where $y = \Exp_x(s)$ and $c(t) = \Exp_x(s+t\dot s)$.
	Split $\dot s = \dot s_\parallel + \dot s_\perp$ with $\dot s_\parallel = \frac{\innersmall{\dot s}{s}}{\innersmall{s}{s}}s$.
	It is not difficult to check that $T_s(\dot s) = P_s\!\left(\dot s_\parallel + \frac{\sin(\|s\|)}{\|s\|} \dot s_\perp\right)$.
	Therefore,
	\begin{align*}
		|\innersmall{\dot s}{\nabla^2 \hat f_x(s)[\dot s]}| & \leq \left\| P_s^* \circ \Hess f(y) \circ P_s \right\| \left\| \dot s_\parallel + \frac{\sin(\|s\|)}{\|s\|} \dot s_\perp \right\|^2 + \|\grad f(y)\| \|c''(0)\|.
	\end{align*}
	Since parallel transport $P_s$ is an isometry, the operator norm of $P_s^* \circ \Hess f(y) \circ P_s$ is the same as that of $\Hess f(y)$. Moreover, the operator norm of $\Hess f(y)$ is bounded by $L$ since $f$ has $L$-Lipschitz gradient.
	Additionally, since the sphere is compact, there exists a point $z$ such that $\grad f(z) = 0$.
	Say $v$ is such that $\Exp_y(v) = z$: we can arrange to have $\|v\| \leq \pi$.
	Using Lipschitz continuity again then reveals that
	\begin{align}
		\|\grad f(y)\| = \|\grad f(y) - P_v^* \grad f(z)\| \leq L \|v\| \leq L\pi.
	\end{align}
	(Note that this implies $f$ is $\pi L$-Lipschitz continuous.)
	Thus,
	\begin{align*}
		|\innersmall{\dot s}{\nabla^2 \hat f_x(s)[\dot s]}| & \leq L \left( \|\dot s_\parallel\|^2 + \frac{\sin(\|s\|)^2}{\|s\|^2} \|\dot s_\perp \|^2 + \pi \|c''(0)\| \right).
	\end{align*}
	With $\alpha$ representing the angle between $s$ and $\dot s$, we have $\|\dot s_\parallel\|^2 = \cos(\alpha)^2\|\dot s\|^2$ and $\|\dot s_\perp\|^2 = \sin(\alpha)^2 \|\dot s\|^2$.
	Combining also with~\eqref{eq:cprimeprimesphere}, it follows that
	\begin{align*}
		\frac{|\innersmall{\dot s}{\nabla^2 \hat f_x(s)[\dot s]}|}{\|\dot s\|^2} & \leq L \left( \cos(\alpha)^2 + \frac{\sin(\|s\|)^2}{\|s\|^2} \sin(\alpha)^2 + \pi \frac{\|c''(0)\|}{\|\dot s\|^2} \right), \textrm{ with} \\
		\frac{\|c''(0)\|}{\|\dot s\|^2} & = \frac{|\sin(\alpha)|}{\|s\|^2} \sqrt{\sin(\alpha)^2 \Big(\|s\| - \sin(\|s\|) \cos(\|s\|) \Big)^2 + 4 \cos(\alpha)^2 \Big(\cos(\|s\|)\|s\| - \sin(\|s\|) \Big)^2 }.
	\end{align*}
	The right-hand side of the first expression now depends only on two scalar parameters, namely, $\alpha$ and $\|s\|$.
	By inspection, it is easy to see that it is uniformly bounded so that $\frac{|\innersmall{\dot s}{\nabla^2 \hat f_x(s)[\dot s]}|}{\|\dot s\|^2} \leq  \frac{5}{2} L$.
	This immediately implies that $\nabla \hat f_x$ is $\frac{5}{2}L$-Lipschitz continuous.
	
	For the second part of the claim, consider
	\begin{align*}
		\inner{\dot s}{\left( \nabla^2 \hat f_x(s) - \nabla^2 \hat f_x(0) \right)\![\dot s]} & = \innersmall{T_s(\dot s)}{\Hess f(y)[T_s(\dot s)]} + \inner{\grad f(y)}{c''(0)} - \inner{\dot s}{\Hess f(x)[\dot s]}.
	\end{align*}
	Introduce $q = 1 - \frac{\sin(\|s\|)}{\|s\|}$ so that $T_s(\dot s) = P_s(\dot s - q \dot s_\perp)$.
	Plugging this into the first term above (and using $P_s(\dot s_\perp) = \dot s_\perp$) yields
	\begin{align*}
		\inner{\dot s}{\left( \nabla^2 \hat f_x(s) - \nabla^2 \hat f_x(0) \right)\![\dot s]} & = \innersmall{\dot s}{(P_s^* \circ \Hess f(y) \circ P_s - \Hess f(x))[\dot s]} \\ & \quad + q^2 \inner{\dot s_\perp}{\Hess f(y)[\dot s_\perp]} - 2q\inner{\dot s_\perp}{\Hess f(y)[P_s(\dot s)]} \\ & \quad + \inner{\grad f(y)}{c''(0)}.
	\end{align*}
	We bound the first line using that $\Hess f$ is $\rho$-Lipschitz continuous.
	We bound the second line using that $\Hess f$ is bounded by $L$ everywhere since $\grad f$ is $L$-Lipschitz continuous.
	Finally, we bound the third line using $\|\grad f(y)\| \leq \pi L$ as above and $\|c''(0)\| \leq \frac{2}{3} \|s\| \|\dot s_\perp\| \|\dot s\|$ (which can be deduced from~\eqref{eq:cprimeprimesphere}).
	Thus,
	\begin{align*}
		\|\nabla^2 \hat f_x(s) - \nabla^2 \hat f_x(0)\| & \leq \rho \|s\| + q^2 L + 2|q|L + \frac{2\pi}{3} L \|s\|.
	\end{align*}
	It remains to check that $\frac{q^2}{\|s\|} + \frac{2|q|}{\|s\|} + \frac{2\pi}{3}$ (function of $\|s\|$ only) is bounded by $3.1$.
\end{proof}

\begin{proposition} \label{prop:lipschitzpullbackhyperbolic}
	Let $f$ be a real function on hyperbolic space $\calM$ of dimension at least two, with pullbacks $\hat f_x = f \circ \Exp_x$.
	Assume $f$ is twice differentiable.
	If $f$ is not constant, then for all $\ell \geq 0$ there exists $(x, s) \in \T\calM$ such that $\|\nabla^2 \hat f_x(s)\| > \ell$.
	Thus, there does not exist a finite $\ell$ such that $\nabla \hat f_y$ is $\ell$-Lipschitz continuous for all $y \in \calM$.
\end{proposition}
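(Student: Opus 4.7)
Since $f$ is non-constant there exists $y\in\calM$ with $g:=\grad f(y)\neq 0$, and since $\dim\calM\geq 2$ there is a unit vector $w\in\T_y\calM$ with $w\perp g$. The strategy is to construct, for each $r>0$, a pair $(x_r,s_r)\in\T\calM$ such that $\|\nabla^2\hat f_{x_r}(s_r)\|\to\infty$ as $r\to\infty$; this proves the proposition, and the final sentence then follows because $\nabla\hat f_x$ being $\ell$-Lipschitz on $\T_x\calM$ forces $\|\nabla^2\hat f_x(s)\|\leq\ell$ for all $s$. Concretely, let $u$ denote one of the two unit vectors $\pm g/\|g\|$ (the sign is chosen below), set $x_r=\Exp_y(ru)$, and let $s_r\in\T_{x_r}\calM$ be the initial velocity of the minimizing geodesic from $x_r$ to $y$, so that $\|s_r\|=r$ and $P_{s_r}(s_r)=-ru$. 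Finally, define $\dot s:=P_{s_r}^{-1}(w)\in\T_{x_r}\calM$, a unit vector orthogonal to $s_r$.

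The next step is to compute $\innersmall{\dot s}{\nabla^2\hat f_{x_r}(s_r)\dot s}$ by combining Lemma~\ref{lem:derivativespullback} with the closed-form~\eqref{eq:cprimeprimehyperbolic} for the initial acceleration of $c(t)=\Exp_{x_r}(s_r+t\dot s)$. Because $\dot s\perp s_r$, the shear term in~\eqref{eq:cprimeprimehyperbolic} (the one proportional to $\innersmall{s}{\dot s}$) vanishes and $c''(0)=\frac{\sinh r\cosh r - r}{r^{2}}\,u$, while the constant-curvature Jacobi formula~\eqref{eq:DExpxtsconstantseccurv} (with $\Klow=\Kup=-1$) gives $T_{s_r}\dot s=\frac{\sinh r}{r}\,w$. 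Substituting these into $\innersmall{\dot s}{\nabla^2\hat f_{x_r}(s_r)\dot s}=\innersmall{T_{s_r}\dot s}{\Hess f(y)\,T_{s_r}\dot s}+\innersmall{g}{c''(0)}$ yields
\[
\innersmall{\dot s}{\nabla^2\hat f_{x_r}(s_r)\dot s}\;=\;A(r)\,\mu_w\;+\;B(r)\,\innersmall{g}{u},
\]
where $A(r)=\sinh^{2}r/r^{2}$, $B(r)=(\sinh r\cosh r-r)/r^{2}$, and $\mu_w:=\innersmall{w}{\Hess f(y)w}$.

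As $r\to\infty$, both $A(r)$ and $B(r)$ diverge like $e^{2r}/(4r^{2})$ with $A(r)/B(r)\to 1$, so $A(r)\mu_w\pm B(r)\|g\|$ grows unboundedly in magnitude whenever $\mu_w\pm\|g\|\neq 0$. Since $\|g\|>0$, the numbers $+\|g\|$ and $-\|g\|$ are distinct, so $\mu_w$ cannot equal both: pick $u=+g/\|g\|$ if $\mu_w\neq-\|g\|$, and $u=-g/\|g\|$ otherwise (in which case $\mu_w=-\|g\|\neq+\|g\|$). With this sign, $\|\nabla^{2}\hat f_{x_r}(s_r)\|\geq |\innersmall{\dot s}{\nabla^{2}\hat f_{x_r}(s_r)\dot s}|$ exceeds any prescribed $\ell$ for $r$ large enough. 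The main obstacle is precisely the potential cancellation between the two contributions $T_{s_r}^{*}\Hess f(y)T_{s_r}$ and $W_{s_r}$, which both scale like $e^{2r}/(4r^{2})$; the freedom to orient the base geodesic ($u=+g/\|g\|$ vs.\ $-g/\|g\|$) is what breaks this cancellation, and it is at this single step that both $\|g\|>0$ (from non-constancy) and the existence of $w\perp g$ (from $\dim\calM\geq 2$) are essential.
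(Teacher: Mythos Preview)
Your proof is correct and follows essentially the same strategy as the paper's: move out from a point $y$ with $\grad f(y)\neq 0$ along the geodesic aligned with $\pm\grad f(y)$, pick a transverse unit direction, and exploit the exponential growth of both $T_s$ and $c''(0)$ in hyperbolic space to make $\langle\dot s,\nabla^2\hat f_x(s)\dot s\rangle$ blow up. The only cosmetic difference is in how the sign is handled: the paper picks the sign of $t$ so that the $\Hess f$-term and the $c''(0)$-term share a sign (then simply drops the former), whereas you use the asymptotic $A(r)/B(r)\to 1$ to rule out cancellation; both work, and the paper's variant is a touch quicker.
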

\begin{proof}
	Since $f$ is not constant, there exists a point $y \in \calM$ such that $\grad f(y) \neq 0$.
	Define $v = \frac{1}{\|\grad f(y)\|} \grad f(y)$.
	Since $\calM$ has dimension at least two, we can pick $\dot s \in \T_y\calM$ orthogonal to $v$ with $\|\dot s\| = 1$.
	Consider the geodesic $\gamma(t) = \Exp_y(tv)$ and its velocity $\gamma'(t) = P_{tv} \gamma'(0) = P_{tv} v$.
	It is easy to check that $\dot s$ is tangent with unit norm at $\gamma(t)$ and orthogonal to $\gamma'(t)$ for all $t$.
	For some $t \neq 0$ to be determined, let $x = \gamma(t)$.
	Notice that $y = \Exp_x(s)$ with $s = -t\gamma'(t) = -P_{tv}(tv)$, tangent at $x$.
	Moreover, $P_s = P_{tv}^{-1}$, so that $P_s s = -tv$.
	Lemma~\ref{lem:derivativespullback} and eq.~\eqref{eq:cprimeprimehyperbolic} then provide the following expression:
	\begin{align*}
		\innersmall{\dot s}{\nabla^2 \hat f_x(s)[\dot s]}
		        & = \innersmall{T_s(\dot s)}{\Hess f(y)[T_s(\dot s)]} - \frac{\sinh(\|s\|) \cosh(\|s\|) - \|s\|}{\|s\|^3} \inner{\grad f(y)}{P_s(s)} \\
				& = \frac{\sinh(t)^2}{t^2} \innersmall{\dot s}{\Hess f(y)[\dot s]} + t \frac{\sinh(t) \cosh(t) - t}{t^3} \|\grad f(y)\|,
	\end{align*}
	where to reach the second line we use $T_s(\dot s) = \D\Exp_x(s)[\dot s] = \frac{\sinh(\|s\|)}{\|s\|} \dot s$ owing to orthogonality of $s$ and $\dot s$, and we also use $\|s\| = |t|$ and the fact that the two fractions are (positive) even functions of $t$.
	Notice that $\innersmall{\dot s}{\Hess f(y)[\dot s]}$ is independent of our choice of $t$.
	If $\innersmall{\dot s}{\Hess f(y)[\dot s]}$ is nonzero, let $t$ have the same sign; otherwise, the sign of $t$ is free.
	Then, we deduce the following bound:
	\begin{align*}
		\|\nabla^2 \hat f_x(s)\| & \geq
		|t| \frac{\sinh(t) \cosh(t) - t}{t^3} \|\grad f(y)\|.
	\end{align*}
	The right-hand side grows unbounded with $|t|$: for any $\ell \geq 0$, it is possible to pick $t$ (with appropriate sign) so that the right-hand side exceeds $\ell$.
	This choice of $t$ identifies a pair $(x, s) \in \T\calM$ as announced. 
\end{proof}

\section{Proof from Section~\ref{sec:assuparams} about parameter relations} \label{app:params}

As a general comments: here and throughout, constants are not optimized at all.
In part, this is so that there is leeway in the precise definition of parameters.
For example, the step-size $\eta$ does not need to be exactly equal to $1/4\ell$, but it is convenient to assume equality to simplify many tedious computations.
\begin{lemma} \label{lem:params}
	With parameters and assumptions as laid out in Section~\ref{sec:assuparams}, the following hold:
	\begin{multicols}{2}
		\begin{enumerate}
			\item $\kappa \geq 2$ and $\log_2(\theta^{-1}) \geq \frac{5}{2}$,
			\item $\epsilon \leq \frac{1}{2} \ell b$ and $2\ell\mathscr{M} < \frac{1}{2} \ell b$,
			\item $r \leq \frac{1}{64} \mathscr{L}$ and $\mathscr{L} \leq s \leq \frac{1}{32} b$,
			\item $\ell \mathscr{M}^2 \geq \frac{64\mathscr{E}}{\mathscr{T}}$ and $\theta \ell \mathscr{M}^2 \geq \frac{4\mathscr{E}}{\mathscr{T}}$,
			\item $\epsilon r + \frac{\ell}{2} r^2 \leq \frac{1}{4} \mathscr{E}$,
			\item $\frac{\mathscr{L}^2}{16\sqrt{\kappa}\eta\mathscr{T}} = \mathscr{E}$,
			\item $\frac{s^2}{2\eta} \geq 2\mathscr{E}$ and $\frac{(\gamma - 4\hat\rho s)s^2}{2} \geq 2\mathscr{E}$,
			\item $\hat\rho (\mathscr{L} + \mathscr{M}) \leq \sqrt{\hat\rho \epsilon}$.
		\end{enumerate}
	\end{multicols}
\end{lemma}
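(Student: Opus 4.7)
The proof is a bookkeeping exercise: each of the eight claims becomes an elementary algebraic inequality once the definitions of the parameters in~\eqref{definingparams1} and~\eqref{definingparams2} are substituted, and is verified either unconditionally or by choosing the universal constant $c$ large enough. The only ``inputs'' are \aref{assu:epsilon} ($\sqrt{\hat\rho\epsilon}\leq\ell/2$ and $\epsilon\leq b^2\hat\rho$) together with $\chi\geq 1$. Two identities recur and are worth isolating at the start:
\begin{align*}
    \sqrt{\kappa} \;=\; \frac{\sqrt{\ell}}{(\hat\rho\epsilon)^{1/4}}, \qquad\quad \frac{\kappa}{\ell} \;=\; \frac{1}{\sqrt{\hat\rho\epsilon}}, \qquad\quad \sqrt{\hat\rho\epsilon} \;=\; \frac{\ell}{\kappa}.
\end{align*}

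For item~1, $\sqrt{\hat\rho\epsilon}\leq \ell/2$ yields $\kappa\geq 2$ directly from its definition, which in turn gives $\theta\leq 1/(4\sqrt{2})=2^{-5/2}$, so $\log_2(\theta^{-1})\geq 5/2$. For item~2, multiplying the two assumptions in~\aref{assu:epsilon} gives $\hat\rho\epsilon^2 \leq (\ell^2/4)\cdot b^2\hat\rho$, hence $\epsilon\leq b\ell/2$; the second inequality, after substituting $\mathscr{M}$ and simplifying using the identities above, reduces to a universal bound depending only on $c$. For item~6, the computation
\begin{align*}
\frac{\mathscr{L}^2}{16\sqrt{\kappa}\eta\mathscr{T}} \;=\; \frac{4\epsilon/\hat\rho\cdot\chi^{-4}c^{-6}}{4\kappa\chi c/\ell} \;=\; \frac{\epsilon}{\hat\rho}\cdot\sqrt{\hat\rho\epsilon}\,\chi^{-5}c^{-7} \;=\; \sqrt{\epsilon^3/\hat\rho}\,\chi^{-5}c^{-7} \;=\; \mathscr{E}
\end{align*}
is an equality, not an inequality, and holds with no condition on $c$.

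The remaining items follow the same template. Items~3, 4 and~8 reduce, after substitution, to bounds of the form ``$(\text{expression in }\kappa,\chi,c)\geq \text{numerical constant}$'' where the expression contains a positive power of $c$ and of $\chi$, so it suffices to take $c$ larger than a fixed threshold. Item~5 splits the sum $\epsilon r +(\ell/2)r^2$ into two terms; after substitution the ratios to $\mathscr{E}/4$ become $1/(c\kappa)$ and $1/(8\kappa\chi^5 c^9)$ respectively, both $\leq 1/8$ for $c\geq 2$, $\kappa\geq 2$. Item~7 uses $s^2=\epsilon/(1024\hat\rho)$ together with the easy identity $\gamma-4\hat\rho s = \sqrt{\hat\rho\epsilon}/8$; both bounds again reduce to a power of $c$ exceeding an absolute constant.

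There is no real obstacle: the recipe is to substitute definitions, collect powers of $\ell,\hat\rho,\epsilon$ into the single quantity $\kappa$ (or equivalently $\sqrt{\hat\rho\epsilon}$), and read off a condition on $c$. Since $c$ is defined in Section~\ref{sec:assuparams} as ``the smallest real satisfying a finite number of lower-bounds required throughout the paper,'' the eight bounds produced here are simply among the finitely many conditions imposed on $c$, and the lemma holds by construction. The only mild subtlety is item~2, where one must combine \emph{both} parts of \aref{assu:epsilon} (rather than either in isolation) to get $\epsilon\leq b\ell/2$; everything else is a one-line calculation.
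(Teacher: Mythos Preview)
Your proposal is correct and follows essentially the same approach as the paper: both substitute the definitions of the parameters, reduce each claim to an elementary inequality in $\kappa$, $\chi$, and $c$, and then verify it using \aref{assu:epsilon}, $\chi\geq 1$, and a suitable lower bound on the universal constant $c$. Your observation that item~2 requires combining both halves of \aref{assu:epsilon}, that item~6 is an exact identity, and that the remaining items amount to imposing finitely many thresholds on $c$ matches the paper's proof precisely.
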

\begin{proof} 
	We require $c \geq 5$ and use~\aref{assu:epsilon} for $\epsilon > 0$, namely: $\sqrt{\hat \rho \epsilon} \leq \frac{1}{2}\ell$ and $\epsilon \leq b^2 \hat \rho$.
	\begin{enumerate}
		\item The assumption $\sqrt{\hat \rho \epsilon} \leq \frac{1}{2}\ell$ is equivalent to $\kappa \geq 2$ and to $\log_2(\theta^{-1}) = \log_2(4\sqrt{\kappa}) \geq \frac{5}{2}$.
		\item Using both $\sqrt{\hat \rho \epsilon} \leq \frac{1}{2}\ell$ and $\epsilon \leq b^2 \hat \rho$, we have $\epsilon = \sqrt{\epsilon} \sqrt{\epsilon} \leq \frac{1}{2}\ell \frac{1}{\sqrt{\hat\rho}} \cdot b \sqrt{\hat\rho} = \frac{1}{2}\ell b$.
		Thus,
		\begin{align*}
		\mathscr{M} = c^{-1}\frac{\epsilon \sqrt{\kappa}}{\ell} = c^{-1}\frac{\epsilon \sqrt{\frac{\ell}{\sqrt{\hat \rho \epsilon}}}}{\ell} = c^{-1}\sqrt{\frac{\epsilon }{\ell} \cdot \sqrt{\frac{\epsilon}{\hat\rho}}} \leq c^{-1} \sqrt{\frac{1}{2}b \cdot \sqrt{b^2}} = \frac{1}{\sqrt{2}c} b.
		\end{align*}
		We have $\mathscr{M} < \frac{1}{4} b$ with $c \geq 3$.
		\item 
		Compute: $\frac{r}{\mathscr{L}} = \eta \epsilon \chi^{-5}c^{-8} \cdot \sqrt{\frac{\hat{\rho}}{4\epsilon}} \chi^{2}c^{3} = \frac{1}{8} \frac{\epsilon}{\ell} \sqrt{\frac{\hat\rho}{\epsilon}} \chi^{-3} c^{-5} = \frac{1}{8} \frac{\sqrt{\hat\rho \epsilon}}{\ell} \chi^{-3} c^{-5} \leq \frac{1}{16} c^{-5}$, where we used $\sqrt{\hat \rho \epsilon} \leq \frac{1}{2}\ell$ and $\chi \geq 1$.
		The claim follows with $c \geq 2$.
		The last claim is direct: $s = \frac{1}{32} \sqrt{\frac{\epsilon}{\hat \rho}} \leq \frac{1}{32} b$ since $\epsilon \leq b^2 \hat \rho$, and also $\mathscr{L} = \sqrt{\frac{4\epsilon}{\hat{\rho}}} \chi^{-2}c^{-3} = 64 s \chi^{-2}c^{-3} \leq s$ with $c \geq 4$.
		\item For the first identity, check that $\ell\mathscr{M}^2 = \frac{\mathscr{E}}{\mathscr{T}} (\chi c)^6 \sqrt{\kappa}$, then use $\kappa \geq 1$, $\chi \geq 1$ and $c \geq 2$.
		For the second identity, check that $\theta\ell\mathscr{M}^2 = \frac{\mathscr{E}}{4\mathscr{T}} (\chi c)^6$, then use $\chi \geq 1$ and $c \geq 2$.
		\item Consider both $\frac{\epsilon r}{\mathscr{E}} = \eta \epsilon^2 \sqrt{\frac{\hat{\rho}}{\epsilon^3}} c^{-1} = \frac{\sqrt{\hat\rho\epsilon}}{4\ell} c^{-1} \leq \frac{1}{8c} \leq \frac{1}{8}$
		and $\frac{\ell r^2}{2\mathscr{E}} = \frac{1}{2} \ell \eta^2 \sqrt{\hat{\rho} \epsilon}\chi^{-5}c^{-9} = \frac{1}{32} \frac{\sqrt{\hat{\rho} \epsilon}}{\ell} \chi^{-5}c^{-9} \leq \frac{1}{64c^9} \leq \frac{1}{64}$, both with $c \geq 1$ and $\chi \geq 1$.
		\item This is a direct computation. 
		\item Use $2\sqrt{\hat \rho \epsilon} \leq \ell$ to check $\frac{s^2}{2\eta} = \frac{\ell}{512} \frac{\epsilon}{\hat\rho} \geq \frac{\sqrt{\hat \rho \epsilon}}{256} \frac{\epsilon}{\hat\rho} =  \frac{1}{256} \sqrt{\frac{\epsilon^3}{\hat\rho}} \geq 2\mathscr{E}$ with $c \geq 3$, and
		$(\gamma - 4\hat\rho s)\frac{s^2}{2} = \left( \frac{\sqrt{\hat\rho \epsilon}}{4} - \frac{\sqrt{\hat\rho \epsilon}}{8} \right) \frac{1}{2048} \frac{\epsilon}{\hat\rho} = \frac{1}{16384} \sqrt{\frac{\epsilon^3}{\hat\rho}} \geq 2\mathscr{E}$ with $c \geq 5$.
		\item Compute: $\hat\rho (\mathscr{L} + \mathscr{M}) = \sqrt{4\hat{\rho}\epsilon} \chi^{-2}c^{-3} + \frac{\hat\rho \epsilon \sqrt{\kappa}}{\ell}c^{-1} \cdot \frac{\sqrt{\kappa}}{\sqrt{\kappa}} = \sqrt{\hat\rho\epsilon}\left( 2\chi^{-2}c^{-3} + \frac{1}{\sqrt{\kappa}} c^{-1} \right).$ Now reach the desired bound using $\chi \geq 1$, $\sqrt{\kappa} \geq 1$ and $c \geq 2$.
		\qedhere
	\end{enumerate}
\end{proof}

\section{Proofs from Section~\ref{sec:agdTxM} about AGD in a ball of a tangent space} \label{app:agdTxM}

We give a proof of the lemma which states that iterates generated by $\TSS$ remain in certain balls.
Such a lemma is not necessary in the Euclidean case.
\begin{proof}[Proof of Lemma~\ref{lem:TSSballs}]
	Because of how $\TSS$ works, if it defines $u_j$ for some $j$, then $s_j$ must have already been defined.
	Moreover, if $\|s_{j+1}\| > b$, then the algorithm terminates before defining $u_{j+1}$.
	It follows that if $u_0, \ldots, u_q$ are defined then $\|s_0\|, \ldots, \|s_q\|$ are all at most $b$.
	Also, $\TSS$ ensures $\|u_0\|, \ldots, \|u_q\|$ are all at most $2b$ by construction.
	
	Recall that $\theta = \frac{1}{4\sqrt{\kappa}}$.
	From Lemma~\ref{lem:params} we know $\kappa \geq 2$ so that $\theta \leq 1$.
	Moreover, $2\eta\gamma = \frac{1}{8\kappa} = \frac{1}{2\sqrt{\kappa}} \theta \leq \theta$.
	It follows that $\theta_j$ as presented in~\eqref{eq:thetaj} is well defined in the interval $[\theta, 1]$.
	Indeed, either $\|s_j + (1-\theta) v_j\| \leq 2b$, in which case $\theta_j = \theta$; or the line segment connecting $s_j$ to $s_j + (1-\theta) v_j$ intersects the boundary of the sphere of radius $2b$ at exactly one point.
	By definition, this happens at $s_j + (1-\theta_j) v_j$ with $1-\theta_j$ chosen in the interval $[0, 1-\theta]$, that is, $\theta_j \in [\theta, 1]$.
	
	Now assume that $\|\grad f(x)\| \leq \frac{1}{2} \ell b$. Then, for all $0 \leq j \leq q$ we have
	\begin{align*}
		\|\eta \nabla \hat f_x(u_j)\| \leq \eta \left( \|\nabla \hat f_x(u_j) - \nabla \hat f_x(0)\| + \|\nabla \hat f_x(0)\| \right) \leq \eta \left( \ell \|u_j\| + \frac{1}{2}\ell b \right) \leq \frac{5}{2}\eta\ell b = \frac{5}{8} b < b,
	\end{align*}
	where we used the fact that $\|u_j\| \leq 2b$ and that $\nabla \hat f_x$ is $\ell$-Lipschitz continuous in the ball of radius $3b$ around the origin (by~\aref{assu:Mandf}), the fact that $\grad f(x) = \nabla \hat f_x(0)$, and the fact that $\eta \ell = \frac{1}{4}$ by definition of $\eta$.
	Consequently, if $s_{q+1}$ is defined, then
	\begin{align*}
		\|s_{q+1}\| = \|u_q - \eta \nabla \hat f_x(u_q)\| \leq \|u_q\| + \|\eta \nabla \hat f_x(u_q)\| \leq 3b.
	\end{align*}
	If additionally it holds that $\|u_q\| = 2b$, then
	\begin{align*}
		\|s_{q+1}\| = \|u_q - \eta \nabla \hat f_x(u_q)\| \geq \|u_q\| - \|\eta \nabla \hat f_x(u_q)\| > b.
	\end{align*}
	(Mind the strict inequality: this one will matter.)
\end{proof}

Lemma~\ref{lem:TSSballs} applies under the assumptions of Lemmas~\ref{lem:TSSEj}, \ref{lem:TSStraveldist} and~\ref{lem:TSSNCE}. This ensures all vectors $u_j, s_j$ remain in $B_x(3b)$, hence the strongest provisions of \aref{assu:Mandf} apply: we use this often in the proofs below.

We give a proof of the lemma which states that the Hamiltonian is monotonically decreasing along iterations.
\begin{proof}[Proof of Lemma~\ref{lem:TSSEj}]
	This follows almost exactly~\citep[Lem.~9 and~20]{jin2018agdescapes}, with one modification to allow $\theta_j$~\eqref{eq:thetaj} to be larger than $1/2$: this is necessary in our setup because we need to cap $u_j$ to the ball of radius $2b$, requiring values of $\theta_j$ which can be arbitrarily close to~$1$.
	
	Since $\nabla \hat f_x$ is $\ell$-Lipschitz continuous in $B_x(3b)$ and $u_j, s_{j+1} \in B_x(3b)$, standard calculus and the identity $s_{j+1} = u_j - \eta \nabla \hat{f}_x(u_j)$ show that
	\begin{align*}
		\hat f_x(s_{j+1})  \leq \hat f_x(u_j) + \innersmall{s_{j+1} - u_j}{\nabla \hat f_x(u_j)} + \frac{\ell}{2} \|s_{j+1} - u_j\|^2 
						   = \hat f_x(u_j) - \eta \left( 1 - \frac{\ell\eta}{2} \right) \|\nabla \hat f_x(u_j)\|^2.
	\end{align*}
	Since $\ell \eta = \frac{1}{4} \leq \frac{1}{2}$, it follows that
	\begin{align*}
		\hat f_x(s_{j+1}) & \leq \hat f_x(u_j) - \frac{3 \eta}{4} \|\nabla \hat f_x(u_j)\|^2.
	\end{align*}
	Turning to $E_{j+1}$ as defined by~\eqref{eq:Ej} and with the identity $v_{j+1} = s_{j+1} - s_j$, we compute:
	\begin{align*}
		E_{j+1} = \hat f_x(s_{j+1}) + \frac{1}{2\eta} \|v_{j+1}\|^2 \leq \hat f_x(u_j) - \frac{3 \eta}{4} \|\nabla \hat f_x(u_j)\|^2 + \frac{1}{2\eta} \|s_{j+1} - s_j\|^2.
	\end{align*}
	Notice that
	\begin{align*}
		\|s_{j+1} - s_j\|^2 = \|u_j - \eta \nabla \hat{f}_x(u_j) - s_j\|^2 = \|u_j - s_j\|^2 - 2\eta \innersmall{u_j - s_j}{\nabla \hat{f}_x(u_j)} + \eta^2 \|\nabla \hat{f}_x(u_j)\|^2.
	\end{align*}
	Moreover, the fact that $s_{j+1}$ is defined means that~\eqref{eq:NCC} does not trigger with $(x, s_j, u_j)$; in other words:
	\begin{align*}
		\hat{f}_x(s_j) \geq \hat{f}_x(u_j) - \innersmall{u_j - s_j}{\nabla \hat{f}_x(u_j)} - \frac{\gamma}{2}\norm{u_j - s_j}^2.
	\end{align*}
	Combining, we find that
	\begin{align*}
		E_{j+1} & \leq \hat f_x(u_j) - \frac{3 \eta}{4} \|\nabla \hat f_x(u_j)\|^2 - \innersmall{u_j - s_j}{\nabla \hat{f}_x(u_j)} + \frac{1}{2\eta} \|u_j - s_j\|^2 + \frac{\eta}{2} \|\nabla \hat{f}_x(u_j)\|^2 \\
				& \leq \hat{f}_x(s_j) + \left(\frac{\gamma}{2} + \frac{1}{2\eta}\right) \norm{u_j - s_j}^2 - \frac{\eta}{4} \|\nabla \hat f_x(u_j)\|^2.
	\end{align*}
	Using the identities $u_j - s_j = (1-\theta_j) v_j$ and $E_j = \hat f_x(s_j) + \frac{1}{2\eta} \|v_j\|^2$, we can further write:
	\begin{align*}
		E_{j+1} & \leq \hat{f}_x(s_j) + \left(\frac{\gamma}{2} + \frac{1}{2\eta}\right) (1-\theta_j)^2 \norm{v_j}^2 - \frac{\eta}{4} \|\nabla \hat f_x(u_j)\|^2 \\
				& = E_j + \left( \frac{\gamma (1-\theta_j)^2}{2} + \frac{(1-\theta_j)^2 - 1}{2\eta} \right) \|v_j\|^2  - \frac{\eta}{4} \|\nabla \hat f_x(u_j)\|^2 \\
				& = E_j + \frac{1}{2\eta} \left( \eta \gamma (1-\theta_j)^2 + (1-\theta_j)^2 - 1 \right) \|v_j\|^2  - \frac{\eta}{4} \|\nabla \hat f_x(u_j)\|^2.
	\end{align*}
	From Lemma~\ref{lem:TSSballs} we know that $\eta\gamma \leq \frac{1}{2} \theta_j$ and that $\theta_j$ is in the interval $[0, 1]$.
	It is easy to check that the function $\theta_j \mapsto \frac{1}{2} \theta_j (1-\theta_j)^2 + (1-\theta_j)^2 - 1$ is upper-bounded by $-\theta_j$ over the interval $[0, 1]$.
	Thus,
	\begin{align*}
		E_{j+1} & \leq E_j - \frac{\theta_j}{2\eta} \|v_j\|^2  - \frac{\eta}{4} \|\nabla \hat f_x(u_j)\|^2 \leq E_j,
	\end{align*}
	as announced.
	
	In closing, note that if $\|v_j\| \geq \mathscr{M}$ then Lemma~\ref{lem:params} shows
	\begin{align*}
		E_j - E_{j+1} \geq \frac{\theta_j}{2\eta} \|v_j\|^2 \geq \frac{\theta}{2\eta} \mathscr{M}^2 = 2 \theta \ell\mathscr{M}^2 \geq \frac{4\mathscr{E}}{\mathscr{T}},
	\end{align*}
	which concludes the proof.
%
\end{proof}

We give a proof of the improve-or-localize lemma.
\begin{proof}[Proof of Lemma~\ref{lem:TSStraveldist}]
	This follows from~\citep[Cor.~11]{jin2018agdescapes}, with some modifications for variable $\theta_j$ and because we allow $\theta_j > \frac{1}{2}$.
	By triangular inequality then Cauchy--Schwarz, we have
	\begin{align*}
		\|s_q - s_{q'}\|^2 = \left\| \sum_{j = q'}^{q-1} s_{j+1} - s_j \right\|^2 \leq \left( \sum_{j = q'}^{q-1} \|s_{j+1} - s_j\| \right)^2 \leq (q - q') \sum_{j = q'}^{q-1} \|s_{j+1} - s_j\|^2.
	\end{align*}
	Now use the inequality $\|a + b\|^2 \leq (1+C) \|a\|^2 + \frac{1+C}{C} \|b\|^2$ (valid for all vectors $a, b$ and reals $C > 0$) with $C = 2\sqrt{\kappa} - 1$ (positive owing to $\kappa \geq 1$ by Lemma~\ref{lem:params}) to see that
	\begin{align*}
		\|s_{j+1} - s_j\|^2 = \|(s_{j+1} - u_j) + (u_j - s_j)\|^2 \leq 2\sqrt{\kappa} \|s_{j+1} - u_j\|^2 + \frac{2\sqrt{\kappa}}{2\sqrt{\kappa} - 1} \|u_j - s_j\|^2.
	\end{align*}
	By construction, we have $s_{j+1} = u_j - \eta \nabla \hat{f}_x(u_j)$ and $u_j = s_j + (1-\theta_j) v_j$. Thus:
	\begin{align*}
		\|s_{j+1} - s_j\|^2 & \leq 2\sqrt{\kappa} \eta^2 \|\nabla \hat{f}_x(u_j)\|^2 + \frac{2\sqrt{\kappa}(1-\theta_j)^2}{2\sqrt{\kappa} - 1} \|v_j\|^2 \\
			                & = 16\sqrt{\kappa} \eta \left( \frac{\eta}{8} \|\nabla \hat{f}_x(u_j)\|^2 + \frac{1}{2\eta}\frac{(1-\theta_j)^2}{4(2\sqrt{\kappa} - 1)} \|v_j\|^2 \right).
	\end{align*}
	We focus on the second term: recall from Lemma~\ref{lem:TSSballs} that $\theta_j \in [\theta, 1]$ with $\theta = \frac{1}{4\sqrt{\kappa}}$, and notice that $(1-t)^2 \leq 4(2\sqrt{\kappa} - 1)t$ for all $t$ in the interval defined by $\frac{1-\theta\pm\sqrt{1-2\theta}}{\theta}$.
	This holds a fortiori for all $t$ in $[\theta, 1]$ because $\theta \leq \frac{1}{4}$ owing to $\kappa \geq 1$.
	It follows that
	\begin{align*}
		\|s_{j+1} - s_j\|^2 & \leq 16\sqrt{\kappa} \eta \left( \frac{\eta}{8} \|\nabla \hat{f}_x(u_j)\|^2 + \frac{\theta_j}{2\eta} \|v_j\|^2 \right).
	\end{align*}
	Apply Lemma~\ref{lem:TSSEj} to the parenthesized expression to deduce that
	\begin{align*}
		\|s_{j+1} - s_j\|^2 & \leq 16\sqrt{\kappa} \eta \left( E_j - E_{j+1} \right).
	\end{align*}
	Plug this into the first inequality of this proof to conclude with a telescoping sum.
\end{proof}

We give a proof of the lemma which states that, upon witnessing significant non-convexity, it is possible to exploit that observation to drive significant decrease in the cost function value.
\begin{proof}[Proof of Lemma~\ref{lem:TSSNCE}]
	This follows almost exactly~\citep[Lem.~10 and 17]{jin2018agdescapes}.
	We need a slight modification because the Hessian $\nabla^2 \hat f_x$ may not be Lipschitz continuous in all of $B_x(3b)$: our assumptions only guarantee a type of Lipschitz continuity with respect to the origin of $\T_x\calM$.
	Interestingly, even if the last momentum step was capped (that is, if $\theta_j \neq \theta$)---something which does not happen in the Euclidean case---the result goes through.
	
	First, consider the case $\|v_j\| \geq s$, where $s$ is a parameter set in Section~\ref{sec:assuparams}.
	Then, $\NCE(x, s_j, v_j) = s_j$.
	It follows from the definition of $E_j$~\eqref{eq:Ej} that
	\begin{align*}
		\hat f_x(\NCE(x, s_j, v_j)) & = \hat f_x(s_j) = E_j - \frac{1}{2\eta} \|v_j\|^2 \leq E_j - \frac{s^2}{2\eta}.
	\end{align*}
	
	Second, consider the case $\|v_j\| < s$.
	We know that $v_j \neq 0$ as otherwise $u_j = s_j + (1-\theta_j) v_j = s_j$: this would contradict the assumption that~\eqref{eq:NCC} triggers with $(x, s_j, u_j)$.
	Expand $\hat f_x$ around $u_j$ in a truncated Taylor series with Lagrange remainder to see that
	\begin{align*}
		\hat f_x(s_j) = \hat f_x(u_j) + \innersmall{\nabla \hat f_x(u_j)}{s_j - u_j} + \frac{1}{2}\innersmall{\nabla^2 \hat f_x(\zeta_j)[s_j - u_j]}{s_j - u_j}
	\end{align*}
	with $\zeta_j = ts_j + (1-t)u_j$ for some $t \in [0, 1]$.
	Since~\eqref{eq:NCC} triggers with $(x, s_j, u_j)$, we also know that
	\begin{align*}
		\hat{f}_x(s_j) < \hat{f}_x(u_j) + \innersmall{\nabla \hat{f}_x(u_j)}{s_j - u_j} - \frac{\gamma}{2}\norm{s_j - u_j}^2.
	\end{align*}
	The last two claims combined yield:
	\begin{align}
		\innersmall{\nabla^2 \hat f_x(\zeta_j)[s_j - u_j]}{s_j - u_j} < - \gamma \norm{s_j - u_j}^2.
		\label{eq:NCEpfnablatwo}
	\end{align}
	Consider $\dot{v} = s \frac{v_j}{\norm{v_j}}$ as defined in the call to $\NCE$.
	Let $\tilde v$ be either $\dot v$ or $-\dot v$, chosen so that $\innersmall{\nabla \hat f_x(s_j)}{\tilde v} \leq 0$ (at least one of the two choices satisfies this condition).
	By construction, $\NCE(x, s_j, v_j)$ is the element of the triplet $\{s_j, s_j+\dot v, s_j-\dot v\}$ where $\hat f_x$ is minimized.
	Since $s_j + \tilde v$ belongs to this triplet, it follows through another truncated Taylor series with Lagrange remainder (this time around $s_j$) that
	\begin{align}
		\hat f_x(\NCE(x, s_j, v_j)) \leq \hat f_x(s_j + \tilde v) & = \hat f_x(s_j) + \innersmall{\nabla \hat f_x(s_j)}{\tilde v} + \frac{1}{2}\innersmall{\nabla^2 \hat f_x(\zeta_j')[\tilde v]}{\tilde v} \nonumber \\
				& \leq \hat f_x(s_j) + \frac{1}{2}\innersmall{\nabla^2 \hat f_x(\zeta_j')[\tilde v]}{\tilde v}
				\label{eq:NCEpfhatfxNCEfoo}
	\end{align}
	with $\zeta_j' = s_j + t'\tilde v$ for some $t' \in [0, 1]$.
	Since $\tilde v$ is parallel to $v_j$ which itself is parallel to $s_j - u_j$ (by definition of $u_j$), we deduce from~\eqref{eq:NCEpfnablatwo} that
	\begin{align*}
		\innersmall{\nabla^2 \hat f_x(\zeta_j)[\tilde v]}{\tilde v} < -\gamma \|\tilde v\|^2 = -\gamma s^2.
	\end{align*}
	We aim to use this to work on~\eqref{eq:NCEpfhatfxNCEfoo}, but notice that $\nabla^2 \hat f_x$ is evaluated at two possibly distinct points, namely, $\zeta_j$ and $\zeta_j'$: we need to use the Lipschitz properties of the Hessian to relate them.
	To this end, notice that $\zeta_j$ and $\zeta_j'$ both live in $B_x(3b)$.
	Indeed, $\|\tilde v\| = \|\dot v\| = s \leq b$ by Lemma~\ref{lem:params} and $\|s_j\| \leq b, \|u_j\| \leq 2b$ by Lemma~\ref{lem:TSSballs}.
	Thus, $\|\zeta_j\| \leq \|s_j\| + \|u_j\| \leq b + 2b = 3b$ and $\|\zeta_j'\| \leq \|s_j\| + \|\tilde v\| \leq b + b = 2b$.
	In contrast to the proof in~\citep{jin2018agdescapes}, we have no Lipschitz guarantee for $\nabla^2 \hat f_x$ along the line segment connecting $\zeta_j$ and $\zeta_j'$, but \aref{assu:Mandf} still offers such guarantees along the line segments connecting the origin of $\T_x\calM$ to each of $\zeta_j$ and $\zeta_j'$. Thus, we can write:
	\begin{align*}
		\innersmall{\nabla^2 \hat f_x(\zeta_j')[\tilde v]}{\tilde v} & = \innersmall{\nabla^2 \hat f_x(\zeta_j)[\tilde v]}{\tilde v} + \innersmall{(\nabla^2 \hat f_x(\zeta_j') - \nabla^2 \hat f_x(0))[\tilde v]}{\tilde v} - \innersmall{(\nabla^2 \hat f_x(\zeta_j) - \nabla^2 \hat f_x(0))[\tilde v]}{\tilde v} \\
				& \leq -\gamma s^2 + \left( \|\nabla^2 \hat f_x(\zeta_j') - \nabla^2 \hat f_x(0)\| + \|\nabla^2 \hat f_x(\zeta_j) - \nabla^2 \hat f_x(0)\| \right) \|\tilde v\|^2 \\
				& \leq \left( -\gamma + \hat\rho (\|\zeta_j'\| + \|\zeta_j\|)  \right) s^2 \\
				& \leq \left( -\gamma + 2\hat\rho(s + \|s_j\|) \right) s^2,
	\end{align*}
	where on the last line we used $\zeta_j = ts_j + (1-t)u_j$,  $u_j = s_j + (1-\theta_j) v_j$, $\theta_j \in [0, 1]$ and $\|v_j\| \leq s$ to claim that $\|\zeta_j\| = \|s_j + (1-t)(1-\theta_j)v_j\| \leq \|s_j\| + \|v_j\| \leq \|s_j\| + s$, and also (more directly) that $\|\zeta_j'\| \leq \|s_j\| + \|\tilde v\| = \|s_j\| + s$.
	Plugging our findings into~\eqref{eq:NCEpfhatfxNCEfoo}, it follows that
	\begin{align}
		\hat f_x(\NCE(x, s_j, v_j)) \leq \hat f_x(s_j) - \frac{1}{2}\left(\gamma - 2\hat\rho(s + \|s_j\|) \right) s^2.
	\end{align}
	Since $\hat f_x(s_j) \leq E_j$ by definition~\eqref{eq:Ej}, the main part of the lemma's claim is now proved.
	
	We now turn to the last part of the lemma's claim, for which we further assume $\|s_j\| \leq \mathscr{L}$.
	Recall from Lemma~\ref{lem:params} that $\mathscr{L} \leq s$.
	We deduce from the main claim that
	\begin{align*}
		\hat f_x(\NCE(x, s_j, v_j)) \leq E_j - \min\!\left( \frac{s^2}{2\eta}, \frac{(\gamma - 4\hat\rho s)s^2}{2} \right).
	\end{align*}
	To conclude, use Lemma~\ref{lem:params} anew to bound the right-most term.
\end{proof}

\section{Supporting lemmas} \label{sec:supportlemmas}

In this section, we state and prove three additional lemmas about accelerated gradient descent in balls of tangent spaces that are useful for proofs in subsequent sections.
The statements apply more broadly than the setup of parameters and assumptions in Section~\ref{sec:assuparams}, but of course it is under those provisions that the conclusions are useful to us.

Throughout this section, we use the following notation.
For some $x \in \calM$, let $\mathcal{H} = \nabla^2 \hat{f}_x(0)$.
Given $s_0 \in \T_x\calM$, set $v_0 = 0$ and define for $j = 0, 1, 2, \ldots$:
\begin{align}\label{updateq}
	u_j & = s_j + (1-\theta)v_j, & s_{j+1} & = u_j - \eta \nabla \hat{f}_x(u_j) & \textrm{ and } & & v_{j+1} & = s_{j+1} - s_j
\end{align}
with some arbitrary $\theta \in [0, 1]$ and $\eta > 0$.
Also define $s_{-1} = s_0 - v_0$ for convenience and
\begin{align}
	\delta_k & = \nabla \hat{f}_x(u_k) - \nabla \hat{f}_x(0) - \mathcal{H}u_k, \nonumber\\
	\delta_k' & = \nabla \hat{f}_x(u_k) - \nabla \hat{f}_x(s_\tau) - \mathcal{H}(u_k-s_\tau),
	\label{eq:deltakdeltakprime}
\end{align}
where $\tau \geq 0$ is a fixed index.
Notice that iterates generated by $\TSS(x, s_0)$ with parameters and assumptions as laid out in Section~\ref{sec:assuparams} conform to this notation so long as $\theta_j = \theta$.
Owing to Lemma~\ref{lem:TSSballs}, the latter condition holds in particular if $\TSS$ runs all its iterations in full because if at any point $\theta_j \neq \theta$ then $\|s_{j+1}\| > b$ and $\TSS$ terminates early.
This is the setting in which we call upon lemmas from this section.

The first lemma is a variation on~\citep[Lem.~18]{jin2018agdescapes}.
\begin{lemma}\label{Lemma13}
	With notation as above, for all $j \geq 0$ we can write 
	\begin{align}
		\begin{pmatrix} s_{\tau+j} \\ s_{\tau+j-1} \end{pmatrix} & = A^j \begin{pmatrix} s_{\tau} \\ s_{\tau-1} \end{pmatrix} - \eta \sum_{k = 0}^{j-1} A^{j-1-k} \begin{pmatrix} \nabla \hat f_x(0) + \delta_{\tau + k} \\ 0 \end{pmatrix}
		\label{eq:first_2}
	\end{align}
	and
	\begin{align}
		\begin{pmatrix}
			s_{\tau+j} - s_{\tau} \\
			s_{\tau+j-1} - s_{\tau}
		\end{pmatrix} & = A^j \begin{pmatrix}
		0 \\
		-v_{\tau}
		\end{pmatrix} - \eta \sum_{k=0}^{j-1} A^{j-1-k} \begin{pmatrix}
			\nabla \hat{f}_x(s_{\tau}) + \delta_{\tau+k}' \\
			0
		\end{pmatrix}
		\label{eq:second}
	\end{align}
	where
	\begin{align}
		A & = \begin{pmatrix}
					(2-\theta)(I-\eta\mathcal{H}) & -(1-\theta)(I-\eta\mathcal{H}) \\
					I & 0
			  \end{pmatrix}.
	\label{matrixA}
	\end{align}
\end{lemma}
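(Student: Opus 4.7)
The plan is to derive both identities by turning the AGD update into a second-order linear recurrence, rewriting it as a first-order recurrence on pairs, and then iterating.

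First I would eliminate $\nabla \hat f_x(u_j)$ from the update rule using the definition of $\delta_j$ in~\eqref{eq:deltakdeltakprime}: since $\nabla \hat f_x(u_j) = \nabla \hat f_x(0) + \mathcal{H} u_j + \delta_j$, combining with $u_j = s_j + (1-\theta)v_j$ and $v_j = s_j - s_{j-1}$ yields
\begin{align*}
s_{j+1} &= u_j - \eta \nabla \hat f_x(u_j) \\
        &= (I-\eta\mathcal{H})\bigl[s_j+(1-\theta)(s_j-s_{j-1})\bigr] - \eta\bigl(\nabla \hat f_x(0)+\delta_j\bigr) \\
        &= (2-\theta)(I-\eta\mathcal{H})s_j - (1-\theta)(I-\eta\mathcal{H})s_{j-1} - \eta\bigl(\nabla \hat f_x(0)+\delta_j\bigr).
\end{align*}
Stacking $(s_{j+1}, s_j)$ and reading off the matrix of coefficients gives exactly the transition
$\binom{s_{j+1}}{s_j} = A \binom{s_j}{s_{j-1}} - \eta \binom{\nabla \hat f_x(0) + \delta_j}{0}$
with $A$ as in~\eqref{matrixA}. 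A straightforward induction on $j$ starting from $\binom{s_\tau}{s_{\tau-1}}$, unrolling the affine recurrence, produces~\eqref{eq:first_2}.

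For~\eqref{eq:second} I would repeat the same computation but expand $\nabla \hat f_x(u_k)$ around $s_\tau$ rather than around $0$, using $\delta_k'$ in place of $\delta_k$: $\nabla \hat f_x(u_k) = \nabla \hat f_x(s_\tau) + \mathcal{H}(u_k - s_\tau) + \delta_k'$. Subtracting $s_\tau$ from both sides of the update and letting $\tilde s_k = s_k - s_\tau$, the same algebra produces
\begin{align*}
\tilde s_{k+1} = (2-\theta)(I-\eta\mathcal{H})\tilde s_k - (1-\theta)(I-\eta\mathcal{H})\tilde s_{k-1} - \eta\bigl(\nabla \hat f_x(s_\tau) + \delta_k'\bigr),
\end{align*}
so the pair $(\tilde s_{k+1}, \tilde s_k)$ obeys the very same first-order recurrence, with the same matrix $A$, but with forcing vector $(\nabla \hat f_x(s_\tau) + \delta_k', 0)^\top$. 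The initial condition at $k = \tau$ is $\tilde s_\tau = 0$ and $\tilde s_{\tau-1} = s_{\tau-1} - s_\tau = -v_\tau$, i.e.\ $\binom{0}{-v_\tau}$, which matches the leading term of~\eqref{eq:second}.

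There is no real obstacle here: the lemma is purely algebraic and follows once one observes that both expansions (around $0$ or around $s_\tau$) convert the nonlinear map into the affine map determined by $A$, the rest being a bookkeeping induction. The only thing to be careful about is the indexing at the boundary (the use of the convention $s_{-1} = s_0 - v_0$ and of $v_\tau = s_\tau - s_{\tau-1}$) so that the initial condition of the recurrence lines up with the form displayed in~\eqref{eq:first_2} and~\eqref{eq:second}.
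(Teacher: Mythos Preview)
Your derivation of~\eqref{eq:first_2} is essentially identical to the paper's: expand $\nabla \hat f_x(u_j)$ via the definition of $\delta_j$, use $u_j = (2-\theta)s_j - (1-\theta)s_{j-1}$, stack, and induct.

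For~\eqref{eq:second}, your route is correct but genuinely different from the paper's. The paper does not rederive the recurrence for the shifted iterates; instead it starts from~\eqref{eq:first_2}, substitutes $\delta_{\tau+k} = \delta_{\tau+k}' + \nabla \hat f_x(s_\tau) - \nabla \hat f_x(0) - \mathcal{H}s_\tau$, and is then left with two extra terms,
\[
(A^j - I)\begin{pmatrix} s_\tau \\ s_\tau \end{pmatrix} \quad\text{and}\quad \sum_{k=0}^{j-1} A^{j-1-k}\begin{pmatrix} \eta\mathcal{H} s_\tau \\ 0 \end{pmatrix},
\]
which it shows cancel via the telescoping identity $\bigl(\sum_{k=0}^{j-1} A^k\bigr)(A-I) = A^j - I$ together with a direct computation of $(A-I)\bigl(\begin{smallmatrix}-I & -I\\ -I & -I\end{smallmatrix}\bigr)$. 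Your approach sidesteps this cancellation entirely: by writing $\tilde s_k = s_k - s_\tau$ and observing that $u_k - s_\tau = (2-\theta)\tilde s_k - (1-\theta)\tilde s_{k-1}$ (the $s_\tau$ contributions sum to $(2-\theta)-(1-\theta)-1 = 0$), you get the same affine recurrence in $\tilde s_k$ with forcing $\nabla \hat f_x(s_\tau)+\delta_k'$ and the claimed initial condition $(\tilde s_\tau,\tilde s_{\tau-1}) = (0,-v_\tau)$ for free. Your argument is shorter and avoids the matrix-sum identity; the paper's version has the mild advantage of making explicit how the two formulas are related through the substitution for $\delta_{\tau+k}$, which is conceptually the origin of the cancellation.
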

\begin{proof}
	By definition of $\delta_{\tau+j-1}$, we have $\nabla \hat{f}_x(u_{\tau+j-1}) = \nabla \hat{f}_x(0) + \mathcal{H}u_{\tau+j-1} + \delta_{\tau+j-1}$.
	Thus,
	\begin{align*}
		s_{\tau+j} & = u_{\tau+j-1} - \eta\nabla\hat{f}_x(u_{\tau+j-1}) \\
		           & = u_{\tau+j-1} - \eta\nabla\hat{f}_x(0) -\eta \mathcal{H}u_{\tau+j-1} - \eta \delta_{\tau+j-1} \\
			       & = (I -\eta \mathcal{H})u_{\tau+j-1} -\eta (\nabla \hat{f}_x(0) + \delta_{\tau+j-1}).
	\end{align*}
	Use the definitions of $u_{k}$ and $v_{k}$ to verify that $u_{k} = (2-\theta)s_{k} - (1-\theta)s_{k-1}$ (we use this several times in subsequent proofs).
	Plug this in the previous identity to see that
	\begin{align*}
		s_{\tau+j} & = (2-\theta) (I -\eta \mathcal{H}) s_{\tau+j-1} - (1-\theta) (I -\eta \mathcal{H}) s_{\tau+j-2} -\eta (\nabla \hat{f}_x(0) + \delta_{\tau+j-1}).
	\end{align*}
	Equivalently in matrix form, then reasoning by induction, it follows that
	\begin{equation*}
	\begin{split}
	\left( {\begin{array}{c}
		s_{\tau+j}\\
		s_{\tau+j-1} \\
		\end{array} } \right)
	&=
	\left( {\begin{array}{cc}
		(2-\theta)(I-\eta\mathcal{H}) & -(1-\theta)(I-\eta\mathcal{H}) \\
		I & 0 \\
		\end{array} } \right)
	\left( {\begin{array}{c}
		s_{\tau+j-1 }\\
		s_{\tau+j-2} \\
		\end{array} } \right) - \eta
	\left( {\begin{array}{c}
		\nabla \hat{f}_x(0) + \delta_{\tau+j-1}\\
		0 \\
		\end{array} } \right) \\
	&= 
	A^j \left( {\begin{array}{c}
		s_{\tau}\\
		s_{\tau-1} \\
		\end{array} } \right)
	- \eta \sum_{k=0}^{j-1} A^{j-1-k} 
	\left( {\begin{array}{c}
		\nabla \hat{f}_x(0) + \delta_{\tau+k}\\
		0
		\end{array} } \right).
	\end{split}
	\end{equation*}
	This verifies eq.~\eqref{eq:first_2}.  To prove eq.~\eqref{eq:second}, observe that \eqref{eq:first_2} together with
	\begin{align*}
		\delta_{\tau+k} & = \delta_{\tau+k}' + \nabla \hat f_x(s_\tau) - \nabla \hat f_x(0) - \mathcal{H}s_\tau
	\end{align*}
	and $s_{\tau-1} = s_\tau - v_\tau$ imply
	\begin{equation*}
	\begin{split}
	\left( {\begin{array}{c}
		s_{\tau+j}-s_{\tau}\\
		s_{\tau+j-1} -s_{\tau}
		\end{array} } \right)
	&= 
	A^j \left( {\begin{array}{c}
		0\\
		-v_{\tau} \\
		\end{array} } \right)
	- \eta \sum_{k=0}^{j-1} A^{j-1-k} 
	\left( {\begin{array}{c}
		\nabla \hat{f}_x(s_{\tau}) + \delta_{\tau+k}'\\
		0 \\
		\end{array} } \right) \\
	& \quad + (A^j - I) \left( {\begin{array}{c}
		s_{\tau}\\
		s_{\tau}
		\end{array} } \right)
	+ \sum_{k=0}^{j-1} A^{j-1-k} 
	\left( {\begin{array}{c}
		\eta\mathcal{H} s_{\tau}\\
		0 \\
		\end{array} } \right).
	\end{split}
	\end{equation*}
	The last two terms cancel.
	Indeed, let $M \triangleq \sum_{k=0}^{j-1} A^{j-1-k} = A^0 + \cdots + A^{j-1}$.
	Notice that $M(A-I) = MA - M = A^j - I$.
	Thus,
	\begin{equation*}
	\begin{split}
	&\sum_{k=0}^{j-1} A^{j-1-k} 
	\left( {\begin{array}{c}
		\eta\mathcal{H} s_{\tau}\\
		0 \\
		\end{array} } \right)
	+ (A^j - I) \left( {\begin{array}{c}
		s_{\tau}\\
		s_{\tau} \\
		\end{array} } \right)\\
	& \quad =
	M\Bigg[
	\left( {\begin{array}{cc}
		\eta\mathcal{H} & 0\\
		0 & \eta\mathcal{H} \\
		\end{array} } \right)
	\left( {\begin{array}{c}
		s_{\tau}\\
		0 \\
		\end{array} } \right)
	+ (A - I)\left( {\begin{array}{c}
		s_{\tau}\\
		s_{\tau} \\
		\end{array} } \right)\Bigg] \\ 
	& \quad =
	M\Bigg[
	\left( {\begin{array}{cc}
		0 & - \eta\mathcal{H}\\
		0 & \eta\mathcal{H} \\
		\end{array} } \right)
	\left( {\begin{array}{c}
		s_{\tau}\\
		0 \\
		\end{array} } \right)
	+ (A-I)\left( {\begin{array}{cc}
		-I & -I\\
		-I & -I \\
		\end{array} } \right)
	\left( {\begin{array}{c}
		s_{\tau}\\
		0 \\
		\end{array} } \right)
	+ (A - I)\left( {\begin{array}{c}
		s_{\tau}\\
		s_{\tau} \\
		\end{array} } \right)\Bigg]\\
	& \quad =
	M\Bigg[0\Bigg] = 0.
	\end{split}
	\end{equation*}
	To reach the second-to-last line, verify that $(A-I)\begin{pmatrix}
	-I & -I \\ -I & -I
	\end{pmatrix} = \begin{pmatrix}
	\eta\mathcal{H} & \eta\mathcal{H} \\ 0 & 0
	\end{pmatrix}$ using~\eqref{matrixA}.
	The last line follows by direct calculation.
\end{proof}

The lemma below is a direct continuation from the lemma above.
We use it only for the proof of Lemma~\ref{lem:socp}. 
\begin{lemma}\label{ForLemma18}
	Use notation from Lemma \ref{Lemma13}.  Given $s_0, s_0' \in \T_x\calM$, define two sequences $\{s_j, u_j, v_j\}$ and $\{s_j', u_j', v_j'\}$ by the update equations~\eqref{updateq}.
	Let $w_j = s_j - s_j'$.
	Then,
	\begin{align*}
		\begin{pmatrix} w_j \\ w_{j-1} \end{pmatrix} & = A^j \begin{pmatrix} w_0 \\ w_{-1} \end{pmatrix} - \eta \sum_{k = 0}^{j-1} A^{j-1-k} \begin{pmatrix} \delta_k'' \\ 0 \end{pmatrix}
	\end{align*}
	where $\delta_k'' = \nabla \hat{f}_x(u_k)-\nabla \hat{f}_x(u_k') - \mathcal{H} (u_k - u_k')$.
\end{lemma}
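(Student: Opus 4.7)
The plan is to obtain this result as an immediate consequence of Lemma~\ref{Lemma13} applied separately to each of the two sequences, followed by subtraction. Specifically, Lemma~\ref{Lemma13} with $\tau = 0$ applies to the sequence $\{s_j, u_j, v_j\}$ (since it satisfies the update equations \eqref{updateq}) and yields
\begin{align*}
    \begin{pmatrix} s_{j} \\ s_{j-1} \end{pmatrix} & = A^j \begin{pmatrix} s_{0} \\ s_{-1} \end{pmatrix} - \eta \sum_{k = 0}^{j-1} A^{j-1-k} \begin{pmatrix} \nabla \hat f_x(0) + \delta_{k} \\ 0 \end{pmatrix},
\end{align*}
with $\delta_k = \nabla \hat f_x(u_k) - \nabla \hat f_x(0) - \mathcal{H} u_k$ as defined in~\eqref{eq:deltakdeltakprime}. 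The exact same reasoning applies verbatim to the primed sequence $\{s_j', u_j', v_j'\}$, producing an analogous identity with $s_j, u_j, \delta_k$ replaced throughout by $s_j', u_j', \tilde\delta_k$, where $\tilde\delta_k = \nabla \hat f_x(u_k') - \nabla \hat f_x(0) - \mathcal{H} u_k'$.

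Subtracting the two identities, the constant terms $\nabla \hat f_x(0)$ cancel, and $\delta_k - \tilde\delta_k$ simplifies exactly to $\delta_k'' = \nabla \hat f_x(u_k) - \nabla \hat f_x(u_k') - \mathcal{H}(u_k - u_k')$ because the Hessian terms at $0$ drop out. Using $w_j = s_j - s_j'$ and $w_{-1} = s_{-1} - s_{-1}' = (s_0 - v_0) - (s_0' - v_0') = w_0$ (since both sequences are initialized with zero momentum), we obtain the stated formula. There is no real obstacle here: the key step is simply observing that Lemma~\ref{Lemma13} is a linear recurrence in $s_j$ forced by the nonlinear term $\nabla \hat f_x(u_k)$, and the linearity allows differences of solutions to obey the same recurrence with forcing given by the difference of the nonlinear forcings. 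The only minor point worth verifying explicitly in the write-up is the cancellation $\delta_k - \tilde\delta_k = \delta_k''$ and the identity $w_{-1} = w_0$.
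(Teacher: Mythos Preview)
Your proposal is correct and essentially identical to the paper's proof: both apply Lemma~\ref{Lemma13} with $\tau = 0$ to each sequence and subtract, noting that the $\nabla \hat f_x(0)$ terms cancel so that the forcing becomes $\delta_k''$. One minor remark: the observation $w_{-1} = w_0$ is not actually needed to establish the lemma as stated (the formula involves $w_{-1}$, not $w_0$); that identity is used only later when the lemma is applied.
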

\begin{proof}
	By Lemma \ref{Lemma13} with $\tau = 0$, both of these identities hold:
	\begin{align*}
		\begin{pmatrix} s_{j} \\ s_{j-1} \end{pmatrix} & = A^j \begin{pmatrix} s_{0} \\ s_{-1} \end{pmatrix} - \eta \sum_{k = 0}^{j-1} A^{j-1-k} \begin{pmatrix} \nabla \hat f_x(u_k) - \mathcal{H} u_{k} \\ 0 \end{pmatrix}, \\
		\begin{pmatrix} s_{j}' \\ s_{j-1}' \end{pmatrix} & = A^j \begin{pmatrix} s_{0}' \\ s_{-1}' \end{pmatrix} - \eta \sum_{k = 0}^{j-1} A^{j-1-k} \begin{pmatrix} \nabla \hat f_x(u_k') - \mathcal{H} u_{k}' \\ 0 \end{pmatrix}.
	\end{align*}
	Taking the difference of these two equations reveals that
	\begin{align*}
		\begin{pmatrix} w_{j} \\ w_{j-1} \end{pmatrix} & = A^j \begin{pmatrix} w_{0} \\ w_{-1} \end{pmatrix} - \eta \sum_{k = 0}^{j-1} A^{j-1-k} \begin{pmatrix} \nabla \hat{f}_x(u_k)-\nabla \hat{f}_x(u_k') - \mathcal{H} (u_k - u_k') \\ 0 \end{pmatrix}.
	\end{align*}
	Conclude with the definition of $\delta_k''$.
\end{proof}

The next lemma corresponds to~\citep[Prop.~19]{jin2018agdescapes}.
The claim applies in particular to iterates generated by $\TSS$ with parameters and assumptions as laid out in Section~\ref{sec:assuparams} and $R \leq b$, so long as $\theta_j = \theta$ and the $s_j$ remain in the appropriate balls.
There are a few changes related to indexing and to the fact that our Lipschitz assumptions are limited to balls.
\begin{lemma}\label{Lemma14}
	Use notation from Lemma \ref{Lemma13}.
	Assume $\|\nabla^2\hat{f}_x(s) - \nabla^2\hat{f}_x(0)\| \leq \hat{\rho}\norm{s}$ for all $s \in B_x(3R)$ with some $R > 0$, $\hat \rho > 0$.
	Also assume $\norm{s_k} \leq R$ for all $k = q'-1, \ldots, q$.
	Then for all $k = q', \ldots, q$ we have $\norm{\delta_k} \leq 5\hat{\rho} R^2$.
	Moreover, for all $k = q'+1, \ldots, q$ we have
	\begin{align*}
		\norm{\delta_{k} - \delta_{k-1}} & \leq 6\hat{\rho} R\big(\norm{s_{k} - s_{k-1}} + \norm{s_{k-1} - s_{k-2}}\big).
	\end{align*}
	Additionally, we can bound their sum as:
	\begin{align*}
		\sum_{k=q'+1}^q \norm{\delta_k - \delta_{k-1}}^2 & \leq 144 \hat{\rho}^2R^2\sum_{k=q'}^q \norm{s_{k} - s_{k-1}}^2.
	\end{align*}
	(Mind the different ranges of summation.)
\end{lemma}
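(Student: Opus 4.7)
The plan is to reduce all three estimates to the Lipschitz-like hypothesis on $\nabla^2 \hat f_x$ via the fundamental theorem of calculus applied to $\nabla \hat f_x$ along appropriate line segments in $\T_x\calM$. Throughout, it will be useful to observe that $u_k = (2-\theta)s_k - (1-\theta)s_{k-1}$ (this identity already appears in the proof of Lemma~\ref{Lemma13}), so that $\|u_k\| \le (2-\theta)\|s_k\| + (1-\theta)\|s_{k-1}\| \le 3R$ and, for $k \geq q'+1$, also $\|u_k - u_{k-1}\| \le (2-\theta)\|s_k - s_{k-1}\| + (1-\theta)\|s_{k-1} - s_{k-2}\|$. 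In particular, every point $t u_k$ and every point $u_{k-1} + t(u_k - u_{k-1})$ with $t \in [0,1]$ lies in the ball $B_x(3R)$ where the Lipschitz bound on $\nabla^2 \hat f_x$ applies.

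For the first estimate, I would write
\begin{align*}
\delta_k = \nabla \hat f_x(u_k) - \nabla \hat f_x(0) - \mathcal{H} u_k = \int_0^1 \big(\nabla^2 \hat f_x(t u_k) - \mathcal{H}\big)[u_k] \, dt,
\end{align*}
and then estimate the integrand pointwise using the assumption $\|\nabla^2 \hat f_x(t u_k) - \mathcal{H}\| \le \hat\rho \|t u_k\|$, giving $\|\delta_k\| \le \frac{\hat\rho}{2}\|u_k\|^2 \le \frac{9}{2}\hat\rho R^2 \le 5\hat\rho R^2$.

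For the second estimate, I would use the analogous integral representation for $\delta_k - \delta_{k-1} = \nabla \hat f_x(u_k) - \nabla \hat f_x(u_{k-1}) - \mathcal{H}(u_k - u_{k-1})$, namely integrating $\nabla^2 \hat f_x - \mathcal{H}$ along the segment from $u_{k-1}$ to $u_k$. Since every point on this segment has norm at most $\max(\|u_k\|, \|u_{k-1}\|) \le 3R$, the integrand is bounded by $3\hat\rho R \, \|u_k - u_{k-1}\|$, yielding $\|\delta_k - \delta_{k-1}\| \le 3\hat\rho R \cdot \big(2\|s_k - s_{k-1}\| + \|s_{k-1} - s_{k-2}\|\big) \le 6\hat\rho R \big(\|s_k - s_{k-1}\| + \|s_{k-1}-s_{k-2}\|\big)$.

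Finally, for the summation bound I would square the pointwise estimate and apply the elementary inequality $(a+b)^2 \le 2(a^2 + b^2)$, then re-index the resulting sums: one copy gives $\sum_{k=q'+1}^q \|s_k - s_{k-1}\|^2$ and the shifted copy gives $\sum_{k=q'}^{q-1} \|s_k - s_{k-1}\|^2$, both dominated by $\sum_{k=q'}^q \|s_k - s_{k-1}\|^2$. This yields the factor $2 \cdot 72 = 144$ as announced. No step looks like a serious obstacle; the only subtlety to check carefully is that all points at which $\nabla^2 \hat f_x$ is evaluated remain in $B_x(3R)$, which is why the hypothesis $\|s_k\| \le R$ is imposed for all $k \in \{q'-1,\ldots,q\}$ (so that $s_{k-2}$ is controlled when $k = q'+1$).
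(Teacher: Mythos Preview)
Your proposal is correct and follows essentially the same approach as the paper: the same integral representations for $\delta_k$ and $\delta_k - \delta_{k-1}$, the same bound $\|u_k\| \le 3R$ from $u_k = (2-\theta)s_k - (1-\theta)s_{k-1}$, the same observation that the segment from $u_{k-1}$ to $u_k$ stays in $B_x(3R)$, and the same $(a+b)^2 \le 2(a^2+b^2)$ plus re-indexing to reach the constant $144$. The paper's proof is slightly more explicit in writing out the final summation step, but the arguments are otherwise identical.
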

\begin{proof}
	Recall that $u_k = (2-\theta) s_k - (1-\theta)s_{k-1}$.
	In particular,
	\begin{align*}
		\|u_k\| & \leq |2-\theta| \|s_k\| + |1-\theta| \|s_{k-1}\| \leq 3R & \textrm{ for } && k & 	= q', \ldots, q.
	\end{align*}
	We use this to establish each of the three inequalities.
	
	First, by definition of $\mathcal{H} = \nabla^2 \hat f_x(0)$ and of $\delta_k$, we know that
	\begin{align*}
		\delta_k = \nabla \hat{f}_x(u_k) - \nabla \hat{f}_x(0) - \mathcal{H} u_k = \int_0^1 \nabla^2 \hat{f}_x(\phi u_k)[u_k] - \nabla^2 \hat f_x(0)[u_k]\dphi.
	\end{align*}
	Owing to $\|u_k\| \leq 3R$, we can use the Lipschitz properties of $\nabla^2 \hat f_x$ to find
	\begin{align*}
		\|\delta_k\| & \leq \int_0^1\norm{\nabla^2 \hat{f}_x(\phi u_k)-\nabla^2 \hat{f}_x(0)} \dphi\norm{u_k}\leq \frac{1}{2} \hat{\rho}\norm{u_k}^2 \leq \frac{9}{2} \hat\rho R^2.
	\end{align*}
	This shows the first inequality for $k = q', \ldots, q$.
	
	For the second inequality, first verify that
	\begin{align*}
		\norm{\delta_k - \delta_{k-1}} & = \norm{\nabla \hat{f}_x(u_k) - \nabla \hat{f}_x(u_{k-1}) - \nabla^2 \hat{f}_x(0) [u_k - u_{k-1}]} \\
					& = \norm{\bigg(\int_0^1\nabla^2 \hat{f}_x((1-\phi)u_{k-1} + \phi u_k) - \nabla^2 \hat{f}_x(0)\dphi\bigg)[u_k - u_{k-1}]}.
	\end{align*}
	Note that the distance between $(1-\phi)u_{k-1} + \phi u_k$ and the origin is at most $\max\{\norm{u_k},\norm{u_{k-1}}\}$ for all $\phi \in [0, 1]$.
	Since for $k = q'+1, \ldots q$ we have both $\|u_k\| \leq 3R$ and $\|u_{k-1}\| \leq 3R$, it follows that $\|(1-\phi)u_{k-1} + \phi u_k\| \leq 3R$ for all $\phi \in [0, 1]$.
	As a result, we can use the Lipschitz-like properties of $\nabla^2 \hat f_x$ and write:
	\begin{align*}
		\norm{\delta_k - \delta_{k-1}} & \leq 3\hat{\rho}R \norm{u_k - u_{k-1}}.
	\end{align*}
	Combine $u_k = (2-\theta) s_k - (1-\theta)s_{k-1}$ and $u_{k-1} = (2-\theta) s_{k-1} - (1-\theta)s_{k-2}$ to find $u_{k} - u_{k-1} = (2-\theta)(s_k - s_{k-1}) - (1-\theta) (s_{k-1} - s_{k-2})$.
	From there, it follows that
	\begin{align*}
		\norm{\delta_k - \delta_{k-1}} & \leq 3\hat{\rho} R \norm{(2-\theta)(s_k - s_{k-1}) - (1-\theta) (s_{k-1} - s_{k-2})} \\
									   & \leq 3\hat{\rho} R \left( 2 \|s_k - s_{k-1}\| + \|s_{k-1} - s_{k-2}\| \right) \\
									   & \leq 6\hat{\rho} R \left( \|s_k - s_{k-1}\| + \|s_{k-1} - s_{k-2}\| \right).
	\end{align*}
	This establishes the second inequality for $k = q'+1, \ldots q$.
	
	The third inequality follows from the second one through squaring and a sum, notably using $(a+b)^2 \leq 2(a^2 + b^2)$ for $a, b \geq 0$:
	\begin{align*}
		\sum_{k=q'+1}^q \norm{\delta_k - \delta_{k-1}}^2 & \leq 36 \hat{\rho}^2 R^2 \sum_{k=q'+1}^q \left( \|s_k - s_{k-1}\| + \|s_{k-1} - s_{k-2}\| \right)^2 \\
													     & \leq 72 \hat{\rho}^2 R^2 \sum_{k=q'+1}^q \left( \|s_k - s_{k-1}\|^2 + \|s_{k-1} - s_{k-2}\|^2 \right) \\
													     & = 72 \hat{\rho}^2 R^2 \left( \sum_{k=q'+1}^q \|s_k - s_{k-1}\|^2   +   \sum_{k=q'}^{q-1} \|s_k - s_{k-1}\|^2   \right).
	\end{align*}
	To conclude, extend the ranges of both sums to $q', \ldots, q$.
\end{proof}

We close this supporting section with important remarks about the matrix $A$~\eqref{matrixA}, still following~\citep{jin2018agdescapes}.
Recall the notation $\mathcal{H} = \nabla^2 \hat f_x(0)$: this is an operator on $\T_x\calM$,  self-adjoint with respect to the Riemannian inner product on $\T_x\calM$.
Let $e_1, \ldots, e_d \in \T_x\calM$ form an orthonormal basis of eigenvectors of $\mathcal{H}$ associated to ordered eigenvalues $\lambda_1 \leq \cdots \leq \lambda_d$.
We think of $A$ as a linear operator to and from $\T_x\calM \times \T_x\calM$.
Conveniently, the eigenvectors of $\mathcal{H}$ reveal how to block-diagonalize $A$.
Indeed,
from
\begin{align*}
	A \begin{pmatrix}
	e_m \\ 0
	\end{pmatrix} & = \begin{pmatrix}
	(2-\theta)(I-\eta\mathcal{H}) & -(1-\theta)(I-\eta\mathcal{H}) \\
	I & 0
	\end{pmatrix} \begin{pmatrix}
	e_m \\ 0
	\end{pmatrix} = \begin{pmatrix}
	(2-\theta)(1-\eta\lambda_m) e_m \\ e_m
	\end{pmatrix}
\end{align*}
and
\begin{align*}
A \begin{pmatrix}
0 \\ e_m
\end{pmatrix} & = \begin{pmatrix}
(2-\theta)(I-\eta\mathcal{H}) & -(1-\theta)(I-\eta\mathcal{H}) \\
I & 0
\end{pmatrix} \begin{pmatrix}
0 \\ e_m
\end{pmatrix} = \begin{pmatrix}
-(1-\theta)(1-\eta\lambda_m) e_m \\ 0
\end{pmatrix}
\end{align*}
it is a simple exercise to check that
\begin{align}
	J^* A J & = \diag\!\left( A_1, \ldots, A_d \right) & \textrm{ with } && J & = \begin{pmatrix}
	e_1 & 0   & e_2 & 0   & \cdots & e_d & 0   \\
	0   & e_1 & 0   & e_2 & \cdots & 0   & e_d
	\end{pmatrix} \nonumber\\
	& & \textrm{ and } && A_m & = \begin{pmatrix}
	(2-\theta)(1-\eta\lambda_m) & -(1-\theta)(1-\eta\lambda_m) \\
	1 & 0
	\end{pmatrix}. \label{eq:Am} 
\end{align}
Here, $J$ is a unitary operator from $\reals^{2d}$ (equipped with the standard Euclidean metric) to $\T_x\calM \times \T_x\calM$, and $J^*$ denotes its adjoint (which is also its inverse).
In particular, it becomes straightforward to investigate powers of $A$:
\begin{align}
	A^k & = \left( J \diag\!\left( A_1, \ldots, A_d \right) J^* \right)^k = J \diag\!\left( A_1^k, \ldots, A_d^k \right) J^*.
	\label{eq:Ak}
\end{align}
For $m, m'$ in $\{1, \ldots, d\}$ we have the useful identities
\begin{align}
	\inner{\begin{pmatrix} e_{m'} \\ 0 \end{pmatrix}}{A^k \begin{pmatrix} e_m \\ 0 \end{pmatrix}} & = \begin{cases}
		(A_m^k)_{11} & \textrm{ if } m = m', \\  
		0 & \textrm{ if } m \neq m',
	\end{cases}
	\label{eq:Akmm}
\end{align}
where $(A_m^k)_{11}$ is the top-left entry of the $2 \times 2$ matrix $(A_m)^k$.
Likewise,
\begin{align}
	\inner{\begin{pmatrix} e_{m'} \\ 0 \end{pmatrix}}{A^k \begin{pmatrix} 0 \\ e_m \end{pmatrix}} & = \begin{cases}
	(A_m^k)_{12} & \textrm{ if } m = m', \\  
	0 & \textrm{ if } m \neq m'.
	\end{cases}
	\label{eq:Akmmbis}
\end{align}
Additionally, one can also check that~\citep[Lem.~24]{jin2018agdescapes}:
\begin{align}
	\inner{\begin{pmatrix} 0 \\ e_{m'} \end{pmatrix}}{A^k \begin{pmatrix} e_m \\ 0 \end{pmatrix}} & = \begin{cases}
	(A_m^{k-1})_{11} & \textrm{ if } m = m', \\
	0 & \textrm{ if } m \neq m'.
	\end{cases}
	\label{eq:Akmmter}
\end{align}

\section{Proofs from Section~\ref{sec:focp} about $\ARGD$} \label{app:FOCP}

We include fulls proofs for the analogues of~\citep[Lem.~21 and~22]{jin2018agdescapes} because we need small but important changes for our setting (as is the case for the other similar results we prove in full), and because of (ultimately inconsequential) small issues with some arguments pertaining to the subspace $\mathcal{S}$ in the original proofs.
(Specifically, the subspace $\calS$ is defined with respect to the Hessian of the cost function at a specific reference point, which for notational convenience in \citet{jin2018agdescapes} is denoted by $0$; however, this same convention is used in several lemmas, on at least one occasion referring to distinct reference points; the authors easily proposed a fix, and we use a different fix below; to avoid ambiguities, we keep all iterate references explicit.)
Up to those minor changes, the proofs of the next two lemmas are due to Jin et al.

As a general heads-up for this and the next section: we call upon several lemmas from~\citep{jin2018agdescapes} which are purely algebraic facts about the entries of powers of the $2\times 2$ matrices $A_m$~\eqref{eq:Am}: they do not change at all for our context, hence we do not include their proofs.
We only note that Lemma~33 in~\citep{jin2018agdescapes} may not hold for all $x \in \left( \frac{\theta^2}{(2-\theta)^2}, \frac{1}{4} \right]$ as stated (there are some issues surrounding their eq.~(17)), but it is only used twice, both times with $x \in \left( \frac{2\theta^2}{(2-\theta)^2}, \frac{1}{4} \right]$: in that interval the lemma does hold.

\begin{proof}[Proof of Lemma~\ref{lem:focpA}]
	For contradiction, assume $E_{\tau-1} - E_{\tau + \mathscr{T}/4} < \mathscr{E}$.
	Then, Lemma~\ref{lem:TSSEj} implies that $E_{\tau-1} - E_{\tau + j} < \mathscr{E}$ for all $-1 \leq j \leq \mathscr{T}/4$.
	Over that range, Lemmas~\ref{lem:TSStraveldist} and~\ref{lem:params} tell us that 
	\begin{align}
		\|s_{\tau + j} - s_{\tau}\|^2 \leq 16 \sqrt{\kappa} \eta |j| |E_{\tau} - E_{\tau+j}| < 4 \sqrt{\kappa} \eta \mathscr{T} \mathscr{E} = \frac{1}{4}\mathscr{L}^2.
		\label{eq:staujminstaucontradiction}
	\end{align}
	The remainder of the proof consists in showing that $\|s_{\tau + \mathscr{T}/4} - s_{\tau}\|$ is in fact larger than $\frac{1}{2}\mathscr{L}$.
	
	Starting now, consider $j = \mathscr{T}/4$. 
	From~\eqref{eq:second} in Lemma~\ref{Lemma13}, we know that
	\begin{align*}
	\begin{pmatrix}
	s_{\tau+j} - s_{\tau} \\
	s_{\tau+j-1} - s_{\tau}
	\end{pmatrix} & = A^j \begin{pmatrix}
	0 \\
	-v_{\tau}
	\end{pmatrix} - \eta \sum_{k=0}^{j-1} A^{j-1-k} \begin{pmatrix}
	\nabla \hat{f}_x(s_{\tau}) + \delta_{\tau+k}' \\
	0
	\end{pmatrix}.
	\end{align*}
	Let $e_1, \ldots, e_d$ form an orthonormal basis of eigenvectors for $\mathcal{H} = \nabla^2 \hat f_x(0)$ with eigenvalues $\lambda_1 \leq \cdots \leq \lambda_d$.
	Expand $v_\tau$, $\nabla \hat{f}_x(s_{\tau})$ and $\delta_{\tau+k}'$ in that basis as:
	\begin{align}
		v_\tau & = \sum_{m = 1}^{d} v^{(m)} e_m, & \nabla \hat{f}_x(s_{\tau}) & = \sum_{m = 1}^{d} g^{(m)} e_m, & \delta_{\tau+k}' & = \sum_{m = 1}^{d} (\delta_{\tau+k}')^{(m)} e_m.
		\label{eq:expansionsfocpA}
	\end{align}
	Then,
	\begin{align*}
		\begin{pmatrix}
		s_{\tau+j} - s_{\tau} \\
		s_{\tau+j-1} - s_{\tau}
		\end{pmatrix} & = \sum_{m = 1}^{d} \left[ -v^{(m)} A^j \begin{pmatrix} 0 \\ e_m \end{pmatrix} - \eta \sum_{k=0}^{j-1} (g^{(m)} + (\delta_{\tau+k}')^{(m)}) A^{j-1-k} \begin{pmatrix}
		e_m \\
		0
		\end{pmatrix} \right].
	\end{align*}
	Owing to~\eqref{eq:Akmm} and~\eqref{eq:Akmmbis} which reveal how $A$ block-diagonalizes in the basis $e$, we can further write
	\begin{align*}
		\inner{\begin{pmatrix} e_{m} \\ 0 \end{pmatrix}}{\begin{pmatrix} s_{\tau+j} - s_{\tau} \\ s_{\tau+j-1} - s_{\tau} \end{pmatrix}} & = -v^{(m)} (A_m^j)_{12} - \eta \sum_{k=0}^{j-1} \left(g^{(m)} + (\delta_{\tau+k}')^{(m)}\right) (A_m^{j-1-k})_{11}.
	\end{align*}
	This reveals the expansion coefficients of $s_{\tau+j} - s_{\tau}$ in the basis $e_1, \ldots, e_d$, which is enough to study the norm of $s_{\tau+j} - s_{\tau}$.
	Explicitly,
	\begin{align}
		\|s_{\tau+j} - s_{\tau}\|^2 & = \sum_{m = 1}^d \left( v^{(m)} b_{m, j} - \eta \sum_{k=0}^{j-1} \left(g^{(m)} + (\delta_{\tau+k}')^{(m)}\right)a_{m, j-1-k} \right)^2,
		\label{eq:focpAfirstbound}
	\end{align}
	where we introduce the notation
	\begin{align}
		a_{m, t} & = (A_m^t)_{11}, & b_{m, t} & = -(A_m^t)_{12}.
		\label{eq:amtbmt}
	\end{align}

	To proceed, we need control over the coefficients $a_{m, t}$ and $b_{m, t}$, as provided by~\citep[Lem.~30]{jin2018agdescapes}.
	We explore this for $m$ in the set
	\begin{align*}
		S^c = \left\{ m : \eta\lambda_m \leq \frac{\theta^2}{(2-\theta)^2} \right\},
	\end{align*}
	that is, for the eigenvectors orthogonal to $\calS$.
	Under our general assumptions it holds that $\|\nabla^2 \hat f_x(0)\| \leq \ell$, so that $|\lambda_m| \leq \ell$ for all $m$.
	This ensures $\eta \lambda_m \in [-1/4, \theta^2/(2-\theta)^2]$ for $m \in S^c$.
	Recall that $A_m$~\eqref{eq:Am} is a $2 \times 2$ matrix which depends on $\theta$ and $\eta\lambda_m$.
	It is reasonably straightforward to diagonalize $A_m$ (or rather, to put it in Jordan normal form), and from there to get an explicit expression for any entry of $A_m^k$.
	The quantity $\sum_{k = 0}^{j-1} a_{m,k}$ is a sum of such entries over a range of powers: this can be controlled as one would a geometric series.
	In~\citep[Lem.~30]{jin2018agdescapes}, it is shown that, for $m \in S^c$, if $j \geq 1 + 2/\theta$ and $\theta \in (0, 1/4]$, then
	\begin{align}
		\sum_{k = 0}^{j-1} a_{m,k} & \geq \frac{1}{c_4 \theta^2} & \textrm{ and } && \frac{b_{m,j}}{\sum_{k' = 0}^{j-1} a_{m,k'}} & \leq c_5^{1/2} \max\!\left( \theta, \sqrt{|\eta\lambda_m|} \right),
		\label{eq:sumsamk}
	\end{align}
	with some universal constants $c_4, c_5$.
	The lemma applies because $\theta \in (0, 1/4]$ by Lemma~\ref{lem:params} and also $j = \mathscr{T}/4 = \chi (c/48) \cdot 3/ \theta \geq 3/\theta \geq 4\sqrt{\kappa} + 2/\theta \geq 1 + 2/\theta$, with $c \geq 48$.
	
	Building on the latter comments, we can define the following scalars for $m \in S^c$:
	\begin{align*}
		p_{m,k,j} & = \frac{a_{m, j-1-k}}{\sum_{k' = 0}^{j-1} a_{m, k'}}, &
		q_{m,j}   & = -\frac{b_{m, j}}{\eta \sum_{k' = 0}^{j-1} a_{m, k'}}, \\
		\tilde\delta_j'^{(m)} & = \sum_{k = 0}^{j-1} p_{m,k,j} (\delta_{\tau+k}')^{(m)} &
		\tilde v_j^{(m)} & = q_{m,j} v^{(m)}.
	\end{align*}
	In analogy with notation in~\eqref{eq:expansionsfocpA}, we also consider vectors $\tilde\delta_j'$ and $\tilde v_j$ with expansion coefficients as above.
	These definitions are crafted specifically so that~\eqref{eq:focpAfirstbound} yields: 
	\begin{align*}
		\|s_{\tau+j} - s_{\tau}\|^2 & \geq \sum_{m \in S^c} \left( \eta \left(\sum_{k = 0}^{j-1} a_{m,k} \right) \left( g^{(m)} + \tilde\delta_j'^{(m)} + \tilde v_j^{(m)} \right) \right)^2.
	\end{align*}
	We deduce from~\eqref{eq:sumsamk} that 
	\begin{align}
		\|s_{\tau+j} - s_{\tau}\| & \geq \frac{\eta}{c_4 \theta^2} \sqrt{\sum_{m \in S^c} \left( g^{(m)} + \tilde\delta_j'^{(m)} + \tilde v_j^{(m)} \right)^2} \nonumber\\
								  & = \frac{\eta}{c_4 \theta^2} \left\|P_{\calS^c}\!\left(\nabla \hat{f}_x(s_{\tau}) + \tilde\delta_j' + \tilde v_j \right)\right\| \nonumber\\
								  & \geq \frac{\eta}{c_4 \theta^2} \left( \frac{\epsilon}{6} - \|P_{\calS^c}(\tilde\delta_j')\| - \|P_{\calS^c}(\tilde v_j )\| \right),
								  \label{eq:staujminstaubound}
	\end{align}
	where $\calS^c$ is the orthogonal complement of $\calS$, that is, it is the subspace of $\T_x\calM$ spanned by eigenvectors $\{e_m\}_{m \in S^c}$, and $P_{\calS^c}$ is the orthogonal projector to $\calS^c$.
	In the last line, we used a triangular inequality and the assumption that $\|P_{\calS^c}(\nabla \hat{f}_x(s_{\tau}))\| \geq \epsilon/6$.
	Our goal now is to show that $\|P_{\calS^c}(\tilde\delta_j')\|$ and $\|P_{\calS^c}(\tilde v_j )\|$ are suitably small.
	
	Consider the following vector with notation as in~\eqref{eq:deltakdeltakprime}:
	\begin{align*}
		\Delta = \delta_{\tau + k} - \delta_{\tau + k}' & = \nabla \hat f_x(s_\tau) - \nabla \hat f_x(0) - \nabla^2 \hat f_x(0)[s_\tau] 
				  = \left( \int_{0}^{1} \nabla^2 \hat f_x(\phi s_\tau) - \nabla^2 \hat f_x(0) \dphi \right)\![s_\tau].
	\end{align*}
	By the Lipschitz-like properties of $\nabla^2 \hat f_x$ and the assumption $\|s_\tau\| \leq \mathscr{L} < b$, we deduce that
	\begin{align*}
		\|\Delta\| & \leq \frac{1}{2} \hat\rho \|s_\tau\|^2 \leq \frac{1}{2} \hat\rho \mathscr{L}^2.
	\end{align*}
	Note that $\sum_{k = 0}^{j-1} p_{m, k, j} = 1$. This and the fact that $\Delta$ is independent of $k$ justify that:
	\begin{align*}
		\|P_{\calS^c}(\tilde\delta_j')\|^2 = \sum_{m \in S^c} \left( \tilde\delta_j'^{(m)} \right)^2 & = \sum_{m \in S^c} \left( \sum_{k = 0}^{j-1} p_{m,k,j} (\delta_{\tau+k}')^{(m)} \right)^2 \\
				& = \sum_{m \in S^c} \left( \sum_{k = 0}^{j-1} p_{m,k,j} \left( (\delta_{\tau+k})^{(m)} - \Delta^{(m)} \right) \right)^2 \\
				& = \sum_{m \in S^c} \left( \sum_{k = 0}^{j-1} p_{m,k,j} (\delta_{\tau+k})^{(m)} - \Delta^{(m)} \right)^2,
	\end{align*}
	where $\Delta^{(m)}$ denotes the expansion coefficients of $\Delta$ in the basis $e$.
	Define the vector $\tilde\delta_j$ (without ``prime'') with expansion coefficients $\tilde\delta_j^{(m)} = \sum_{k = 0}^{j-1} p_{m,k,j} (\delta_{\tau+k})^{(m)}$.
	Then, by construction,
	\begin{align*}
		\|P_{\calS^c}(\tilde\delta_j')\| = \|P_{\calS^c}( \tilde\delta_j - \Delta )\| \leq \|P_{\calS^c}( \tilde\delta_j )\| + \|\Delta\| \leq \|P_{\calS^c}( \tilde\delta_j )\| + \hat\rho \mathscr{L}^2.
	\end{align*}
	Through a simple reasoning using~\citep[Lem.~24, 26]{jin2018agdescapes} one can conclude that, under our setting, both eigenvalues of $A_m$ (for $m \in S^c$) are positive, and as a result that the coefficients $a_{m,k}$ (hence also $p_{m, k, j}$) are positive.
	%
	%
	%
	%
	Therefore,
	\begin{align*}
		\|P_{\calS^c}(\tilde\delta_j)\|^2 & = \sum_{m \in S^c} \left( \sum_{k = 0}^{j-1} p_{m,k,j} (\delta_{\tau+k})^{(m)} \right)^2 \\
										  & \leq \sum_{m \in S^c} \left( \sum_{k = 0}^{j-1} p_{m,k,j} \left( |(\delta_{\tau})^{(m)}| + |(\delta_{\tau+k})^{(m)} - (\delta_{\tau})^{(m)}| \right) \right)^2.
	\end{align*}
	Notice that for all $0 \leq k \leq j-1$ we have
	\begin{align*}
		|(\delta_{\tau+k})^{(m)} - (\delta_{\tau})^{(m)}| \leq \sum_{k' = 1}^{k} |(\delta_{\tau+k'})^{(m)} - (\delta_{\tau+k'-1})^{(m)}| \leq \sum_{k' = 1}^{j-1} |(\delta_{\tau+k'})^{(m)} - (\delta_{\tau+k'-1})^{(m)}|,
	\end{align*}
	and this right-hand side is independent of $k$.
	Thus, we can factor out $\sum_{k = 0}^{j-1} p_{m,k,j} = 1$ in the expression above to get:
	\begin{align*}
		\|P_{\calS^c}(\tilde\delta_j)\|^2 & \leq \sum_{m \in S^c} \left( |(\delta_{\tau})^{(m)}| + \sum_{k = 1}^{j-1} |(\delta_{\tau+k})^{(m)} - (\delta_{\tau+k-1})^{(m)}| \right)^2.
	\end{align*}
	Use first $(a + b)^2 \leq 2a^2 + 2b^2$ then (another) Cauchy--Schwarz to deduce
	\begin{align*}
		\|P_{\calS^c}(\tilde\delta_j)\|^2 & \leq 2 \sum_{m \in S^c} |(\delta_{\tau})^{(m)}|^2   +  2(j-1) \sum_{m \in S^c} \sum_{k = 1}^{j-1} |(\delta_{\tau+k})^{(m)} - (\delta_{\tau+k-1})^{(m)}|^2 \\
										  & \leq 2 \|\delta_\tau\|^2 + 2j \sum_{k = 1}^{j-1} \|\delta_{\tau+k} - \delta_{\tau+k-1} \|^2.
	\end{align*}
	To bound this further, we call upon Lemma~\ref{Lemma14} with $R = \frac{3}{2} \mathscr{L} \leq \frac{1}{3}b$, $q' = \tau$ and $q = \tau + \frac{\mathscr{T}}{4} - 1$.
	To this end, we must first verify that $\|s_{\tau + k}\| \leq R$ for $k = -1, \ldots, \frac{\mathscr{T}}{4} - 1$.
	This is indeed the case owing to~\eqref{eq:staujminstaucontradiction} and the assumption $\|s_\tau\| \leq \mathscr{L}$:
	\begin{align*}
		\|s_{\tau+k}\| & \leq \|s_{\tau+k} - s_\tau\| + \|s_\tau\| \leq \frac{1}{2} \mathscr{L} + \mathscr{L} = R & \textrm{ for } & & k = -1, \ldots, \mathscr{T}/4.
	\end{align*}
	This confirms that we can use the conclusions of Lemma~\ref{Lemma14}, reaching:
	\begin{align*}
	\|P_{\calS^c}(\tilde\delta_j)\|^2 & \leq 50 \hat\rho^2 R^4 + 288 \hat\rho^2 R^2 \cdot j\sum_{k = 0}^{j-1} \|s_{\tau+k} - s_{\tau+k-1} \|^2 \\
				& = \frac{4050}{16} \hat\rho^2 \mathscr{L}^4 + 648 \hat\rho^2 \mathscr{L}^2 \cdot j \sum_{k = \tau-1}^{\tau+j-2} \|s_{k+1} - s_{k} \|^2 \\
				& \leq 256 \hat\rho^2 \mathscr{L}^4 + 648 \hat\rho^2 \mathscr{L}^2 \cdot 16 \sqrt{\kappa} \eta j (E_{\tau-1} - E_{\tau+j-2}),
	\end{align*}
	where the first and last lines follow from the definition of $R$ and from Lemma~\ref{lem:TSStraveldist}, respectively.
	Recall that we assume $E_{\tau-1} - E_{\tau + \mathscr{T}/4} < \mathscr{E}$ for contradiction.
	Then, monotonic decrease of the Hamiltonian (Lemma~\ref{lem:TSSEj}) tells us that $E_{\tau-1} - E_{\tau+j-2} < \mathscr{E}$ for $0 \leq j \leq \mathscr{T}/4$.
	Combining with $16 \sqrt{\kappa} \eta \mathscr{T} \mathscr{E} = \mathscr{L}^2$ (Lemma~\ref{lem:params}), we find:
	\begin{align*}
		\|P_{\calS^c}(\tilde\delta_j)\|^2 & \leq 256 \hat\rho^2 \mathscr{L}^4 + 162 \hat\rho^2 \mathscr{L}^4 = 418 \hat\rho^2 \mathscr{L}^4.
	\end{align*}
	Thus, $\|P_{\calS^c}(\tilde\delta_j)\| \leq 21 \hat\rho \mathscr{L}^2 = 84 \epsilon \chi^{-4} c^{-6} \leq \epsilon/24$ with $c \geq 4$ and $\chi \geq 1$, for $0 \leq j \leq \mathscr{T}/4$.
	
	Recall that we aim to make progress from bound~\eqref{eq:staujminstaubound}.
	The bound $\|P_{\calS^c}(\tilde\delta_j)\| \leq \epsilon/24$ we just established is a first step.
	We now turn to bounding $\|P_{\calS^c}(\tilde v_j )\|$.
	Owing to~\eqref{eq:sumsamk}, we have this first bound assuming $j = \mathscr{T}/4$:
	\begin{align}
		\|P_{\calS^c}(\tilde v_j )\|^2 & = \sum_{m \in S^c} q_{m,j}^2 (v^{(m)})^2 \nonumber\\
									   & = \sum_{m \in S^c} \left(\frac{b_{m, j}}{\eta \sum_{k' = 0}^{j-1} a_{m, k'}}\right)^2 (v^{(m)})^2 \leq \frac{c_5}{\eta^2} \sum_{m \in S^c} (v^{(m)})^2 \max\!\left( \theta^2, |\eta\lambda_m| \right).
	   \label{eq:psctildevj}
	\end{align}
	(Recall from~\eqref{eq:expansionsfocpA} that $v^{(m)}$ denotes the coefficients of $v_\tau$ in the basis $e_1, \ldots, e_d$.)
	We split the sum in order to resolve the max.
	To this end, note that $\theta \in [0, 1]$ implies $\theta^2 \geq \frac{\theta^2}{(2-\theta)^2}$, so that the $\max$ evaluates to $\theta^2$ exactly when $-\theta^2 \leq \eta\lambda_m \leq \frac{\theta^2}{(2-\theta)^2}$ (remembering that $\eta\lambda_m \leq \frac{\theta^2}{(2-\theta)^2}$ because $m \in S^c$).
	Thus,
	\begin{multline*}
		\sum_{m \in S^c} (v^{(m)})^2 \max\!\left( \theta^2, |\eta\lambda_m| \right) = \sum_{m : -\theta^2 \leq \eta\lambda_m \leq \frac{\theta^2}{(2-\theta)^2}} (v^{(m)})^2 \theta^2 - \sum_{m : \eta\lambda_m < -\theta^2} (v^{(m)})^2 \eta\lambda_m.
	\end{multline*}
	Let us rework the last sum (we get a first bound by extending the summation range, exploiting that the summands are nonpositive):
	\begin{align*}
		-\sum_{m : \eta\lambda_m < -\theta^2} (v^{(m)})^2 \eta\lambda_m & \leq -\sum_{m : \eta\lambda_m \leq 0} (v^{(m)})^2 \eta\lambda_m \\
				 & = \sum_{m : \eta\lambda_m > 0} (v^{(m)})^2 \eta\lambda_m - \sum_{m = 1}^d (v^{(m)})^2 \eta\lambda_m \\
				 & = \sum_{m : \eta\lambda_m > 0} (v^{(m)})^2 \eta\lambda_m - \eta\inner{v_\tau}{\mathcal{H} v_\tau} \\
				 & = \sum_{m : 0 < \eta\lambda_m \leq \frac{\theta^2}{(2-\theta)^2}} (v^{(m)})^2 \eta\lambda_m + \eta \inner{P_\calS v_\tau}{\mathcal{H}P_\calS v_\tau} - \eta \inner{v_\tau}{\mathcal{H} v_\tau} \\
				 & \leq \theta^2 \|v_\tau\|^2 + \eta \inner{P_\calS v_\tau}{\mathcal{H}P_\calS v_\tau} - \eta \inner{v_\tau}{\mathcal{H} v_\tau}.
	\end{align*}
	(Recall that $P_\calS$ projects to the subspace spanned by eigenvectors with eigenvalues strictly above $\frac{\theta^2}{\eta(2-\theta)^2}$.)
	Combining all work done since~\eqref{eq:psctildevj}, it follows that
	\begin{align*}
		\|P_{\calS^c}(\tilde v_j )\|^2 & \leq \frac{c_5}{\eta^2} \left( 2\theta^2 \|v_\tau\|^2 + \eta \inner{P_\calS v_\tau}{\mathcal{H}P_\calS v_\tau} - \eta \inner{v_\tau}{\mathcal{H} v_\tau} \right).
	\end{align*}
	Use assumptions $\|v_\tau\| \leq \mathscr{M}$ and $\inner{P_\calS v_\tau}{\mathcal{H}P_\calS v_\tau} \leq \sqrt{\hat \rho \epsilon} \mathscr{M}^2$ to see that
	\begin{align}
		\|P_{\calS^c}(\tilde v_j )\|^2 & \leq \frac{c_5}{\eta^2} \left( 2\theta^2 \mathscr{M}^2 + \eta \sqrt{\hat \rho \epsilon} \mathscr{M}^2 - \eta \inner{v_\tau}{\mathcal{H} v_\tau} \right) \nonumber\\
						& = 4\ell c_5 \left( \frac{3}{2} \sqrt{\hat \rho \epsilon} \mathscr{M}^2 - \inner{v_\tau}{\mathcal{H} v_\tau} \right).
						\label{eq:PcsSctildevj}
	\end{align}
	(For the last equality, use $2\theta^2 = \frac{\sqrt{\hat\rho \epsilon}}{2} \eta$ and $\eta = 1/4\ell$.)
	To proceed, we must bound $\inner{v_\tau}{\mathcal{H} v_\tau}$.
	To this end, notice that by assumption the \eqref{eq:NCC} condition did not trigger for $(x, s_\tau, u_\tau)$.
	Therefore, we know that
	\begin{align*}
		\hat{f}_x(s_\tau) & \geq \hat{f}_x(u_\tau) + \innersmall{\nabla \hat{f}_x(u_\tau)}{s_\tau - u_\tau} - \frac{\gamma}{2}\norm{s_\tau - u_\tau}^2. 
	\end{align*}
	Moreover, it always holds that
	\begin{align*}
		\hat{f}_x(s_\tau) & = \hat f_x(u_\tau) + \innersmall{\nabla \hat{f}_x(u_\tau)}{s_\tau - u_\tau} + \frac{1}{2} \innersmall{s_\tau - u_\tau}{\nabla^2 \hat f_x(\phi s_\tau + (1-\phi)u_\tau)[s_\tau - u_\tau]}
	\end{align*}
	for some $\phi \in [0, 1]$.
	Also using $u_\tau = s_\tau + (1-\theta) v_\tau$, we deduce that
	\begin{align*}
		\innersmall{v_\tau}{\nabla^2 \hat f_x(\phi s_\tau + (1-\phi)u_\tau)[v_\tau]} \geq -\gamma \|v_\tau\|^2.
	\end{align*}
	With the help of Lemma~\ref{lem:params}, note that
	\begin{align*}
		\|\phi s_\tau + (1-\phi)u_\tau\| & = \|s_\tau + (1-\phi)(1-\theta) v_\tau\| \leq \|s_\tau\| + \|v_\tau\| \leq \mathscr{L} + \mathscr{M} \leq b.
	\end{align*}
	Thus, the Lipschitz-type properties of $\nabla^2 \hat f_x$ apply up to that point and we get
	\begin{align*}
		\|\nabla^2 \hat f_x(\phi s_\tau + (1-\phi)u_\tau) - \mathcal{H}\| \leq \hat\rho (\mathscr{L} + \mathscr{M}) \leq \sqrt{\hat\rho \epsilon}.
	\end{align*}
	Since $\gamma = \frac{\sqrt{\hat\rho \epsilon}}{4}$, it follows overall that
	\begin{align*}
		\innersmall{v_\tau}{\mathcal{H}v_\tau} \geq -\frac{5}{4} \sqrt{\hat\rho \epsilon} \|v_\tau\|^2 \geq -\frac{5}{4}\sqrt{\hat\rho \epsilon} \mathscr{M}^2.
	\end{align*}
	Plugging this back into~\eqref{eq:PcsSctildevj} with $c \geq 80\sqrt{c_5}$ reveals that
	\begin{align*}
		\|P_{\calS^c}(\tilde v_j )\|^2 & \leq 11 \ell c_5 \sqrt{\hat \rho \epsilon} \mathscr{M}^2 = 11 c_5 \epsilon^2 c^{-2} \leq \epsilon^2 / 24^2.
	\end{align*}
	This shows that $\|P_{\calS^c}(\tilde v_j )\| \leq \epsilon / 24$ for $j = \mathscr{T}/4$.
	
	We plug $\|P_{\calS^c}(\tilde\delta_j)\| \leq \epsilon/24$ and $\|P_{\calS^c}(\tilde v_j )\| \leq \epsilon / 24$  into~\eqref{eq:staujminstaubound} to state that, with $j = \mathscr{T} / 4$,
	\begin{align*}
		\|s_{\tau+j} - s_{\tau}\| & \geq \frac{\eta}{c_4 \theta^2} \left( \frac{\epsilon}{6} - \frac{\epsilon}{24} - \frac{\epsilon}{24} \right) = \frac{\eta \epsilon}{12 c_4 \theta^2} = \frac{1}{3c_4}\sqrt{\frac{\epsilon}{\hat\rho}} > \sqrt{\frac{\epsilon}{\hat\rho}} \chi^{-2} c^{-3} = \mathscr{L} / 2.
	\end{align*}
	(We used $4\theta^2 = \sqrt{\hat\rho \epsilon} \eta$, then we also set $c > (3c_4)^{1/3}$.)
	This last inequality contradicts~\eqref{eq:staujminstaucontradiction}.
	Thus, the proof by contradiction is complete and we conclude that $E_{\tau-1} - E_{\tau + \mathscr{T}/4} \geq \mathscr{E}$.
\end{proof}

What follows is the equivalent of the proof of~\citep[Lem.~22]{jin2018agdescapes}, with the small changes needed for our purpose.
\begin{proof}[Proof of Lemma~\ref{lem:focpB}]
	Since $E_0 - E_{\mathscr{T}/2} \leq \mathscr{E}$ and $s_0 = 0$, Lemmas~\ref{lem:TSSEj}, \ref{lem:TSStraveldist} and~\ref{lem:params} yield:
	\begin{align}
		\forall j \leq \mathscr{T}/2, && \|s_j\| & = \|s_j - s_0\| \leq \sqrt{8 \sqrt{\kappa} \eta \mathscr{T} \mathscr{E}} = \frac{\mathscr{L}}{\sqrt{2}} \leq \mathscr{L} \leq b.
		\label{eq:lemfocpBstarters}
	\end{align}
	By Lemma~\ref{Lemma13} with $\tau = 0$ and noting that $s_0 = 0$, $s_{-1} = s_0 - v_0 = 0$, we know that, for all $j$,
	\begin{align}
		\begin{pmatrix} s_{j} \\ s_{j-1} \end{pmatrix} & = - \eta \sum_{k = 0}^{j-1} A^{j-1-k} \begin{pmatrix} \nabla \hat f_x(0) + \delta_{k} \\ 0 \end{pmatrix}.
		\label{eq:focpBsjsjminone}
	\end{align}
	Define the operator $\Delta_j = \int_{0}^{1} \nabla^2 \hat f_x(\phi s_j) - \mathcal{H} \dphi$ with $\mathcal{H} = \nabla^2 \hat f_x(0)$.
	We can write:
	\begin{align}
		P_\calS \nabla \hat f_x(s_j) & = P_\calS\!\left( \nabla \hat f_x(0) + \mathcal{H}s_j + \Delta_js_j \right).
		\label{eq:PcalSnablahatfxsj}
	\end{align}
	We shall bound this term by term.
	
	The third term is straightforward, so let us start with this one.
	Owing to~\eqref{eq:lemfocpBstarters}, the Lipschitz-like properties of the Hessian apply to claim $\|\Delta_j\| \leq \frac{1}{2}\hat\rho \|s_j\|$.
	Therefore,
	\begin{align}
		\|P_\calS \Delta_js_j\| \leq \|\Delta_j\| \|s_j\| \leq \frac{1}{2}\hat\rho \|s_j\|^2 \leq \frac{1}{2}\hat\rho \mathscr{L}^2 = 2\epsilon \chi^{-4} c^{-6} \leq \epsilon/18
		\label{eq:blablatermthree}
	\end{align}
	with $c \geq 2$ and $\chi \geq 1$.
	Below, we work toward bounding the other two terms.
	
	As we did in the proof of Lemma~\ref{lem:focpA}, let $e_1, \ldots, e_d$ form an orthonormal basis of eigenvectors for $\mathcal{H}$ with eigenvalues $\lambda_1 \leq \cdots \leq \lambda_d$.
	Expand $\nabla \hat f_x(0)$ and $\delta_{k}$ in that basis as
	\begin{align*}
		\nabla \hat f_x(0) & = \sum_{m = 1}^{d} g^{(m)} e_m, & \delta_{k} & = \sum_{m = 1}^{d} \delta_k^{(m)} e_m.
	\end{align*}
	From~\eqref{eq:focpBsjsjminone} and~\eqref{eq:Akmm} it follows that
	\begin{align*}
		s_j & = \sum_{m' = 1}^{d} \inner{e_{m'}}{s_j} e_{m'} = -\eta \sum_{m' = 1}^{d} \sum_{k = 0}^{j-1} \sum_{m = 1}^d \inner{\begin{pmatrix} e_{m'} \\ 0 \end{pmatrix}}{A^{j-1-k} \begin{pmatrix} e_m \\ 0 \end{pmatrix}} (g^{(m)} + \delta_k^{(m)}) e_{m'} \\ & = -\eta \sum_{k = 0}^{j-1} \sum_{m = 1}^d (A_m^{j-1-k})_{11} (g^{(m)} + \delta_k^{(m)}) e_{m}.
	\end{align*}
	Motivated by~\eqref{eq:PcalSnablahatfxsj} and reusing notation $a_{m, j-1-k} = (A_m^{j-1-k})_{11}$ as in~\eqref{eq:amtbmt}, we further write
	\begin{align}
		P_\calS\!\left(\nabla \hat f_x(0) + \mathcal{H}s_j\right) & = \sum_{m \in S} \left[ g^{(m)} - \eta\lambda_m \sum_{k = 0}^{j-1} a_{m, j-1-k} (g^{(m)} + \delta_k^{(m)})  \right] e_m \nonumber \\
			& = \sum_{m \in S} \left[ \left( 1 -  \eta\lambda_m \sum_{k = 0}^{j-1} a_{m, k} \right) g^{(m)} - \eta\lambda_m \sum_{k = 0}^{j-1} a_{m, j-1-k} \delta_k^{(m)} \right] e_m,
			\label{eq:yetanothername}
	\end{align}
	where $S = \big\{ m : \eta \lambda_m > \frac{\theta^2}{(2-\theta)^2} \big\}$ indexes the eigenvalues of the eigenvectors which span $\calS$.
	This identity splits in two parts, each of which we now aim to bound.
	
	In the spirit of the comments surrounding~\eqref{eq:sumsamk}, here too it is possible to control the coefficients $a_{m,k}$ and $b_{m,k}$ (both defined as in~\eqref{eq:amtbmt}), this time for $m \in S$.
	Specifically, combining~\citep[Lem.~25]{jin2018agdescapes} with an identity in the proof of~\citep[Lem.~29]{jin2018agdescapes}, we see that
	\begin{align}
		1 - \eta \lambda_m \sum_{k = 0}^{j-1} a_{m, k} & = a_{m, j} - b_{m, j}.
		\label{eq:amjbmjminus}
	\end{align}
	Moreover, owing to~\citep[Lem.~32]{jin2018agdescapes} we know that
	\begin{align}
		\forall j \geq 0, \forall m \in S, && \max(|a_{m, j}|, |b_{m, j}|) \leq (j+1)(1-\theta)^{j/2}.
		\label{eq:amjbmjbound}
	\end{align}
	Thus, the first part of~\eqref{eq:yetanothername} is bounded as:
	\begin{align*}
		\left\|\sum_{m \in S} \left( 1 -  \eta\lambda_m \sum_{k = 0}^{j-1} a_{m, k} \right) g^{(m)} e_m \right\|^2 & = \sum_{m \in S} (a_{m,j} - b_{m,j})^2 (g^{(m)})^2 \leq 4 (j+1)^2 (1-\theta)^{j} \|\nabla \hat f_x(0)\|^2.
	\end{align*}
	One can show using $\theta \in (0, 1/4]$, $\chi \geq \log_2(\theta^{-1})$ and $c \geq 256$ (which we all assume) that
	\begin{align}
		\forall j \geq \mathscr{T} / 4, && (j+1)^2 & \leq (1-\theta)^{-j/2}.
		\label{eq:jpowerbound}
	\end{align}
	%
	%
	%
	%
	Then use the assumption $\|\nabla \hat f_x(0)\| \leq 2\ell\mathscr{M}$ and $j \geq \mathscr{T} / 4$ again to replace the power with $j / 2 \geq \sqrt{\kappa} \chi c / 8 \geq 4 \sqrt{\kappa} \cdot 2\chi$ (with $c \geq 64$) and see that
	\begin{align*}
		\left\|\sum_{m \in S} \left( 1 -  \eta\lambda_m \sum_{k = 0}^{j-1} a_{m, k} \right) g^{(m)} e_m \right\|^2 & \leq 16 \ell^2 \mathscr{M}^2 (1-\theta)^{j/2} \leq 16 \epsilon^2 \kappa c^{-2} \left( 1 - \frac{1}{4\sqrt{\kappa}} \right)^{4 \sqrt{\kappa} \cdot 2\chi}.
	\end{align*}
	Use the fact that $0 < (1-t^{-1})^t < e^{-1} \leq 2^{-1}$ for $t \geq 4$
	together with $\kappa \geq 1$ to bound the right-hand side by $16 \epsilon^2 \kappa c^{-2} 2^{-2\chi}$.
	This itself is bounded by $16 \epsilon^2 \kappa c^{-2} \theta^2 = \epsilon^2 c^{-2}$ using $\chi \geq \log_2(\theta^{-1})$.
	Overall, we have shown that
	\begin{align}
		\left\|\sum_{m \in S} \left( 1 -  \eta\lambda_m \sum_{k = 0}^{j-1} a_{m, k} \right) g^{(m)} e_m \right\| & \leq \epsilon / 18,
		\label{eq:blablatermone}
	\end{align}
	with $c \geq 18$.
	This covers the first term in~\eqref{eq:yetanothername}.
	
	We turn to bounding the second term in~\eqref{eq:yetanothername}.
	For this one, we need~\citep[Lem.~34]{jin2018agdescapes} which states that, for $m \in S$ and $j \geq \mathscr{T} / 4$, for any sequence $\{\epsilon_k\}$, we have
	\begin{align}
		\sum_{k = 0}^{j-1} a_{m, k} \epsilon_k & \leq \frac{\sqrt{c_2}}{\eta \lambda_m} \left( |\epsilon_0| + \sum_{k = 1}^{j-1} |\epsilon_{k} - \epsilon_{k-1}| \right), \textrm{ and } \label{eq:c2} \\
		\sum_{k = 0}^{j-1} (a_{m, k} - a_{m, k-1}) \epsilon_k & \leq \frac{\sqrt{c_3}}{\sqrt{\eta\lambda_m}} \left( |\epsilon_0| + \sum_{k = 1}^{j-1} |\epsilon_{k} - \epsilon_{k-1}| \right), \label{eq:c3} 
	\end{align}
	with some positive constants $c_1, c_2, c_3$ and $c \geq c_1$.
	Thus, to bound the remaining term in~\eqref{eq:yetanothername} we start with:
	\begin{align}
		\left\| \sum_{m \in S} \eta\lambda_m \sum_{k = 0}^{j-1} a_{m, j-1-k}^{} \delta_{k}^{(m)} e_m \right\|^2 & \leq c_2 \sum_{m \in S} \left( |\delta_{j-1}^{(m)}| + \sum_{k = 1}^{j-1} | \delta_k^{(m)} - \delta_{k-1}^{(m)} | \right)^2 \nonumber\\
			& \leq 2c_2 \sum_{m \in S} \left[ |\delta_{j-1}^{(m)}|^2 + \left( \sum_{k = 1}^{j-1} | \delta_k^{(m)} - \delta_{k-1}^{(m)} | \right)^2 \right] \nonumber\\
			& \leq 2c_2 \sum_{m \in S} \left[ |\delta_{j-1}^{(m)}|^2 + (j-1) \sum_{k = 1}^{j-1} | \delta_k^{(m)} - \delta_{k-1}^{(m)} |^2 \right] \nonumber\\
			& \leq 2c_2 \|\delta_{j-1}\|^2 + 2c_2 j \sum_{k = 1}^{j-1} \|\delta_k - \delta_{k-1}\|^2.
			\label{eq:spirou}
	\end{align}
	(We used $(a+b)^2 \leq 2a^2 + 2b^2$ again, and another Cauchy--Schwarz on the remaining sum.)
	In order to proceed, we call upon Lemma~\ref{Lemma14} with $R = \mathscr{L}$, $q' = 0$ and $q = j-1$, which is justified by~\eqref{eq:lemfocpBstarters} (recall that $s_{-1} = 0$).
	This yields the first inequality in:
	\begin{align}
		 \left\| \sum_{m \in S} \eta\lambda_m \sum_{k = 0}^{j-1} a_{m, j-1-k}^{} \delta_{k}^{(m)} e_m \right\|^2 & \leq 50 c_2 \hat\rho^2 \mathscr{L}^4 + 2c_2 j \cdot 144 \hat\rho^2 \mathscr{L}^2 \sum_{k = 0}^{j-1} \|s_k - s_{k-1}\|^2 \nonumber\\
			 & \leq 50 c_2 \hat\rho^2 \mathscr{L}^4 + 144 c_2 \hat\rho^2 \mathscr{L}^4.
			 \label{eq:nameofmyfirstborn}
	\end{align}
	The second inequality above is supported by Lemmas~\ref{lem:TSSEj}, \ref{lem:TSStraveldist} and~\ref{lem:params} as well as $j \leq \mathscr{T} / 2$ and the assumption $E_0 - E_{\mathscr{T}/2} \leq \mathscr{E}$, through:
	\begin{align}
		j \sum_{k = 0}^{j-1} \|s_k - s_{k-1}\|^2 \leq 16 \sqrt{\kappa} \eta j (E_{0} - E_{j}) \leq 8 \sqrt{\kappa} \eta \mathscr{T} \mathscr{E} = \mathscr{L}^2 / 2.
		\label{eq:bombadil}
	\end{align}
	Continuing from~\eqref{eq:nameofmyfirstborn}, we see that the right- (hence also left-) hand side is upper-bounded by
	\begin{align*}
		194 c_2 \cdot \hat\rho^2 \mathscr{L}^4 & = 194 c_2 \cdot 16 \epsilon^2 \chi^{-8} c^{-12} \leq \epsilon^2 / 18^2,
	\end{align*}
	with $c \geq 4 c_2^{1/12}$ and $\chi \geq 1$.
	Combine this result with~\eqref{eq:PcalSnablahatfxsj}, \eqref{eq:blablatermthree}, \eqref{eq:yetanothername} and~\eqref{eq:blablatermone} to conclude that
	\begin{align*}
		\left\| P_\calS \nabla \hat f_x(s_j) \right\| & \leq \frac{\epsilon}{18} + \frac{\epsilon}{18} + \frac{\epsilon}{18} = \frac{\epsilon}{6}
	\end{align*}
	for all $\mathscr{T}/4 \leq j \leq \mathscr{T} / 2$. This proves the first part of the lemma.
	
	For the second part of the result, consider~\eqref{eq:focpBsjsjminone} anew then~\eqref{eq:Akmm} and~\eqref{eq:Akmmter} to see that:
	\begin{align*}
		v_j = s_j - s_{j-1} & = \sum_{m' = 1}^{d} \inner{\begin{pmatrix} s_{j} \\ s_{j-1} \end{pmatrix}}{\begin{pmatrix} e_{m'} \\ -e_{m'	} \end{pmatrix}} e_{m'} \\
			& = - \eta \sum_{m' = 1}^{d} \sum_{k = 0}^{j-1} \sum_{m = 1}^d \left( g^{(m)} - \delta_k^{(m)} \right) \inner{A^{j-1-k} \begin{pmatrix} e_m \\ 0 \end{pmatrix}}{\begin{pmatrix} e_{m'} \\ -e_{m'} \end{pmatrix}} e_{m'} \\
			& = - \eta \sum_{k = 0}^{j-1} \sum_{m = 1}^d \left( g^{(m)} - \delta_k^{(m)} \right) \left( (A_m^{j-1-k})_{11} - (A_m^{j-2-k})_{11} \right) e_{m}.	
	\end{align*}
	Using notation as in~\eqref{eq:amtbmt} for $a_{m, t}$, it follows that
	\begin{align*}
		P_\calS v_j & = - \eta \sum_{m \in S}  \sum_{k = 0}^{j-1} \left( g^{(m)} - \delta_k^{(m)} \right) \left( a_{m, j-1-k} - a_{m, j-2-k} \right) e_{m}.
	\end{align*}
	We aim to upper-bound $\inner{P_\calS v_j}{\mathcal{H} P_\calS v_j}$.
	Compute, then use~\eqref{eq:c3} to bound the sum in $k$:
	\begin{align}
		 \inner{P_\calS v_j}{\mathcal{H} P_\calS v_j} & = \eta^2 { \sum_{m \in S} \lambda_m \left(\sum_{k = 0}^{j-1} \left( g^{(m)} - \delta_k^{(m)} \right) \left( a_{m, j-1-k} - a_{m, j-2-k} \right)\right)^2 } \nonumber \\
		 & = \eta^2 { \sum_{m \in S} \lambda_m \left(g^{(m)} \sum_{k = 0}^{j-1} \left( a_{m, k} - a_{m, k-1} \right) - \sum_{k = 0}^{j-1} \delta_k^{(m)} \left( a_{m, j-1-k} - a_{m, j-2-k} \right)\right)^2 } \nonumber \\
		 & \leq 2 \eta^2 \sum_{m \in S} \lambda_m \left(g^{(m)} \sum_{k = 0}^{j-1} \left( a_{m, k} - a_{m, k-1} \right) \right)^2 \nonumber \\
		 & \qquad + 2 \eta^2 \sum_{m \in S} \lambda_m \left(\sum_{k = 0}^{j-1} \delta_k^{(m)} \left( a_{m, j-1-k} - a_{m, j-2-k} \right)\right)^2.
		 \label{eq:PcalSvjHPcalSvj}
	\end{align}
	(We used $(a + b)^2 \leq 2a^2 + 2b^2$ again.)
	
	Focusing on the first term of~\eqref{eq:PcalSvjHPcalSvj}, use~\eqref{eq:amjbmjminus} twice to see that
	\begin{align*}
		\sum_{k = 0}^{j-1} \left( a_{m, k} - a_{m, k-1} \right) & = \frac{1}{\eta\lambda_m}(1 - a_{m, j} + b_{m, j}) - \frac{1}{\eta\lambda_m}(1 - a_{m, j-1} + b_{m, j-1}) - a_{m, -1} \\
			& = \frac{1}{\eta\lambda_m}(a_{m, j-1} - b_{m, j-1} - a_{m, j} + b_{m, j}).
	\end{align*}
	(Indeed, $a_{m, -1} = 0$ as it is the top-left entry of a matrix of the form $\left(\begin{smallmatrix} a & b \\ 1 & 0 \end{smallmatrix}\right)^{-1}$: that is zero regardless of $a$ and $b \neq 0$.)
	Hence, the first term in~\eqref{eq:PcalSvjHPcalSvj} is equal to the right-hand side below; the first bound follows from $(a+b+c+d)^2 \leq 4(a^2 + b^2 + c^2 + d^2)$ (Cauchy--Schwarz) and~\eqref{eq:amjbmjbound}, while the second bound follows from~\eqref{eq:jpowerbound} for $j \geq \mathscr{T} / 4$:
	\begin{align*}
		\sum_{m \in S} \frac{2}{\lambda_m} \left|g^{(m)}\right|^2 \left(a_{m, j-1} - b_{m, j-1} - a_{m, j} + b_{m, j}\right)^2 & \leq \sum_{m \in S} \frac{16}{\lambda_m} \left|g^{(m)}\right|^2 \left( (j+1)^2(1-\theta)^{j} + j^2(1-\theta)^{j-1} \right) \\
			& \leq \sum_{m \in S} \frac{16}{\lambda_m} \left|g^{(m)}\right|^2 \left( (1-\theta)^{j/2} + (1-\theta)^{j/2-1} \right) \\
			& \leq \sum_{m \in S} \frac{128}{3\lambda_m} \left|g^{(m)}\right|^2 (1-\theta)^{j/2}.
	\end{align*}
	(The last inequality uses $\theta \in (0, 1/4]$ so that $(1-\theta)^{-1} \leq 4/3$.)
	Moreover, for $m \in S$ we have $\lambda_m > \frac{\theta^2}{\eta (2-\theta)^2} \geq \frac{1}{4\eta} \theta^2 = \frac{1}{4\eta} \frac{1}{16} \frac{\sqrt{\hat\rho \epsilon}}{\ell} = \frac{\sqrt{\hat\rho \epsilon}}{16}$.
	Therefore, in light of the latest considerations and using the assumption $\|\nabla \hat f_x(0)\| \leq 2\ell\mathscr{M}$ and also $j / 2 \geq \sqrt{\kappa} \chi c / 8$ owing to $j \geq \mathscr{T} / 4$, the first term in~\eqref{eq:PcalSvjHPcalSvj} is upper-bounded by:
	\begin{align*}
		\sum_{m \in S} \frac{128}{3} \frac{16}{\sqrt{\hat\rho \epsilon}} \left|g^{(m)}\right|^2 (1-\theta)^{j/2} & \leq 3000 \frac{\ell^2\mathscr{M}^2}{\sqrt{\hat\rho \epsilon}} (1-\theta)^{\sqrt{\kappa} \chi c / 8} \\
		& = 3000 \mathscr{M}^2 \sqrt{\hat \rho \epsilon} \kappa^2 \left(1-\frac{1}{4\sqrt{\kappa}}\right)^{4\sqrt{\kappa} \cdot  4\chi \cdot c / 128} \\
		& \leq 3000 \mathscr{M}^2 \sqrt{\hat \rho \epsilon} \kappa^2 \cdot 2^{-4\chi} 2^{-c / 128}
		\leq \frac{1}{4} \mathscr{M}^2 \sqrt{\hat \rho \epsilon},
	\end{align*}
	where the second-to-last inequality uses again that $0 < (1-t^{-1})^t < 2^{-1}$ for $t \geq 4$, as well as $4\chi \cdot c / 128 \geq 4\chi + c / 128$ with $c \geq 128$; and the last inequality uses $\chi \geq \log_2(\theta^{-1}) = \log_2(4\sqrt{\kappa})$ to see that $\kappa^2 2^{-4\chi} \leq 4^{-4}$, and also $3000 \cdot 4^{-4} \cdot 2^{-c/128} \leq 1/4$ with $c \geq 720$.
	(With care, one could improve the constant, here and in many other places.)
	
	Now focusing on the second term of~\eqref{eq:PcalSvjHPcalSvj}, we start with~\eqref{eq:c3} to see that
	\begin{align*}
		2 \eta^2 \sum_{m \in S} \lambda_m \left(\sum_{k = 0}^{j-1} \delta_k^{(m)} \left( a_{m, j-1-k} - a_{m, j-2-k} \right)\right)^2 & \leq 2 c_3 \eta \sum_{m \in S} \left( |\delta_{j-1}^{(m)}| + \sum_{k = 1}^{j-1} | \delta_k^{(m)} - \delta_{k-1}^{(m)} | \right)^2 \\
			& \leq 4c_3\eta\|\delta_{j-1}\|^2 + 4c_3\eta j \sum_{k = 1}^{j-1} \| \delta_k - \delta_{k-1} \|^2 \\
			& \leq 388 c_3 \eta \cdot \hat\rho^2 \mathscr{L}^4.
	\end{align*}
	The last inequality follows through the same reasoning that was applied to go from~\eqref{eq:spirou} to~\eqref{eq:nameofmyfirstborn}.
	Through simple parameter manipulation we find
	\begin{align*}
		388 c_3 \eta \cdot \hat\rho^2 \mathscr{L}^4 & = \frac{97c_3}{\ell} \cdot 16\epsilon^2 \chi^{-8}c^{-12} \cdot \frac{\ell^2}{\epsilon^2 \kappa}c^{2} \cdot \mathscr{M}^2
		                                              = 97c_3 \cdot 16 \chi^{-8}c^{-10} \cdot \sqrt{\hat \rho \epsilon} \mathscr{M}^2 \leq \frac{1}{4} \mathscr{M}^2 \sqrt{\hat \rho \epsilon},
	\end{align*}
	with $c \geq 3 c_3^{1/10}$ and $\chi \geq 1$.
	
	To conclude, we combine the two main results about~\eqref{eq:PcalSvjHPcalSvj} to confirm that $\inner{P_\calS v_j}{\mathcal{H} P_\calS v_j} \leq \frac{1}{4} \mathscr{M}^2 \sqrt{\hat \rho \epsilon} + \frac{1}{4} \mathscr{M}^2 \sqrt{\hat \rho \epsilon} = \frac{1}{2} \mathscr{M}^2 \sqrt{\hat \rho \epsilon} \leq \mathscr{M}^2 \sqrt{\hat \rho \epsilon}$ for all $\mathscr{T}/4 \leq j \leq \mathscr{T} / 2$. This proves the second part of the lemma.
\end{proof}

\section{Proof from Section~\ref{sec:socp} about $\PARGD$} \label{app:SOCP}

\begin{proof}[Proof of Lemma~\ref{lem:socp}]
	For contradiction, assume $E_0 - E_{\mathscr{T}}$ and $E_0' - E_{\mathscr{T}}'$ are both strictly less than $2\mathscr{E}$.
	Then, by Lemmas~\ref{lem:TSSEj}, \ref{lem:TSStraveldist} and~\ref{lem:params} and the assumption $\|s_0\|, \|s_0'\| \leq r$, we have
	\begin{align}
		\forall j \leq \mathscr{T}, && \|s_j\|  & \leq r + \|s_j - s_0\| \leq \mathscr{L}/64 + \sqrt{32 \sqrt{\kappa} \eta \mathscr{T} \mathscr{E}} = (1/64 + \sqrt{2}) \mathscr{L} \leq 2\mathscr{L}, \nonumber\\
									&& \|s_j'\| & \leq 2\mathscr{L}.
									\label{eq:socpboundssj}
	\end{align}
	The aim is to show that this cannot hold for $j = \mathscr{T}$.
	
	Define $w_j = s_j - s_j'$ for all $j$.
	Observe $w_{-1} = s_{-1}^{} - s_{-1}' = (s_0 - v_0) - (s_0' - v_0') = s_0^{} - s_0' = w_0$ since $v_0 = v_0' = 0$.
	Then, Lemma~\ref{ForLemma18} provides that
	\begin{align}
		\begin{pmatrix} w_j \\ w_{j-1} \end{pmatrix} & = A^j \begin{pmatrix} w_0 \\ w_{0} \end{pmatrix} - \eta \sum_{k = 0}^{j-1} A^{j-1-k} \begin{pmatrix} \delta_k'' \\ 0 \end{pmatrix},
		\label{eq:wjwjminone}
	\end{align}
	where $A$ is as defined and discussed in Appendix~\ref{sec:supportlemmas}, and
	\begin{align*}
		\delta_k'' & \triangleq \nabla \hat{f}_x(u_k)-\nabla \hat{f}_x(u_k') - \mathcal{H} (u_k - u_k') \\
			& = \left(\int_{0}^{1} \left( \nabla^2 \hat f_x(\phi u_k + (1-\phi)u_k') - \nabla^2 \hat f_x(0) \right) \dphi\right)\![u_k - u_k'].
	\end{align*}
	Recall that $u_k = (2-\theta) s_k - (1-\theta) s_{k-1}$.
	In particular, using~\eqref{eq:socpboundssj} and Lemma~\ref{lem:params} we have: 
	\begin{align*}
		\|u_k\| & \leq |2-\theta| \|s_k\| + |1-\theta| \|s_{k-1}\| \leq 6\mathscr{L} \leq b.
	\end{align*}
	The same holds for $\|u_k'\|$, and $\|\phi u_k + (1-\phi)u_k'\| \leq \max\!\left( \|u_k\|, \|u_k'\| \right) \leq 6\mathscr{L} \leq b$ for $\phi \in [0, 1]$.
	It follows that the Lipschitz-type properties of $\nabla^2 \hat f_x$ apply along rays from the origin of $\T_x\calM$ to any point of the form $\phi u_k + (1-\phi)u_k'$ for $\phi \in [0, 1]$.
	Therefore,
	\begin{align}
		\|\delta_k''\| & \leq 6 \hat\rho \mathscr{L} \| u_k - u_k' \| = 6 \hat\rho \mathscr{L} \| (2-\theta) w_k - (1-\theta) w_{k-1} \| \leq 12 \hat\rho \mathscr{L} \left( \|w_k\| + \|w_{k-1}\| \right).
		\label{eq:deltakppbound}
	\end{align}
	This will come in handy momentarily.
	
	As we did in previous proofs, let $e_1, \ldots, e_d$ form an orthonormal basis of eigenvectors for $\mathcal{H}$ with eigenvalues $\lambda_1 \leq \cdots \leq \lambda_d$.
	Expand the vectors $w_j$ and $\delta_k''$ in this basis as:
	\begin{align*}
		w_j & = \sum_{m = 1}^d w_j^{(m)} e_m , & \delta_k'' & = \sum_{m = 1}^{d} (\delta_k'')^{(m)} e_m.
	\end{align*}
	Going back to~\eqref{eq:wjwjminone}, we can write
	\begin{align*}
		w_j & = \sum_{m' = 1}^{d} \inner{\begin{pmatrix} e_{m'} \\ 0 \end{pmatrix}}{\begin{pmatrix} w_j \\ w_{j-1} \end{pmatrix}} e_{m'} \\
			& = \sum_{m' = 1}^{d} \sum_{m = 1}^{d} \left[ \inner{\begin{pmatrix} e_{m'} \\ 0 \end{pmatrix}}{A^j \begin{pmatrix} e_{m} \\ e_{m} \end{pmatrix}} w_0^{(m)}  -  \eta \sum_{k = 0}^{j-1} \inner{\begin{pmatrix} e_{m'} \\ 0 \end{pmatrix}}{A^{j-1-k} \begin{pmatrix} e_m \\ 0 \end{pmatrix}} (\delta_k'')^{(m)} \right]e_{m'}.
	\end{align*}
	Owing to~\eqref{eq:Akmm} and~\eqref{eq:Akmmbis}, only the terms with $m = m'$ survive.
	Also, recalling that $w_0 = r_0 e_1$ by assumption, we have
	\begin{align}
		w_j & = \left( a_{1,j} - b_{1,j} \right) r_0 e_1  -  \eta \sum_{m = 1}^{d} \sum_{k = 0}^{j-1} a_{m, j-1-k} (\delta_k'')^{(m)} e_{m},
		\label{eq:wjexpansion}
	\end{align}
	where $a_{m,j}$, $b_{m,j}$ are defined by~\eqref{eq:amtbmt}.
	
	We aim to show that $w_{\mathscr{T}} = s_\mathscr{T}^{} - s_\mathscr{T}'$ is larger than $4\mathscr{L}$, as this will contradict the claim that both $\|s_\mathscr{T}\|$ and $\|s_\mathscr{T}'\|$ are smaller than $2\mathscr{L}$: in view of~\eqref{eq:socpboundssj}, this is sufficient to prove the lemma.
	To this end, we introduce two new sequences of vectors to split $w_j$ according to~\eqref{eq:wjexpansion}:
	\begin{align*}
		w_j & = y_j - z_j, & y_j & = \left( a_{1,j} - b_{1,j} \right) r_0 e_1, 
			& z_j & = \eta \sum_{m = 1}^{d} \sum_{k = 0}^{j-1} a_{m, j-1-k} (\delta_k'')^{(m)} e_m.
	\end{align*}
	First, we show by induction that $\|z_j\| \leq \frac{1}{2}\|y_j\|$ for all $j$.
	The base case holds since $z_0 = 0$.
	Now assuming the claim holds for $z_0, \ldots, z_j$, we must prove that $\|z_{j+1}\| \leq \frac{1}{2} \|y_{j+1}\|$.
	Owing to the induction hypothesis, we know that
	\begin{align}
		\forall j' \leq j, && \|w_{j'}\| \leq \|y_{j'}\| + \|z_{j'}\| \leq \frac{3}{2} \|y_{j'}\|.
		\label{eq:wjboundearly}
	\end{align}
	By assumption, $\lambda_1$ (the smallest eigenvalue of $\nabla^2 \hat f_x(0)$) is less than $-\sqrt{\hat\rho \epsilon}$.
	In particular, it is nonpositive.
	Hence~\citep[Lem.~37]{jin2018agdescapes} asserts that $\max_{m = 1, \ldots, d} |a_{m, j-k}| = |a_{1, j-k}|$, so that, also using~\eqref{eq:deltakppbound} then~\eqref{eq:wjboundearly}:
	\begin{align*}
		\|z_{j+1}\| & \leq \eta \sum_{k = 0}^{j} \left\| \sum_{m = 1}^{d} a_{m, j-k} (\delta_k'')^{(m)} e_m \right\|
		\leq \eta \sum_{k = 0}^{j}  |a_{1, j-k}| \|\delta_k''\| \\
		& \leq 12 \eta \hat\rho \mathscr{L} \sum_{k = 0}^{j}  |a_{1, j-k}| \left( \|w_k\| + \|w_{k-1}\| \right)
		\leq 18 \eta \hat\rho \mathscr{L} \sum_{k = 0}^{j}  |a_{1, j-k}| \left( \|y_{k}\| + \|y_{k-1}\| \right).
	\end{align*}
	Moreover, \citep[Lem.~38]{jin2018agdescapes} applies and tells us that
	\begin{align}
		\forall j', && \|y_{j'+1}\| \geq \|y_{j'}\| & \geq \frac{\theta r_0}{2} \left( 1 + \frac{1}{2} \min\!\left( \frac{|\eta\lambda_1|}{\theta}, \sqrt{|\eta\lambda_1|} \right) \right)^{j'}.
		\label{eq:lem38}
	\end{align}
	In particular, $\|y_j\|$ is non-decreasing with $j$.
	Thus, continuing from above, we find that
	\begin{align*}
		\|z_{j+1}\| & \leq 36 \eta \hat\rho \mathscr{L} \sum_{k = 0}^{j}  |a_{1, j-k}| \|y_{k}\| = 36 \eta \hat\rho \mathscr{L} r_0 \sum_{k = 0}^{j}  |a_{1, j-k}| |a_{1,k} - b_{1,k}|,
	\end{align*}
	where the last equality follows from the definition of $y_k$.
	Owing to~\citep[Lem.~36]{jin2018agdescapes}, the fact that $\lambda_1$ is nonpositive implies that
	\begin{align*}
		\forall 0 \leq k \leq j, && |a_{1, j-k}| |a_{1,k} - b_{1,k}| \leq \left( \frac{2}{\theta} + (j+1) \right) |a_{1,k+1} - b_{1,k+1}|.
	\end{align*}
	Moreover, $j+1 \leq \mathscr{T}$ (as otherwise we are done with the proof by induction), and $\frac{2}{\theta} \leq 2\mathscr{T}$ with $c \geq 4$.
	Hence,
	\begin{align*}
		\|z_{j+1}\| & \leq 108 \eta \hat\rho \mathscr{L} \mathscr{T} r_0 \sum_{k = 0}^{j}  |a_{1,k+1} - b_{1,k+1}| = 108 \eta \hat\rho \mathscr{L} \mathscr{T} \sum_{k = 0}^{j}  \|y_{k+1}\|.
	\end{align*}
	Recall that $\|y_k\|$ is non-decreasing with $k$ to see that, using $j+1 \leq \mathscr{T}$ once more:
	\begin{align*}
		\|z_{j+1}\| & \leq 108 \eta \hat\rho \mathscr{L} \mathscr{T}^2 \|y_{j+1}\| \leq \frac{1}{2} \|y_{j+1}\|.
	\end{align*}
	(The last inequality holds with $c \geq 108$ because $108 \eta \hat\rho \mathscr{L} \mathscr{T}^2 = 54 c^{-1}$.)
	This concludes the induction, from which we learn that $\|w_j\| \geq \|y_j\| - \|z_j\| \geq \frac{1}{2} \|y_j\|$ for all $j \leq \mathscr{T}$.
	In particular, it holds owing to~\eqref{eq:lem38} that
	\begin{align*}
		\|w_\mathscr{T}\| & \geq \frac{1}{2} \|y_{\mathscr{T}}\| \geq \frac{\theta r_0}{4} \left( 1 + \frac{1}{2} \min\!\left( \frac{|\eta\lambda_1|}{\theta}, \sqrt{|\eta\lambda_1|} \right) \right)^{\mathscr{T}}.
	\end{align*}
	As per our assumptions, $\lambda_1 \leq -\sqrt{\hat\rho \epsilon}$. Therefore, using the definitions of $\theta$, $\eta$ and $\kappa$,
	\begin{align*}
		\min\!\left( \frac{|\eta\lambda_1|}{\theta}, \sqrt{|\eta\lambda_1|} \right) & \geq \min\!\left( \frac{\sqrt{\hat\rho\epsilon} \sqrt{\kappa}}{\ell}, \sqrt{\frac{\sqrt{\hat\rho \epsilon}}{4\ell}} \right) = \min\!\left( \frac{1}{\sqrt{\kappa}}, \frac{1}{2} \frac{1}{\sqrt{\kappa}} \right) = \frac{1}{2} \frac{1}{\sqrt{\kappa}}.
	\end{align*}
	Moreover, $\mathscr{T} = \sqrt{\kappa} \chi c = 4 \sqrt{\kappa} \chi c/4$, so that, using $(1+1/t)^t \geq 2$ for $t \geq 4$ and $\kappa \geq 1$, $\chi c \geq 4$:
	\begin{align*}
		\|w_\mathscr{T}\| & \geq \frac{\theta r_0}{4} \left( 1 + \frac{1}{4\sqrt{\kappa}} \right)^{4 \sqrt{\kappa} \cdot \chi c/4} \geq \frac{\theta r_0}{4} 2^{\chi c/4} \geq \frac{\theta}{4} \frac{\delta \mathscr{E}}{2\Delta_f} \frac{r}{\sqrt{d}} 2^{\chi (c/4-1)} 2^{\chi}.
	\end{align*}
	At this point, we finally use the assumption $\chi \geq \log_2\!\left( \frac{d^{1/2} \ell^{3/2} \Delta_f}{(\hat \rho \epsilon)^{1/4} \epsilon^2 \delta} \right)$ on the $2^\chi$ factor:
	\begin{align*}
		\|w_\mathscr{T}\| & \geq \frac{\theta}{4} \frac{\delta \mathscr{E}}{2\Delta_f} \frac{r}{\sqrt{d}} 2^{\chi (c/4-1)} \frac{d^{1/2} \ell^{3/2} \Delta_f}{(\hat \rho \epsilon)^{1/4} \epsilon^2 \delta}
		                     = \frac{1}{1024} \chi^{-8} c^{-12} 2^{\chi (c/4-1)} \cdot 4\mathscr{L} > 4\mathscr{L}.
	\end{align*}
	(The last inequality holds with $c \geq 500$ and $\chi \geq 1$:
	this fact is straightforward to show by taking derivatives of $\frac{2^{\chi(c/4-1)}}{\chi^{8} c^{12}}$ with respect to $\chi$ and $c$, and showing those derivatives are positive.)
	This concludes the proof by contradiction, from which we deduce that at least one of $E_0 - E_{\mathscr{T}}$ or $E_0' - E_{\mathscr{T}}'$ must be larger than or equal to $2\mathscr{E}$.
\end{proof}

\add{
\section{What if $L, \rho$ are unknown?} \label{backtrackingTAGDsection} \label{backtrackingTAGD}
Consider the scenario where we do not know the Lipschitz constants for the gradient and Hessian of the objective function.  
We assume we have knowledge of $\epsilon, \Delta_f, K$ and $F$.

In this section, $L, \hat{\rho}$ denote the true Lipschitz constants (as usual), and $L', \hat{\rho}'$ are guesses for $L, \hat{\rho}$.
Also, $L_{ini}, \hat{\rho}_{ini}$ denote our initial guesses for $L, \hat{\rho}$.
Given guesses $L', \hat{\rho}'$, let $\ARGD(x_0, L', \hat{\rho}', \epsilon)$ denote the output of running $\ARGD$ for at most $T_1$ steps (i.e., $t \leq T_1$) with parameters (including $T_1$) defined by equations~\eqref{ellrhohatb},~\eqref{definingparams1},~\eqref{definingparams2},~\eqref{chidefinition},~\eqref{T1definition}
except with $L'$ replacing $L$ and $\hat{\rho}'$ replacing $\hat{\rho}$.

Consider the algorithm $\mathtt{backtrackingTAGD}$.
We will show that \\ $\mathtt{backtrackingTAGD}(x_0, L_{ini}, \hat{\rho}_{ini}, \beta)$ finds an $\epsilon$-approximate first-order critical point in 
$$O\Bigg(\hat{\rho}^{1/4} L^{1/2} \epsilon^{-7/4} \bigg(\max\{1, \log(\hat{\rho} / \hat{\rho}_{ini})\} + \max\{1, \log(L / L_{ini})\}\bigg)^2\Bigg)$$
queries.
For comparison, recall that Riemannian gradient descent with backtracking line search finds an $\epsilon$-critical point in $O(L \epsilon^{-2} \max\{1, \log(L / L_{ini})\})$~\cite[Sec.~4.5]{boumal2020intromanifolds}.

Indeed, let $m', n'$ be the minimum nonnegative integers satisfying 
\begin{align*}
L_{ini} \beta^{n'} \geq L, \quad \quad \hat{\rho}_{ini} \beta^{2m'} \geq \hat{\rho}.
\end{align*}
Observe that for any $m, n$ with $m+n = p \leq m' + n'$, $\ARGD(x_0, L', \hat{\rho}', \epsilon)$ (with $L', \hat{\rho}'$ defined as in the algorithm $\mathtt{backtrackingTAGD}$) terminates in
\begin{align*}
O(T_1) &= O(\hat{\rho}'^{1/4} L'^{1/2} \epsilon^{-7/4}) = O(\hat{\rho}_{ini}^{1/4} \beta^{m/2} L_{ini}^{1/2} \beta^{n/2} \epsilon^{-7/4}) = O(\hat{\rho}_{ini}^{1/4} L_{ini}^{1/2} \epsilon^{-7/4} \beta^{p/2}) 
\\ &\leq O(\hat{\rho}_{ini}^{1/4} \beta^{m'/2} L_{ini}^{1/2} \beta^{n'/2} \epsilon^{-7/4}) = O({\hat{\rho}}^{1/4} L^{1/2} \epsilon^{-7/4})
\end{align*}
queries regardless of if an $\epsilon$-approximate first-order critical point is found.

On the other hand, owing to our main theorems and how $\mathtt{backtrackingTAGD}$ works, an $\epsilon$-approximate first-order critical point must be found if $p \geq m' + n'$.
So there are at most $O((m' + n')^2)$ runs of $\ARGD(x_0, L', \hat{\rho}', \epsilon)$.
Therefore, $\mathtt{backtrackingTAGD}$ requires at most $O(\hat{\rho}^{1/4} L^{1/2} \epsilon^{-7/4} (m' + n')^2)$ queries to find an $\epsilon$-approximate first order critical point.

\begin{algorithm}[t] \label{backtrackingTAGDalgo}
	\caption{$\mathtt{backtrackingTAGD}(x_0, L_{ini}, \rho_{ini}, \beta)$ with $x_0 \in \calM$ and $\beta > 1$} 
	\label{algo:backtrackingARGD}
	\begin{algorithmic}[1]	
		\For {$p = 0, 1, \ldots$}
			\For {$m = 0, 1, \ldots, p$}
				\State $n = p - m$
				\State $L' = L_{ini} \beta^{n}$
				\State $\hat{\rho}' = \hat{\rho}_{ini} \beta^{2 m}$
				\State $x_{p, m} = \ARGD(x_0, L', \rho', \epsilon)$ \Comment{See Appendix~\ref{backtrackingTAGDsection} for definition of $\ARGD(x_0, L', \rho', \epsilon)$.}
				\If {$\norm{\grad f(x_{p, m})} \leq \epsilon$}
					\State \textbf{return} $x_{p, m}$
				\EndIf
			\EndFor
		\EndFor
	\end{algorithmic}
\end{algorithm}
}

\add{
\section{Curvature for positive definite matrices} \label{appPDmatrices}
Let $d \geq 2$.
Let $\Sym(d)$ be the set of real $d \times d$ symmetric matrices.
The set of $d\times d$ positive definite matrices
	$$\calP_d = \{P \in \Sym(d) : P \succ 0\}$$ 
	endowed with the so-called affine invariant metric 
$$\inner{X}{Y}_P = \trace(P^{-1} X P^{-1} Y) \quad \text{for } P \in \calP_d \text{, and } X, Y \in \T_P \calP_d \cong \Sym(d)$$
is a Hadamard manifold, meaning all of its sectional curvatures are less than or equal to zero---see~\citep[Thm. 10.39]{bridsonmetric} or~\citep[Prop. 3.1]{dolcetti2018differential}.	

The Riemannian manifold $\calP_d$ is also a symmetric space~\citep[Prop. 3.1]{dolcetti2018differential}.  All symmetric spaces are locally symmetric~\citep[Exercise~6-19, Exercise~7-3 and p78]{lee2018riemannian}, so $\nabla R = 0$ for $\calP_d$: we can pick $F=0$ in assumption~\aref{assu:Mandfintrinsic}.
The following proposition shows that we can also pick $K = \frac{1}{2}$ and this is the best constant.

\begin{proposition}
The sectional curvatures of $\calP_d$ are at least $-1/2$, and this bound is tight.
\end{proposition}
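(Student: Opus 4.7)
The plan is to combine the homogeneity of $\calP_d$ with a classical commutator inequality. First, the congruence action $P \mapsto A P A^\top$ of $\mathrm{GL}(d)$ is transitive and isometric with respect to the affine invariant metric (this is the defining invariance of the metric). Thus $\calP_d$ is a Riemannian homogeneous space, and it suffices to bound the sectional curvatures at the identity $I \in \calP_d$, where the tangent space identifies with $\Sym(d)$ and the metric reduces to $\inner{X}{Y}_I = \trace(XY)$.

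The main technical step is the sectional curvature formula at $I$: for any orthonormal $X, Y \in \Sym(d)$ with respect to $\inner{\cdot}{\cdot}_I$,
\begin{align*}
K(X, Y) = -\tfrac{1}{4}\|[X,Y]\|_F^2,
\end{align*}
where $[X,Y] = XY - YX$ and $\|\cdot\|_F$ is the Frobenius norm. This is standard for the symmetric space $\calP_d \cong \mathrm{GL}(d)/\mathrm{O}(d)$, and can be derived via Jacobi fields along the geodesic $\gamma(t) = e^{tY}$: the Jacobi field with $J(0) = 0$, $J'(0) = X$ equals $J(t) = t \int_0^1 e^{(1-u)tY} X e^{utY}\,du$, and matching its $t^3$ Taylor coefficient against the standard expansion $J(t) = tP_t X - \tfrac{t^3}{6} P_t R(X,Y)Y + O(t^4)$ (with parallel transport $P_t V = e^{tY/2} V e^{tY/2}$ along $\gamma$) identifies $R(X,Y)Y = -\tfrac{1}{4}[Y,[Y,X]]$; the trace identity $\trace([Y,[Y,X]]\,X) = \|[X,Y]\|_F^2$ (via cyclic invariance and skew-symmetry of $[X,Y]$) then yields the formula. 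Alternatively one can quote it directly from~\citep{dolcetti2018differential}.

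With the formula in hand, the lower bound follows immediately from the Böttcher--Wenzel commutator inequality $\|[A,B]\|_F^2 \leq 2\|A\|_F^2 \|B\|_F^2$, valid for all real $d \times d$ matrices $A, B$: specializing to orthonormal symmetric $X, Y$ gives $K(X, Y) \geq -1/2$. For tightness (possible because $d \geq 2$), I would exhibit the pair
\begin{align*}
X = \tfrac{1}{\sqrt{2}}\bigl(e_1 e_2^\top + e_2 e_1^\top\bigr), \qquad Y = \tfrac{1}{\sqrt{2}}\bigl(e_1 e_1^\top - e_2 e_2^\top\bigr),
\end{align*}
embedded in the top-left $2 \times 2$ block of a $d \times d$ matrix (padded with zeros). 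Direct computation gives $\|X\|_F = \|Y\|_F = 1$, $\trace(XY) = 0$, and $[X,Y] = e_2 e_1^\top - e_1 e_2^\top$, hence $\|[X,Y]\|_F^2 = 2$ and $K(X, Y) = -1/2$.

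The main obstacle is deriving the sectional curvature formula self-containedly; once it is in place, the remainder reduces to a one-line application of Böttcher--Wenzel plus an explicit $2 \times 2$ example. The shortest route is to quote the symmetric-space curvature identity; the Jacobi-field derivation sketched above is the backup if a self-contained argument is required.
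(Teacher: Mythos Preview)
Your proposal is correct and reaches the same curvature formula $K(X,Y)=-\tfrac14\|[X,Y]\|_F^2$ at $P=I$ as the paper does (the paper simply cites~\citep{dolcetti2018differential} for this, which is also your preferred route). The divergence is in the subsequent bound on $\|[X,Y]\|_F^2$: you invoke the B\"ottcher--Wenzel inequality $\|[A,B]\|_F^2 \le 2\|A\|_F^2\|B\|_F^2$ as a black box, while the paper gives a self-contained optimization argument---it shows that at a maximizer one may take $X^*$ diagonal and $Y^*$ with zero diagonal (via a first-order criticality condition $[X^*,[X^*,Y^*]]=f(X^*,Y^*)Y^*$), and then bounds $\trace(X^2Y^2)\le\tfrac12$ and $\trace((XY)^2)\ge-\tfrac12$ by elementary means. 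Your route is considerably shorter and arguably cleaner, at the cost of importing a nontrivial external inequality; the paper's route is longer but elementary and self-contained. The tightness example is the same up to relabeling (the paper uses the last two coordinates with the roles of $X$ and $Y$ swapped).
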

\begin{proof}
By Proposition 2.3 of~\citep{dolcetti2018differential}, or alternatively Theorem 2.1 of~\citep{skovgaard1984riemgeogaussians}, the curvature tensor of $\calP_d$ at $P \in \calP_d$ is
$$\inner{R(W, X) Y}{Z}= -\frac{1}{4} \trace([P^{-1} W, P^{-1} X] [P^{-1} Y, P^{-1} Z]), \quad \text{for } W, X, Y, Z \in \Sym(d)$$
where $[X, Y] = X Y - Y X$ is the matrix commutator of $X, Y$.
Since $\calP_d$ is homogeneous, it is sufficient to consider $P = I$.

Let $X, Y \in \Sym(d)$ be two orthonormal tangent vectors at $P=I$, i.e., 
\begin{align*}
\trace(X Y) = 0, \quad \quad \trace(X^2) = \trace(Y^2) = 1.
\end{align*}
The sectional curvature corresponding to the 2-dimensional subspace spanned by $X, Y$ is therefore
$$K(X, Y) = \frac{\inner{R(X, Y)Y}{X}}{\trace(X^2) \trace(Y^2) - \trace(X Y)^2} = -\frac{1}{4} \trace([X, Y] [Y, X]) = -\frac{1}{4} \trace([X, Y]^\top [X, Y]).$$
So let us consider the optimization problem
$$\max_{X, Y} f(X, Y) \quad \text{with} \quad f(X,Y) = -4 K(X, Y) = \trace([X, Y]^\top [X, Y]) = 2 \trace(X^2 Y^2) - 2 \trace((X Y)^2)$$
subject to $X, Y \in \Sym(d), \trace(X Y) = 0, \trace(X^2) = \trace(Y^2) = 1$.
We will show the max value is $2$ which implies the sectional curvatures of $\calP_d$ are bounded below by $-2 / 4 = -1/2$.

\textbf{Step 1}:
Note that the constraint set 
$$\{(X, Y) \in \Sym(d) \times \Sym(d) : \trace(X Y) = 0, \trace(X^2) = \trace(Y^2) = 1\}$$
is compact, so a maximizer $(X^*, Y^*)$ of $f$ exists.
Fixing $X = X^*$, consider the problem 
$$\max_{Y \in \calN} f_{X^*}(Y) \quad \text{with} \quad f_{X^*}(Y) = f(X^*, Y), \quad \calN = \{Y \in \Sym(d) : \trace(X^* Y) = 0, \trace(Y^2) = 1\}.$$
Of course $Y^*$ is a maximizer of this problem, and the constraint set $\calN$ is the intersection of a sphere with two linear subspaces.
Therefore, $\calN$ is a smooth manifold.
Treating $\calN$ as an embedded submanifold of $\reals^{d \times d}$ with the Frobenius inner product, let $\grad f_{X^*}$ denote the Riemannian gradient of $f_{X^*}$ on $\calN$.
As $\calN$ is a compact manifold (without boundary), $Y^*$ must be a critical point: $\grad f_{X^*}(Y^*) = 0$.

Consider the smooth extension 
$$\bar{f}_{X^*} : \reals^{d \times d} \rightarrow \reals, \quad \quad \bar{f}_{X^*}(Y) = \trace([X^*, Y]^\top [X^*, Y]).$$
The Euclidean gradient of this extension is $\nabla \bar{f}_{X^*}(Y) = 2 [X^*, [X^*, Y]]$, and so 
$$\grad f_{X^*}(Y) = \Proj_Y(\nabla \bar{f}_{X^*}(Y)) = \Proj_Y(2 [X^*, [X^*, Y]])$$
where $\Proj_Y \colon \reals^{d \times d} \rightarrow \T_Y \calN$ denotes orthogonal projection onto the tangent space $\T_Y \calN$ ~\citep[Sec.~3.8]{boumal2020intromanifolds}.

Projection onto a tangent space of the sphere $\{Y \in \reals^{d \times d}: \trace(Y^\top Y) = 1\}$ is given by
$$\Proj_Y^{\mathbb{S}}(U) = U - \trace(Y^\top U) Y, \quad \text{ for } U \in \reals^{d \times d}.$$
For $Y \in \calN$, note that 
$$\Proj_Y^{\mathbb{S}}(2 [X^*, [X^*, Y]]) = 2([X^*, [X^*, Y]] - \trace(Y^\top [X^*, [X^*, Y]]) Y) = 2( [X^*, [X^*, Y]] - f(X^*, Y) Y)$$
is contained in both linear subspaces $\Sym(d)$ and $\{Y \in \reals^{d \times d} : \trace(X^* Y) = 0\}$.
Therefore,
$$\grad f_{X^*}(Y) = \Proj_Y(\nabla \bar{f}_{X^*}(Y)) = \Proj_Y^{\mathbb{S}}(2 [X^*, [X^*, Y]]) = 2( [X^*, [X^*, Y]] - f(X^*, Y) Y).$$
So, $\grad f_{X^*}(Y^*) = 0$ implies
$$[X^*, [X^*, Y^*]] = f(X^*, Y^*) Y^*.$$

\textbf{Step 2}:
If $X$ has eigenvalue decomposition $X = Q D Q^{-1}$ with $Q$ orthogonal and $D$ diagonal,
\begin{align*}
f(X, Y) &= 2 \trace(X^2 Y^2) - 2 \trace(X Y X Y) = 2 \trace(Q D^2 Q^{-1} Y^2) - 2 \trace(Q D Q^{-1} Y Q D Q^{-1} Y)
\\ &= 2 \trace(D^2 Q^{-1} Y^2 Q) - 2 \trace(D Q^{-1} Y Q D Q^{-1} Y Q) = 2 \trace(D^2 Z^2) - 2 \trace(D Z D Z) \\
&= f(D, Z)
\end{align*}
where $Z = Q^{-1} Y Q$, using the cyclic property of the trace.
Therefore, without loss of generality, we can assume that $X^*$ is diagonal.

Note that 
$$[X^*, [X^*, Y^*]] = X^* [X^*, Y^*] - [X^*, Y^*] X^* = X^* X^* Y^* - 2 X^* Y^* X^* + Y^* X^* X^*$$
has $i$-th diagonal entry
\begin{align*}
[X^*, [X^*, Y^*]]_{ii} = \sum_{j, k} X^*_{ij} X^*_{jk} Y^*_{ki} - 2 X^*_{ij} Y^*_{jk} X^*_{ki} + Y^*_{ij} X^*_{jk} X^*_{ki} \\
=  X^*_{ii} X^*_{ii} Y^*_{ii} - 2 X^*_{ii} Y^*_{ii} X^*_{ii} + Y^*_{ii} X^*_{ii} X^*_{ii} = 0.
\end{align*}
So $[X^*, [X^*, Y^*]]$ is a matrix whose diagonal entries are all zero, hence the diagonal entries of $Y^*$ are also all zero.

In the following two series of equations and inequalities~\eqref{eqn1PDmatrices} and~\eqref{eqn2PDmatrices}, we denote $X^*$ and $Y^*$ simply by $X$ and $Y$ for notational convenience (in particular, $X$ is diagonal and $Y$ has all diagonal entries equal to zero).
With $i, j$ ranging from $1$ to $d$,
\begin{equation} \label{eqn1PDmatrices}
\begin{split}
\trace(X^2 Y^2) &= \sum_{i, j} X_{ii}^2 Y_{ij}^2 = \sum_{i \neq j} X_{ii}^2 Y_{ij}^2 = \sum_{i} \Big(X_{ii}^2 \sum_{j : j \neq i} Y_{ij}^2\Big) \leq \Big(\max_{i} \sum_{j : j \neq i} Y_{ij}^2\Big) \sum_{i} X_{ii}^2 \\
&= \Big(\max_{i} \sum_{j : j \neq i} Y_{ij}^2\Big) \trace(X^2) = \max_{i} \sum_{j : j \neq i} Y_{ij}^2 \leq \trace(Y^2) / 2 = 1/2.
\end{split}
\end{equation}
Also,
\begin{equation} \label{eqn2PDmatrices}
\begin{split}
\trace((X Y)^2) &= \sum_{i, j} X_{ii} X_{jj} Y_{ij}^2 = \sum_{i \neq j} X_{ii} X_{jj} Y_{ij}^2 \geq - \Big(\max_{i \neq j} \left| X_{ii} X_{jj} \right| \Big) \sum_{i \neq j} Y_{ij}^2 \\
&\geq - \Big(\max_{i \neq j} \left| X_{ii} X_{jj} \right| \Big) \trace(Y^2) = -\max_{i \neq j} \left| X_{ii} X_{jj} \right| \geq -1/2.
\end{split}
\end{equation}
So $f(X^*, Y^*) \leq 2(1/2) - 2(-1/2) = 2$.

\textbf{Step 3}:
Finally, it is easy to construct an example showing that this is in fact tight.  Indeed, consider $X = \diag(0, ..., 0, \frac{1}{\sqrt{2}}, - \frac{1}{\sqrt{2}})$ and $Y$ such that $Y_{d-1, d} = Y_{d, d-1} = \frac{1}{\sqrt{2}}$ and all other entries of $Y$ equal to zero.
\end{proof}
}

\bibliographystyle{abbrvnat}
\bibliography{../bibtex/boumal}

\end{document}